\newcommand{\bb}{\mathbb}
\newcommand{\bs}{\boldsymbol}
\newcommand{\mbf}{\mathbf}
\newcommand{\msf}{\mathsf}
\newcommand{\scr}{\mathscr}
\newcommand{\mrm}{\mathrm}
\newcommand*{\bfcdot}{\scalebox{0.6}{$\bullet$}}
\newcommand{\et}{{\acute{\mathrm{e}}\mathrm{t}}} 
\let\det\relax
\DeclareMathOperator{\det}{det} 
\DeclareFontFamily{U}{wncy}{}
\DeclareFontShape{U}{wncy}{m}{n}{<->wncyr10}{}
\DeclareSymbolFont{mcy}{U}{wncy}{m}{n}
\DeclareMathSymbol{\Sh}{\mathord}{mcy}{"58} 
\DeclareSymbolFont{extraup}{U}{zavm}{m}{n}
\DeclareMathSymbol{\varheart}{\mathalpha}{extraup}{86}
\theoremstyle{definition}
\newtheorem{theorem}{Theorem}[section]
\newtheorem{lemma}[theorem]{Lemma}
\newtheorem{proposition}[theorem]{Proposition}
\newtheorem{corollary}[theorem]{Corollary}
\newtheorem{definition}[theorem]{Definition}
\newtheorem{conjecture}[theorem]{Conjecture}
\newtheorem{thmx}{Theorem}
\theoremstyle{remark}
\newtheorem{remark}[theorem]{Remark}
\begin{document}

\author{Michele Fornea}
\email{mfornea@math.columbia.edu}
\address{Columbia University, New York, USA.}
\author{Zhaorong Jin}
\email{zhaorong@math.princeton.edu}
\address{Princeton University, Princeton, USA.}
\classification{11F33, 11F41, 11F67, 11F80, 11G35, 11G40.} 

\title{Hirzebruch--Zagier classes and rational elliptic curves over quintic fields}

\begin{abstract}

Conditionally on a conjecture on the \'etale cohomology of Hilbert modular surfaces and some minor technical assumptions, we establish new instances of the equivariant BSD-conjecture in rank $0$ with applications to the arithmetic of rational elliptic curves over quintic fields. The key ingredients are a refinement of twisted triple product \emph{$p$-adic $L$-functions}, the construction of a compatible collection of \emph{Hirzebruch--Zagier cycles} and an \emph{explicit reciprocity law} relating the two. 
\end{abstract}
\maketitle

\vspace*{6pt}\tableofcontents  

\section{Introduction}
The most general result towards the BSD-conjecture was established by Shouwu Zhang and his school \cite{Heights} as a major generalization of the methods of Gross-Zagier \cite{GZformula} and Kolyvagin \cite{Koly}. The result states that if $E_{/F}$ is a modular elliptic curve over a totally real field $F$ such that either $E_{/F}$ has at least one prime of multiplicative reduction or $[F:\bb{Q}]$ is odd, then 
\[
 r_\mrm{an}(E/F)\in\{0,1\}\implies r_\mrm{an}(E/F)=r_\mrm{alg}(E/F).
\]
It is important not to forget that the modularity of $E_{/F}$ is currently the only known way to access the analytic properties of the $L$-function $L(E/F,s)$. Because of this, it becomes natural to expect that cycles on Shimura varieties will play a role in any strategy to establish the BSD-conjecture. 

The three pillars of Gross-Zagier and Kolyvagin's approach are: $(i)$ the existence of a non-constant map $X_{/F}\to E_{/F}$ from a Shimura curve to the elliptic curve, $(ii)$ the existence of CM points on $X_{/F}$ with their significance for Selmer groups, and $(iii)$ formulas for the derivative of certain base-change $L$-functions of $E_{/F}$ in terms of the height of images of CM points, called Heegner points.  These three items are at the same time the strengths and the limitations of the most effective strategy developed so far to prove instances of the BSD-conjecture. Firstly, the strong form of geometric modularity in $(i)$ can only be realized for certain elliptic curves over totally real fields, hence the first pillar topples down right away when considering elliptic curves defined over general number fields. However, a lot can be proved using congruences for general rank zero elliptic curves over totally real fields, as it was shown by Longo \cite{Longo} and Nekovar  \cite{LevelRaisingNekovar}. Secondly, suppose one fixed an elliptic curve over a totally real field $F$ and took a finite extension $K/F$; what could be said about the BSD-conjecture for $E_{/K}$? In this case, even though a modular parametrization could still be available, one would lack a systematic way to produce points over extensions of a general $K$. Indeed, Heegner points are defined over certain dihedral extensions of $F$, and therefore miss all non-solvable extensions. Finally, what if one contented themselves with tackling the BSD-conjecture over totally real fields? In this case all the pillars could still be standing, but $(ii)$ and $(iii)$  would have nothing to say about higher rank situations. The striking feature of CM points is their explicit relation to \emph{first} derivative of $L$-functions; thus, as soon as the rank is greater than or equal to two, they become torsion.

In recent years the $p$-adic approach to the BSD-conjecture of Coates and Wiles \cite{CoatesWiles} has been revitalized by Bertolini, Darmon and Rotger (\cite{BDR}, \cite{DR2}). In their works the focus is to explore the arithmetic of rational elliptic curves over field extensions that are not contained in ring class fields of quadratic imaginary fields.
The present paper is part of this new line of inquiry and studies the arithmetic of rational elliptic curves over non-solvable quintic fields. 

\subsubsection{The equivariant BSD-conjecture.}
 Let $F$ be a totally real number field and $K/F$ a finite Galois extension. For any elliptic curve $E_{/F}$, the Galois group $G(K/F)$ naturally acts on the $\bb{C}$-vector space $E(K)\otimes\bb{C}$ generated by the group of $K$-rational points. Since complex representations of finite groups are semisimple, the representation $E(K)\otimes\bb{C}$ decomposes into a direct sum of $\varrho$-isotypic components $E(K)^\varrho=\mrm{Hom}_{G(K/F)}(\varrho,E(K)\otimes\bb{C})$, indexed by irreducible representations $\varrho\in\mrm{Irr}\big( G(K/F)\big)$, each with its multiplicity. It is natural to define the algebraic rank of $E$ with respect to some $\varrho$ as
\[
r_\mrm{alg}(E,\varrho):=\dim_\bb{C}E(K)^\varrho.
\]  
On the analytic side, for any $\varrho\in\mrm{Irr}\big( G(K/F)\big)$ one can define a twisted $L$-function $L(E,\varrho,s)$ as the $L$-function associated to the Galois representation $\varrho\otimes\mrm{V}_p(E)$ of the absolute Galois group of $F$. If $L(E,\varrho,s)$ admitted meromorphic continuation to the whole complex plane, then the analytic rank of $E$ with respect to some $\varrho$ could be defined as  
\[
r_\mrm{an}(E,\varrho):=\mrm{ord}_{s=1}L(E,\varrho,s).
\] 
The Artin formalism of $L$-functions can be used to show that the BSD-conjecture for an elliptic curve $E_{/F}$ base-changed to $K$ should be equivalent to the equality of ranks
\[
r_\mrm{an}(E,\varrho)\overset{?}{=}r_\mrm{alg}(E,\varrho)\quad \text{for all}\quad \varrho\in\mrm{Irr}\big( G(K/F)\big).
\]
 The advantage of this point of view resides in the fact that it splits the BSD-conjecture into more manageable pieces. Specifically, when the considered representation $\varrho$ arises from an automorphic form, the right framework to be explored becomes apparent. For example, Bertolini, Darmon and Rotger \cite{BDR} proved new instances of the conjecture in rank $0$ for rational elliptic curves in the case of $\varrho$ an odd, irreducible two dimensional Artin representation. By modularity, both $\varrho$ and $E_{/\bb{Q}}$ correspond to some automorphic representation of $\mrm{GL}_{2,\bb{Q}}$, thus it should not come as a total surprise that the main theorem of \cite{BDR} is obtained by a careful analysis of elements of higher Chow groups of a product of modular curves. Another noteworthy result was obtained by Darmon and Rotger \cite{DR2} when $\varrho$ is the tensor product of two odd, irreducible two dimensional Artin representations. In this case, generalized Kato classes -- constructed from diagonal cycles on triple products of modular curves --  are used to establish the first cases of the BSD-conjecture in rank $0$ for rational elliptic curves over $A_5$-quintic extensions of $\bb{Q}$. 
 
 Bhargava \cite{Bar5} showed that $100\%$ of quintic fields have Galois group isomorphic to $S_5$. To access some of these quintic fields, in this paper we are interested in the case of $\varrho$  the tensor induction of a totally odd, irreducible two dimensional Artin representation of the absolute Galois group of a real quadratic field. Conditionally on a conjecture on the \'etale cohomology of Hilbert modular surfaces (\ref{wishingOhta1}) and some technical assumptions, we prove new instances of the equivariant BSD-conjecture in rank $0$ by analyzing Hirzebruch--Zagier (HZ) cycles on a product of a Hilbert modular surface and a modular curve.

\subsection{Main results}
Let $L$ be a real quadratic field and  $\varrho:\Gamma_L\to\mrm{GL}_2(\bb{C})$ a totally odd, irreducible two-dimensional Artin representation  of the absolute Galois group of $L$. The Asai representation
\[
\mrm{As}(\varrho)=\otimes\mbox{-}\mrm{Ind}_L^\bb{Q}(\varrho)
\] 
is a $4$-dimensional complex representation obtained as the tensor induction of $\varrho$ from $\Gamma_L$ to $\Gamma_\bb{Q}$.
 We suppose that $\varrho$ has conductor $\mathfrak{Q}$ and that the tensor induction of the determinant $\det(\varrho)$ is the trivial character so that $\mrm{As}(\varrho)$ is self-dual. 
 For any rational elliptic curve $E_{/\bb{Q}}$ of conductor $N$ prime to $\frak{Q}$, we are interested in understanding when 
\[
r_\mrm{an}(E,\mrm{As}(\varrho))\overset{?}{=}r_\mrm{alg}(E,\mrm{As}(\varrho)).
\]
We rely on the modularity of totally odd Artin representations and rational elliptic curves (\cite{W}, \cite{TW}, \cite{PS}) to establish the meromorphic continuation, functional equation and analyticity at the center of the twisted triple product $L$-function $L\big(E,\mrm{As}(\varrho),s\big)$.

\begin{thmx}
Suppose that $N$ is coprime to $\mathfrak{Q}$, split in $L$, and there exists an ordinary prime $p\nmid 2N\cdot\frak{Q}$ for $E_{/\bb{Q}}$ such that  
	\begin{itemize}
		\item[($1$)] $p$ splits in $L$ with narrowly principal factors;
		\item[($2$)] there is no totally positive unit in $L$ congruent to $-1$ modulo $p$;
		\item[($3$)] the eigenvalues of $\mrm{Fr}_p$ on $\mrm{As}(\varrho)$ are all distinct modulo $p$.
	\end{itemize}
If, additionally, $\varrho$ is residually not solvable and Conjecture \ref{wishingOhta1} holds, then 
\[
r_\mrm{an}\big(E,\mrm{As}(\varrho)\big)=0\quad\implies\quad r_\mrm{alg}\big(E,\mrm{As}(\varrho)\big)=0.
\]
\end{thmx}

\begin{remark}
Conditions ($1$),($2$),($3$) on the auxiliary ordinary prime $p$ are minor technical assumptions and they can always be satisfied in our applications (Proposition \ref{choiceofp}).

\noindent We introduce ($1$) in Section \ref{section AJ p-adic}  to relate the action of certain Hecke correspondences on different Shimura varieties, while ($2$) appears in Proposition \ref{prop comparison different models} to compare Hilbert modular surfaces with different level structures. Furthermore, we use ($3$) in Proposition \ref{somekindoffil} to obtain a Galois stable filtration in a projective limit of \'etale cohomology groups, and in Theorem \ref{Main Theorem} to ensure that the four global cohomology classes we constructed are linearly independent.
	 
\noindent Regarding the remaining assumptions, we require $\varrho$ to be residually not solvable in Section \ref{geomrealiz} to apply recent results of Caraiani--Tamiozzo \cite{Caraiani-Tamiozzo}. This assumption is satisfied  in our applications because we consider Artin representations with projective image isomorphic to $A_5$. 
The final hypothesis, Conjecture \ref{wishingOhta1} which appears in Section \ref{motivic p-adic L-function}, is used to deduce an Eichler--Shimura morphism for Hida families of Hilbert modular forms.  We refer to Section \ref{ontheconjectures} for a discussion of Conjecture \ref{wishingOhta1}, but we note here that Sangiovanni--Skinner announced a proof of our conjecture  as part of their new construction of Euler Systems (\cite{SangiovanniSkinner}).
\end{remark}

\noindent We apply Theorem A to Artin representations constructed in \cite{MicAnalytic}. The outcome is a result concerning the arithmetic of rational elliptic curves over $S_5$-quintic fields.

\begin{corollary}\label{CorolQuintic}
	Let $K/\bb{Q}$ be a non-totally real $S_5$-quintic extension whose Galois closure contains a real quadratic field $L$. Suppose that $N$ is odd, unramified in $K/\bb{Q}$ and split in $L$, and that Conjecture \ref{wishingOhta1} holds, then 
		\[
		r_\mrm{an}(E/K)=r_\mrm{an}(E/\bb{Q})\quad\implies\quad r_\mrm{alg}(E/K)=r_\mrm{alg}(E/\bb{Q}).
		\]
\end{corollary}
\begin{remark}
 The collection of $S_5$-quintic number fields satisfying the hypothesis of Corollary \ref{CorolQuintic} for a fixed elliptic curve of odd conductor $N$ has positive proportion in all quintic fields by (\cite{BSW}, Theorem 2).
\end{remark}
\noindent The strategy of proof of these results consists in producing enough global cohomology classes functioning as annihilators, and whose non-triviality is controlled by an automorphic $L$-value. As one expects cycles on Shimura varieties to play a prominent role in any plan to establish cases of the BSD-conjecture, the \'etale Abel--Jacobi map becomes a pivotal tool to convert null-homologous cycle classes into Selmer classes. However, the representation 
\[
\mrm{V}_{\varrho,E}:=\mrm{As}(\varrho)\otimes\mrm{V}_p(E)
\] 
is not known to appear in the \'etale cohomology of a Shimura variety and even if it was, we are looking for annihilators of Mordell-Weil groups, not Selmer classes. To our rescue comes the idea of \emph{$p$-adic deformation}: Corollary \ref{correctspec} allows us to realize $\mrm{V}_{\varrho,E}$ as the $p$-adic limit of Galois representations appearing in the \'etale cohomology of Shimura threefolds. Thus, we can obtain the sought-after annihilators as limits of Abel--Jacobi images of HZ-cycles, that need not remain Selmer at $p$.
The compatible collection of HZ-cycles also gives rise to a \emph{motivic} $p$-adic $L$-function via Perrin-Riou's machinery. Conjecture \ref{wishingOhta1} ensures that the non-triviality of a value of that function implies the non-triviality of the annihilators. Finally, the last step of the proof's strategy entails an explicit reciprocity law (Theorem \ref{comparison aut-mot}) comparing the motivic with the \emph{automorphic} $p$-adic $L$-function, the latter retaining information about the automorphic $L$-value.   

In the remaining of the introduction we present in more detail the main steps of the proof.

\subsection{Overview of the proof}
Let $\msf{g}_\varrho$ be the Hilbert cuspform of parallel weight one associated to the Artin representation $\varrho:\Gamma_L\to\mrm{GL}_2(\bb{C})$, and $\msf{f}_E$ the elliptic cuspform associated to $E_{/\bb{Q}}$. Choose a rational prime $p$ ordinary for $\msf{g}_\varrho$ and $\msf{f}_E$, and a valuation ring $O\subseteq \overline{\bb{Q}}_p$ finite flat over $\bb{Z}_p$ containing all the Hecke eigenvalues of $\msf{g}_\varrho$. Then, there exist Hida families $\scr{G}, \scr{F}$ -- over $\bs{\cal{W}}_{\scr{G}}=\mrm{Spf}(\mbf{I}_\scr{G})^\mrm{rig}$ and $\bs{\cal{W}}_{\scr{F}}=\mrm{Spf}(\mbf{I}_\scr{F})^\mrm{rig}$ respectively -- passing through a choice of ordinary $p$-stabilizations $\msf{g}_\varrho^{\mbox{\tiny $(p)$}}$, $\msf{f}_E^{\mbox{\tiny $(p)$}}$. These families are equipped with big Galois representations of the absolute Galois group of $\bb{Q}$,
$\mrm{As}(\mbf{V}_\scr{G})$ of rank $4$ and $\mbf{V}_\scr{F}$ of rank $2$, each interpolating the representations of the eigenforms in the families. Specifically, there are arithmetic points $\mrm{P}_\circ\in\bs{\cal{W}}_\scr{G}$, $\mrm{Q}_\circ\in\bs{\cal{W}}_\scr{F}$ such that
\[
\mrm{As}(\mbf{V}_{\scr{G}_{\mrm{P}_\circ}})\cong\mrm{As}(\varrho)\qquad \text{and}\qquad\mbf{V}_{\scr{F}_{\mrm{Q}_\circ}}\cong\mrm{V}_p(E).
\]
One can show that there exists a twist $\mbf{V}_{\scr{G},\scr{F}}^\dagger$ of
\[
\mbf{V}_{\scr{G},\scr{F}}:=\mrm{As}(\mbf{V}_{\scr{G}})(-1)\otimes \mbf{V}_{\scr{F}}
\]
interpolating Kummer self-dual representations: for any pair of arithmetic points $\mrm{P}\in \bs{\cal{W}}_\scr{G}$ and $\mrm{Q}\in \bs{\cal{W}}_\scr{F}$ the specialization
\[
\mbf{V}_{\scr{G}_{\mrm{P}},\scr{F}_{\mrm{Q}}}^\dagger=\Big( \mrm{As}(\mbf{V}_{\scr{G}_{\mrm{P}}})(-1)\otimes \mbf{V}_{\scr{F}_{\mrm{Q}}}\Big)^\dagger
\] 
is the Kummer self-dual twist of the $8$-dimensional Galois representation attached to $\scr{G}_\mrm{P}$ and $\scr{F}_\mrm{Q}$. 
In particular, the specialization at the arithmetic points $\mrm{P}_\circ\in\bs{\cal{W}}_\scr{G}$, $\mrm{Q}_\circ\in\bs{\cal{W}}_\scr{F}$ is
\[
\mbf{V}_{\scr{G}_{\mrm{P_\circ}},\scr{F}_{\mrm{Q}_\circ}}^\dagger= \mrm{V}_{\varrho,E}.
\] 
 When the pair $(\mrm{P},\mrm{Q})$ is $\bb{Q}$-dominated  (\cite{BlancoFornea}, Definition 1.3), Ichino's formula (\cite{I}) gives an expression for the central $L$-value
\[
L\Big(\mbf{V}_{\scr{G}_\mrm{P},\scr{F}_\mrm{Q}}^\dagger, c\Big)
\]
which is suitable for $p$-adic interpolation. It is then possible to construct a rigid meromorphic function $\scr{L}_p(\breve{\scr{G}},\scr{F}):\bs{\cal{W}}_{\scr{G},\scr{F}}\to\bb{C}_p$ whose values at crystalline $\bb{Q}$-dominated pairs satisfy
\[
\scr{L}_p(\breve{\scr{G}},\scr{F})(\mrm{P},\mrm{Q})\overset{\cdot}{\sim} L^\mrm{alg}\Big(\mbf{V}_{\scr{G}_\mrm{P},\scr{F}_\mrm{Q}}^\dagger, c\Big),
\]
and whose values at certain crystalline points outside the range of interpolation are related to the syntomic Abel--Jacobi image of \emph{generalized Hirzebruch--Zagier cycles} (\cite{BlancoFornea}, Theorem 1.7). 

\noindent In the present work we focus our attention on the one-variable Hida family $\scr{G}$ interpolating parallel weight Hilbert cuspforms. By restricting the number of variables we are able to refine the construction of $\scr{L}_p(\breve{\scr{G}},\scr{F})$ and obtain a rigid analytic function on the disk $\bs{\cal{W}}_{\scr{G}}$, the \emph{automorphic $p$-adic $L$-function} (Definition \ref{autpadicLfun})
\begin{equation}\label{apLf}
\scr{L}^\mrm{aut}_p(\breve{\scr{G}},\msf{f}_E):\bs{\cal{W}}_{\scr{G}}\longrightarrow\bb{C}_p,
\end{equation}
whose value at the arithmetic point  $\mrm{P}_\circ$ of weight one is equal, up to a non-zero constant, to 
\[
\scr{L}^\mrm{aut}_p(\breve{\scr{G}},\msf{f}_E)(\mrm{P}_\circ)\overset{\cdot}{\sim} L^\mrm{alg}(E,\mrm{As}(\varrho), 1).
\]
Furthermore, its value at \emph{any} arithmetic point $\mrm{P}\in\bs{\cal{W}}_\scr{G}$ of weight $2$ is explicitly given in terms of $p$-adic cuspforms. In other words, we construct a rigid analytic function containing the information about the vanishing or non-vanishing of an automorphic $L$-value at $\mrm{P}_\circ\in\bs{\cal{W}}_\scr{G}$, and whose values at \emph{every} arithmetic points of weight $2$ have the potential of being related to syntomic Abel--Jacobi images of algebraic cycles.

\subsubsection{Hirzebruch--Zagier classes.}
When working with the one-variable ordinary Hida family $\scr{G}$ and the single form $\msf{f}_E$, the big Galois representation we want to consider has a simple form: there is a twist $\mrm{As}(\mbf{V}_{\scr{G}})^\dagger$ of $\mrm{As}(\mbf{V}_{\scr{G}})$ by a $\mbf{I}_\scr{G}^\times$-valued character (see Definition \ref{selfdual remark}) such that
\[
\mbf{V}_{\scr{G},E}^\dagger:= \mrm{As}(\mbf{V}_{\scr{G}})^\dagger(-1)\otimes \mrm{V}_p(E)
\] 
interpolates Kummer self-dual Galois representations.
The realization of this Galois representation in the cohomology of a tower of threefolds with increasing level at $p$ plays a crucial role in the construction of the sought-after annihilators. 
Suppose $p$ is a rational prime splitting in the real quadratic field $L$, and write $p\cal{O}_L=\frak{p}_1\frak{p}_2$.

\begin{definition}
 For any $\alpha\ge1$ and any compact open $K \le \mrm{GL}_2(\bb{A}_{L,f})$ hyperspecial at $p$, we set 
\[
	K_{\diamond,t}(p^\alpha):=\left\{\begin{pmatrix}a&b\\c&d \end{pmatrix}\in K_0(p^\alpha)\Big\lvert\  a_{\mathfrak{p}_1}d_{\mathfrak{p}_1}\equiv  a_{\mathfrak{p}_2}d_{\mathfrak{p}_2},\ d_{\mathfrak{p}_1}d_{\mathfrak{p}_2}\equiv 1 \pmod{p^{\alpha}}\right\}
\]
and denote by $S(K_{\diamond,t}(p^\alpha))$ the corresponding Hilbert modular surface.
\end{definition}
\noindent The reason for considering these unusual level structures is that for any arithmetic point $\mrm{P}\in\bs{\cal{W}}_\scr{G}$ of weight $2$ and level $p^\alpha$ the cohomology of $S(K_{\diamond,t}(p^\alpha))$ admits a Galois equivariant surjection
\[\xymatrix{
\mrm{H}^2_\et\big(S(K_{\diamond,t}(p^\alpha))_{\bar{\bb{Q}}},E_\wp(2)\big)\ar@{->>}[r]& \mrm{As}(\mbf{V}_{\scr{G}_{\mrm{P}}}).
}\]
Inspired by \cite{DR2}, for every $\alpha\ge1$ we produce a null-homologous codimension $2$ cycle, called \emph{Hirzebruch--Zagier cycle},
\[
\Delta_\alpha^\circ\in\mrm{CH}^2\big(Z_\alpha(K)\big)\big(\bb{Q}(\zeta_{p^\alpha})\big)\otimes\bb{Z}_p
\]
on the Shimura threefold $Z_\alpha(K)=S(K_{\diamond,t}(p^\alpha))\times X_0(Np)$. Moreover, the action of $\mrm{Gal}(\bb{Q}(\zeta_{p^\alpha})/\bb{Q})$ is such that  $\Delta^\circ_\alpha$ corresponds to a null-homologous rational cycle class 
\[
\Delta_\alpha^\circ\in\mrm{CH}^2\big(Z^\dagger_\alpha(K)\big)(\bb{Q})\otimes\bb{Z}_p
\]
  on a twisted threefold $Z^\dagger_\alpha(K)$ with the following appealing property: for every arithmetic point $\mrm{P}\in\bs{\cal{W}}_\scr{G}$ of weight $2$ and level $p^\alpha$ there are Galois equivariant surjections
\[\xymatrix{
\mrm{H}^3_\et\big(Z^\dagger_\alpha(K)_{\bar{\bb{Q}}},E_\wp(2)\big)\ar@{->>}[r]& \mrm{As}(\mbf{V}_{\scr{G}_{\mrm{P}}})^\dagger(-1)\otimes \mrm{V}_p(E).
}\]
The ordinary parts of the Abel--Jacobi images  
\[
\mrm{AJ}^\et_p(\Delta_\alpha^\circ)\in\mrm{H}^1\big(\bb{Q},\mrm{H}^3_\et\big(Z^\dagger_\alpha(K)_{\bar{\bb{Q}}},O(2)\big)\big)
\] can be made compatible under the degeneracy maps $\varpi_2:Z^\dagger_{\alpha+1}(K)\to Z^\dagger_\alpha(K)$ and packaged together to form a global big cohomology class (Definition \ref{BigCohomology})
\[
\bs{\kappa}_{\scr{G},E}\in\mrm{H}^1\big(\bb{Q},\bs{\cal{V}}_{\scr{G},E}\big).
\]
This class retains information about the Abel--Jacobi image of algebraic cycles at arithmetic points of weight two and it can be specialized at the arithmetic point  $\mrm{P}_\circ\in\bs{\cal{W}}_\scr{G}$ of weight one. Then, Corollary \ref{correctspec} implies that we obtain a $\mrm{V}_{\varrho,E}$-valued global class by specialization
\[
\kappa_E(\msf{g}_\circ^{\mbox{\tiny $(p)$}})\in\mrm{H}^1\big(\bb{Q},\mrm{V}_{\varrho,E}\big).
\]
In order to make apparent the relationship between $\bs{\kappa}_{\scr{G},E}$ and the automorphic $p$-adic $L$-function, we use Perrin-Riou's machinery to fabricate the \emph{motivic $p$-adic $L$-function}.

\subsubsection{The motivic $p$-adic $L$-function.}
This construction is naturally divided in two steps.
 First, the localization at $p$ of the big cohomology class can be projected to a Galois cohomology group
\[
\boldsymbol{\kappa}_p:=\mrm{Im}(\bs{\kappa}_{\scr{G},E}) \in \mrm{H}^1\big(\mathbb{Q}_p,\bs{\cal{U}}^E_\scr{G}(\bs{\Theta})\big)
\]
valued in a subquotient $\bs{\cal{U}}^E_\scr{G}(\bs{\Theta})$ of the Galois module $\bs{\cal{V}}_{\scr{G},E}$ on which $\Gamma_{\bb{Q}_p}$ acts through characters. Perrin-Riou's big logarithm (Proposition \ref{prop: big log}) valued in the big Dieudonn\'e module $\bb{D}(\bs{\cal{U}}^E_\scr{G})$ gives an element
\[
\bs{\cal{L}}(\boldsymbol{\kappa}_p)\in\bb{D}\big(\bs{\cal{U}}^E_\scr{G}\big)
\]
 interpolating the Bloch--Kato logarithm of the specialization of the class at arithmetic points of weight $\ge2$, and the Bloch--Kato dual exponential at the arithmetic point $\mrm{P}_\circ\in\bs{\cal{W}}_\scr{G}$ of weight one.
 The second step entails the definition of a linear map ($\mbf{I}_\scr{G}$-valued assuming Conjecture \ref{wishingOhta1})
	\begin{equation}\label{linmap}
	\big\langle\ ,\omega_{\breve{\scr{G}}}\otimes\eta_E'\big\rangle: \bb{D}\big(\bs{\cal{U}}^E_\scr{G}\big)\longrightarrow \mbf{I}_\scr{G}
	\end{equation}
producing rigid-analytic functions out of elements of the Dieudonn\'e module, and depending on the same input $(\breve{\scr{G}}, \msf{f}_E)$ used in constructing the automorphic $p$-adic $L$-function \eqref{apLf}. 
 Then, the motivic $p$-adic $L$-function is defined by setting (Definition \ref{motpadicLfun})
\[
\scr{L}_p^\mrm{mot}(\breve{\scr{G}},\msf{f}_E):=\big\langle\bs{\cal{L}}(\boldsymbol{\kappa}_p) ,\ \omega_{\breve{\scr{G}}}\otimes\eta_E'\big\rangle.
\]
As the notation suggests, the value at \emph{every} arithmetic point $\mrm{P}\in\bs{\cal{W}}_\scr{G}$ of weight two
\[
\scr{L}_p^\mrm{mot}(\breve{\scr{G}},\msf{f}_E)(\mrm{P})\overset{\cdot}{\sim}\big\langle\log_\mrm{BK}(\boldsymbol{\kappa}_p(P)) ,\ \omega_{\breve{\scr{G}_\mrm{P}}}\otimes\eta_E'\big\rangle_\mrm{dR}
\]
is computed by the de Rham pairing between the Bloch--Kato logarithm of the specialization of the big class and a de Rham class associated to the cuspforms $\breve{\scr{G}}_\mrm{P}$ and $\msf{f}_E$. Crucially, these quantities are values of \emph{syntomic Abel--Jacobi images} of HZ-cycles.

\subsubsection{Explicit reciprocity law.}
The comparison of the two $p$-adic $L$-functions is the bridge between the automorphic and the algebro-geometric worlds. It transfers information about the non-vanishing of an automorphic $L$-value into information on the non-triviality of annihilators of Mordell-Weil groups. It is achieved by an explicit reciprocity law (Theorem \ref{comparison aut-mot}) 
\[
\bs{\zeta}_{\scr{G},\msf{f}_E}(\mrm{P})\cdot \scr{L}^\mrm{mot}_p(\breve{\scr{G}},\msf{f}_E)(\mrm{P})
=
\scr{L}^\mrm{an}_p(\breve{\scr{G}},\msf{f}_E)(\mrm{P})
\]
for arithmetic points $\mrm{P}\in\bs{\cal{W}}_\scr{G}$ of weight two, where $\bs{\zeta}_{\scr{G},\msf{f}_E}\in\mbf{I}_\scr{G}$ is an explicit fudge factor non-vanishing at arithmetic points of $\mbf{I}_\scr{G}$. The proof relies on an explicit expression of syntomic Abel--Jacobi images of HZ-cycles in terms of $p$-adic modular forms (Theorem \ref{AJ formula}). Then, the key implication is given by the following theorem.

\begin{thmx}
	Suppose that $N$ is coprime to $\mathfrak{Q}$, split in $L$, and there exists an ordinary prime $p\nmid 2N\cdot\frak{Q}$ for $E_{/\bb{Q}}$ such that  
		\begin{itemize}
			\item[($1$)] $p$ splits in $L$ with narrowly principal factors;
			\item[($2$)] there is no totally positive unit in $L$ congruent to $-1$ modulo $p$;
			\item[($3$)] the eigenvalues of $\mrm{Fr}_p$ on $\mrm{As}(\varrho)$ are all distinct modulo $p$.
		\end{itemize}
	If, additionally, $\varrho$ is residually not solvable and Conjecture \ref{wishingOhta1} holds, then
for any choice of an ordinary $p$-stabilization $\msf{g}_\varrho^{\mbox{\tiny $(p)$}}$  of $\msf{g}_\varrho$, 
\[
L(E,\mrm{As}(\varrho), 1)\not=0 \qquad\implies\qquad\kappa_E(\msf{g}_\circ^{\mbox{\tiny $(p)$}})\in\mrm{H}^1\big(\bb{Q},\mrm{V}_{\varrho,E}\big)\quad \text{not Selmer at $p$}.
\]
\end{thmx}
\noindent  The result is used as follows: by assuming that $p$ splits in $L$ and the eigenvalues of $\mrm{Fr}_p$ on $\mrm{As}(\varrho)$ are all distinct, the eigenform $\msf{g}_\varrho$ has four distinct ordinary $p$-stabilizations.  Hence, we obtain four global cohomology classes by repeatedly applying Theorem B, and their images are linearly independent in the singular quotient at $p$ (Theorem \ref{criterion crystalline}). As the self-dual representation $\mrm{As}(\varrho)$ is four dimensional, these annihilators suffice to prove that the relevant part of the Mordell-Weil group is trivial (Lemma \ref{zerolocalization}).

\subsection{On the conjecture}\label{ontheconjectures}
We conclude the introduction by discussing the conjecture on the cohomology of Hilbert modular surfaces that  we assume in our work. 
Consider the nearly ordinary part of the \'etale cohomology
\[
\cal{V}_\alpha:=e_\mrm{n.o.}\mrm{H}^2_{\et,c}\big(S(K_{\diamond,t}(p^\alpha))_{\bar{\bb{Q}}},O(2)\big)
\]
where $e_\mrm{n.o.}$ is the nearly ordinary projector defined as in \eqref{eno} using Section \ref{def Up}.
The $0$-th graded piece $\mrm{Gr}^0\cal{V}_\alpha$ of the \'etale cohomology, with respect to the ordinary filtration, 
is an unramified $\Gamma_{\bb{Q}_p}$-representation (see Proposition \ref{somekindoffil}), therefore 
\[
\bb{D}\big(\mrm{Gr}^0\cal{V}_\alpha\big):=\big(\mrm{Gr}^0\cal{V}_\alpha\otimes\widehat{\bb{Z}}_p^\mrm{ur}\big)^{\Gamma_{\bb{Q}_p}}
\] is a lattice in the de-Rham cohomology group $\mrm{D}_\mrm{dR}\big(\mrm{Gr}^0\cal{V}_\alpha\otimes_OE_\wp\big)$. We are interested in comparing two integral structure for the de Rham cohomology of Hilbert modular surfaces: one coming from integral \'etale cohomology, the other arising from ordinary Hilbert modular forms of parallel weight two.

\begin{conjecture}\label{wishingOhta1}
	For every large enough prime $p$ and every $\alpha\ge1$ the image of the natural map
	\[
	S^\mrm{ord}_{2t_L,t_L}\big(K_{\diamond,t}(p^\alpha);O\big)
	\longrightarrow
	\mrm{D}_\mrm{dR}\big(\mrm{Gr}^0\cal{V}_\alpha\otimes_OE_\wp\big)
	\]
	is contained in the lattice $\bb{D}\big(\mrm{Gr}^0\cal{V}_\alpha\big)$.
\end{conjecture}

 This conjecture (Conjecture \ref{wishingOhta} in the body of the article) is a generalization of (\cite{Ohta95}, Proposition 3.3.6). Ohta's proof relies on Jacobians of modular curves, and therefore it cannot be directly generalized to higher dimensional Shimura varieties.
Recently, Sangiovanni--Skinner announced a proof of our conjecture  for general Hilbert and Siegel modular varieties (\cite{SangiovanniSkinner}). Their work relies on the new $p$-adic Hodge theoretic methods of Bhatt--Morrow--Scholze \cite{BMS}, Bhatt--Scholze \cite{Prismatic}, Bhatt--Lurie \cite{BhattLurie},
and was inspired by an argument of Faltings \cite{FaltingsSiegel}.

The crucial input of Conjecture \ref{wishingOhta1} in our work is in establishing the integrality of the linear map \eqref{linmap}. In particular, for $\mrm{P}_\circ\in\cal{A}_{\bs{\chi}}(\mbf{I}_\scr{G})$ the arithmetic point of weight one, it provides the implication
\[
\scr{L}_p^\mrm{mot}(\breve{\scr{G}},\msf{f}_E)(\mrm{P}_\circ)\not=0\qquad\implies\qquad \bs{\kappa}_p(\mrm{P}_\circ)\quad \text{non-trivial}.
\]

\begin{remark}
	The existence of the automorphic $p$-adic $L$-function can be used to meromorphically continue $\scr{L}_p^\mrm{mot}(\breve{\scr{G}},\msf{f}_E)$ to the whole disk $\bs{\cal{W}}_\scr{G}$ without assuming Conjecture \ref{wishingOhta1} -- even though that is insufficient for our arithmetic applications.
Indeed, the motivic $p$-adic $L$-function is a priori only an element of the huge ring $\bs{\Pi} \otimes_{\bs{\Lambda}} \mbf{I}_\scr{G}$ of functions defined only at arithmetic points of weight $2$ of $\bs{\cal{W}}_\scr{G}$ (for the definition of $\bs{\Pi}$ see \ref{bigpi}).
 However, there is a natural inclusion $\mbf{I}_\scr{G}\hookrightarrow\bs{\Pi} \otimes_{\bs{\Lambda}} \mbf{I}_\scr{G}$ which in terms of functions corresponds to the restriction of the domain from $\bs{\cal{W}}_\scr{G}$ to the subset of arithmetic points of weight $2$. As Hida families are \'etale at arithmetic points of weight $2$, an element of $\bs{\Pi} \otimes_{\bs{\Lambda}} \mbf{I}_\scr{G}$ is zero if and only if \emph{all} its specializations are zero. Therefore, Theorem \ref{comparison aut-mot} shows that $\scr{L}_p^\mrm{mot}(\breve{\scr{G}},\msf{f}_E)$ extends to a rigid meromorphic function on $\bs{\cal{W}}_\scr{G}$.	
\end{remark}


\begin{acknowledgements}
We would like to express deep gratitude to our Ph.D. advisors, Henri Darmon, Adrian Iovita and Christopher Skinner, who introduced us to this subject and provided valuable guidance throughout our Ph.D. careers. We also  thank Mladen Dimitrov, Dimitar Jetchev, David Lilienfeldt, David Loeffler, Alice Pozzi, Giovanni Rosso, Jan Vonk and Sarah Zerbes for many fruitful discussions and enriching conversations. A special thank goes to Daniel Disegni for pointing out a gap in a previous version of the paper, and to the anonymous referees whose comments helped to correct minor mistakes and to significantly improve the exposition.

This collaboration originated during a conference hosted by the Bernoulli Center at \'Ecole Polytechnique F\'ed\'erale de Lausanne, and continued at McGill University and Princeton University. We are grateful to these institutions for their hospitality and for providing ideal working conditions.
\end{acknowledgements}




\section{Review of Hilbert cuspforms}
In this section $L$ denotes a totally real number field with ring of integers $\cal{O}_L$ and different $\mathfrak{d}_L$. The following algebraic groups play a prominent role in the article
\begin{equation}
D = \mrm{Res}_{L/\bb{Q}}\big(\bb{G}_{m,L}\big)
,\qquad
G = \mrm{Res}_{L/\bb{Q}}\big(\mrm{GL}_{2,L}\big)
,\qquad
G^* = G \times_D \bb{G}_m.
\end{equation}
We denote by $\mrm{I}_L$ the set of field embeddings of $L$ into $\overline{\bb{Q}}$, then there is an identification of $L_\infty := L\otimes_\bb{Q}\bb{R}$ with $\bb{R}^{\mrm{I}_L}$. If $\frak{H}$ denotes the Poincar\'e upper half plane, the identity component 
$G(\bb{R})_+ $ of $G(\bb{R})= \mrm{GL}_2(L_\infty)$ naturally acts on $\frak{H}^{\mrm{I}_L}$. We denote by  $i = \sqrt{-1} \in \frak{H}$ the  square root of $-1$ belonging to $\frak{H}$ and $\mbf{i} = (i,...,i) \in \frak{H}^{\mrm{I}_L}$.

\begin{definition}
Every element $s = \sum_{\tau}s_\tau\cdot[\tau]$ of the free group $\bb{Z}[\mrm{I}_L]$ gives a power map 
\[ (-)^s:L\otimes_\bb{Q}\overline{\bb{Q}}_v\longrightarrow\overline{\bb{Q}}_v,\qquad \ell\otimes c\mapsto  \prod_{\tau} \big(c\cdot\tau(\ell)\big)^{s_\tau}
\]
for any place $v$ of $\bb{Q}$. 
\end{definition} 

\noindent The group $Z_L(1)$  is defined by the following short exact sequence 
\[\xymatrix{
 1\ar[r]&\overline{L^\times \widehat{\cal{O}}_L^{p,\times} L_{\infty,+}^\times}\ar[r]& \bb{A}_L^\times \ar[r]&Z_L(1)\ar[r]&1,
}\]  
and the $p$-adic cyclotomic character can be expressed as
\begin{equation}
\varepsilon_L: Z_L(1) \longrightarrow \bb{Z}^\times_p,\qquad y\mapsto y_p^{-t_L}\lvert y^\infty\rvert_{\bb{A}_L}^{-1}
\end{equation}
where $t_L := \sum_{\tau \in \mrm{I}_L} [\tau]\ \in\ \bb{Z}[\mrm{I}_L]$. Moreover, the canonical isomorphism $\bb{Z}_p^\times\cong(1+p\bb{Z}_p)\times\mu_{p-1}$ induces the factorization 
\begin{equation}\label{splitcyc}
\varepsilon_L=\eta_L\cdot\theta_L.
\end{equation}

\subsection{Adelic Hilbert cuspforms}
Let $K\le G(\bb{A}_f)$ be a compact open subgroup and $(k,w)\in\bb{Z}[\mrm{I}_L]^2$ an element satisfying $k-2w=m\cdot t_L$, then a holomorphic Hilbert cuspform of weight $(k,w)$ and level $K$ is a function $\msf{f}:G(\bb{A})\to\bb{C}$ that satisfies the following properties: 
\begin{itemize}
\item[$\bullet$] $\msf{f}(\alpha x u)=\msf{f}(x)j_{k,w}(u_\infty,\mathbf{i})^{-1}$ where $\alpha\in G(\bb{Q})$, $u\in K\cdot C_{\infty}^+$ for $C_{\infty}^+$ the stabilizer of $\mathbf{i}$ in $G(\bb{R})^+$, and where the automorphy factor is given by 
 $j_{k,w}\big(\gamma,z\big)=(ad-bc)^{-w}(cz+d)^k$ for 
$\gamma=\begin{pmatrix}a& b\\
c&d\end{pmatrix}
\in G(\bb{R})$, $z\in\mathfrak{H}^{\mrm{I}_L}$;
\item[$\bullet$] for every finite adelic point $ x\in G(\bb{A}_f)$ the well-defined function $\msf{f}_x:\frak{H}^{\mrm{I}_L}\to\bb{C}$ given by $\msf{f}_x(z)=\msf{f}(xu_\infty)j_{k,w}(u_\infty,\mathbf{i})$ is holomorphic, where for each $z\in\mathfrak{H}^{\mrm{I}_L}$ one chooses $u_\infty\in G(\bb{R})_+$ such that $u_\infty\mathbf{i}=z$.
\item[$\bullet$] for all adelic points $x\in G(\bb{A})$ and for all additive measures on 
$L\backslash\bb{A}_L$ we have
 \[\int_{L\backslash\bb{A}_L}\msf{f}\bigg(\begin{pmatrix}1&a\\0&1\end{pmatrix}x\bigg)da=0.\]
\item[$\bullet$] If the totally real field is the field of rational numbers, $L=\bb{Q}$, we need to impose the extra condition that for all finite adelic point $x\in G(\bb{A}^\infty)$ the function $\lvert \text{Im}(z)^\frac{k}{2}\msf{f}_x(z)\rvert$ is uniformly bounded on $\frak{H}$.
\end{itemize}
The $\bb{C}$-vector space of Hilbert cuspforms of weight $(k,w)$ and level $K$ is denoted by $S_{k,w}(K;\bb{C})$.
Let $dx$ be the Tamagawa measure on the quotient 
$[G(\bb{A})] := \bb{A}_L^\times G(\bb{Q}) \backslash G(\bb{A})$. For any pair $\msf{f}_1, \msf{f}_2 \in S_{k,w}(K;\bb{C})$ of cuspforms whose weight satisfies $k-2w = m\cdot t_L$, their Petersson inner product is given by
\begin{equation}
\langle \msf{f}_1, \msf{f}_2 \rangle :=
\int_{[G(\bb{A})]} \msf{f}_1(x) \overline{\msf{f}_2(x)} \cdot\lvert\det(x)\rvert^m_{\bb{A}_L} dx.
\end{equation}

\begin{definition}
For any $\cal{O}_L$-ideal $\mathfrak{N}$ we consider the following compact open subgroups of $G(\bb{A}_f)$
\begin{itemize}
\item[\bfcdot] $V_0(\mathfrak{N})=\bigg\{\begin{pmatrix}a&b\\c&d\end{pmatrix}\in G(\widehat{\bb{Z}})\bigg\lvert\ c\in\mathfrak{N}\widehat{\cal{O}}_L\bigg\}$,
\item[\bfcdot] $V_1(\mathfrak{N})=\bigg\{\begin{pmatrix}a&b\\c&d\end{pmatrix}\in V_0(\mathfrak{N})\bigg\lvert\ d\equiv1 \pmod{\mathfrak{N}\widehat{\cal{O}}_L}\bigg\}$,
\item[\bfcdot] $V^1(\mathfrak{N})=\bigg\{\begin{pmatrix}a&b\\c&d\end{pmatrix}\in V_0(\mathfrak{N})\bigg\lvert\ a\equiv1 \pmod{\mathfrak{N}\widehat{\cal{O}}_L}\bigg\}$,
\item[\bfcdot] $V(\mathfrak{N})=V_1(\mathfrak{N})\cap V^1(\mathfrak{N})$.
\end{itemize}
\end{definition}
When $K=V_1(\frak{N})$ for some $\cal{O}_L$-ideal $\frak{N}$ we will write
$S_{k,w}(\frak{N};\bb{C})$ instead of $S_{k,w}(K;\bb{C})$.

\subsubsection{Adelic q-expansion.} 
 Let 
$\mrm{cl}_L^+(\mathfrak{N}) := L^\times_+\backslash \bb{A}_{L,f}^\times /\det V(\mathfrak{N})$ be a narrow class group of $L$ of cardinality $h_L^+(\mathfrak{N})$, and fix a set of representatives $\{ a_i\}_i \subset \bb{A}_{L,f}^\times$ for $\mrm{cl}_L^+(\mathfrak{N})$. Then the adelic points of $G$ can be written as a disjoint union 
\[
G(\bb{A}) =
\coprod_{i=1}^{h_L^+(\mathfrak{N})} G(\bb{Q}) t_i V(\mathfrak{N})G(\bb{R})_+,\qquad \text{for}\quad t_i=\begin{pmatrix}a_i^{-1}&0\\ 0&1\end{pmatrix},
\]
using strong approximation.
Given a Hilbert cuspform $\msf{f}\in S_{k,w}(V(\frak{N});\bb{C})$ one can consider the holomorphic function $\msf{f}_i:\frak{H}^{\mrm{I}_L}\to\bb{C}$
\[
\msf{f}_i(z)=y_\infty^{-w}\msf{f}\left(t_i\begin{pmatrix}
y_\infty& x_\infty\\
0&1
\end{pmatrix}\right)=\underset{\xi\in(\frak{a}_i\frak{d}_L^{-1})_+}{\sum}a(\xi,\msf{f}_i)e_L(\xi z)
\]
where $z=x_\infty+\mathbf{i}y_\infty$, $\frak{a}_i=a_i\cal{O}_L$ and $e_L(\xi z)=\text{exp}\big(2\pi i\sum_{\tau\in \mrm{I}_L}\tau(\xi)z_\tau\big)$ for every index $i$. The Fourier expansions of these functions can be packaged together into a single adelic $q$-expansion: 

\noindent fix  a finite idele $\mathsf{d}_L\in\bb{A}_{L,f}^{\times}$ such that $\mathsf{d}_L\cal{O}_L=\frak{d}_L$. Let $L^\text{Gal}$ be the Galois closure of $L$ in $\overline{\bb{Q}}$ and write $\mathcal{V}$ for the ring of integers or a valuation ring of a finite extension $L_0$ of $L^\text{Gal}$ such that for every ideal $\frak{a}$ of $\cal{O}_L$, for all $ \tau\in \mrm{I}_L$, the ideal $\frak{a}^\tau\mathcal{V}$ is principal.
Choose a generator $\{\frak{q}^\tau\}\in\mathcal{V}$ of $\frak{q}^\tau\mathcal{V}$ for each prime ideal $\frak{q}$ of $\cal{O}_L$ and by multiplicativity define $\{\frak{a}^v\}\in\cal{V}$ for each fractional ideal $\frak{a}$ of $L$ and each $v\in\bb{Z}[\mrm{I}_L]$. Then, we set $\{y^v\}:=\{y^v\cal{O}_L\}\in\cal{V}$ for each idele of $L$.
Every idele $y$ in $\bb{A}^\times_{L,+}:=\bb{A}_{L,f}^{\times} L^\times_{\infty,+}$ can be written as $y=\xi a_i^{-1}\mathsf{d}_Lu$ for $\xi\in L_+^\times$ and $u\in\det V(\frak{N})L^\times_{\infty,+}$, then the following functions 
\[
\msf{a}(-,\msf{f}):\bb{A}^\times_{L,+}\longrightarrow\bb{C},\qquad \msf{a}_p(-,\msf{f}):\bb{A}^\times_{L,+}\longrightarrow\overline{\bb{Q}}_p
\]
are defined by 
\[
\msf{a}(y,\msf{f}):=a(\xi,\msf{f}_i)\{y^{w-t_L}\}\xi^{t_L-w}\lvert a_i\rvert_{\bb{A}_L}\qquad\text{and}\qquad \msf{a}_p(y,\msf{f}):=a(\xi,\msf{f}_i)y_p^{w-t_L}\xi^{t_L-w}\varepsilon_L(a_i)^{-1}
\] 
if $y\in\widehat{\cal{O}_L}L^\times_{\infty,+}$ and zero otherwise.  
 
 \begin{theorem}{(\cite{pHida}, Theorem 1.1)}\label{thm: adelic q-exp}
	Consider the additive character of the ideles $\chi_L:\bb{A}_L/L\to\bb{C}^\times$ which satisfies $\chi_L(x_\infty)=e_L(x_\infty)$. Each cuspform $\msf{f}\in S_{k,w}(V(\frak{N});\bb{C})$ has an adelic $q$-expansion of the form
	 \[
	 \msf{f}\left(\begin{pmatrix}
	 y & x \\ 
	 0 & 1
	 \end{pmatrix}\right)=\lvert y\rvert_{\bb{A}_L}\underset{\xi\in L_+}{\sum}\mathsf{a}(\xi y\mathsf{d}_L,\msf{f})\{(\xi y\mathsf{d}_L)^{t_L-w}\}(\xi y_\infty)^{w-t_L}e_L(\mathbf{i}\xi y_\infty)\chi_L(\xi x) 
	 \]
	 for $y\in\bb{A}^\times_{L,+}$, $x\in\bb{A}^\times_L$, and the function $\mathsf{a}(-,\msf{f}):\bb{A}^\times_{L,+}\to \bb{C}$ vanishes outside $\widehat{\cal{O}}_LL^\times_{\infty,+}$. 
 \end{theorem}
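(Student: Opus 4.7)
The plan is to derive the adelic $q$-expansion by reducing, via strong approximation, to the classical Fourier expansions of the holomorphic functions $\msf{f}_i$ on $\mathfrak{H}^{\mrm{I}_L}$, and then to carefully track the resulting normalization constants so that the final formula is expressed in terms of the adelic coefficient function $\mathsf{a}(-,\msf{f})$.

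First I would use the decomposition
\[
G(\bb{A}) = \coprod_{i=1}^{h_L^+(\mathfrak{N})} G(\bb{Q}) \begin{pmatrix} a_i^{-1} & 0 \\ 0 & 1 \end{pmatrix} V(\mathfrak{N}) G(\bb{R})_+
\]
to write each matrix $\begin{pmatrix} y & x \\ 0 & 1 \end{pmatrix}$ with $y \in \bb{A}_{L,+}^\times$ in the form $\gamma \begin{pmatrix} a_i^{-1} & 0 \\ 0 & 1 \end{pmatrix} u \cdot g_\infty$ for some $\gamma \in G(\bb{Q})$, $u \in V(\mathfrak{N})$ and $g_\infty \in G(\bb{R})_+$. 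The index $i$ is determined by the class of $y \mathsf{d}_L^{-1}$ in $\mathrm{cl}_L^+(\mathfrak{N})$, and writing $y = \xi a_i^{-1} \mathsf{d}_L u$ fixes $\xi \in L_+^\times$ up to units in $\det V(\mathfrak{N}) \cdot L_{\infty,+}^\times$. Applying the transformation rule $\msf{f}(\gamma x u k_\infty) = \msf{f}(x) j_{k,w}(k_\infty, \mathbf{i})^{-1}$ then converts evaluation of $\msf{f}$ at the adelic upper-triangular matrix into evaluation of the classical function $\msf{f}_i$ at the corresponding point $z = x_\infty + \mathbf{i} y_\infty$ in $\mathfrak{H}^{\mrm{I}_L}$.

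Next, left $L$-invariance of the map $x \mapsto \msf{f}\begin{pmatrix} y & x \\ 0 & 1 \end{pmatrix}$ combined with the cuspidality condition yields a Fourier expansion indexed by characters of $L \backslash \bb{A}_L$ with vanishing constant term. Holomorphy of each $\msf{f}_i$, together with the Koecher principle when $[L:\bb{Q}]>1$ or the explicit boundedness hypothesis when $L=\bb{Q}$, restricts the sum to totally positive $\xi \in (\mathfrak{a}_i \mathfrak{d}_L^{-1})_+$, recovering
\[
\msf{f}_i(z) = \sum_{\xi \in (\mathfrak{a}_i \mathfrak{d}_L^{-1})_+} a(\xi, \msf{f}_i) \, e_L(\xi z).
\]
I would then substitute this expansion back into the adelic formula, using the archimedean factor $y_\infty^{w}$ and the exponential $e_L(\mathbf{i}\xi y_\infty) \chi_L(\xi x)$ that arise from $j_{k,w}(g_\infty, \mathbf{i})$ and the character decomposition.

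Finally, to obtain the stated shape I would match the classical coefficients with the adelic coefficient function via its defining relation
\[
\mathsf{a}(\xi a_i^{-1}\mathsf{d}_L u, \msf{f}) = a(\xi, \msf{f}_i)\{(\xi a_i^{-1}\mathsf{d}_L u)^{w-t_L}\}\, \xi^{t_L - w} \, |a_i|_{\bb{A}_L},
\]
together with the overall factor $|y|_{\bb{A}_L}$ coming from the Haar-measure scaling. The main obstacle is precisely this bookkeeping: one must verify that the formula is independent of the choice of representatives $a_i$, of the generators $\{\mathfrak{q}^\tau\}$, and of the ambiguity in writing $y = \xi a_i^{-1}\mathsf{d}_L u$, by checking that the homogeneity terms $\{(\xi y \mathsf{d}_L)^{t_L - w}\}$ and $(\xi y_\infty)^{w - t_L}$ balance correctly when $\xi$ is replaced by $\varepsilon \xi$ for a totally positive unit $\varepsilon \in \det V(\mathfrak{N}) \cap L_+^\times$. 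The vanishing of $\mathsf{a}(y,\msf{f})$ outside $\widehat{\mathcal{O}}_L L_{\infty,+}^\times$ then follows from the fact that the classical coefficients $a(\xi, \msf{f}_i)$ are only defined for $\xi \in (\mathfrak{a}_i \mathfrak{d}_L^{-1})_+$, which under the idele identification corresponds exactly to $\xi y \mathsf{d}_L \in \widehat{\mathcal{O}}_L L_{\infty,+}^\times$.
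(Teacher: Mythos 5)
This theorem is not proved in the paper at all --- it is quoted verbatim from Hida (\cite{pHida}, Theorem 1.1) --- so there is no internal argument to compare against. Your sketch follows the same standard route as Hida's original derivation (strong approximation to reduce to the classical Fourier expansions of the $\msf{f}_i$, invariance under unipotents plus cuspidality and holomorphy/Koecher to restrict the support to totally positive $\xi$, and then the normalization bookkeeping built into the definition of $\mathsf{a}(-,\msf{f})$), and it is sound as an outline of that proof.
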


\subsubsection{Diagonal restriction.}
The degree map $\bb{Z}[\mrm{I}_L]\to\bb{Z}$ denoted by $\ell\mapsto \lvert\ell\rvert$ satisfies $\lvert t_L\rvert=[L:\bb{Q}]$. For any positive integer $N$ the natural inclusion $\zeta:\text{GL}_2(\bb{A})\hookrightarrow\text{GL}_2(\bb{A}_L)$ defines by composition a \emph{diagonal restriction} map 
\[
\zeta^*:S_{k,w}(V(N\cal{O}_L);\bb{C})\to S_{\lvert k\rvert, \lvert w\rvert}(V(N);\bb{C})
\]
from Hilbert cuspforms over $L$ to elliptic cuspforms.
\begin{lemma}\label{q-exp diagonal}
	Let $\msf{g}\in S_{\ell,x}(V(N\cal{O}_L);\overline{\bb{Q}})$ be a Hilbert cuspform over $L$, then for $y \in \widehat{\bb{Z}}\cdot \bb{R}^\times_{+}$ written as $y=\xi a_i^{-1}u$ for $\xi\in \bb{Q}^\times_+$ and $u\in\det(V(N))\bb{R}^\times_{+}$, we have
	\[
	\msf{a}_p(y,\zeta^*\msf{g})=y_p^{\lvert x\rvert-1}\xi^{1-\lvert x\rvert}\varepsilon_\bb{Q}(a_i)^{-1}\sum_{\mrm{Tr}_{L/\bb{Q}}(\eta)=\xi}\msf{a}_p(y_\eta,\msf{g})(y_\eta)_p^{t_L-x}\eta^{x-t_L}
	\]
	where $\eta\in L^\times_+$ and $y_\eta=\eta a_i^{-1}\msf{d}_Lu$.
\end{lemma}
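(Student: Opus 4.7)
\textbf{Proof plan for Lemma \ref{q-exp diagonal}.}
The strategy is to compute the right-hand side of $\zeta^*\msf{g}(g) = \msf{g}(\zeta(g))$ in two different ways using Theorem~\ref{thm: adelic q-exp}, and then equate Fourier coefficients. First, I would apply the adelic $q$-expansion directly to $\zeta^*\msf{g}\in S_{|\ell|,|x|}(V(N);\bb{C})$ at an element $\left(\begin{smallmatrix} y & x'\\ 0 & 1\end{smallmatrix}\right)$ with $y\in\widehat{\bb{Z}}\bb{R}^\times_+$ and $x'\in\bb{A}_\bb{Q}$, producing a Fourier expansion indexed by $\xi\in\bb{Q}_+$. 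Second, I would compute the same quantity as $\msf{g}\left(\left(\begin{smallmatrix}\zeta(y) & \zeta(x')\\ 0 & 1\end{smallmatrix}\right)\right)$ using the $q$-expansion for $\msf{g}\in S_{\ell,x}(V(N\cal{O}_L);\bb{C})$, producing a Fourier expansion indexed by $\eta\in L_+$.

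The bridge between the two expansions is provided by two standard compatibilities: the chosen additive character satisfies $\chi_L=\chi_\bb{Q}\circ\mrm{Tr}_{L/\bb{Q}}$, so that $\chi_L(\eta\,\zeta(x'))=\chi_\bb{Q}(\mrm{Tr}_{L/\bb{Q}}(\eta)\,x')$ for $\eta\in L$ and $x'\in\bb{A}_\bb{Q}$, and the infinite-place exponential satisfies $e_L(\mathbf{i}\,\eta\,\zeta(y_\infty))=e_\bb{Q}(\mathbf{i}\,\mrm{Tr}_{L/\bb{Q}}(\eta)\,y_\infty)$ for $y_\infty\in\bb{R}^\times$ embedded diagonally in $L_\infty$. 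Using these identities I would regroup the Hilbert-side sum according to the value $\xi=\mrm{Tr}_{L/\bb{Q}}(\eta)\in\bb{Q}_+$; this converts the Hilbert $q$-expansion into an elliptic $q$-expansion in the variables $(y,x')$, with the coefficient at $\xi$ given by an inner sum over $\{\eta\in L_+ : \mrm{Tr}(\eta)=\xi\}$.

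Matching coefficients of $e_\bb{Q}(\mathbf{i}\xi y_\infty)\chi_\bb{Q}(\xi x')$ then yields an identity expressing $\msf{a}(\xi y,\zeta^*\msf{g})$ as a $\mrm{Tr}$-indexed sum of the Hilbert coefficients $\msf{a}(y_\eta,\msf{g})$ with weight-dependent factors $\{(\eta\zeta(y)\msf{d}_L)^{t_L-x}\}(\eta\zeta(y_\infty))^{x-t_L}$ and the ratio of adelic absolute values on $L$ and $\bb{Q}$. Finally, I would translate from the archimedean normalization $\msf{a}(-,\msf{f})$ to the $p$-adic normalization $\msf{a}_p(-,\msf{f})$ by substituting the definition: this replaces the archimedean power $\{-\}^{w-t_L}$ by the $p$-adic power $(-)_p^{w-t_L}$ and the absolute value $|a_i|_{\bb{A}}$ by $\varepsilon(a_i)^{-1}$, after using the identity $\varepsilon_L(y)=y_p^{-t_L}|y^\infty|^{-1}_{\bb{A}_L}$ to separate the $p$-adic and the finite-adelic contributions. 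Writing $y_\eta=\eta a_i^{-1}\msf{d}_Lu$ makes the $p$-adic factor $(y_\eta)_p^{t_L-x}\eta^{x-t_L}$ appear naturally, and the outer prefactor $y_p^{|x|-1}\xi^{1-|x|}\varepsilon_\bb{Q}(a_i)^{-1}$ arises from unwinding $\msf{a}_p(\xi y,\zeta^*\msf{g})$ on the left.

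The main bookkeeping obstacle is the balancing of normalizations across fields: the factor $|\zeta(y)|_{\bb{A}_L}$ appearing on the Hilbert side differs from $|y|_{\bb{A}_\bb{Q}}$ appearing on the elliptic side by a power determined by the local degrees $[L_v:\bb{Q}_v]$, and the cyclotomic character satisfies $\varepsilon_L\circ\zeta=\varepsilon_\bb{Q}^{[L:\bb{Q}]}$ (by the explicit formula for $\varepsilon_L$). The claim that these discrepancies cancel against the archimedean factors $(\eta\zeta(y_\infty))^{x-t_L}\xi^{|x|-1}$ when passing from $\msf{a}$ to $\msf{a}_p$ — leaving behind only the clean prefactor $y_p^{|x|-1}\xi^{1-|x|}\varepsilon_\bb{Q}(a_i)^{-1}$ stated in the lemma — is the one place where careful tracking of the normalizations of Theorem~\ref{thm: adelic q-exp} is essential.
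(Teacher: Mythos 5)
Your plan is correct and is exactly the ``direct computation'' the paper invokes without writing out: expand both sides via the adelic $q$-expansion of Theorem~\ref{thm: adelic q-exp}, use $\chi_L=\chi_\bb{Q}\circ\mrm{Tr}_{L/\bb{Q}}$ and $e_L(\mathbf{i}\,\eta\,\zeta(y_\infty))=e_\bb{Q}(\mathbf{i}\,\mrm{Tr}_{L/\bb{Q}}(\eta)\,y_\infty)$ to regroup the Hilbert-side sum by $\xi=\mrm{Tr}_{L/\bb{Q}}(\eta)$, match Fourier coefficients, and translate to the $p$-adic normalization $\msf{a}_p$. The compatibilities you cite ($\lvert\zeta(y)\rvert_{\bb{A}_L}=\lvert y\rvert_{\bb{A}_\bb{Q}}^{[L:\bb{Q}]}$, $\varepsilon_L\circ\zeta=\varepsilon_\bb{Q}^{[L:\bb{Q}]}$) are the right bookkeeping ingredients, so no gap beyond carrying out the final normalization check.
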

\begin{proof}
	A direct computation.
\end{proof}
\begin{remark}
	When $p$ is unramified in $L/\bb{Q}$ one sees that $y_p=(\xi u)_p$, $(y_\eta)_p=(\eta u)_p$ and that the formula becomes
	\begin{equation}
	\msf{a}_p(y,\zeta^*\msf{g})=u_p\varepsilon_\bb{Q}(a_i)^{-1}\sum_{\mrm{Tr}_{L/\bb{Q}}(\eta)=\xi}\msf{a}_p(y_\eta,\msf{g}).
	\end{equation}
\end{remark}

\subsection{Hecke Theory}
Let $K \le G(\bb{A}_f)$ be an open compact subgroup satisfying $V(\mathfrak{N})\le K\le V_0(\mathfrak{N}) $. Suppose $\cal{V}$ is the valuation ring corresponding to the fixed embedding $\iota_p: L^\mrm{Gal} \hookrightarrow \overline{\bb{Q}}_p$, then we assume $\{\frak{q}\} = 1$ whenever the ideal $\frak{q}$ is prime to $p\cal{O}_L$.
For every $g \in G(\bb{A})$, one can consider the following double coset operator $[KgK]$. By decomposing the double coset into a disjoint union
\[
K g K
=
\coprod_i \gamma_i K,
\]
its action on Hilbert cuspforms of level $K$ is given by 
\begin{equation}
\big([KgK]\msf{f}\big)   (x)
=
\sum_i \msf{f}(x\gamma_i).
\end{equation}

\begin{definition}
For every prime ideal $\mathfrak{q}\le\cal{O}_L$ and a choice of uniformizer $\varpi_\mathfrak{q}$ of $\cal{O}_{L,\mathfrak{q}}$, the Hecke operators at $\frak{q}$ acting on $S_{k,w}(K;\bb{C})$ are defined as
\[
T_0(\varpi_\mathfrak{q}) = \{\varpi_\mathfrak{q}^{w-t_L}\}
\Big[ K\begin{pmatrix} \varpi_\mathfrak{q} & 0 \\ 0 & 1 \end{pmatrix} K \Big].
\]
For every invertible element
$a \in \cal{O}_{L,\mathfrak{N}}^\times = \Pi_{\mathfrak{q} | \mathfrak{N}} \cal{O}_{L,\mathfrak{q}}^\times$ there a Hecke operator
\[
T(a,1) =  
\Big[ K \begin{pmatrix} a & 0 \\ 0 & 1 \end{pmatrix} K \Big].
\]
For any element $z \in Z_G(\bb{A}_f)$ in the center of $G(\bb{A}_f)$, the associated diamond operator $\langle z \rangle$ acts through the rule $(\langle z \rangle\msf{f})(x) = \msf{f}(xz)$.
\end{definition}

\noindent It turns out that if the ideal $\mathfrak{q}$ is coprime to the level $\mathfrak{N}$, then $T_0(\varpi_\mathfrak{q})$  and $\langle \varpi_\mathfrak{q} \rangle$  are independent of the particular choice of  uniformizer $\varpi_\mathfrak{q}$, thus we simply denote them by $T(\mathfrak{q})$ and $\langle \mathfrak{q} \rangle$. However, if $\mathfrak{q}\mid \mathfrak{N}$, then  $T_0(\varpi_\mathfrak{q})$ does depend on $\varpi_\mathfrak{q}$, and we denote it $U_0(\varpi_\mathfrak{q})$. Any $y \in \widehat{\cal{O}_L}\cap\bb{A}_L^\times$ can be written as 
\[
y = au \prod_\mathfrak{q} \varpi_\mathfrak{q}^{e(\mathfrak{q})}\qquad\text{for}\qquad a \in \cal{O}_{L,\mathfrak{N}}^\times,\ u \in \det V(\mathfrak{N}).
\]
 Write $\mathfrak{n}$ for the ideal
$\big(\prod_{\mathfrak{q} \nmid \mathfrak{N}} \varpi_\mathfrak{q}^{e(\mathfrak{q})}\big) \cal{O}_L$, then we define the Hecke operators associated to the adele $y$ by
\begin{equation}
T_0(y) = T(a,1) T_0(\mathfrak{n}) \prod_{\mathfrak{q}\mid \mathfrak{N}} U_0(\varpi_\mathfrak{q}^{e(\mathfrak{q})}).
\end{equation}

\begin{definition}
A cuspform $\msf{f}\in S_{k,w}(K;\bb{C})$ is said to be an eigenform if it is an eigenvector for all the Hecke operators $T_0(y)$, and it is normalized if $\msf{a}(1,\msf{f}) = 1$. 
\end{definition}

\noindent     For finite ideles $b \in \bb{A}_{L,f}^\times$ there are other operators $V(b)$ on cuspforms defined by
    \begin{equation}
    (V(b)\msf{f})(x) = \mrm{N}_{L/\bb{Q}}(b\cal{O}_L)^{-1}\msf{f}\left(x\begin{pmatrix} b^{-1} & 0 \\ 0 & 1 \end{pmatrix}\right).
    \end{equation}
These operators are right inverses of the operators $U(\varpi_\mathfrak{q}):=\{\varpi_\frak{q}^{t_L-w}\}U_0(\varpi_\mathfrak{q})$, i.e.,
 \begin{equation}
 U(\varpi_\mathfrak{q})\circ V(\varpi_\mathfrak{q}) = 1.
 \end{equation}

\subsection{Hida families}\label{sect Hida families}
Let $O$ be a valuation ring  in $\overline{\bb{Q}}_p$ finite flat over $\bb{Z}_p$ and containing $\iota_p(\cal{V})$. For $\mathfrak{N}$ an $\cal{O}_L$-ideal prime to $p$ and compact open subgroups satisfying $V_1(\frak{N})\le K\le V_0(\mathfrak{N})$ we set
$K(p^\alpha) = K \cap V(p^\alpha)$ and $K(p^\infty) = \cap_{\alpha\ge1} K(p^\alpha)$. The projective limit of $p$-adic Hecke algebras
\[
\mbf{h}_L(K;O):=\varprojlim_\alpha\ \msf{h}_{k,w}(K(p^\alpha);O)
\qquad
\text{acts on} 
\qquad
\varinjlim_\alpha\ S_{k,w}(K(p^\alpha);O)
\]
 through the Hecke operators $\mbf{T}(y) = \varprojlim_\alpha T_0(y) $ and it is independent of the weight $(k,w)$. Since $\mathbf{h}_L(K;O)$ is a compact ring, it can be written as a direct sum of algebras 
\[
\mathbf{h}_L(K;O) = \mathbf{h}^{\text{n.o.}}_L(K;O) \oplus \mathbf{h}_L^{\text{ss}}(K;O)
\] 
such that $\mathbf{T}(\varpi_p)$ is a unit in $\mathbf{h}_L^{\text{n.o.}}(K;O)$ and topologically nilpotent in $\mathbf{h}_L^{\text{ss}}(K;O)$. We denote by 
\begin{equation}\label{eno}
e_{\text{n.o.}} = \underset{n \to \infty}{\lim} \mathbf{T}(\varpi_p)^{n!}
\end{equation}
the idempotent corresponding to the nearly ordinary part $\mathbf{h}^{\text{n.o.}}_L(K;O)$. 
For any $\alpha\ge1$ we set
\begin{equation}
Z^\alpha_L(K) := \bb{A}_L^\times/L^\times (\bb{A}_{L,f} \cap  K(p^\alpha)) L_{\infty,+}^\times\quad\text{and}\quad \bb{G}_L^\alpha(K):= Z^\alpha_L(K)\times(\cal{O}_L/p^\alpha\cal{O}_L)^\times.
\end{equation}
If we denote the projective limits by
\begin{equation}
Z_L(K):= \varprojlim_\alpha\ Z^\alpha_L(K),\qquad \bb{G}_L(K) :=\varprojlim_\alpha\ \bb{G}^\alpha_L(K),
\end{equation}
then $\bb{G}_L(K) = Z_L(K)\times\cal{O}_{L,p}^\times$ and there is a group homomorphism 
\begin{equation}
\bb{G}_L(K)\to \mathbf{h}_L(K;O)^\times,\qquad (z,a)\mapsto \langle z,a\rangle:=\langle z\rangle T(a^{-1},1)
\end{equation}
that endows  $\mbf{h}_L(K;O)$ with a structure of $O\llbracket\bb{G}_L(K)\rrbracket$-algebra.

\noindent Let $\mrm{cl}_L^+(\mathfrak{N}p)$ be the strict ray class group of modulus $\mathfrak{N}p$ and $\overline{\cal{E}}^+_{\mathfrak{N}p}$ the closure in $\cal{O}_{L,p}^\times$ of the totally positive units of $\cal{O}_L$ congruent to $1$ $\pmod{\mathfrak{N}p}$. There is a short exact sequence
\[
\xymatrix{
1\ar[r]
&
\overline{\cal{E}}^+_{\mathfrak{N}p}\backslash\big(1+p\cal{O}_{L,p}\big) \ar[r]& Z_L(K) \ar[r]
&
\mrm{cl}_L^+(\mathfrak{N}p) \ar[r]
& 
1,
}
\] 
that splits when $p$ is large enough because then the group $\mrm{cl}_L^+(\mathfrak{N}p)$ has order prime to $p$. We denote the canonical decomposition by
\begin{equation}\label{GaloisDecomposition}
	Z_L(K) \overset{\sim}{\to} \overline{\cal{E}}^+_{\mathfrak{N}p}\backslash\big(1+p\cal{O}_{L,p}\big) \times \mrm{cl}_L^+(\mathfrak{N}p),\qquad z\mapsto \big(\xi_z,\bar{z}\big).	
\end{equation} 
If we set
$\boldsymbol{\mathfrak{I}}_L = \overline{\cal{E}}^+_{\mathfrak{N}p}\backslash\big(1+p\cal{O}_{L,p}\big) \times (1+p \cal{O}_{L,p})$, then the short exact sequence
\[\xymatrix{
1\ar[r]& \boldsymbol{\mathfrak{I}}_L \ar[r]& \bb{G}_L(K)\ar[r]& \mrm{cl}_L^+(\mathfrak{N}p)\times (\cal{O}_L/p)^\times\ar[r]& 1
}\]
 splits canonically when $p$ is large enough. The group $\boldsymbol{\mathfrak{I}}_L$ is a finitely generated $\bb{Z}_p$-module of $\bb{Z}_p$-rank $[L:\bb{Q}]+1+\delta$, where $\delta$ is Leopoldt's defect for $L$. Let $\mbf{W}$ be the torsion-free part of $\boldsymbol{\mathfrak{I}}_L$ and denote by
$\bs{\Lambda}_L = O\llbracket \mbf{W} \rrbracket$ the associated completed group ring.

\begin{theorem}{(\cite{nearlyHida}, Theorem 2.4)}
The nearly ordinary Hecke algebra $\mathbf{h}_L^{\text{n.o.}}(K;O)$ is finite and torsion-free over $\boldsymbol{\Lambda}_L$.
\end{theorem}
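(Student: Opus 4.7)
The strategy, following Hida, is to package classical nearly ordinary cuspforms across all $p$-power levels into a single compact $\bs{\Lambda}_L$-module, invoke a control theorem describing its specializations at arithmetic weights, and then transfer the structural information to the Hecke algebra via Hecke duality.

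First, I would form the direct limit
\[
\mbf{S}^{\mrm{n.o.}}(K; O) := e_{\mrm{n.o.}} \varinjlim_{\alpha}\, S_{t_L,0}\!\big(K(p^\alpha);\, O\otimes\bb{Q}_p/\bb{Z}_p\big),
\]
and take its Pontryagin dual $\mbf{X} := \mrm{Hom}_O\!\big(\mbf{S}^{\mrm{n.o.}}(K;O),\, O\big)$, which is naturally a compact module over $O\llbracket \bb{G}_L(K)\rrbracket$, hence over $\bs{\Lambda}_L$ through the canonical decomposition of $\bb{G}_L(K)$ recalled above. The Hecke algebra $\mbf{h}_L^{\mrm{n.o.}}(K;O)$ acts faithfully on $\mbf{X}$ by transport.

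The crucial input is the nearly ordinary vertical control theorem: for a suitable arithmetic character $(\varepsilon_L^m\psi,\psi')$ of $\bb{G}_L(K)$ of weight $(k,w)$ with $k-2w = m\cdot t_L$, there is a specialization isomorphism
\[
\mbf{X} \otimes_{\bs{\Lambda}_L} O_{(\psi,\psi')} \;\overset{\sim}{\longrightarrow}\; \mrm{Hom}_O\!\big(e_{\mrm{n.o.}} S_{k,w}(K(p^\alpha); \psi, \psi'; O),\, O\big).
\]
The right-hand side is a free $O$-module of finite rank, and crucially this rank stays \emph{bounded} as $(k,w)$ and $\alpha$ vary. Topological Nakayama applied at a regular arithmetic point then produces a surjection from a free $\bs{\Lambda}_L$-module of finite rank onto $\mbf{X}$, so $\mbf{X}$ is finitely generated over the Noetherian ring $\bs{\Lambda}_L$. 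The Hecke pairing $\mbf{h}_L^{\mrm{n.o.}}(K;O) \times \mbf{X} \to \bs{\Lambda}_L$, $(T,\phi)\mapsto \phi\circ T$, embeds $\mbf{h}_L^{\mrm{n.o.}}(K;O)$ into $\mrm{Hom}_{\bs{\Lambda}_L}(\mbf{X},\bs{\Lambda}_L)$, which is again finitely generated; this yields finiteness of $\mbf{h}_L^{\mrm{n.o.}}(K;O)$ over $\bs{\Lambda}_L$.

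For torsion-freeness, I would exploit that arithmetic points are Zariski dense in $\mrm{Spec}(\bs{\Lambda}_L)$: any nonzero $\bs{\Lambda}_L$-torsion element of $\mbf{h}_L^{\mrm{n.o.}}(K;O)$ would vanish identically at every classical arithmetic weight $(k,w,\psi,\psi')$, contradicting faithfulness of the Hecke action on the corresponding nearly ordinary classical cuspforms; equivalently, $\mbf{h}_L^{\mrm{n.o.}}(K;O)$ injects into the product of its specializations at arithmetic characters, and that product is manifestly $\bs{\Lambda}_L$-torsion-free. The main obstacle is establishing the uniform bound on the $O$-rank of $e_{\mrm{n.o.}} S_{k,w}(K(p^\alpha); \psi, \psi'; O)$ as both $(k,w)$ and $\alpha$ vary: it reflects the fact that $e_{\mrm{n.o.}}$ cuts out a subspace whose size is governed by the number of nearly ordinary $p$-stabilizations of cuspforms of tame level $K$, a geometric input coming from the analysis of the ordinary locus on Hilbert modular varieties and the behavior of $U(\varpi_p)$ on $q$-expansions. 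Once this bound is in place, the rest is formal manipulation of the completed group ring $O\llbracket\bb{G}_L(K)\rrbracket$ via Nakayama's lemma and Hecke duality.
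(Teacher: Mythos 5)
The paper offers no proof of this statement: it is quoted verbatim from Hida (\cite{nearlyHida}, Theorem 2.4), so your proposal can only be measured against the standard argument. Your finiteness half does follow that standard route (control theorem at an arithmetic specialization, boundedness of the nearly ordinary classical spaces, topological Nakayama, then Hecke duality), with two repairable inaccuracies: the Pontryagin dual of the discrete module $e_{\mrm{n.o.}}\underset{\rightarrow,\alpha}{\lim}\, S_{k,w}(K(p^\alpha);O\otimes E_\wp/O)$ must be taken with values in $E_\wp/O$ (your $\mrm{Hom}_O(-,O)$ of a divisible torsion module is zero), and Nakayama is applied at the maximal ideal of $\bs{\Lambda}_L$ --- one uses control at a single arithmetic point to see that $\mbf{X}/\mathfrak{m}\mbf{X}$ is finite, rather than ``Nakayama at a regular arithmetic point''. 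Identifying the uniform bound on the rank of the nearly ordinary classical spaces (with nebentypus of conductor $p^\alpha$, uniformly in $\alpha$ and the weight) as the essential nonformal input is correct.

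The torsion-freeness paragraph, however, has a genuine gap. If $T\in\mbf{h}_L^{\mrm{n.o.}}(K;O)$ is killed by a nonzero $\lambda\in\bs{\Lambda}_L$, its specializations are forced to vanish only at arithmetic points $\mrm{P}$ with $\mrm{P}(\lambda)\neq0$ (still Zariski dense, so this is minor); the real problem is the claim that $\mbf{h}_L^{\mrm{n.o.}}(K;O)$ injects into the product of its arithmetic specializations and that this product is ``manifestly $\bs{\Lambda}_L$-torsion-free''. Neither is formal: the product $\prod_{\mrm{P}}\mbf{h}_L^{\mrm{n.o.}}(K;O)\otimes_{\bs{\Lambda}_L,\mrm{P}}\overline{\bb{Q}}_p$ has plenty of $\bs{\Lambda}_L$-torsion (anything supported on the arithmetic points of the hypersurface $\lambda=0$ is killed by $\lambda$), and the injectivity amounts to $\bigcap_{\mrm{P}}\ker(\mrm{P})\cdot\mbf{h}_L^{\mrm{n.o.}}(K;O)=0$, a statement whose standard proof (embed into a free module and use density of arithmetic points) already requires torsion-freeness; for a finite $\bs{\Lambda}_L$-module with nonzero torsion it is not a formal consequence of faithfulness of the Hecke action on classical forms weight by weight, since a torsion element need only lie in $\ker(\mrm{P})\cdot\mbf{h}_L^{\mrm{n.o.}}(K;O)$ at each arithmetic $\mrm{P}$. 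As written the last step is therefore circular. To close it you must exhibit a $\bs{\Lambda}_L$-torsion-free module on which the nearly ordinary Hecke algebra acts faithfully --- for instance the module of $\bs{\Lambda}_L$-adic cusp forms embedded through their adelic $q$-expansion coefficients into a product of copies of $\bs{\Lambda}_L$, together with a control/lifting statement guaranteeing faithfulness of the action there, which is in essence how Hida's proof proceeds --- rather than deduce it from pointwise faithfulness at arithmetic specializations.
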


\noindent The completed group ring $O\llbracket\bb{G}_L(\mathfrak{N})\rrbracket$ naturally decomposes as the direct sum $\bigoplus_\chi \boldsymbol{\Lambda}_{L,\chi}$ ranging over all the characters of the torsion subgroup
$\bb{G}_L(\mathfrak{N})_\mrm{tor} = \mathfrak{I}_{L,\mrm{tor}}\times \mrm{cl}_L^+(\mathfrak{N}p)\times (\cal{O}_L/p)^\times$.
It induces a decomposition of the nearly ordinary Hecke algebra $
\mbf{h}_L^{\text{n.o.}}(K;O) = \bigoplus_{\chi}\mathbf{h}_L^{\text{n.o.}}(K;O)_\chi$.

\begin{definition}\label{def I-adic cuspforms}
    Let $\chi: \bb{G}_L(K)_\mrm{tor} \rightarrow O^\times$ be a character. For any $\bs{\Lambda}_{L,\chi}$-algebra  $\mbf{I}$, the space of nearly ordinary $\mbf{I}$-adic cuspforms of tame level $K$ and character $\chi$ is 
    \[
    \bar{\mbf{S}}_L^\mrm{n.o.}(K,\chi;\mbf{I}) :=
    \mrm{Hom}_{\bs{\Lambda}_{L,\chi}\mbox{-}\mrm{mod}} \big( \mbf{h}_L^\mrm{n.o.}(K(p^\infty);O)_\chi, \mbf{I}\big).
    \]
	 When an $\mbf{I}$-adic cuspform is also a $\bs{\Lambda}_{L,\chi}$-algebra homomorphism, we call it a Hida family.
\end{definition}

Let
$\psi: \mrm{cl}_L^+(\mathfrak{N}p^\alpha) \rightarrow O^\times$,
$\psi': (\cal{O}_L/p^\alpha)^\times \rightarrow O^\times$  be a pair of characters 
and  $(k,w)$ a weight satisfying $k-2w = mt_L$. The group homomorphism
\[
\bb{G}_L(K)\to O^\times,\qquad (z,a) \mapsto \psi(z)\psi'(a)\varepsilon_L(z)^ma^{t_L-w}
\]
determines a $O$-algebra homomorphism $\mrm{P}_{k,w,\psi,\psi'}:O\llbracket\bb{G}_L(K) \rrbracket\rightarrow O$.

\begin{definition}
For  a $\boldsymbol{\Lambda}_{L,\chi}$-algebra $\mbf{I}$ the set of \emph{arithmetic points}, denoted by $\cal{A}_\chi(\mathbf{I})$, is the subset of $\mrm{Hom}_{O\mbox{-}\mrm{alg}}(\mathbf{I},\overline{\bb{Q}}_p)$ consisting of homomorphisms that coincide with some $\mrm{P}_{k,w,\psi,\psi'}$ when restricted to $\boldsymbol{\Lambda}_{L,\chi}$. 
\end{definition}

\begin{definition}
    Let $(k,w)$ be a weight such that $k-2w=mt_L$. For any pair of characters 
    $\psi: \mrm{cl}_L^+(\mathfrak{N}p^\alpha) \rightarrow O^\times$
    and
    $\psi': (\cal{O}_L/p^\alpha)^\times \rightarrow O^\times$,
   one defines
    \[
    S_{k,w}(K(p^\alpha);\psi,\psi';O)\subseteq S_{k,w}(K(p^\alpha);O)
	\]
  to be the submodule of cuspforms satisfying
   \[
    \langle z,a \rangle \msf{f} = \varepsilon_L(z)^m\psi(z)\psi'(a)\cdot\msf{f} \qquad \forall (z,a) \in \bb{G}_L(K).
   \]
    
\end{definition}

\subsection{Twists of cuspforms}
First we recall Hida's twists of Hilbert cuspforms by Hecke characters (\cite{pHida}, Section 7F), then we define a twist of cuspforms by local characters and relate it to the Atkin-Lehner involution.

\noindent Let $\Psi:\bb{A}_L^\times/L^\times\to\bb{C}^\times$ be a Hecke character of conductor $C(\Psi)$ and infinity type $m\cdot t_L$, $m\in\bb{Z}$. Since $\Psi$ has algebraic values on finite ideles, Hida defined the map
\[\begin{split}
-\otimes\Psi: S_{k,w}\big(\mathfrak{N}p^\alpha,\psi,\psi';&O\big)\to S_{k,w+m\cdot t_L}\big(C(\Psi)\mathfrak{N}p^\alpha,\psi\Psi^2,\psi'\Psi_p^{-1};O\big)\\
&\msf{f}\mapsto\msf{f}\otimes\Psi
\end{split}\]
where the cuspform $\msf{f}\otimes\Psi$ has adelic Fourier coefficients given by 
\begin{equation}
\msf{a}_p(y,\msf{f}\otimes\Psi)=\Psi(y^\infty)\msf{a}_p(y,\msf{f})y_p^{m\cdot t_L}.
\end{equation}
When $\Psi=\lvert-\rvert_{\bb{A}_L}^m$ is a integral power of the adelic norm character, one finds that
 \[
	\msf{f}\otimes\lvert-\rvert_{\bb{A}_L}^m\in S_{k,w+m\cdot t_L}\big(\mathfrak{N}p^\alpha,\psi,\psi';O\big)
	\]
	and 
	\[
	\msf{a}_p(y,\msf{f}\otimes\lvert-\rvert_{\bb{A}_L}^m)=\varepsilon_L(y)^{-m}\msf{a}_p(y,\msf{f}).
	\]
Note that twisting does not affect the classical Fourier expansion on the identity component of the Hilbert modular surface. 

\begin{lemma}\label{twist classical expansion}
    Let $\msf{g}\in S_{k,w}\big(\mathfrak{N}p^\alpha,\psi,\psi';O\big)$ and $\msf{g}_1$ be the first component of the corresponding tuple of classical Hilbert modular forms. Then for any Hecke character $\Psi:\bb{A}_L^\times/L^\times\rightarrow \bb{C}^\times$, we have
    \[
    (\msf{g}\otimes \Psi)_1 = \Psi(\msf{d}_L) \msf{g}_1.
    \]
\end{lemma}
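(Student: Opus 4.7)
The plan is to compute both sides of the asserted equality directly from the adelic $q$-expansion of Theorem \ref{thm: adelic q-exp} and the defining rule for Hida's twist, and to observe that the classical Fourier coefficients of $(\msf{g}\otimes\Psi)_1$ and of $\msf{g}_1$ differ only by the constant $\Psi(\msf{d}_L)$.

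First I would convert the twist rule, which is phrased in terms of $\msf{a}_p$, into the analogous rule for the complex-valued Fourier function $\msf{a}$. Using the two expressions
\[
\msf{a}(y,\msf{f}) = a(\xi,\msf{f}_i)\{y^{w-t_L}\}\xi^{t_L-w}|a_i|_{\bb{A}_L},
\qquad
\msf{a}_p(y,\msf{f}) = a(\xi,\msf{f}_i)\, y_p^{w-t_L}\xi^{t_L-w}\varepsilon_L(a_i)^{-1}
\]
for $y = \xi a_i^{-1}\msf{d}_L u\in\widehat{\cal{O}}_L L_{\infty,+}^\times$, together with the given identity $\msf{a}_p(y,\msf{g}\otimes\Psi) = \Psi(y^\infty)\msf{a}_p(y,\msf{g})\,y_p^{m t_L}$, the mismatched powers of $y_p$ cancel and one obtains the clean transformation
\[
\msf{a}(y,\msf{g}\otimes\Psi) = \Psi(y^\infty)\,\{y^{m t_L}\}\,\msf{a}(y,\msf{g}).
\]

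Next I would extract the first classical component. Taking $a_1=1$ and specializing the adelic $q$-expansion at $y = y_\infty$, $x = x_\infty$ identifies, for $\xi\in(\mathfrak{d}_L^{-1})_+$,
\[
a(\xi,\msf{g}_1) = \msf{a}(\xi y_\infty\msf{d}_L,\msf{g})\cdot\{(\xi\msf{d}_L)^{t_L-w}\}\cdot\xi^{w-t_L},
\]
the $y_\infty$-dependent factors collapsing because $|y_\infty|_{\bb{A}_L} = y_\infty^{t_L}$ and $\{\cdot\}$ only sees the finite part. Applying the same identity to $\msf{g}\otimes\Psi$ (whose weight is $(k, w + m t_L)$) and then plugging in the twist formula for $\msf{a}$, the factor $\{(\xi\msf{d}_L)^{m t_L}\}$ coming from the twist merges by multiplicativity with $\{(\xi\msf{d}_L)^{t_L-w-m t_L}\}$ to reproduce exactly $\{(\xi\msf{d}_L)^{t_L-w}\}$. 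What remains is
\[
a(\xi,(\msf{g}\otimes\Psi)_1) = \Psi(\xi^\infty\msf{d}_L)\cdot\xi^{m t_L}\cdot a(\xi,\msf{g}_1).
\]

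Finally I would invoke that $\Psi$ is trivial on $L^\times$ and has infinity type $m\cdot t_L$: the relation $\Psi(\xi) = 1$ in $\bb{A}_L^\times/L^\times$ together with $\Psi_\infty(\xi) = \xi^{m t_L}$ forces $\Psi(\xi^\infty) = \xi^{-m t_L}$, so the two $\xi$-dependent factors cancel and leave $a(\xi,(\msf{g}\otimes\Psi)_1) = \Psi(\msf{d}_L)\,a(\xi,\msf{g}_1)$, which is the claim. The only real obstacle is keeping the bookkeeping straight among the four concurrent normalizations $\{y^{w-t_L}\}$, $y_p^{w-t_L}$, $|a_i|_{\bb{A}_L}$ and $\varepsilon_L(a_i)^{-1}$; once one sees that the twist factor $y_p^{m t_L}$ is exactly what is needed to convert the $\msf{a}_p$-rule into a clean $\msf{a}$-rule, the rest is a mechanical unwinding of Theorem \ref{thm: adelic q-exp}.
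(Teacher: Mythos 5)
Your proposal is correct and takes essentially the same approach as the paper: a direct unwinding of Hida's twist rule against the $q$-expansion on the identity component, with the factor $\Psi(\xi^\infty)$ cancelling $\xi^{m\cdot t_L}$ because $\Psi$ is trivial on $L^\times$ with infinity type $m\cdot t_L$. The only difference is cosmetic — the paper shortcuts the bookkeeping by working with the $p$-adic coefficients via the identity $\msf{a}_p(\xi\msf{d}_L,\msf{g})=a(\xi,\msf{g}_1)$, whereas you first convert the twist rule to the complex-valued function $\msf{a}(-,\msf{g})$ and then extract the classical coefficients from Theorem \ref{thm: adelic q-exp}.
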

\begin{proof}
   If $\xi\in (\frak{d}_L)_+$, it follows directly from the definitions that
    \[
    \msf{a}_p(\xi \msf{d}_L,\msf{g}) = a(\xi,\msf{g}_1).
    \]
    Suppose $\Psi$ has infinity type $m\cdot t_L$, then one can compute that
    \[\begin{split}
   a(\xi,(\msf{g}\otimes\Psi)_1)= \msf{a}_p(\xi \msf{d}_L,\msf{g}\otimes \Psi)
    &=
    \msf{a}_p(\xi \msf{d}_L,\msf{g})\Psi((\xi \msf{d}_L)^\infty) (\xi\msf{d}_L)_p^{m\cdot t_L}\\
    &=
    \Psi(\msf{d}_L)\cdot\msf{a}_p(\xi \msf{d}_L,\msf{g})\\
    &=
    \Psi(\msf{d}_L)\cdot a(\xi ,\msf{g}_1).
    \end{split}\]
\end{proof}

\noindent Now let $\mathfrak{N}, \mathfrak{A}$ be integral $\cal{O}_L$-ideals prime to $p$ and let $\chi:\mrm{Cl}_L^+(\mathfrak{A}p^\alpha)\to O^\times$ be a finite order Hecke character. Hida defined a function
\[\begin{split}
-_{\lvert\chi}: S_{k,w}\big(\mathfrak{N}p^\alpha,\psi,\psi';&O\big)\to S_{k,w}\big(\mathfrak{N}p^\alpha\mathfrak{A}^2,\psi\chi^2,\psi';O\big)\\
 &\msf{f}\mapsto\msf{f}_{\lvert \chi}
\end{split}\]
where $\msf{f}_{\lvert \chi}$ is a cuspform whose adelic Fourier coefficients are given by 
\[
\msf{a}_p(y,\msf{f}_{\lvert \chi})
=
\begin{cases}\chi(y)\msf{a}_p(y,\msf{f})
& \text{if}\ 
y_{\mathfrak{A}p}\in\cal{O}_{L,\mathfrak{A}p}^\times\\
0&\text{otherwise}.
\end{cases}
\]
We define a new kind of twist by slightly modifying Hida's work. Let $p>[L:\bb{Q}]$ be a rational prime, $\mathfrak{p}\mid p$  an $\cal{O}_L$-prime ideal and $\chi:\cal{O}_{L,\mathfrak{p}}^\times\to O^\times$ a finite order character of conductor $c(\chi)$.
\begin{proposition}\label{TwistHMF}
	There is a function 
	\[\begin{split}
	-\star\chi: S_{k,w}\big(\mathfrak{N}p^\alpha,\psi,\psi';&O\big)\to S_{k,w}\big(\mathfrak{N}p^{\mrm{max}\{\alpha,c(\chi)\}},\psi,\psi'\chi^{-1};O\big)\\
	 &\msf{f}\mapsto\msf{f}\star\chi
	\end{split}\]
	where $\msf{f}\star\chi$ has adelic Fourier coefficients given by 
	\[
	\msf{a}_p(y,\msf{f}\star\chi)=\begin{cases}\chi(y_\mathfrak{p})\msf{a}_p(y,\msf{f})& \text{if}\ y_{\mathfrak{p}}\in\cal{O}_{F,\mathfrak{p}}^\times\\
0&\text{otherwise}.
\end{cases}
	\]
\end{proposition}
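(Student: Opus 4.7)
The natural way to construct $\msf{f}\star\chi$ is via a local Gauss sum twist at $\mathfrak{p}$. I would set
\[
(\msf{f}\star\chi)(x)\;:=\;\tau(\bar\chi)^{-1}\sum_{u\in(\cal{O}_L/\mathfrak{p}^{c(\chi)})^\times}\bar\chi(u)\,\msf{f}\!\left(x\cdot n_\mathfrak{p}\bigl(u\varpi_\mathfrak{p}^{-c(\chi)}\bigr)\right),
\]
where $n_\mathfrak{p}(v)=\begin{pmatrix}1&v\\ 0&1\end{pmatrix}$ is embedded at the place $\mathfrak{p}$ (and is the identity elsewhere), and $\tau(\bar\chi)$ is the local Gauss sum of $\bar\chi$ against the restriction of $\chi_L$ to $L_\mathfrak{p}$. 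The proof then amounts to four essentially independent checks: that $\msf{f}\star\chi$ is again a holomorphic Hilbert cuspform of weight $(k,w)$; that it has the claimed level, nebentypus, and central character; and that its adelic Fourier coefficients satisfy the stated formula.

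The weight and holomorphy are inherited from $\msf{f}$ since the unipotent matrices $n_\mathfrak{p}(v)$ are finite-adelic and hence do not interact with the Archimedean automorphy factor $j_{k,w}$; cuspidality is preserved because the integral along any translate of the unipotent radical is zero. For the level structure one writes, for $k\in V_1(\mathfrak{N}p^{\max\{\alpha,c(\chi)\}})$, the identity $n_\mathfrak{p}(v)\cdot k=k'\cdot n_\mathfrak{p}(v')$ with $k'\in V_1(\mathfrak{N}p^\alpha)$ and $v'\equiv v\pmod{\mathfrak{p}^{-c(\chi)}\widehat{\cal{O}}_L}$, reducing invariance to invariance under $V_1(\mathfrak{N}p^\alpha)$ already enjoyed by $\msf{f}$. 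For the nebentypus, the key move is the commutation relation $\mrm{diag}(a^{-1},1)\cdot n_\mathfrak{p}(v)=n_\mathfrak{p}(a^{-1}_\mathfrak{p}v)\cdot\mrm{diag}(a^{-1},1)$: applying $T(a^{-1},1)$ to $\msf{f}\star\chi$ and substituting $u\mapsto a_\mathfrak{p}u$ in the Gauss sum produces exactly the factor $\psi'(a)\chi^{-1}(a_\mathfrak{p})$, so the wild character at $\mathfrak{p}$ is multiplied by $\chi^{-1}$ while $\psi$ and the weight remain untouched (the matrices $n_\mathfrak{p}(v)$ commute with the center).

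For the Fourier coefficients I would insert the adelic $q$-expansion of Theorem \ref{thm: adelic q-exp} into the defining sum. Writing
\[
\msf{f}\!\left(\begin{pmatrix}y&x+yu\varpi_\mathfrak{p}^{-c(\chi)}\\ 0&1\end{pmatrix}\right)=|y|_{\bb{A}_L}\sum_{\xi\in L_+}\msf{a}(\xi y\msf{d}_L,\msf{f})(\ldots)\chi_L(\xi x)\,\chi_{L,\mathfrak{p}}\!\bigl(\xi y u\varpi_\mathfrak{p}^{-c(\chi)}\bigr),
\]
the inner sum $\sum_u\bar\chi(u)\chi_{L,\mathfrak{p}}(\xi y u\varpi_\mathfrak{p}^{-c(\chi)})$ is a standard local Gauss-sum evaluation: it vanishes unless $\xi_\mathfrak{p} y_\mathfrak{p}$ is a $\mathfrak{p}$-adic unit, in which case it equals $\chi(\xi_\mathfrak{p} y_\mathfrak{p})\cdot\tau(\bar\chi)$ (up to the normalization of $\chi_{L,\mathfrak{p}}$ built into $\msf{d}_L$). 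Dividing by $\tau(\bar\chi)$ and comparing with the definition of $\msf{a}_p$ gives precisely $\msf{a}_p(y,\msf{f}\star\chi)=\chi(y_\mathfrak{p})\msf{a}_p(y,\msf{f})$ when $y_\mathfrak{p}\in\cal{O}_{L,\mathfrak{p}}^\times$ and zero otherwise.

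\textbf{Main obstacle.} The routine part is the commutation and Gauss-sum manipulations; the one place where one has to be genuinely careful is in matching the normalizations between the local additive character $\chi_{L,\mathfrak{p}}$ (which depends on the different $\mathfrak{d}_L$), the power map $\{y^{w-t_L}\}$ appearing in the definition of $\msf{a}_p(y,\msf{f})$, and the character extended trivially from $\cal{O}_{L,\mathfrak{p}}^\times$ to $(\cal{O}_L/p^{\max\{\alpha,c(\chi)\}})^\times$. One must also verify that $\tau(\bar\chi)$ is invertible in the chosen coefficient ring — this is harmless after enlarging $O$ to contain the relevant $p$-power roots of unity, which is consistent with the standing assumption $p>[L:\bb{Q}]$ and the fact that $O$ is already a sufficiently large finite flat extension of $\bb{Z}_p$.
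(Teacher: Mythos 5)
Your proposal is correct and follows essentially the same route as the paper: define the twist as a Gauss-sum-weighted sum of unipotent translates at $\mathfrak{p}$, evaluate the resulting character sum on the adelic $q$-expansion to get the stated coefficients, normalize by $G(\chi)^{-1}$, and verify the action of $T(a^{-1},1)$ to identify the new character $\psi'\chi^{-1}$. The only cosmetic difference is that the paper inserts the idele $\msf{d}_L$ directly into the upper-right entry $u\varpi_\mathfrak{p}^{-c(\chi)}\msf{d}_L$, which absorbs exactly the different-normalization issue you flag as the main obstacle, so the Gauss-sum evaluation comes out clean without further bookkeeping.
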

\begin{proof}
	Define
	\[
	\msf{h}(x):=\sum_{u\in\left(\cal{O}_{L}/\mathfrak{p}^{c(\chi)}\right)^\times}\chi^{-1}(u)\msf{f}\left(x\begin{pmatrix}
	1& u\varpi_\mathfrak{p}^{-c(\chi)}\msf{d}_L\\
	0&1
	\end{pmatrix}\right).
	\]
	Looking at the adelic $q$-expansion we see that  for all $y\in\widehat{\cal{O}_L}$
	\[
	\msf{a}_p(y,\msf{h})=\left(\sum_{u\in\left(\cal{O}_{L}/\mathfrak{p}^{c(\chi)}\right)^\times}\chi^{-1}(u)\chi_L\big(yu\varpi_\mathfrak{p}^{-c(\chi)}\big)\right)\msf{a}_p(y,\msf{f}).
	\]
	Then one notices that $\chi_L\big(yu\varpi_\mathfrak{p}^{-c(\chi)}\big)$ is a $p$-power root of unity of order $c(\chi)-\mrm{val}_\frak{p}(y_\frak{p})$ so that
	\[
	\sum_{u\in\left(\cal{O}_{L}/\mathfrak{p}^{c(\chi)}\right)^\times}\chi^{-1}(u)\chi_L\big(yu\varpi_\mathfrak{p}^{-c(\chi)}\big)=\begin{cases}
	\chi(y_\mathfrak{p})G(\chi)&\text{if}\ y_{\mathfrak{p}}\in\cal{O}_{L,\mathfrak{p}}^\times\\
		0&\text{otherwise} 
	\end{cases}
	\]
	where $G(\chi)=\sum_{u}\chi^{-1}(u)\chi_L\big(u\varpi_\mathfrak{p}^{-c(\chi)}\big)$ is a Gauss sum. The cuspform
	\[
	\msf{f}\star\chi:=G(\chi)^{-1}\msf{h}
	\]
	has the claimed adelic $q$-expansion and another direct calculation shows that the operators $T(a^{-1},1)$'s act on it by
	\[
	\msf{a}_p\big(y,(\msf{f}\star\chi)_{\lvert T(a^{-1},1)}\big)=\chi^{-1}(a_\mathfrak{p})\psi'(a)\msf{a}_p(y,\msf{f}\star\chi).
	\]
\end{proof}

\noindent Let $\msf{f}\in S_{k,w}(\mathfrak{N}p^\alpha,\psi,\psi';O)$ be an eigencuspform. For $\mathfrak{p}$ a prime $\cal{O}_L$-ideal, let $\tau_{\mathfrak{p}^\alpha}\in\mrm{GL}_2(\bb{A}_L)$ be defined by
\[
(\tau_{\mathfrak{p}^\alpha})_\mathfrak{p}=\begin{pmatrix}
	0&-1\\
	\varpi_\mathfrak{p}^\alpha&0
\end{pmatrix},\qquad
(\tau_{\mathfrak{p}^\alpha})_v=\mathbbm{1}_2\qquad \text{for}\qquad v\not=\mathfrak{p}.
\] 
We have the following operator
\[
\msf{f}\lvert\tau_{\mathfrak{p}^\alpha}^{-1}(x):=\msf{f}(x\tau_{\mathfrak{p}^\alpha}^{-1}).
\]
Moreover, for any prime ideal $\frak{p}\mid p$, the $\frak{p}$-depletion of a cuspform $\msf{f}$ is the cuspform 
\[
\msf{f}^{[\frak{p}]}=\left(1-V(\varpi_\frak{p})\circ U(\varpi_\frak{p})\right)\msf{f}
\]
 whose Fourier coefficient $\mathsf{a}_p(y,\msf{f}^{[\frak{p}]})$ equals $\mathsf{a}_p(y,\msf{f})$ if $y_\frak{p}\in \cal{O}_{F,\frak{p}}^\times$ and $0$ otherwise.
\begin{lemma}\label{Atkin-Lehner}
	Let $\msf{f}\in S_{2t_L,t_L}(\mathfrak{N}p^\alpha;\psi,\psi';O)$ be an eigenform, then 
	\[
	\msf{a}_p(\varpi_\mathfrak{p},\msf{f})^\alpha \cdot
	\Big(\msf{f}\lvert\tau^{-1}_{\mathfrak{p}^\alpha}\Big)^{\mbox{\tiny $[\mathfrak{p}]$}}
	=
	G(\psi_\mathfrak{p}(\psi'_\mathfrak{p})^{2})\cdot\Big(\msf{f}\star\psi_\mathfrak{p}(\psi'_\mathfrak{p})^{2}\Big)
	\]
	the equality taking place in $ S_{2t_L,t_L}\big(\mathfrak{N}p^\alpha;\psi,\psi'\cdot\psi_\mathfrak{p}^{-1}(\psi_\mathfrak{p}')^{-2};O\big)$.
\end{lemma}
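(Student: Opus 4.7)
The plan is to verify the identity by matching adelic Fourier coefficients on both sides. Both cuspforms have Fourier expansion supported on ideles $y$ with $y_\frak{p}\in\cal{O}_{L,\frak{p}}^\times$ --- the right-hand side directly by the formula of Proposition~\ref{TwistHMF}, the left-hand side because of the $[\frak{p}]$-depletion superscript. By the uniqueness of adelic $q$-expansions (Theorem~\ref{thm: adelic q-exp}), it therefore suffices to establish
\[
\msf{a}_p(\varpi_\frak{p},\msf{f})^\alpha\cdot\msf{a}_p\big(y,\msf{f}|\tau_{\frak{p}^\alpha}^{-1}\big) = G(\chi)\cdot\msf{a}_p(y,\msf{f}\star\chi)
\]
for every such $y$, where $\chi := \psi_\frak{p}(\psi'_\frak{p})^2$.

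First I would extract the Fourier coefficient of $\msf{f}|\tau_{\frak{p}^\alpha}^{-1}$ by integrating against the additive character $\chi_L$, reducing the problem to evaluating $\msf{f}$ on matrices of the form $\begin{pmatrix}y & x\\ 0 & 1\end{pmatrix}\tau_{\frak{p}^\alpha}^{-1}$. The only nontrivial component at $\frak{p}$ is $\begin{pmatrix}-x_\frak{p} & y_\frak{p}\varpi_\frak{p}^{-\alpha}\\ -1 & 0\end{pmatrix}$, and the key step is to decompose this matrix as a product of a diagonal term $\mrm{diag}(\varpi_\frak{p}^{-\alpha},1)$, an upper unipotent with entry $u\varpi_\frak{p}^{-\alpha}\msf{d}_L$ for some $u\in(\cal{O}_L/\frak{p}^\alpha)^\times$, and an element of $V_0(\frak{p}^\alpha)$ whose diagonal entries (through the diamond action) contribute the character $\chi$. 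Performing a change of variables in $x$ and summing over $u$ rewrites the integral as
\[
\sum_{u\in(\cal{O}_L/\frak{p}^\alpha)^\times}\chi^{-1}(u)\cdot\msf{f}\!\left(\begin{pmatrix}y & x'\\ 0 & 1\end{pmatrix}\begin{pmatrix}\varpi_\frak{p}^{-\alpha} & u\varpi_\frak{p}^{-\alpha}\msf{d}_L\\ 0 & 1\end{pmatrix}\right),
\]
which is precisely the shape appearing in the defining formula of $\msf{f}\star\chi$ from Proposition~\ref{TwistHMF}.

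Two separate contributions then emerge. The Gauss sum $G(\chi)$ arises from $\sum_u\chi^{-1}(u)\chi_L(u\varpi_\frak{p}^{-\alpha})$, exactly as in the proof of Proposition~\ref{TwistHMF}. The factor $\msf{a}_p(\varpi_\frak{p},\msf{f})^\alpha$ comes from absorbing the diagonal piece $\mrm{diag}(\varpi_\frak{p}^{-\alpha},1)$ through the $U(\varpi_\frak{p})^\alpha$-eigenvalue of $\msf{f}$, using that $V(\varpi_\frak{p})$ is a right inverse of $U(\varpi_\frak{p})$. Matching these contributions against the adelic $q$-expansion of $\msf{f}\star\chi$ then yields the desired identity.

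The main obstacle is the careful bookkeeping of the local matrix manipulation at $\frak{p}$: decomposing $\tau_{\frak{p}^\alpha}^{-1}$ into admissible factors (diagonal, unipotent, and level) that are compatible with the translation of the additive character argument is delicate, and one has to correctly track the normalizing factor $\{y^{w-t_L}\}$ in the adelic $q$-expansion together with the diamond/central action $\langle z,a\rangle\msf{f} = \psi(z)\psi'(a)\msf{f}$. Fortunately, the balanced weight $(2t_L,t_L)$ forces $w-t_L = 0$, which simplifies these normalizing factors substantially and lets the Gauss sum combinatorics match on the nose.
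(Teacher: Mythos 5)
Your opening reduction is fine: both sides are $\frak{p}$-depleted, so it would indeed suffice to match adelic coefficients at ideles $y$ with $y_\frak{p}\in\cal{O}_{L,\frak{p}}^\times$. The gap is in the central step. The matrix $\begin{pmatrix}-x_\frak{p}&y_\frak{p}\varpi_\frak{p}^{-\alpha}\\-1&0\end{pmatrix}$ does not admit the factorization you describe, for \emph{any} value of $x_\frak{p}$: your diagonal $\mrm{diag}(\varpi_\frak{p}^{-\alpha},1)$ and the unipotent with entry $u\varpi_\frak{p}^{-\alpha}\msf{d}_L$ are both upper triangular, so any product of them with an element $k\in V_0(\frak{p}^\alpha)$ on the right has bottom row $(t c_k, t d_k)$ with $c_k\in\frak{p}^\alpha\cal{O}_{L,\frak{p}}$ and $d_k\in\cal{O}_{L,\frak{p}}^\times$, i.e.\ the lower-left entry has valuation at least $\alpha$ more than the lower-right one. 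Your matrix has bottom row $(-1,0)$, so it sits in the opposite cell with respect to $B(L_\frak{p})\cdot V_0(\frak{p}^\alpha)$. Consequently no change of variables in the $x$-integral can produce the sum over $u\in(\cal{O}_L/\frak{p}^\alpha)^\times$, and the Gauss sum cannot be generated this way. Likewise the factor $\msf{a}_p(\varpi_\frak{p},\msf{f})^\alpha$ cannot be obtained by ``absorbing'' $\mrm{diag}(\varpi_\frak{p}^{-\alpha},1)$: right translation by a single diagonal matrix is neither $U(\varpi_\frak{p})^\alpha$ nor $V(\varpi_\frak{p})^\alpha$ (these are normalized coset sums), and $\msf{f}$ is not a $V$-eigenvector.

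This cell obstruction is precisely what the paper's argument is built to circumvent. One first right-translates by the $U(\varpi_\frak{p})^\alpha$-coset representatives $\gamma_{u,\alpha}$ with $\frak{p}$-component $\begin{pmatrix}\varpi_\frak{p}^\alpha&u\\0&1\end{pmatrix}$: the products $\tau_{\frak{p}^\alpha}^{-1}\gamma_{u,\alpha}$ have bottom row $(-\varpi_\frak{p}^\alpha,-u)$ and now do factor as $\delta_{v,\alpha}\cdot\beta_{u,v,\alpha}$ with $uv\equiv-1\pmod{\frak{p}^\alpha}$ and $\beta_{u,v,\alpha}$ in the level group (equation (\ref{matrix identity})). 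Summing over $u$ produces $U(\varpi_\frak{p})^\alpha\msf{f}$, whence the eigenvalue $\msf{a}_p(\varpi_\frak{p},\msf{f})^\alpha$; summing over $v$ assembles $G(\psi_\frak{p}(\psi'_\frak{p})^2)\cdot\msf{f}\star\psi_\frak{p}(\psi'_\frak{p})^2$ exactly as in the proof of Proposition \ref{TwistHMF}; and the extra term coming from $u\equiv0\pmod{\frak{p}}$ is what the $\frak{p}$-depletion kills, so the depletion does real work beyond restricting supports, contrary to your reading. If you insist on a purely coefficient-level proof, you would have to compute the local Whittaker newvector at the Atkin--Lehner translate (via the local functional equation and epsilon factor), which is substantially more input than the bookkeeping your sketch envisages; as written, the proposal has a genuine gap at its key step.
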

\begin{proof}
	For $u,v\in\cal{O}^\times_{L,\mathfrak{p}}$ such that $uv\equiv-1  \pmod{\mathfrak{p}^\alpha}$ we have the following identity in $\mrm{GL}_2(L_\frak{p})$
	\begin{equation}\label{matrix identity}
	\begin{pmatrix}
		0&-1\\
		\varpi_\mathfrak{p}^\alpha&0
	\end{pmatrix}^{-1}
	\begin{pmatrix}
		\varpi_\mathfrak{p}^\alpha&u\\
		0&1
	\end{pmatrix}
	=
	\begin{pmatrix}
		1&v\varpi_\mathfrak{p}^{-\alpha}\\
		0&1
	\end{pmatrix}
	\begin{pmatrix}
		v & (1+uv)\varpi_\mathfrak{p}^{-\alpha}\\
		-\varpi_\mathfrak{p}^\alpha & -u
	\end{pmatrix}.
	\end{equation}
	Let $\delta_{v,\alpha}=\begin{pmatrix}
		1&v\varpi_\mathfrak{p}^{-\alpha}\msf{d}_L\\
		0&1
	\end{pmatrix}\in G(\bb{A})$, denote by
	$\gamma_{u,\alpha}\in G(\bb{A})$ the matrix satisfying
	\[
	(\gamma_{u,\alpha})_\frak{p}=\begin{pmatrix}
		\varpi_\mathfrak{p}^\alpha&u\\
		0&1
	\end{pmatrix},
	\qquad (\gamma_{u,\alpha})_v=\mathbbm{1}_2\qquad \text{for}\qquad v\not=\frak{p}
	\]
	and by
		$\beta_{u,v,\alpha}\in G(\bb{A})$ the matrix satisfying
		\[
		(\beta_{u,v,\alpha})_\frak{p}=\begin{pmatrix}
		v & (1+uv)\varpi_\mathfrak{p}^{-\alpha}\\
		-\varpi_\mathfrak{p}^\alpha & -u
	\end{pmatrix},
		\qquad (\beta_{u,v,\alpha})_v=\begin{pmatrix}
		1 & -(\msf{d}_L)_v\\
		0 & 1
	\end{pmatrix}\qquad \text{for}\qquad v\not=\frak{p}.
		\]
		Then equation (\ref{matrix identity}) implies that
		\[
		(\tau_{\frak{p}^\alpha})^{-1}\cdot\gamma_{u,\alpha}=\delta_{v,\alpha}\cdot\beta_{u,v,\alpha}
		\]
	Right translation by $\beta_{u,v,\alpha}$ on a Hilbert cuspform
	corresponds to the action of the element $\langle v^{-1}, v^{-2}\rangle$ in $\bb{G}_L(K)$ and we can write
	\begin{equation}\label{AL equation}
	\msf{f}{\lvert\gamma_{u,\alpha}\lvert \tau_{\mathfrak{p}^\alpha}^{-1}}
	=
	(\langle v^{-1}, v^{-2}\rangle\msf{f}){\lvert\delta_{v,\alpha}}=
	\psi(v^{-1})\psi'(v^{-2})\cdot\msf{f}{\lvert\delta_{v,\alpha}}.
	\end{equation}
	On one hand, summing the right hand side of (\ref{AL equation}) over $v\in(\cal{O}_{L}/\mathfrak{p}^\alpha)^\times$ gives
	\[
	\sum_{v\in(\cal{O}_{L}/\mathfrak{p}^\alpha)^\times}
		(\psi(\psi')^{2})^{-1}(v) \msf{f}{\lvert\delta_{v,\alpha}}
	=
	G(\psi_\mathfrak{p}(\psi'_\mathfrak{p})^{2})\cdot\big(\msf{f}\star\psi_\mathfrak{p}(\psi'_\mathfrak{p})^{2}\big).
	\]
	On the other hand, summing the left hand side of (\ref{AL equation}) over $u\in(\cal{O}_{L}/\mathfrak{p}^\alpha)^\times$ gives
	\[\begin{split}
	\sum_{u\in(\cal{O}_L/\mathfrak{p}^\alpha)^\times}
	\msf{f}{\lvert\gamma_{u,\alpha}}{\lvert \tau_{\mathfrak{p}^\alpha}^{-1}}
	&=
	\Big(\sum_{u\in(\cal{O}_{L}/\mathfrak{p}^\alpha)}
	\msf{f}{\lvert\gamma_{u,\alpha}}\Big){\lvert \tau_{\mathfrak{p}^\alpha}^{-1}}
	-
	\Big(\sum_{u'\in(\mathfrak{p}\cal{O}_{L}/\mathfrak{p}^{\alpha})}
	\msf{f}{\lvert\gamma_{u',\alpha-1}}
	\lvert \tau_{\mathfrak{p}^\alpha}^{-1}
	\Big)\lvert\mbox{\tiny $\begin{pmatrix}
		1&\\
		&\varpi_\mathfrak{p}
	\end{pmatrix}$} \\
	&=
	\Big(U(\varpi_{\mathfrak{p}})^\alpha\msf{f} \Big){\lvert\tau_{\mathfrak{p}^\alpha}^{-1}}
	- 
	\Big(U(\varpi_{\mathfrak{p}})^{\alpha-1}\msf{f}\Big)\lvert \tau_{\mathfrak{p}^\alpha}^{-1}\lvert \mbox{\tiny $\begin{pmatrix}
		1&\\
		&\varpi_\mathfrak{p}
	\end{pmatrix}$},
		\end{split}\]
	where in the first equality we used the following identity for
	$u\in\mathfrak{p}(\cal{O}_L/\mathfrak{p}^\alpha)$:
	\[\begin{pmatrix}
				0&-1\\
				\varpi_\mathfrak{p}^\alpha&0
			\end{pmatrix}^{-1}
			\begin{pmatrix}
				\varpi_\mathfrak{p}^\alpha&u\\
				0&1
			\end{pmatrix}=\begin{pmatrix}
				1&0\\
				0&\varpi_{\mathfrak{p}}
			\end{pmatrix}\begin{pmatrix}
				0&-1\\
				\varpi_\mathfrak{p}^\alpha&0
			\end{pmatrix}^{-1}
			\begin{pmatrix}
				\varpi_\mathfrak{p}^{\alpha-1}&u/\varpi_\mathfrak{p}\\
				0&1
			\end{pmatrix}.\]
	Finally, taking the $\mathfrak{p}$-depletion of both expressions we obtain
	\[
	\msf{a}_p(\varpi_\mathfrak{p},\msf{f})^\alpha \cdot
	\Big(\msf{f}\lvert\tau^{-1}_{\mathfrak{p}^\alpha}\Big)^{\mbox{\tiny $[\mathfrak{p}]$}}
	=
	G(\psi_\mathfrak{p}(\psi'_\mathfrak{p})^{2})\cdot\Big(\msf{f}\star\psi_\mathfrak{p}(\psi'_\mathfrak{p})^{2}\Big)
	\]
	because
	\[
	\left[\big(U_{\mathfrak{p}}^{\alpha-1}\msf{f}\big)\lvert \tau_{\mathfrak{p}^\alpha}^{-1}\lvert \mbox{\tiny $\begin{pmatrix}
		1&\\
		&\varpi_\mathfrak{p}
	\end{pmatrix}$}\right]^{\mbox{\tiny $[\mathfrak{p}]$}}
	= 0
	\qquad  \text{and}\qquad \Big(\msf{f}\star\psi_\mathfrak{p}(\psi'_\mathfrak{p})^{2}\Big)^{\mbox{\tiny $[\mathfrak{p}]$}}=\msf{f}\star\psi_\mathfrak{p}(\psi'_\mathfrak{p})^{2}.
		\]
\end{proof}

\section{Automorphic $p$-adic $L$-functions}
Let $L/\bb{Q}$ be a real quadratic extension, $N$ a positive integer and $\frak{Q}$ an $\cal{O}_L$-ideal. 
We consider a primitive Hilbert cuspform $\msf{g}_\circ\in S_{t_L,t_L}(\mathfrak{Q};\chi_\circ; \overline{\bb{Q}})$ over $L$ with associated Artin representation $\varrho:\Gamma_L\to\mrm{GL}_2(\bb{C})$, and a primitive elliptic cuspform $\msf{f}_\circ\in S_{2,1}(N; \psi_\circ; \overline{\bb{Q}})$  such that the characters $\chi_\circ:\mrm{Cl}^+_L(\mathfrak{Q})\to O^\times$, $\psi_\circ:\mrm{Cl}^+_\bb{Q}(N)\to O^\times$  satisfy 
\begin{equation}\label{assumption characters}
\chi_{\circ\lvert\bb{Q}}\cdot\psi_\circ\equiv 1.
\end{equation} 
Denote by $\Pi=\pi_{\msf{g}_\circ}^u\otimes\sigma_{\msf{f}_\circ}^u$ the unitary cuspidal automorphic representation of $\mrm{GL}_2(\bb{A}_{L\times\bb{Q}})$ associated to the two cuspforms. The  restriction to the ideles $\bb{A}_\bb{Q}^\times$ of its central character $\omega_\Pi$ is trivial under hypothesis ($\ref{assumption characters}$). Therefore the twisted triple product $L$-function $L(s,\Pi,\mrm{r})$ admits meromorphic continuation to $\bb{C}$, functional equation $L(s,\Pi,\mrm{r})=\epsilon(s,\Pi,\mrm{r})L(1-s,\Pi,\mrm{r})$ and it is holomorphic at the center $s=1/2$ (\cite{BlancoFornea}, Section 3.1). By a direct inspection of Euler products (\cite{MicAnalytic}, Section 5) one deduces that the twisted triple product $L$-function does not vanish at its center if and only if the $L$-function $L\big(\msf{f}_\circ,\mrm{As}(\varrho),s\big)$ associated to the Galois representation $\mrm{As}(\varrho)\otimes\mrm{V}_{\msf{f}_\circ}$ -- where $\mrm{V}_{\msf{f}_\circ}$ is a $\msf{f}_\circ$-isotypic quotient of the \'etale cohomology of a modular curve -- does not vanish at its center:
\[
L\Big(\frac{1}{2},\Pi,\mrm{r}\Big)\not=0\qquad \iff\qquad L\Big(\msf{f}_\circ,\mrm{As}(\varrho),1\Big)\not=0.
\]
We let $\mu:\bb{A}_\bb{Q}^\times\to\bb{C}^\times$ denote the quadratic character attached to $L/\bb{Q}$ by class field theory and assume from now on that
\begin{equation}\label{epsilonfactors}
\epsilon_\ell\left(\frac{1}{2},\Pi_\ell,\mrm{r}\right)\cdot\mu_\ell(-1)=+1\qquad\qquad\forall\ \text{prime}\ \ell.
\end{equation}
\begin{remark}
The assumption on local $\epsilon$-factors can be satisfied by requiring that 
\begin{equation}\label{humhum}
   \text{$N$ splits in $L$ and is coprime to  $\frak{Q}$.}
\end{equation} 
Indeed, at a prime $\ell$ split in $L$, Prasad's calculations for triple product $L$-functions (\cite{prasadtrilinear}, Theorems 1.2, 1.4 and their proofs) ensure that as long as one on the local automorphic representations is an unramified principal series, condition \eqref{epsilonfactors} is satisfied (see also \cite{DR}, Introduction). At a non-split prime $\ell$, \eqref{humhum} implies that the local automorphic representation attached to $\msf{f}_\circ$ is an unramified principal series. Therefore, Prasad's calculations for twisted triple product $L$-functions (\cite{epsilonprasad}, Theorems $\text{B},\text{D}$ and their proofs) ensure that condition \eqref{epsilonfactors} is satisfied (see also \cite{YLiu}, Assumption 1.1, E2-E3).
\end{remark}

\begin{theorem}\label{Ichino}
	 The central $L$-value $L\big(\msf{f}_\circ,\mrm{As}(\varrho),1\big)$ does not vanish if and only if there exists an integer $M$ supported on the prime divisors of $N\cdot\mrm{N}_{L/\bb{Q}}(\frak{Q})\cdot d_{L/\bb{Q}}$, and an eigenform for the good Hecke operators $\breve{\msf{g}}_\circ\in S_{t_L,t_L}(M\cal{O}_L;O)[\msf{g}_\circ]$ such that the Petersson inner product
	\[
	\Big\langle\zeta^*(\breve{\msf{g}}_\circ)\otimes\lvert-\rvert_{\bb{A}_\bb{Q}}^{-1},\ \msf{f}_\circ^*\Big\rangle\not=0
	\]
	does not vanish. Here $\msf{f}_\circ^*\in S_{2,1}(N;\psi_\circ^{-1};\overline{\bb{Q}})$ denotes the elliptic eigenform whose Hecke eigenvalues are the complex conjugates of those of $\msf{f}_\circ$.
\end{theorem}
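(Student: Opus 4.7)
The plan is to apply Ichino's formula \cite{I}, which relates the central triple-product $L$-value to a normalized global period
\[
I(\phi) := \int_{[\mrm{GL}_2/\bb{Q}]} \phi(g,g)\, dg
\]
evaluated on the diagonally-embedded copy of $\mrm{GL}_{2,\bb{Q}}$ inside $\mrm{GL}_{2,L}\times\mrm{GL}_{2,\bb{Q}}$. Concretely, Ichino's identity expresses $|I(\phi)|^2$ as a nonzero multiple of $L(1/2,\Pi,\mrm{r})/L(1,\Pi,\mrm{Ad})$ times a product of local matrix-coefficient integrals $I_v(\phi_v)$. Since the Euler-product comparison recalled before the statement shows that non-vanishing of $L(1/2,\Pi,\mrm{r})$ is equivalent to non-vanishing of $L(\msf{f}_\circ,\mrm{As}(\varrho),1)$, it suffices to produce a test vector $\phi$ making $I(\phi)$ nonzero.

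First I would treat the global-to-local reduction. Under the local $\epsilon$-factor hypothesis (\ref{epsilonfactors}), Prasad's dichotomy (\cite{epsilonprasad}) guarantees that at every place $v$ the local trilinear form on $\Pi_v$ is non-zero; together with the standard archimedean and unramified computations, this produces a decomposable test vector $\phi = \otimes_v \phi_v$ with $\prod_v I_v(\phi_v)\neq 0$. Combined with Ichino's formula, $L(1/2,\Pi,\mrm{r})\neq 0$ is then equivalent to $I(\phi)\neq 0$ for such a choice. Next I would unfold $I(\phi)$: taking the $\mrm{GL}_{2,\bb{Q}}$-component of $\phi$ to be $\msf{f}_\circ^*$ and the $\mrm{GL}_{2,L}$-component to be an eigenform $\breve{\msf{g}}_\circ\in S_{t_L,t_L}(M\cal{O}_L;O)[\msf{g}_\circ]$, the diagonal period becomes, up to nonzero constants, the Petersson pairing of the diagonal restriction $\zeta^*(\breve{\msf{g}}_\circ)$ against $\msf{f}_\circ^*$. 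The twist by $\lvert-\rvert_{\bb{A}_\bb{Q}}^{-1}$ appearing in the statement is a weight-matching device: $\zeta^*(\breve{\msf{g}}_\circ)$ has weight $(\lvert t_L\rvert,\lvert t_L\rvert)=(2,2)$, and the norm twist shifts it to weight $(2,1)$, which is the weight at which the Petersson pairing against $\msf{f}_\circ^*\in S_{2,1}(N;\psi_\circ^{-1};\overline{\bb{Q}})$ is non-trivial. The backward implication of the theorem is immediate from the same chain of equalities.

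The main obstacle I anticipate is controlling the level of the local test vectors on the $\mrm{GL}_{2,L}$-side so that $\breve{\msf{g}}_\circ$ lies in $S_{t_L,t_L}(M\cal{O}_L;O)$ for a \emph{rational} integer $M$, rather than for some arbitrary $\cal{O}_L$-congruence level. This requires choosing Prasad's local test vectors at the primes of $L$ above each ramified rational prime in a Galois-symmetric fashion. The hypotheses that $N$ be split in $L$ and coprime to $\mathfrak{Q}$ are tailor-made for this: above each rational prime the ramified components of $\Pi$ either have a common shape (for the ramification inherited from $\msf{f}_\circ$) or decouple into essentially independent $\mrm{GL}_2(L_\mathfrak{p})$-representations (for the ramification of $\msf{g}_\circ$), so a compatible choice of local vectors achieves the required symmetric level, and the support condition on $M$ follows because $\Pi$ is unramified outside $N\cdot\mrm{N}_{L/\bb{Q}}(\mathfrak{Q})\cdot d_{L/\bb{Q}}$.
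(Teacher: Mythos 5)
Your outline is essentially the paper's own argument: the proof there simply cites the Ichino-formula/Prasad test-vector result of Blanco--Fornea (Theorem 3.2 and Lemma 3.4), with the only added remark being that one may take the $\mrm{GL}_{2,\bb{Q}}$-vector to be $\msf{f}_\circ^*$ itself (rather than a level-$M$ test vector in its isotypic space) by inspecting Ichino's local functionals as matrix coefficients — exactly the choice you make, and your weight-matching explanation of the $\lvert-\rvert_{\bb{A}_\bb{Q}}^{-1}$ twist is correct. So the proposal is correct and follows the same route as the paper.
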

\begin{proof}
	This is a refinement of a special case of (\cite{BlancoFornea}, Theorem 3.2 $\&$ Lemma 3.4). In \emph{loc. cit.} the theorem is proved for some $\breve{\msf{g}}_\circ\in S_{t_L,t_L}(M_0\cal{O}_L;O)[\msf{g}_\circ]$ and some $\breve{\msf{f}}_\circ^*\in S_{2,1}(M_0;O)[\msf{f}_\circ^*]$ where $M_0$ is an integer supported on the prime divisors of $N\cdot\mrm{N}_{L/\bb{Q}}(\frak{Q})\cdot d_{L/\bb{Q}}$. We claim that, up to modifying  $\breve{\msf{g}}_\circ$, the statement holds for $\breve{\msf{f}}_\circ^*=\msf{f}_\circ^*$. Indeed, let $\phi=(\breve{\msf{g}}_\circ)^u\otimes(\breve{\msf{f}}^{\frak{J}}_\circ)^u\in \Pi$ be the test vector provided by (\cite{BlancoFornea}, Lemma 3.4). Now, for any place $v$, Ichino's local functional
 \[
 \mrm{I}_v\in \mrm{Hom}_{\mrm{GL}_2(\bb{Q}_v)\times \mrm{GL}_2(\bb{Q}_v)}\big(\Pi_v\otimes\overline{{\Pi}_v},\bb{C}\big)
 \]
 is (an explicit multiple of) an integral of matrix coefficients
 \[
 \mrm{I}_v(\psi_v\otimes\psi_v')\overset{\cdot}{=}\int_{\bb{Q}_v^\times\backslash\mrm{GL}_2(\bb{Q}_v)}\langle\Pi_v(x_v)\psi_v,\psi_v'\rangle_v\hspace{0.5mm}\mrm{d}^\times x_v,
 \]
 where $\langle\hspace{2mm},\hspace{1mm}\rangle_v$ is a $\big(\mrm{GL}_2(L_v)\times\mrm{GL}_2(\bb{Q}_v)\big)$-invariant pairing between $\Pi_v$ and its complex conjugate $\overline{{\Pi}_v}$ (which we have identified with the contragradient $\widetilde{\Pi}_v)$. As the local representation $(\sigma_{\msf{f}_\circ}^u)_v$ is irreducible, it is spanned by $\mrm{GL}_2(\bb{Q}_v)$-translates of the local component at $v$ of $(\msf{f}^{\frak{J}}_\circ)^u$. Therefore, using the invariance of the pairings  $\langle\hspace{2mm},\hspace{1mm}\rangle_v$ and by changing coordinates in the integrals of matrix coefficients, the non-vanishing of the $\mrm{I}_v(\phi_v\otimes\overline{\phi}_v)$'s is equivalent to the non-vanishing of Ichino's local functionals on the local components of a test vector $\psi$ of the form $(\widetilde{\msf{g}}_\circ)^u\otimes(\msf{f}^{\frak{J}}_\circ)^u$ for some $\widetilde{\msf{g}}_\circ\in S_{t_L,t_L}(M\cal{O}_L;O)[\msf{g}_\circ]$ where $M$ is an integer (possibly different from $M_0$) supported on the prime divisors of $N\cdot\mrm{N}_{L/\bb{Q}}(\frak{Q})\cdot d_{L/\bb{Q}}$.
\end{proof}

\subsection{Construction}
Let $p$ be a rational prime split in $L$, coprime to the levels $\mathfrak{Q},N$ and such that $\msf{g}_\circ$, $\msf{f}_\circ$ are $p$-ordinary.

\begin{definition}
    Let $\Gamma$ denote the $p$-adic group $1+p\bb{Z}_p$ and
    $\bs{\Lambda} = O\llbracket \Gamma \rrbracket$ its completed group ring. We consider the weight space 
    $\bs{\cal{W}}=\mrm{Spf}(\bs{\Lambda})^\mrm{rig}$ whose arithmetic points correspond to continuous homomorphisms of the form
	\[
	\msf{w}_{\ell,\psi}:\bs{\Lambda} \longrightarrow \overline{\bb{Q}}_p ,\qquad [u]\mapsto\psi(u)u^{\ell-2}
	\]
	for $\ell\in\bb{Z}_{\ge1}$ and $\psi:\Gamma\to\bar{\bb{Q}}_p^\times$ a finite order character.
\end{definition}
\noindent Given the character
\begin{equation}
\boldsymbol{\chi}:\mrm{Cl}_L^+(\mathfrak{Q}p)\longrightarrow O^\times,\qquad z\mapsto \chi_\circ(z)\theta_L^{-1}(\bar{z})
\end{equation}
one can define the surjection
\begin{equation}
\phi_{\boldsymbol{\chi}}:O\llbracket\bb{G}_L(\mathfrak{Q})\rrbracket\twoheadrightarrow \bs{\Lambda},\qquad [(z,a)]\mapsto\bs{\chi}(z) [\xi_z^{-t_L}]. 
\end{equation}

\begin{remark}
Any arithmetic point $\mrm{P}:O\llbracket\bb{G}_L(\mathfrak{Q})\rrbracket\to\overline{\bb{Q}}_p$ of weight $(\ell t_L,t_L)$ and character $(\chi_\circ\theta_L^{1-\ell}\chi^{-1},\mathbbm{1})$,
for $\chi:Z_L(\mathfrak{Q})\to O^\times$ a $p$-power order character factoring through the norm, $\chi=\chi_{\mbox{\tiny $\spadesuit$}}\circ\mrm{N}_{L/\bb{Q}}$, factors through $\phi_{\boldsymbol{\chi}}$:
\begin{equation} \mrm{P}_{\ell t_L,t_L,\chi_\circ\theta_L^{1-\ell}\chi^{-1},\mathbbm{1}}=\msf{w}_{\ell,\chi_{\mbox{\tiny $\spadesuit$}}}\circ\phi_{\bs{\chi}}
\end{equation}
Furthermore, any arithmetic point $\mrm{P}:O\llbracket\bb{G}_L(\mathfrak{Q})\rrbracket\to\overline{\bb{Q}}_p$ factoring through $\phi_{\bs{\chi}}$  has that form.
\end{remark}

\begin{definition}
Given $\chi:Z_L(\mathfrak{Q})\to O^\times$ a $p$-power order character factoring through the norm, we denote by $\chi_{\mbox{\tiny $\spadesuit$}}: \Gamma\to O^\times$ the character satisfying $\chi=\chi_{\mbox{\tiny $\spadesuit$}}\circ\mrm{N}_{L/\bb{Q}}$.
\end{definition}

\noindent Fix  $\msf{g}_\circ^{\mbox{\tiny $(p)$}}$ an ordinary $p$-stabilization of $\msf{g}_\circ$ and let $\scr{G}_\mrm{n.o.}\in \overline{\mbf{S}}_L^\mrm{n.o.}(\mathfrak{Q};\boldsymbol{\chi};\mbf{I}_{\scr{G}_\mrm{n.o.}})$ be the nearly ordinary Hida family passing through it, that is, there exists an arithmetic point $\mrm{P}_\circ\in\cal{A}(\mbf{I}_{\scr{G}_{\mrm{n.o.}}})$ of weight $(t_L,t_L)$ and character $(\chi_\circ,\mathbbm{1})$ such that  $\scr{G}_\mrm{n.o.}(\mrm{P}_\circ)=\msf{g}_\circ^{\mbox{\tiny $(p)$}}$. If we set
	\begin{equation}\label{OrdinaryFamily}
	\mbf{I}_{\scr{G}}:=\mbf{I}_{\scr{G}_\mrm{n.o.}}\otimes_{\phi_{\bs{\chi}}} \bs{\Lambda}
	\qquad\text{and}\qquad \scr{G}:=(1\otimes\phi_{\bs{\chi}})\circ\scr{G}_\mrm{n.o.},
	\end{equation}
then $\scr{G}\in \overline{\mbf{S}}_L^\mrm{ord}(\mathfrak{Q};\boldsymbol{\chi};\mbf{I}_{\scr{G}})$ is the ordinary Hida family passing through $\msf{g}_\circ^{\mbox{\tiny $(p)$}}$ (\cite{Wiles-rep}, Theorem 3).

\subsubsection{The $\Lambda$-adic cuspform.}
Write $\cal{P}$ for the set of prime $\cal{O}_L$-ideals dividing $p$. The choice of ordinary $p$-stabilization of $\msf{g}_\circ$ determines an ordinary $p$-stabilization $\breve{\msf{g}}_\circ^{\mbox{\tiny $(p)$}}$ of any cuspform $\breve{\msf{g}}_\circ$ arising from Theorem $\ref{Ichino}$.  Let $\breve{\scr{G}}$ be the $\mathbf{I}_{\scr{G}}$-adic cuspform passing through $\breve{\msf{g}}_\circ^{\mbox{\tiny $(p)$}}$ defined as in  (\cite{DR} Section 2.6). 	
For a choice a prime $\frak{p}\in\cal{P}$ we define a homomorphism of $\bs{\Lambda}_{L,\bs{\chi}}$-modules $d_\frak{p}^{\bfcdot}\breve{\scr{G}}^{\mbox{\tiny $[\cal{P}]$}}:\mathbf{h}_L(M\cal{O}_L;O)\longrightarrow\mathbf{I}_{\scr{G}}$  by
\[
d_\frak{p}^{\bfcdot}\breve{\scr{G}}^{\mbox{\tiny $[\cal{P}]$}}\left(\langle z\rangle\mathbf{T}(y)\right)
=
\begin{cases} 
\breve{\scr{G}}\left(\langle z\rangle\mathbf{T}(y)\right)
\phi_{\boldsymbol{\chi}}\big([ y_\mathfrak{p}, 1]\big)
y_\mathfrak{p}^{-1} \qquad &\text{if}\ y_p\in\cal{O}_{L,p}^\times
\\
0\qquad&\text{otherwise}.
\end{cases}
\]
 Using diagonal restriction (\cite{BlancoFornea}, Section 2.3)
\[
\zeta: \mbf{h}_\bb{Q}(M;O)\to\mathbf{h}_L(M\cal{O}_L;O),\qquad \zeta([z,a])=[\Delta(z),\Delta(a)]a^{-1},
\] 
we define the  ordinary
$\mathbf{I}_{\scr{G}}$-adic cuspform  
\begin{equation}
e_\mrm{ord}\zeta^*\big(d_\frak{p}^{\bfcdot}\breve{\scr{G}}^{\mbox{\tiny $[\cal{P}]$}}\big)^\dagger\in \overline{\mbf{S}}_\bb{Q}^\mrm{ord}\big(M;\psi_\circ^{-1};\mathbf{I}_{\scr{G}}\big)
\end{equation}
by setting 
\begin{equation}\label{our rule}
\zeta^*\big(d_\frak{p}^{\bfcdot}\breve{\scr{G}}^{\mbox{\tiny $[\cal{P}]$}}\big)^{\dagger}\big(\langle z\rangle\mbf{T}(y)\big)=\theta_\bb{Q}(\bar{y})\cdot\phi_{\bs{\chi}}\big([\Delta(\xi_z)^{-1}\Delta(\xi_y)^{-1/2},1]\big)\cdot d_\frak{p}^{\bfcdot}\breve{\scr{G}}^{\mbox{\tiny $[\cal{P}]$}}\Big(\zeta\left[\langle z\rangle\mbf{T}(y)\right]\Big),
\end{equation}
where $\Delta:1+p\bb{Z}_p\hookrightarrow1+p\cal{O}_{L,p}$ is the diagonal embedding.
\begin{remark}
By Definition \ref{def I-adic cuspforms}, proving that $e_\mrm{ord}\zeta^*\big(d_\frak{p}^{\bfcdot}\breve{\scr{G}}^{\mbox{\tiny $[\cal{P}]$}}\big)^\dagger$ is an ordinary $\mbf{I}_\scr{G}$-adic cuspform amounts to showing that \eqref{our rule} determines a $\bs{\Lambda}_{\psi_\circ^{-1}}$-linear homomorphism. Then, Proposition \ref{specializations} justifies our claim.
\end{remark}
\noindent Recall that for each $\mu\in \mrm{I}_L$ there is a differential operator on $p$-adic cuspforms  given on adelic $q$-expansions by $\msf{a}_p(y,d_\mu\msf{g})=y_p^\mu\msf{a}_p(y,\msf{g})$.  (\cite{pHida}, Section $6\text{G}$). 
\begin{proposition}\label{specializations}
	Let $\mrm{P}\in \cal{A}_{\bs{\chi}}(\mbf{I}_\scr{G})$ be an arithmetic point of weight $(\ell t_L,t_L)$ and character $(\chi_\circ\theta_L^{1-\ell}\chi^{-1},\mathbbm{1})$. If we define the local character $\chi_\mathfrak{p}\theta^{\ell-1}_{L,\mathfrak{p}}:\ \cal{O}_{L,\mathfrak{p}}^\times\longrightarrow O^\times$ by 
	$x\mapsto \chi\theta^{\ell-1}_{L}(x)$ and $\mu\in\mrm{I}_L$ is the embedding inducing $\frak{p}$ then 
	\[
	e_\mrm{ord}\zeta^*\big(d_\frak{p}^{\bfcdot}\breve{\scr{G}}^{\mbox{\tiny $[\cal{P}]$}}\big)^\dagger(\mrm{P})=e_\mrm{ord}\zeta^*\Big[d_\mu^{1-\ell}\big(\breve{\msf{g}}^{\mbox{\tiny $[\cal{P}]$}}_\mrm{P}\star\chi^{-1}_\mathfrak{p}\theta_{L,\mathfrak{p}}^{1-\ell}\big)\Big]\otimes\theta_\bb{Q}^{\ell-1}\chi_{\mbox{\tiny $\spadesuit$}}\lvert-\rvert_{\bb{A}_\bb{Q}}^{\ell-2}
	\]
	is a classical cuspform of weight $(2,1)$ and character $(\psi_\circ^{-1},\mathbbm{1})$.
\end{proposition}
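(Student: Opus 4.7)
The plan is to check the asserted equality by comparing the values of both sides on the Hecke algebra. Since an ordinary $\mathbf{I}_\scr{G}$-adic cuspform is determined by its action on the operators $\langle z\rangle\mathbf{T}(y)$, and similarly a classical cuspform by its adelic Fourier coefficients, it suffices to show that $\mrm{P}$ applied to the left-hand side produces the same values as the classical cuspform on the right.

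First, I would unpack the specialization $\mrm{P}\bigl(d_\mathfrak{p}^{\bfcdot}\breve{\scr{G}}^{\mbox{\tiny $[\cal{P}]$}}(\langle z\rangle\mathbf{T}(y))\bigr)$ using the factorization $\mrm{P}=\msf{w}_{\ell,\chi_{\mbox{\tiny $\spadesuit$}}}\circ\phi_{\boldsymbol{\chi}}$ recorded in the Remark after the definition of $\phi_{\boldsymbol{\chi}}$. The factor $\phi_{\boldsymbol{\chi}}([y_\mathfrak{p},1])$ specializes to the local character value $\chi_\mathfrak{p}^{-1}\theta_{L,\mathfrak{p}}^{1-\ell}(y_\mathfrak{p})$, producing the star-twist of Proposition $\ref{TwistHMF}$; the multiplicative $y_\mathfrak{p}^{-1}$ factor, combined with the support condition $y_p\in\cal{O}_{L,p}^\times$, enacts Serre's $p$-adic derivation $d_\mu^{1-\ell}$ at the embedding $\mu$ inducing $\mathfrak{p}$, on top of the $\mathfrak{p}$-depletion already built into $\breve{\scr{G}}^{\mbox{\tiny $[\cal{P}]$}}$. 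The $p$-ordinarity of $\breve{\msf{g}}_\mrm{P}^{\mbox{\tiny $(p)$}}$ ensures that the fractional power $d_\mu^{1-\ell}$ acts well on the relevant spaces of $p$-adic Hilbert cuspforms.

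Next, I would apply $\zeta^*$ together with the $\dagger$ decoration. Lemma $\ref{q-exp diagonal}$ expresses the adelic Fourier coefficients of $\zeta^*(\cdot)$ as sums over $\eta\in L_+^\times$ with $\mrm{Tr}_{L/\bb{Q}}(\eta)=\xi$, matching those of the right-hand side. The $\dagger$ decoration inserts the factor $\theta_\bb{Q}(\bar{y})\phi_{\boldsymbol{\chi}}\bigl([\Delta(\xi_z)^{-1}\Delta(\xi_y)^{-1/2},1]\bigr)$, which under $\mrm{P}$ becomes exactly the twist by $\theta_\bb{Q}^{\ell-1}\chi_{\mbox{\tiny $\spadesuit$}}\lvert-\rvert_{\bb{A}_\bb{Q}}^{\ell-2}$: the $\Delta(\xi_y)^{-1/2}$ piece, evaluated through $\msf{w}_{\ell,\chi_{\mbox{\tiny $\spadesuit$}}}$, gives $\lvert-\rvert_{\bb{A}_\bb{Q}}^{\ell-2}$; the $\Delta(\xi_z)^{-1}$ piece yields $\chi_{\mbox{\tiny $\spadesuit$}}$; and the $\theta_\bb{Q}(\bar{y})$ factor is boosted to $\theta_\bb{Q}^{\ell-1}(\bar{y})$ by the $\theta_L^{1-\ell}$ appearing in the character of $\mrm{P}$. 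A short weight computation confirms that the norm-character twist brings the elliptic weight $(2,3-\ell)$ obtained after $d_\mu^{1-\ell}$ and $\zeta^*$ down to classical weight $(2,1)$, and the central character reduces to $(\psi_\circ^{-1},\mathbbm{1})$ using the hypothesis $\chi_{\circ\lvert\bb{Q}}\cdot\psi_\circ\equiv 1$ of $(\ref{assumption characters})$.

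Applying the ordinary projector $e_\mrm{ord}$ to both sides then concludes the proof. The main obstacle will be the precise bookkeeping of characters and weight shifts — in particular, tracing how $\phi_{\boldsymbol{\chi}}$ redistributes its input between a local twist at $\mathfrak{p}$, the global ray-class character $\chi_{\mbox{\tiny $\spadesuit$}}$, and the power of $\lvert-\rvert_{\bb{A}_\bb{Q}}$ that effects the weight renormalization — while respecting the canonical decomposition $Z_L(K)\cong\overline{\cal{E}}^+_{\mathfrak{Q}p}\backslash(1+p\cal{O}_{L,p})\times\mrm{cl}_L^+(\mathfrak{Q}p)$ and the factorization $\varepsilon_L=\eta_L\cdot\theta_L$.
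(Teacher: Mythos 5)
Your plan is essentially the paper's own proof: a direct computation that specializes $\mrm{P}=\msf{w}_{\ell,\chi_{\mbox{\tiny $\spadesuit$}}}\circ\phi_{\bs{\chi}}$ on the defining Hecke-algebra values, matches adelic Fourier coefficients to identify the star-twist $\star\chi_\mathfrak{p}^{-1}\theta_{L,\mathfrak{p}}^{1-\ell}$ and the operator $d_\mu^{1-\ell}$, unwinds the $\dagger$ twist into $\theta_\bb{Q}^{\ell-1}\chi_{\mbox{\tiny $\spadesuit$}}\lvert-\rvert_{\bb{A}_\bb{Q}}^{\ell-2}$, and checks the weight $(2,1)$ and nebentype $(\psi_\circ^{-1},\mathbbm{1})$ via $\chi_{\circ\lvert\bb{Q}}\psi_\circ\equiv 1$. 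The only caveat is a small redistribution in your bookkeeping (the full factor $y_\mathfrak{p}^{1-\ell}$ realizing $d_\mu^{1-\ell}$ comes from $y_\mathfrak{p}^{-1}$ together with the cyclotomic part of $\mrm{P}\circ\phi_{\bs{\chi}}([y_\mathfrak{p},1])$, not from $y_\mathfrak{p}^{-1}$ alone), which you already flag as the bookkeeping to be carried out.
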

\begin{proof}
	The direct computation 
	\[\begin{split}
	\mrm{P}\circ e_\mrm{ord}\zeta^*&\big(d_\frak{p}^{\bfcdot}\breve{\scr{G}}^{\mbox{\tiny $[\cal{P}]$}}\big)^{\dagger}([z,a])=\theta_\bb{Q}(a^{-1})\cdot\mrm{P}\circ\phi_{\bs{\chi}}([\Delta(\xi_z)^{-1}\Delta(\xi_a)^{1/2},1])\cdot\mrm{P}\circ d_\frak{p}^{\bfcdot}\breve{\scr{G}}^{\mbox{\tiny $[\cal{P}]$}}\Big([\Delta(z),\Delta(a)]a^{-1}\Big)\\
	&=\theta_{\bb{Q}}^{-1}(a)\cdot \chi^{2}_{\mbox{\tiny $\spadesuit$}}(z)\eta_\bb{Q}^{2(2-\ell)}(z)\chi^{-1}_{\mbox{\tiny $\spadesuit$}}(a)\eta_\bb{Q}^{\ell-2}(a)\cdot \mrm{P}\circ\phi_{\bs{\chi}}([\Delta(z)\Delta(a)_\mathfrak{p}^{-1},\Delta(a)]) \\
&=\theta_{\bb{Q}}^{-1}(a)\cdot \chi^{2}_{\mbox{\tiny $\spadesuit$}}(z)\eta_\bb{Q}^{2(2-\ell)}(z)\chi^{-1}_{\mbox{\tiny $\spadesuit$}}(a)\eta_\bb{Q}^{\ell-2}(a)\cdot (\chi_\circ\theta_L^{1-\ell}\chi^{-1})(\Delta(z)\Delta(a)_\mathfrak{p}^{-1})\varepsilon_L^{\ell-2}(\Delta(z)\Delta(a)_\mathfrak{p}^{-1})\\
	&=\psi^{-1}_\circ(z)
	\end{split}\] 
	shows that $e_\mrm{ord}\zeta^*\big(d_\frak{p}^{\bfcdot}\breve{\scr{G}}^{\mbox{\tiny $[\cal{P}]$}}\big)^\dagger(\mrm{P})$ is a cuspform of weight $(2,1)$ and character $(\psi_\circ^{-1},\mathbbm{1})$. Then we compute 
	\[
	\msf{a}_p(y,d_\frak{p}^{\bfcdot}\breve{\scr{G}}^{\mbox{\tiny $[\cal{P}]$}}(\mrm{P}))= y_\mathfrak{p}^{1-\ell}\msf{a}_p(y,\breve{\msf{g}}^{\mbox{\tiny $[\cal{P}]$}}_\mrm{P})\cdot\chi^{-1}(y_\mathfrak{p})\theta_L^{1-\ell}(y_\mathfrak{p})
	\] which implies 
	\[
	\zeta^*d_\frak{p}^{\bfcdot}\breve{\scr{G}}^{\mbox{\tiny $[\cal{P}]$}}(\mrm{P})=\zeta^*\Big[d_\mu^{1-\ell}\big(\breve{\msf{g}}^{\mbox{\tiny $[\cal{P}]$}}_\mrm{P}\star\chi^{-1}_\mathfrak{p}\theta_{L,\mathfrak{p}}^{1-\ell}\big)\Big].
	\]
	Finally,
	\[\begin{split}
	\msf{a}_p\Big(y,\zeta^*\big(d_\frak{p}^{\bfcdot}\breve{\scr{G}}^{\mbox{\tiny $[\cal{P}]$}}\big)^\dagger(\mrm{P})\Big)&=\theta_{\bb{Q}}(y)\cdot \chi_{\mbox{\tiny $\spadesuit$}}(y)\eta_\bb{Q}^{2-\ell}(y)\cdot \msf{a}_p\Big(y, \zeta^*d_\frak{p}^{\bfcdot}\breve{\scr{G}}^{\mbox{\tiny $[\cal{P}]$}}(\mrm{P})\Big)\\
	&= \theta^{\ell-1}_{\bb{Q}}(y) \chi_{\mbox{\tiny $\spadesuit$}}(y)\varepsilon_\bb{Q}^{2-\ell}(y)\cdot \msf{a}_p\Big(y, \zeta^*d_\frak{p}^{\bfcdot}\breve{\scr{G}}^{\mbox{\tiny $[\cal{P}]$}}(\mrm{P})\Big)
	\end{split}\]
	proves the last claim
	\[
	e_\mrm{ord}\zeta^*\big(d_\frak{p}^{\bfcdot}\breve{\scr{G}}^{\mbox{\tiny $[\cal{P}]$}}\big)^\dagger(\mrm{P})=e_\mrm{ord}\zeta^*\Big[d_\mu^{1-\ell}\big(\breve{\msf{g}}^{\mbox{\tiny $[\cal{P}]$}}_\mrm{P}\star\chi^{-1}_\mathfrak{p}\theta_{L,\mathfrak{p}}^{1-\ell}\big)\Big]\otimes\theta_\bb{Q}^{\ell-1}\chi_{\mbox{\tiny $\spadesuit$}}\lvert-\rvert_{\bb{A}_\bb{Q}}^{\ell-2}.
	\]
\end{proof}

\begin{corollary}\label{diagonal restriction family}
	We have
	\[
	e_{\mrm{ord}}\zeta^*\big(d_\frak{p}^{\bfcdot}\breve{\scr{G}}^{\mbox{\tiny $[\cal{P}]$}}\big)^\dagger\in S^\mrm{ord}_{2,1}\big(Mp;\psi^{-1}_\circ;O\big)\otimes_O\mbf{I}_{\scr{G}}.
	\]
\end{corollary}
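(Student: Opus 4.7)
The plan is to combine the pointwise classicality of Proposition \ref{specializations} with Hida's control theorem for ordinary $\bs{\Lambda}$-adic elliptic cuspforms. The key point is that Proposition \ref{specializations} is already tailored so that every arithmetic specialization lands in a \emph{single} classical space, independently of the point $\mrm{P}$; the corollary is then a formal consequence of the Zariski density of arithmetic weight-two points in $\mrm{Spf}(\mbf{I}_\scr{G})^\mrm{rig}$.

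First, I would recall that, by Hida's theorem, the module $\bar{\mbf{S}}^\mrm{ord}_\bb{Q}(M;\psi_\circ^{-1};\bs{\Lambda})$ is free of finite rank over $\bs{\Lambda}$, with weight-$(2,1)$ specialization canonically identified with the classical ordinary space $S^\mrm{ord}_{2,1}(Mp;\psi_\circ^{-1};O)$. Extending scalars along the finite flat homomorphism $\bs{\Lambda}\to\mbf{I}_\scr{G}$ produces a canonical $\mbf{I}_\scr{G}$-linear inclusion
\[
S^\mrm{ord}_{2,1}(Mp;\psi_\circ^{-1};O)\otimes_O\mbf{I}_\scr{G}\ \hookrightarrow\ \bar{\mbf{S}}^\mrm{ord}_\bb{Q}(M;\psi_\circ^{-1};\mbf{I}_\scr{G}),
\]
whose image is characterized by the property that every arithmetic specialization lies in the classical weight-$(2,1)$ subspace of level $Mp$ with constant nebentypus $\psi_\circ^{-1}$. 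Equivalently, an $\mbf{I}_\scr{G}$-adic form lies in the image precisely when it is annihilated by the kernel of the projection of the big ordinary Hecke algebra onto its weight-two quotient at level $Mp$; this membership can be detected pointwise on any dense set of arithmetic points.

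It then suffices to invoke Proposition \ref{specializations}: at \emph{every} arithmetic point $\mrm{P}\in\cal{A}_{\bs{\chi}}(\mbf{I}_\scr{G})$ of weight $(\ell t_L,t_L)$, the specialization of $e_\mrm{ord}\zeta^*\big(d_\frak{p}^{\bfcdot}\breve{\scr{G}}^{\mbox{\tiny $[\cal{P}]$}}\big)^\dagger$ is a classical cuspform in $S^\mrm{ord}_{2,1}(Mp;\psi_\circ^{-1};O)$, uniformly in $\ell$ and in the wild character. The auxiliary twists by $\theta_\bb{Q}^{\ell-1}\chi_{\mbox{\tiny $\spadesuit$}}\lvert-\rvert_{\bb{A}_\bb{Q}}^{\ell-2}$ built into the dagger construction are engineered precisely to absorb the weight variation and produce this uniformity; this is the only non-routine input, and the hard computation has already been carried out in Proposition \ref{specializations}. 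With uniform classicality in hand, density of arithmetic points forces the form to lie in the displayed image, yielding the corollary.
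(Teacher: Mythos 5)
Your proposal is correct and follows essentially the same route as the paper: Proposition \ref{specializations} gives that every specialization is a classical ordinary form of weight $(2,1)$, level $Mp$ and character $\psi_\circ^{-1}$, and then density of arithmetic points (the paper uses just the crystalline ones) together with Hida's control/duality shows the $\mbf{I}_\scr{G}$-adic form factors through the weight-$(2,1)$ quotient of $\mbf{h}^\mrm{ord}_\bb{Q}(M;O)$, i.e.\ lies in $S^\mrm{ord}_{2,1}(Mp;\psi_\circ^{-1};O)\otimes_O\mbf{I}_\scr{G}$. The only difference is expository: you spell out the Hida-theoretic characterization of the image that the paper invokes implicitly via ``factors through the reduction.''
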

\begin{proof}
By Proposition $\ref{specializations}$
\[
e_{\mrm{ord}}\zeta^*\big(d_\frak{p}^{\bfcdot}\breve{\scr{G}}^{\mbox{\tiny $[\cal{P}]$}}\big)^\dagger(\mrm{P})\in S^\mrm{ord}_{2,1}\big(Mp;\psi^{-1}_\circ;O\big)
\] for any arithmetic crystalline point $\mrm{P}\in\cal{A}_{\bs{\chi}}(\mbf{I}_\scr{G})$ of weight $(\ell t_L,t_L)$ and character $(\chi_\circ\theta_L^{1-\ell},\mathbbm{1})$.
By the density of such crystalline points, the homomorphism $e_{\mrm{ord}}\zeta^*\big(d_\frak{p}^{\bfcdot}\breve{\scr{G}}^{\mbox{\tiny $[\cal{P}]$}}\big)^\dagger$ factors through the reduction to weight $(2,1)$, level $Mp$ and character $\psi^{-1}_\circ$ of $\mbf{h}_\bb{Q}^\mrm{ord}(M;O)$.	
\end{proof}

\subsubsection{The automorphic $p$-adic $L$-function.}
Let  $\msf{f}_\circ^*\in S_{2,1}(N,\psi_\circ^{-1};\overline{\bb{Q}})$ be the elliptic eigenform whose Hecke eigenvalues are the complex conjugates of those of $\msf{f}_\circ$, then we can write 
\[
e_{\msf{f}_\circ^{*\mbox{\tiny $(p)$}}}e_{\mrm{ord}}\zeta^*\big(d_\frak{p}^{\bfcdot}\breve{\scr{G}}^{\mbox{\tiny $[\cal{P}]$}}\big)^\dagger=\sum_{d\mid{(M/N)}}\boldsymbol{\lambda}_d\cdot \msf{f}_\circ^{*\mbox{\tiny $(p)$}}(q^d)\qquad\text{for}\quad\{\bs{\lambda}_d\}_d\subset\mbf{I}_\scr{G},
\] and define 
\[
\mbf{L}_p(\breve{\scr{G}},\msf{f}_\circ)=\sum_{d\mid{(M/N)}}\boldsymbol{\lambda}_d\cdot \frac{\big\langle\msf{f}_\circ^{*\mbox{\tiny $(p)$}}(q^d),\msf{f}_\circ^{*\mbox{\tiny $(p)$}}(q)\big\rangle}{\big\langle\msf{f}_\circ^{*\mbox{\tiny $(p)$}},\msf{f}_\circ^{*\mbox{\tiny $(p)$}}\big\rangle}\in\mbf{I}_\scr{G}.
\]

\begin{definition}\label{autpadicLfun}
Set $\bs{\cal{W}}_\scr{G}=\mrm{Spf}(\mbf{I}_\scr{G})^\mrm{rig}$, then the automorphic $p$-adic $L$-function attached to $\big(\breve{\scr{G}}, \msf{f}_\circ\big)$ is the rigid-analytic function 
\[
\mathscr{L}^\mrm{aut}_p(\breve{\scr{G}},\msf{f}_\circ):\boldsymbol{\cal{W}}_{\scr{G}}\longrightarrow\bb{C}_p
\]
determined by
$\mbf{L}_p(\breve{\scr{G}},\msf{f}_\circ)\in\mbf{I}_\scr{G}$. 
\end{definition}
\noindent For any arithmetic point $\mrm{P}\in\boldsymbol{\cal{W}}_\scr{G}$ of weight $(\ell t_L,t_L)$ and character $(\chi_\circ\theta_L^{1-\ell}\chi^{-1},\mathbbm{1})$ we have
\[
\mathscr{L}^\mrm{aut}_p(\breve{\scr{G}},\msf{f}_\circ)(\mrm{P}) =\frac{\Big\langle e_\mrm{ord}\zeta^*\Big[d_\mu^{1-\ell}\big(\breve{\msf{g}}^{\mbox{\tiny $[\cal{P}]$}}_\mrm{P}\star\chi^{-1}_\mathfrak{p}\theta_{L,\mathfrak{p}}^{1-\ell}\big)\Big]\otimes\theta_\bb{Q}^{\ell-1}\chi_{\mbox{\tiny $\spadesuit$}}\lvert-\rvert_{\bb{A}_\bb{Q}}^{\ell-2},\ \msf{f}_\circ^{*\mbox{\tiny $(p)$}}\Big\rangle}{\Big\langle \msf{f}_\circ^{*\mbox{\tiny $(p)$}}, \msf{f}_\circ^{*\mbox{\tiny $(p)$}}\Big\rangle}.
\]

\begin{remark}
A different choice of  prime $\frak{p}$ above $p$ changes the automorphic $p$-adic $L$-function $\mathscr{L}^\mrm{aut}_p(\breve{\scr{G}},\msf{f}_\circ)$ by a sign. 
\end{remark}

\subsection{Weight one specialization}
 Consider the $T_0(\varpi_p)$-Hecke polynomial of $\msf{f}_\circ^*\in S_{2,1}(N;\psi_\circ^{-1};\overline{\bb{Q}})$
 \[ 1-\msf{a}_p(\varpi_p,\msf{f}_\circ^*)X+\psi^{-1}_\circ(p)pX^2=(1-\alpha_{\msf{f}_\circ^*}X)(1-\beta_{\msf{f}_\circ^*}X)
 \] 
  and suppose $\alpha_{\msf{f}_\circ^*}$  denotes the inverse of the root which is a $p$-adic unit. By defining
\[
\msf{E}(\msf{f}_\circ^*):=(1-\beta_{\msf{f}_\circ^*}\alpha_{\msf{f}_\circ^*}^{-1})
\] 
we can rewrite the values of the $p$-adic $L$-function at every arithmetic point $\mrm{P}\in\boldsymbol{\cal{W}}_\scr{G}$ as
	\begin{equation}\label{eq: second expression p-adic L-function}
	\mathscr{L}^\mrm{aut}_p(\breve{\scr{G}},\msf{f}_\circ)(\mrm{P}) =\frac{1}{\msf{E}(\msf{f}_\circ^*)}\frac{\left\langle e_\mrm{ord}\zeta^*\Big[d_\mu^{1-\ell}\big(\breve{\msf{g}}^{\mbox{\tiny $[\cal{P}]$}}_\mrm{P}\star\chi^{-1}_\mathfrak{p}\theta_{L,\mathfrak{p}}^{1-\ell}\big)\Big]\otimes\theta_\bb{Q}^{\ell-1}\chi_{\mbox{\tiny $\spadesuit$}}\lvert-\rvert_{\bb{A}_\bb{Q}}^{\ell-2},\ \msf{f}_\circ^{*}\right\rangle}{\big\langle \msf{f}_\circ^{*}, \msf{f}_\circ^{*}\big\rangle}.
\end{equation}
\begin{lemma}\label{specialvalue}
	We have
	\[
	\mathscr{L}^\mrm{aut}_p(\breve{\scr{G}},\msf{f}_\circ)(\text{P}_\circ)
	=
	\frac{\cal{E}_p^\mrm{sp}(\msf{g}_\circ,\msf{f}_\circ^*)}
	{\msf{E}(\msf{f}_\circ^*)\cal{E}_{1,p}(\msf{g}_\circ,\msf{f}_\circ^*)}
	\frac{\left\langle \zeta^*(\breve{\msf{g}}_\circ)\otimes\lvert-\rvert^{-1}_{\bb{A}_\bb{Q}}, \msf{f}_\circ^{*}\right\rangle}
	{\big\langle \msf{f}_\circ^{*}, \msf{f}_\circ^{*}\big\rangle}
	\]
	for
	\[
	\cal{E}^\mrm{sp}_{p}(\msf{g}_\circ,\msf{f}^*_\circ)=\underset{\bfcdot,\star\in\{\alpha,\beta\}}{\prod}\left(1-\bfcdot_1\star_2 \beta_{\msf{f}_\circ^*}^{-1}\right),\qquad\cal{E}_{1,p}(\msf{g}_\circ,\msf{f}^*_\circ)=1-\alpha_1\beta_1\alpha_2\beta_2(\beta_{\msf{f}^*_\circ})^{-2}.
	\]
	where $\alpha_i,\beta_i$ are the inverses of the roots of the $T(\mathfrak{p}_i)$-Hecke polynomial for $\msf{g}_\circ$, $i=1,2$.
\end{lemma}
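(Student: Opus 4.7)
The strategy is to take formula (\ref{eq: second expression p-adic L-function}) and specialize it at the weight-one arithmetic point $\mrm{P}_\circ$, then remove the $\cal{P}$-depletion of $\breve{\msf{g}}_\circ$ at the cost of an explicit ratio of Euler factors. At $\mrm{P}_\circ$ we have $\ell=1$ and trivial finite-order characters $\chi=\chi_{\mbox{\tiny $\spadesuit$}}=\mathbbm{1}$, so $d_\mu^{1-\ell}$ is the identity, the local twist $\star\chi_\mathfrak{p}^{-1}\theta_{L,\mathfrak{p}}^{1-\ell}$ is trivial, and $\theta_\bb{Q}^{\ell-1}\chi_{\mbox{\tiny $\spadesuit$}}\lvert-\rvert^{\ell-2}_{\bb{A}_\bb{Q}}$ reduces to $\lvert-\rvert^{-1}_{\bb{A}_\bb{Q}}$. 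Substituting these simplifications gives
\[
\mathscr{L}^\mrm{aut}_p(\breve{\scr{G}},\msf{f}_\circ)(\mrm{P}_\circ)=\frac{1}{\msf{E}(\msf{f}_\circ^*)}\cdot\frac{\big\langle e_\mrm{ord}\,\zeta^*\big(\breve{\msf{g}}_\circ^{\mbox{\tiny $[\cal{P}]$}}\big)\otimes\lvert-\rvert^{-1}_{\bb{A}_\bb{Q}},\,\msf{f}_\circ^*\big\rangle}{\langle\msf{f}_\circ^*,\msf{f}_\circ^*\rangle},
\]
so it suffices to compare this with the corresponding quantity for the undepleted form $\breve{\msf{g}}_\circ$.

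Since $\breve{\msf{g}}_\circ$ is an eigenform for the good Hecke operators at the two primes $\mathfrak{p}_1,\mathfrak{p}_2$ above $p$, the first step is to express the $\cal{P}$-depletion through the $V$-operators as
\[
\breve{\msf{g}}_\circ^{\mbox{\tiny $[\cal{P}]$}}=\prod_{i=1,2}\bigl(1-\alpha_i V(\varpi_{\mathfrak{p}_i})\bigr)\bigl(1-\beta_i V(\varpi_{\mathfrak{p}_i})\bigr)\,\breve{\msf{g}}_\circ,
\]
which follows from the factorization of the $T(\mathfrak{p}_i)$-Hecke polynomial together with the identity $\breve{\msf{g}}_\circ=T(\varpi_{\mathfrak{p}_i})V(\varpi_{\mathfrak{p}_i})\breve{\msf{g}}_\circ-\chi_\circ(\mathfrak{p}_i)V(\varpi_{\mathfrak{p}_i})^2\breve{\msf{g}}_\circ$ modulo the $\mathfrak{p}_i$-depleted part. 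By Lemma \ref{q-exp diagonal} the diagonal restriction intertwines $V(\varpi_{\mathfrak{p}_i})$ on Hilbert cuspforms with the elliptic operator $V(p)$, once the $\mrm{N}_{L/\bb{Q}}$-normalization is absorbed into the twist by $\lvert-\rvert^{-1}_{\bb{A}_\bb{Q}}$, so that applying $\zeta^*(-)\otimes\lvert-\rvert^{-1}_{\bb{A}_\bb{Q}}$ yields a degree-four polynomial in $V(p)$ acting on $\zeta^*(\breve{\msf{g}}_\circ)\otimes\lvert-\rvert^{-1}_{\bb{A}_\bb{Q}}$, whose coefficients are the four products $\bfcdot_1\star_2$ featuring in $\cal{E}_p^\mrm{sp}$.

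Next, the Petersson product against $\msf{f}_\circ^*$ can be computed after applying $e_\mrm{ord}$. Since $\msf{f}_\circ^*$ has level $N$ coprime to $p$, its image in the ordinary part at level $Np$ is the ordinary $p$-stabilization $\msf{f}_\circ^{*\mbox{\tiny $(p)$}}$; under adjunction $V(p)$ acts on the $\msf{f}_\circ^{*\mbox{\tiny $(p)$}}$-isotypic component as multiplication by $\beta_{\msf{f}_\circ^*}^{-1}$, so each factor $(1-\bfcdot_1\star_2 V(p))$ specializes to the corresponding factor of $\cal{E}_p^\mrm{sp}(\msf{g}_\circ,\msf{f}_\circ^*)$ and the whole degree-four polynomial becomes the scalar $\cal{E}_p^\mrm{sp}(\msf{g}_\circ,\msf{f}_\circ^*)$. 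Finally, comparing the $p$-stabilized pairing $\langle-,\msf{f}_\circ^{*\mbox{\tiny $(p)$}}\rangle$ against the pairing against the newform $\msf{f}_\circ^*$ introduces exactly the factor $\cal{E}_{1,p}(\msf{g}_\circ,\msf{f}_\circ^*)$ in the denominator: this is the standard Euler factor measuring the discrepancy between the newform at level $N$ and its ordinary stabilization at level $Np$, and a direct check using $\alpha_i\beta_i=\chi_\circ(\mathfrak{p}_i)$ together with $\chi_{\circ\lvert\bb{Q}}\cdot\psi_\circ\equiv1$ identifies it with $1-\alpha_1\beta_1\alpha_2\beta_2\beta_{\msf{f}_\circ^*}^{-2}$. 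Combining the three steps yields the claimed formula.

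The main obstacle lies in the careful bookkeeping of normalizations: the diagonal embedding $\zeta$ changes level and introduces a global norm factor, the $V(\varpi_{\mathfrak{p}_i})$-operators carry inverse-norm coefficients from their definition, and the identification of $V(p)$ with multiplication by $\beta_{\msf{f}_\circ^*}^{-1}$ on the ordinary projection relies on the subtle standard comparison between the newform at level $N$ and its ordinary $p$-stabilization at level $Np$. Verifying that all these normalizations conspire to yield precisely the stated Euler factors $\cal{E}_p^\mrm{sp}$ and $\cal{E}_{1,p}$—including the factor $\msf{E}(\msf{f}_\circ^*)$ already present in (\ref{eq: second expression p-adic L-function})—is the technical heart of the argument, while the structural content is straightforward once the weight-one specialization of the twists and differential operators has been carried out.
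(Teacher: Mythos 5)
Your overall plan---specialize the expression (\ref{eq: second expression p-adic L-function}) at $\mrm{P}_\circ$ and then trade the $\cal{P}$-depletion of $\breve{\msf{g}}_\circ$ for an explicit ratio of Euler factors---is the same as the paper's, but the paper delegates that second step to (\cite{BlancoFornea}, Lemma 3.11), after first moving the norm twist onto the elliptic side and computing that the unit root of $\msf{f}_\circ^{*}\otimes\lvert-\rvert_{\bb{A}_\bb{Q}}$ is $\alpha_{\msf{f}^*_\circ\otimes\lvert-\rvert}=\beta_{\msf{f}^*_\circ}\cdot p^{-1}$, whereas you attempt to re-derive that lemma by hand. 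Your derivation has a genuine gap: diagonal restriction does \emph{not} intertwine the individual operators $V(\varpi_{\mathfrak{p}_i})$ with the elliptic operator $V(p)$. On adelic $q$-expansions, $V(\varpi_{\mathfrak{p}_1})\msf{g}$ is supported on indices $\eta\in\mathfrak{p}_1$, while $V(p)\zeta^*\msf{g}$ is supported on integers divisible by $p$; since $\mrm{Tr}_{L/\bb{Q}}(\eta)$ for $\eta\in\mathfrak{p}_1$ need not be divisible by $p$, one has $\zeta^*\big(V(\varpi_{\mathfrak{p}_1})\msf{g}\big)\neq V(p)\zeta^*\msf{g}$ in general; only the balanced operator $V(\varpi_p)=V(\varpi_{\mathfrak{p}_1})V(\varpi_{\mathfrak{p}_2})$ commutes with $\zeta^*$ in this way. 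Hence, when you expand $\breve{\msf{g}}_\circ^{[\cal{P}]}=\prod_{i}(1-\alpha_iV(\varpi_{\mathfrak{p}_i}))(1-\beta_iV(\varpi_{\mathfrak{p}_i}))\breve{\msf{g}}_\circ$, the unbalanced cross terms (odd powers of a single $V(\varpi_{\mathfrak{p}_i})$) are not controlled by your argument; showing that after $e_\mrm{ord}\zeta^*$ they vanish or recombine correctly is precisely the nontrivial content of the cited computations in \cite{BlancoFornea} (compare the use of its Lemma 3.10 in Corollary \ref{corollcomputation} of this paper).

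The gap is also visible in the shape of your answer: if the claimed intertwining were true, the depletion would become $\prod_{i\in\{1,2\},\,\bfcdot\in\{\alpha,\beta\}}\big(1-\bfcdot_i\,V(p)\big)$ on the elliptic side, which under $V(p)\mapsto\beta_{\msf{f}_\circ^*}^{-1}$ yields a product over the \emph{individual} eigenvalues $\alpha_1,\beta_1,\alpha_2,\beta_2$, not the asserted $\cal{E}_p^\mrm{sp}(\msf{g}_\circ,\msf{f}_\circ^*)=\prod_{\bfcdot,\star\in\{\alpha,\beta\}}\big(1-\bfcdot_1\star_2\beta_{\msf{f}_\circ^*}^{-1}\big)$, whose factors involve the mixed products $\bfcdot_1\star_2$. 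Those mixed (Asai-type) products arise from the interaction of the two primes above $p$ under the trace in the diagonal restriction, which your shortcut does not capture; similarly, the factor $\cal{E}_{1,p}(\msf{g}_\circ,\msf{f}_\circ^*)$ comes out of the same computation rather than from a bare newform-versus-$p$-stabilization comparison. The weight-one specialization step and the final handling of the twist $\lvert-\rvert_{\bb{A}_\bb{Q}}^{-1}$ are fine, but the central identity requires either invoking (\cite{BlancoFornea}, Lemma 3.11) as the paper does, or an honest treatment of the unbalanced terms.
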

\begin{proof}
	The value of the $p$-adic $L$-function at $\mrm{P}_\circ\in\bs{\cal{W}}_\scr{G}$ can be expressed as
	\[
	\mathscr{L}^\mrm{aut}_p(\breve{\scr{G}},\msf{f}_\circ)(\mrm{P}_\circ) =
	\frac{1}{\msf{E}(\msf{f}_\circ^*)}\frac{\left\langle e_\mrm{ord}\zeta^*\big(\breve{\msf{g}}^{\mbox{\tiny $[\cal{P}]$}}_\mrm{P}\big),\ \msf{f}_\circ^{*}\otimes\lvert-\rvert_{\bb{A}_\bb{Q}}\right\rangle}{\big\langle \msf{f}_\circ^{*}, \msf{f}_\circ^{*}\big\rangle}.
	\]
	The $T_0(\varpi_p)$-Hecke polynomial for the eigenform  $\msf{f}_\circ^{*}\otimes\lvert-\rvert_{\bb{A}_\bb{Q}}\in S_{2,2}(N;\psi_\circ^{-1};\overline{\bb{Q}})$ is 
	\[
	1-\msf{a}_p(\varpi_p,\msf{f}_\circ^*)p^{-1}X+\psi_\circ^{-1}(p)p^{-1}X^2=(1-\alpha_{\msf{f}^*_\circ\otimes\lvert-\rvert}X)(1-\beta_{\msf{f}^*_\circ\otimes\lvert-\rvert}X).
	\] 
	Therefore the inverse of the root which is a $p$-adic unit is 
	\[
	\alpha_{\msf{f}^*_\circ\otimes\lvert-\rvert}=\beta_{\msf{f}^*_\circ}\cdot p^{-1}.
	\]  
	The result follows applying (\cite{BlancoFornea}, Lemma 3.11):
	\[\begin{split}
	\mathscr{L}^\mrm{aut}_p(\breve{\scr{G}},\msf{f}_\circ)(\mrm{P}_\circ) &=
	\frac{1}{\msf{E}(\msf{f}_\circ^*)}\frac{\underset{\bfcdot,\star\in\{\alpha,\beta\}}{\prod}\left(1-\bfcdot_1\star_2 \big(\alpha_{\msf{f}_\circ^*\otimes\lvert-\rvert}\cdot p\big)^{-1}\right)}{1-\alpha_1\beta_1\alpha_2\beta_2\big(\alpha_{\msf{f}^*_\circ\otimes\lvert-\rvert}\cdot p\big)^{-2}}
	\frac{\Big\langle e_\mrm{ord}\zeta^*\big(\breve{\msf{g}}_\mrm{P}\big),\ \msf{f}_\circ^{*}\otimes\lvert-\rvert_{\bb{A}_\bb{Q}}\Big\rangle}{\big\langle \msf{f}_\circ^{*}, \msf{f}_\circ^{*}\big\rangle}\\
	&=
	\frac{\cal{E}_p^\mrm{sp}(\msf{g}_\circ,\msf{f}_\circ^*)}
	{\msf{E}(\msf{f}_\circ^*)\cal{E}_{1,p}(\msf{g}_\circ,\msf{f}_\circ^*)} \frac{\Big\langle e_\mrm{ord}\zeta^*\big(\breve{\msf{g}}_\mrm{P}\big),\ \msf{f}_\circ^{*}\otimes\lvert-\rvert_{\bb{A}_\bb{Q}}\Big\rangle}{\big\langle \msf{f}_\circ^{*}, \msf{f}_\circ^{*}\big\rangle}\\
	&=
	\frac{\cal{E}_p^\mrm{sp}(\msf{g}_\circ,\msf{f}_\circ^*)}
	{\msf{E}(\msf{f}_\circ^*)\cal{E}_{1,p}(\msf{g}_\circ,\msf{f}_\circ^*)}
	\frac{\left\langle \zeta^*(\breve{\msf{g}}_\circ)\otimes\lvert-\rvert^{-1}_{\bb{A}_\bb{Q}}, \msf{f}_\circ^{*}\right\rangle}
	{\big\langle \msf{f}_\circ^{*}, \msf{f}_\circ^{*}\big\rangle}.
	\end{split}\]
\end{proof}

\begin{corollary}\label{firststep}
	\[
	L\Big(\msf{f}_\circ,\mrm{As}(\varrho),1\Big)\not=0\qquad\iff\qquad\mathscr{L}^\mrm{aut}_p(\breve{\scr{G}},\msf{f}_\circ)(\text{P}_\circ)\not=0.
	\]
\end{corollary}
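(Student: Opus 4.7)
The plan is to derive the corollary by juxtaposing Lemma \ref{specialvalue} with Theorem \ref{Ichino}. The former expresses $\mathscr{L}^\mrm{aut}_p(\breve{\scr{G}},\msf{f}_\circ)(\mrm{P}_\circ)$ as an explicit rational multiple of the Petersson pairing $\langle \zeta^*(\breve{\msf{g}}_\circ)\otimes|-|_{\bb{A}_\bb{Q}}^{-1},\,\msf{f}_\circ^*\rangle$, while the latter identifies the non-vanishing of that same pairing (for some admissible choice of $\breve{\msf{g}}_\circ$) with the non-vanishing of $L(\msf{f}_\circ,\mrm{As}(\varrho),1)$. Matching the quantifiers is automatic because the construction of $\breve{\scr{G}}$ takes as input any $\breve{\msf{g}}_\circ$ arising from Theorem \ref{Ichino}. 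The only genuine thing to verify is that the Euler-type proportionality factor
\[
\frac{\cal{E}_p^\mrm{sp}(\msf{g}_\circ,\msf{f}_\circ^*)}{\msf{E}(\msf{f}_\circ^*)\,\cal{E}_{1,p}(\msf{g}_\circ,\msf{f}_\circ^*)}
\]
relating the two quantities is a non-zero, finite element of $\overline{\bb{Q}}_p$.

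This I would check by elementary archimedean and $p$-adic size estimates. Because $\msf{g}_\circ$ has parallel weight one and is associated to the Artin representation $\varrho$ of finite image, the inverse roots $\alpha_i,\beta_i$ of the $T(\mathfrak{p}_i)$-Hecke polynomial are roots of unity, so $|\alpha_i|_\bb{C}=|\beta_i|_\bb{C}=1$. Because $\msf{f}_\circ^*$ has weight two and $\msf{f}_\circ$ is $p$-ordinary, both inverse roots of its $p$-th Hecke polynomial have complex absolute value $\sqrt{p}$, with $\alpha_{\msf{f}_\circ^*}$ the unit root and $\beta_{\msf{f}_\circ^*}$ of strictly positive $p$-adic valuation. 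It follows that each factor $1-\bfcdot_1\star_2\beta_{\msf{f}_\circ^*}^{-1}$ in $\cal{E}_p^\mrm{sp}(\msf{g}_\circ,\msf{f}_\circ^*)$ has $|\bfcdot_1\star_2\beta_{\msf{f}_\circ^*}^{-1}|_\bb{C}=p^{-1/2}\neq 1$ and is hence non-zero; in $\cal{E}_{1,p}(\msf{g}_\circ,\msf{f}_\circ^*)$ the argument $\alpha_1\beta_1\alpha_2\beta_2\beta_{\msf{f}_\circ^*}^{-2}$ has complex absolute value $p^{-1}\neq 1$; and $\msf{E}(\msf{f}_\circ^*)=1-\beta_{\msf{f}_\circ^*}\alpha_{\msf{f}_\circ^*}^{-1}\neq 0$ because $\mrm{ord}_p(\beta_{\msf{f}_\circ^*}\alpha_{\msf{f}_\circ^*}^{-1})>0$.

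With these non-vanishings in hand, the formula of Lemma \ref{specialvalue} forces $\mathscr{L}^\mrm{aut}_p(\breve{\scr{G}},\msf{f}_\circ)(\mrm{P}_\circ)$ and $\langle\zeta^*(\breve{\msf{g}}_\circ)\otimes|-|^{-1}_{\bb{A}_\bb{Q}},\msf{f}_\circ^*\rangle$ to vanish simultaneously, and Theorem \ref{Ichino} then yields both directions of the corollary. There is no serious obstacle here: beyond invoking the two preceding results the argument is just a short collection of Ramanujan-type absolute value estimates, together with the ordinarity hypothesis on $\msf{f}_\circ$ to separate $\alpha_{\msf{f}_\circ^*}$ from $\beta_{\msf{f}_\circ^*}$ $p$-adically.
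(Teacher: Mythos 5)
Your proposal is correct and follows the paper's own route: it deduces the corollary by combining Theorem \ref{Ichino} with Lemma \ref{specialvalue} and then checking that the Euler-type factors $\msf{E}(\msf{f}_\circ^*)$, $\cal{E}_{1,p}(\msf{g}_\circ,\msf{f}_\circ^*)$ and $\cal{E}_p^\mrm{sp}(\msf{g}_\circ,\msf{f}_\circ^*)$ are non-zero. The only (immaterial) difference is that you verify non-vanishing via archimedean Weil-bound estimates for two of the factors, where the paper simply invokes $p$-adic valuations.
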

\begin{proof}
	It follows from Theorem \ref{Ichino} and Lemma $\ref{specialvalue}$. The $p$-adic valuations of  $\msf{E}(\msf{f}_\circ^*)$, $\cal{E}_{1,p}(\msf{g}_\circ,\msf{f}_\circ^*)$, and $\cal{E}_p^\mrm{sp}(\msf{g}_\circ,\msf{f}_\circ^*)$ show they are not zero.
\end{proof}


\section{Review of Hilbert modular varieties}\label{review HMV}

Recall the algebraic groups
\[
D = \mrm{Res}_{L/\bb{Q}}\big(\bb{G}_{m,L}\big)
,\qquad
G = \mrm{Res}_{L/\bb{Q}}\big(\mrm{GL}_{2,L}\big)
,\qquad
G^* = G \times_D \bb{G}_{m,\bb{Q}}.
\]
Given $K \le G(\widehat{\bb{Z}})$ a compact open subgroup we define
\begin{equation}
K^*= K\cap G^*(\bb{A}_f),\qquad K'=K \cap \mrm{GL}_2(\bb{A}_f)
\end{equation}
 with associated Shimura varieties
\begin{equation}
	\begin{split}
&S(K)(\bb{C}) := G(\bb{Q})_+ \backslash  \mathfrak{H}^2 \times G(\bb{A}_f)/K,\\
&
S^*(K^*)(\bb{C}) := G^*(\bb{Q})_+ \backslash  \mathfrak{H}^2 \times G^*(\bb{A}_f)/K^*,\\
&
Y(K')(\bb{C}) := \mrm{GL}_2(\bb{Q})_+ \backslash  \mathfrak{H} \times \mrm{GL}_2(\bb{A}_f)/K'.
\end{split}
\end{equation}
If $K$ is sufficiently small, then $S(K)(\bb{C})$, $S^*(K^*)(\bb{C})$ and $Y(K')(\bb{C})$ have smooth canonical models $S(K)$, $S^*(K^*)$ and $Y(K')$  defined over $\bb{Q}$. 

\subsubsection{Special level subgroups.}
Let $p$ be a rational prime split in $L$,
$p\cal{O}_L = \mathfrak{p}_1\mathfrak{p}_2$ and fix isomorphisms
$\cal{O}_{L,\mathfrak{p}_1} \simeq \bb{Z}_p$,
$\cal{O}_{L,\mathfrak{p}_2} \simeq \bb{Z}_p$
to identify elements of
$\cal{O}_{L,p} $
with pairs $(a_{\frak{p}_1},a_{\frak{p}_2})\in\bb{Z}_p\times\bb{Z}_p$.

\begin{definition}\label{LevelSubgroups1}
For any $\alpha\ge1$ and any compact open $K \subseteq G(\bb{A}_f)$ hyperspecial at $p$ we set 
\[
K_\diamond(p^\alpha):=\left\{\begin{pmatrix}a&b\\c&d \end{pmatrix}\in K_0(p^\alpha)\Big\lvert\  a_{\mathfrak{p}_1}\equiv a_{\mathfrak{p}_2},\ d_{\mathfrak{p}_1}\equiv d_{\mathfrak{p}_2}\pmod{p^{\alpha}}\right\},\quad
K_{\diamond,1}(p^\alpha):=K_\diamond(p^\alpha)\cap V_1(p^\alpha)
\]
\[
	K_{\mbox{\tiny $\mrm{X}$},1}(p^\alpha):=\left\{\begin{pmatrix}a&b\\c&d \end{pmatrix}\in K_0(p^\alpha)\Big\lvert\ a_{\mathfrak{p}_1}\equiv d_{\mathfrak{p}_2},\ d_{\mathfrak{p}_1}\equiv a_{\mathfrak{p}_2}\equiv1\pmod{p^{\alpha}}\right\},
\]
and
\[
	K_{\diamond,t}(p^\alpha):=\left\{\begin{pmatrix}a&b\\c&d \end{pmatrix}\in K_0(p^\alpha)\Big\lvert\  a_{\mathfrak{p}_1}d_{\mathfrak{p}_1}\equiv  a_{\mathfrak{p}_2}d_{\mathfrak{p}_2},\ d_{\mathfrak{p}_1}d_{\mathfrak{p}_2}\equiv 1 \pmod{p^{\alpha}}\right\}.
\]
\end{definition}

\begin{definition}\label{LevelSubgroups2}
For any $\alpha\ge1$ and any compact open $K \subseteq G(\bb{A}_f)$ hyperspecial at $p$ we set 

\[
	K_{\mbox{\tiny $\mrm{X}$}}(p^\alpha)
	:=
	\left\{\begin{pmatrix}a&b\\c&d \end{pmatrix}\in K_0(p^\alpha)\Big\lvert\  a_{\mathfrak{p}_1}d_{\mathfrak{p}_1}\equiv  a_{\mathfrak{p}_2}d_{\mathfrak{p}_2},\ d_{\mathfrak{p}_1}a_{\mathfrak{p}_2}\equiv 1 \pmod{p^{\alpha}}\right\},
\]
it is the subgroup of $K_0(p^\alpha)$   generated by $K_{\mbox{\tiny $\mrm{X}$},1}(p^\alpha)$ and matrices $\gamma$ of the form
\[\gamma_v=\mathbbm{1}_2\quad\text{for}\ v\not=p,\qquad\gamma_p=\begin{pmatrix}d^{-1}&\\&d \end{pmatrix}\quad\text{with}\quad d_{\mathfrak{p}_1}\equiv d_{\mathfrak{p}_2} \pmod{p^{\alpha}}.
\] 
\end{definition}

\subsubsection{Geometrically connected components.}
The determinant $\det: G\to D$ induces a bijection between the set of geometric connected components of $S(K)$ and $\text{cl}^+_L(K)=L^\times_+\backslash \bb{A}_{L,f}^{\times}/\det(K)$ the strict class group of $K$.
The natural surjection $\text{cl}^+_L(K)\twoheadrightarrow \text{cl}^+_L$ to the strict ideal class group of $L$ can be used to label the geometrically connected components of  $S(K)$ as follows. 
Fix fractional ideals $\mathfrak{c}_1,\dots,\mathfrak{c}_{h^+_L}$ forming a set of representatives of $\text{cl}^+_L$. For every such ideal $\frak{c}$ choose $[\frak{c}]_K\subseteq G(\bb{A}_f)$ a set of diagonal matrices with lower right entry equal to $1$ and  whose determinants represent the preimage of the class $[\mathfrak{c}]$ in $\text{cl}^+_L(K)$.   By strong approximation there is a decomposition
\[
S(K)(\bb{C})=G(\bb{Q})_+\backslash \mathfrak{H}^{2}\times G(\bb{A}_f)/K=\underset{[\mathfrak{c}]\in \text{cl}^+_L(K)}{\coprod} S^\mathfrak{c}(K)(\bb{C}),
\]
\begin{equation}\label{complex uniformization}
\text{where}\qquad S^\mathfrak{c}(K)(\bb{C})=\underset{g\in[\mathfrak{c}]_K}{\coprod}\Gamma(g,K)\backslash\mathfrak{H}^{2}\qquad\text{for}\qquad \Gamma(g,K)=gKg^{-1}\cap G(\bb{Q})_+.
\end{equation}

\begin{proposition}\label{prop comparison different models}
If $\cal{O}^\times_L$ does not contain a totally positive unit congruent to $-1$ modulo $p$, then the complex uniformizations of $S(K_\diamond(p^\alpha))$ and $S(K_{\mbox{\tiny $\mrm{X}$}}(p^\alpha))$ can be canonically identified:
\[
S(K_\diamond(p^\alpha))(\bb{C})=S(K_{\mbox{\tiny $\mrm{X}$}}(p^\alpha))(\bb{C})\qquad \forall \alpha\ge1.
\]
\end{proposition}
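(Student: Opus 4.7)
The plan is to compare the two uniformizations term by term in the disjoint-union description
\[
S(K)(\bb{C}) \;=\; \coprod_{[\mathfrak{c}]\in\mathrm{cl}_L^+(K)}\ \coprod_{g\in[\mathfrak{c}]_K}\Gamma(g,K)\backslash\mathfrak{H}^2.
\]
First I would verify that $\det(K_\diamond(p^\alpha))=\det(K_{\mbox{\tiny $\mrm{X}$}}(p^\alpha))$: in both groups the condition at $p$ reduces, using $c\in p^\alpha\cal{O}_{L,p}$, to $(ad)_{\mathfrak{p}_1}\equiv (ad)_{\mathfrak{p}_2}\pmod{p^\alpha}$, so $\mathrm{cl}_L^+(K_\diamond)=\mathrm{cl}_L^+(K_{\mbox{\tiny $\mrm{X}$}})$ and one may choose a common set of diagonal representatives $\{g_i=\mathrm{diag}(a_i,1)\}\subset[\mathfrak{c}]_K$.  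This matches the index sets of the two disjoint unions canonically, giving a bijection between the geometric components of $S(K_\diamond(p^\alpha))$ and $S(K_{\mbox{\tiny $\mrm{X}$}}(p^\alpha))$.

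For each $g_i$ it then suffices to produce a canonical identification $\Gamma(g_i,K_\diamond)\backslash\mathfrak{H}^2=\Gamma(g_i,K_{\mbox{\tiny $\mrm{X}$}})\backslash\mathfrak{H}^2$.  Since the centre $L^\times\subset G(\bb{Q})_+$ acts trivially on $\mathfrak{H}^2$, I would reformulate this as an equality of the extended groups $\Gamma(g_i,K_\diamond)\cdot L^\times=\Gamma(g_i,K_{\mbox{\tiny $\mrm{X}$}})\cdot L^\times$ inside $G(\bb{Q})_+$.  Concretely, given $\gamma\in\Gamma(g_i,K_\diamond)$ one searches for a scalar $\mu\in\cal{O}_L^\times$ such that $\mu\gamma\in\Gamma(g_i,K_{\mbox{\tiny $\mrm{X}$}})$.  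A local computation at $p$, using the $\diamond$-conditions $a_{\mathfrak{p}_1}\equiv a_{\mathfrak{p}_2}$ and $d_{\mathfrak{p}_1}\equiv d_{\mathfrak{p}_2}\pmod{p^\alpha}$ on $\gamma$, unwinds the membership $\mu\gamma\in K_{\mbox{\tiny $\mrm{X}$}}$ into the two congruences
\[
\mu_{\mathfrak{p}_1}^{2}\equiv\mu_{\mathfrak{p}_2}^{2}\pmod{p^\alpha}\qquad\text{and}\qquad \mathrm{Nm}_{L/\bb{Q}}(\mu)\cdot\det(\gamma)\equiv 1\pmod{p^\alpha}.
\]

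The hard step is the sign analysis, where the hypothesis on units enters.  The first congruence forces $\mu_{\mathfrak{p}_1}\equiv\pm\mu_{\mathfrak{p}_2}$, while $\mathrm{Nm}_{L/\bb{Q}}(\mu)\in\{\pm1\}$ for $\mu\in\cal{O}_L^\times$.  Chasing these through the second congruence, the only obstruction to the existence of $\mu$, and to the resulting identification being independent of the choice of $\mu$ (up to a central ambiguity that acts trivially on $\mathfrak{H}^2$), is the potential presence in $\cal{O}_L^\times$ of a totally positive unit congruent to $-1$ modulo $p$ -- exactly the situation that the hypothesis rules out.  The symmetric direction, starting from $\gamma'\in\Gamma(g_i,K_{\mbox{\tiny $\mrm{X}$}})$, is handled the same way and yields $\Gamma(g_i,K_\diamond)\cdot L^\times=\Gamma(g_i,K_{\mbox{\tiny $\mrm{X}$}})\cdot L^\times$.

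Finally I would check that the bijection constructed in this way is independent of the representative $g_i$ and of the chosen scalar $\mu$, which is a routine consequence of the trivial action of $L^\times$ on $\mathfrak{H}^2$.  Assembling over all classes $[\mathfrak{c}]$ and all representatives gives the claimed canonical identification $S(K_\diamond(p^\alpha))(\bb{C})=S(K_{\mbox{\tiny $\mrm{X}$}}(p^\alpha))(\bb{C})$.
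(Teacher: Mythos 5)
Your proposal is correct and follows essentially the same route as the paper: identify the determinant images (hence the component sets and representatives) of $K_\diamond(p^\alpha)$ and $K_{\mbox{\tiny $\mrm{X}$}}(p^\alpha)$, then compare the lattices $\Gamma(g,K_\diamond(p^\alpha))$ and $\Gamma(g,K_{\mbox{\tiny $\mrm{X}$}}(p^\alpha))$, with the unit hypothesis entering exactly to exclude $\det(\gamma)\equiv -1 \pmod p$. The only difference is cosmetic: your auxiliary scalar $\mu\in\cal{O}_L^\times$ is unnecessary, since $\det(\gamma)$ is a totally positive unit of norm $1$ and the level conditions force $\det(\gamma)\equiv\pm1\pmod{p^\alpha}$, so once the $-1$ case is ruled out one gets $\Gamma(g,K_\diamond(p^\alpha))=\Gamma(g,K_{\mbox{\tiny $\mrm{X}$}}(p^\alpha))$ on the nose (i.e.\ $\mu=1$ always works), which is what the paper proves.
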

\begin{proof}
	First we note that $\mrm{det}\big(K_\diamond(p^\alpha)\big)$ and $\mrm{det}\big(K_{\mbox{\tiny $\mrm{X}$}}(p^\alpha)\big)$ both equal  $(\bb{Z}_p^\times+p^\alpha\cal{O}_{L,p})\mrm{det}\big(K^p\big)$, thus  we can choose the same set of matrices $[\frak{c}]_{K_\diamond(p^\alpha)}=[\frak{c}]_{K_{\mbox{\tiny $\mrm{X}$}}(p^\alpha)}$
	for any fractional ideal $\frak{c}$. To conclude we claim that for every matrix $g$ in those sets we have
	\[ 
	\Gamma(g,K_\diamond(p^\alpha))=\Gamma(g,K_{\mbox{\tiny $\mrm{X}$}}(p^\alpha)).
	\]
    We only write the argument showing  $\Gamma(g,K_\diamond(p^\alpha))\subseteq\Gamma(g,K_{\mbox{\tiny $\mrm{X}$}}(p^\alpha))$ because the other inclusion is analogous. Let $\gamma$ be a matrix in $\Gamma(g,K_\diamond(p^\alpha))$, then its entries $a_\gamma, b_\gamma, c_\gamma, d_\gamma\in L$ satisfy $(a_\gamma)_p, (d_\gamma)_p\in\bb{Z}_p^\times+p^\alpha\cal{O}_{L,p}$, $(c_\gamma)_p\in p^\alpha\cal{O}_{L,p}$ and its determinant $\det(\gamma)\in\cal{O}^\times_{L,+}$ is a totally positive unit. In order to prove the claimed inclusion we need to show that 
	\[(a_\gamma d_\gamma)_{\mathfrak{p}_1}\equiv_{p^\alpha}(a_\gamma d_\gamma)_{\mathfrak{p}_2}\qquad\text{and}\qquad (d_\gamma)_{\mathfrak{p}_1}\equiv_{p^\alpha}(a_\gamma)_{\mathfrak{p}_2}.
	\]  Since $N_{L/\bb{Q}}\big(\det(\gamma)\big)=1$, one sees that either $a_\gamma d_\gamma- 1\in p^\alpha\cal{O}_{L}$ or $a_\gamma d_\gamma+ 1\in p^\alpha\cal{O}_{L}$. The second option implies that  $\det(\gamma)\equiv_p-1$ that is ruled out by our assumption. Hence we must have $a_\gamma d_\gamma- 1\in p^\alpha\cal{O}_{L}$.
\end{proof}

\subsection{Hecke correspondences}

%
%

We recall the conventions for Hecke correspondences that are used in the rest of this work. For $K\le G(\widehat{\bb{Z}})$ an open compact subgroup and an element $g \in G(\bb{A}_f)$ there is a map
\begin{equation}
\mathfrak{T}_g:
S(K) \longrightarrow S(gKg^{-1}),
\qquad
[x,h]\mapsto[x, hg^{-1}],
\end{equation}
descending to a morphism of $\bb{Q}$-varieties.  The double coset $[KgK]$ defines the following correspondence
\begin{equation}\label{HeckeCorrespondence}
\xymatrix{
	S( g^{-1} K g\cap K)
	\ar[r]^-{\mathfrak{T}_{g}} \ar[d]^{\mrm{pr}} 
	&
	S(  K \cap g K g^{-1} )
	\ar[d]^-{\mrm{pr}'}
	\\
	S(K) 
	&
	S(K)
	}
\end{equation}
acting on the cohomology of $S(K)$ via $(\mrm{pr}')_{*}\circ(\mathfrak{T}_{g})_*\circ (\mrm{pr})^{*}$. Suppose $V(\mathfrak{N})\le K$ for some $\cal{O}_L$-ideal $\mathfrak{N}$, then
for $a,b \in \cal{O}_{L,\mathfrak{N}}^\times$ and $z \in \bb{A}_{L,f}^\times$, we can consider correspondences associated to the double cosets
\begin{equation}
T(a,b) = \left[K\begin{pmatrix} a&0\\0&b \end{pmatrix}K\right],\qquad \langle z \rangle= \left[K\begin{pmatrix} z&0\\0&z \end{pmatrix}K\right].
\end{equation}
Since $z\cdot\mathbbm{1}_2$ belongs to the center, the action of $\langle z\rangle$ is that of
 $(\mathfrak{T}_{z})_* = (\mathfrak{T}_{z^{-1}})^*$. Moreover, if the matrix $D_{a,b}:=\mbox{\tiny $\begin{pmatrix} a&0\\0&b \end{pmatrix}$}$
normalizes the compact open subgroup $K$,
 then $T(a,b)$ acts as
 $(\mathfrak{T}_{D_{a,b}})_* = (\mathfrak{T}_{D_{a,b}^{-1}})^*$. 
 
\begin{definition}\label{diamonds on cohomology}
 Given an element $(z,a)\in\bb{G}_L(K)$ we set
\begin{equation}
\langle z, a \rangle:= \big(\mathfrak{T}_{z}\big)\circ \big(\mathfrak{T}_{D_{a^{-1},1}}\big).
\end{equation}
\end{definition}

\subsubsection{The $U_p$-correspondence.}\label{def Up}
 Recall that $K(p^\alpha)$ denotes the subgroup of $K$ of those matrices whose $p$-component is unipotent modulo $p^\alpha$, that $\varpi_{\mathfrak{p}} \in \bb{A}_{L,f}^\times$ is the element whose $\mathfrak{p}$-component is $p$ and every other component is 1, and $\varpi_p = \varpi_{\mathfrak{p}_1}\varpi_{\mathfrak{p}_2}$.
Consider the matrix $g_p = \mbox{\tiny $\begin{pmatrix}\varpi_p&0\\0&1\end{pmatrix}$}$ satisfying
$g_p^{-1}K(p^\alpha)g_p \cap K(p^\alpha)
=
K(p^\alpha)\cap K_0(p^{\alpha+1})$.
We denote by
\[
\pi_1: S(K(p^\alpha)\cap K_0(p^{\alpha+1}))
\longrightarrow S(K(p^\alpha)),
\qquad
\pi_2: S(K(p^\alpha)\cap K_0(p^{\alpha+1}))
\longrightarrow S(K(p^\alpha)),
\]
the two projections $\pi_1=\mrm{pr}$  and $\pi_2=\mrm{pr}'\circ\mathfrak{T}_{g_p}$.
Then the $U_p$-correspondence and its adjoint $U_p^*$ are given by 
\begin{equation}
U_p = (\pi_{2})_*\circ(\pi_1)^*,\qquad U_p^* = (\pi_{1})_*\circ(\pi_2)^*.
\end{equation}
Analogously, there is a $U_\mathfrak{p}$-correspondence for each prime $\frak{p}$ above $p$. Let 
$ g_\mathfrak{p}
 =
 \mbox{\tiny $\begin{pmatrix} 
 \varpi_{\mathfrak{p}}&0\\
 0&1
 \end{pmatrix}$}$, then
 $g_\mathfrak{p}^{-1}K(p^\alpha)g_\mathfrak{p}
 =
 K(p^\alpha) \cap K_0(\mathfrak{p}^{\alpha+1})$, and there are projections 
 \[
 \pi_{1,\mathfrak{p}}, \pi_{2,\mathfrak{p}}: S(K(p^\alpha) \cap K_0(\mathfrak{p}^{\alpha+1}))\longrightarrow S(K(p^\alpha)).
 \] 
 The $U_{\mathfrak{p}}$ operator is defined as  $U_\mathfrak{p}=(\pi_{2,\mathfrak{p}})_*\circ(\pi_{1,\mathfrak{p}})^*$
 with its adjoint given by $U^*_\mathfrak{p}=(\pi_{1,\mathfrak{p}})_*\circ(\pi_{2,\mathfrak{p}})^*$. The above discussion holds verbatim if we change $K(p^\alpha)$ to any other level subgroup defined in Definitions \ref{LevelSubgroups1} and \ref{LevelSubgroups2}.


\subsubsection{Atkin--Lehner map.}
Recall the matrix 
$\tau_{\mathfrak{p}_2^\alpha} \in G(\bb{A}_{f})$ 
defined by
\[
(\tau_{\mathfrak{p}_2^\alpha})_{\mathfrak{p}_2}=\begin{pmatrix}
	0&-1\\
	\varpi_{\mathfrak{p}_2}^\alpha&0
\end{pmatrix},\qquad
(\tau_{\mathfrak{p}^\alpha})_v=\mathbbm{1}_2\quad \text{for}\; v\not=\mathfrak{p}_2
\] 
which normalizes $K(p^\alpha)$ and induces the morphism
\[
\mathfrak{T}_{\tau_{\mathfrak{p}_2^\alpha}}:
S(K(p^\alpha)) \longrightarrow S(K(p^\alpha))
,\qquad 
[x,h] \mapsto [x,h\tau_{\mathfrak{p}_2^\alpha}^{-1}].
\]
The morphism $\mathfrak{T}_{\tau_{\mathfrak{p}_2^\alpha}}$ interacts with diamonds operators as follows: 
	\begin{equation}\label{AL-diamonds}
	\mathfrak{T}_{\tau_{\mathfrak{p}_2^\alpha}} \circ \big\langle z,a	\big\rangle
	=
	\big\langle z\cdot a_{\mathfrak{p}_2}, a\cdot a_{\mathfrak{p}_2}^{-2}\big\rangle\circ \mathfrak{T}_{\tau_{\mathfrak{p}_2^\alpha}}
	\end{equation}
for any $(z,a)\in \bb{G}_L(K)$. Furthermore,
\begin{equation}
\big(\mathfrak{T}_{\tau_{\mathfrak{p}_2^\alpha}}\big)^2=\big\langle-\varpi_{\mathfrak{p}_2}^\alpha,1\big\rangle.
\end{equation}
It is also useful to know how $\mathfrak{T}_{\tau_{\mathfrak{p}_2^\alpha}}$ interacts with the Hecke operators at $p$. For $\frak{p}$ equal to either $\frak{p}_1$ or $\frak{p}_2$ there are commutative diagrams
 \[\xymatrix{
 S(K(p^\alpha)\cap K_0(p^{\alpha+1}))
 \ar[d]^{\nu_{1,\mathfrak{p}}} \ar[drr]^{\pi_1}
 &&&
 S(K(p^\alpha)\cap K_0(p^{\alpha+1}))
 \ar[d]^{\nu_{2,\mathfrak{p}}} \ar[drr]^{\pi_2}
 & \\
  S(K(p^\alpha)\cap K_0(\mathfrak{p}^{\alpha+1}))
  \ar[rr]_{\qquad \pi_{1,\mathfrak{p}}} 
  && 
  S(K(p^\alpha)),     
  &      
  S(K(p^\alpha)\cap K_0(\mathfrak{p}^{\alpha+1}))
  \ar[rr]_{\qquad \pi_{2,\mathfrak{p}}} 
  &&
  S(K(p^\alpha))
 }\]
 where $\nu_{1,\frak{p}}([x,h])=[x,h]$ is the natural map, while $\nu_{2,\frak{p}}([x,h])=[x,hg_{\frak{p}'}^{-1}]$ for $\frak{p}'\not=\frak{p}$. 
 A direct calculation shows the following relations
\begin{equation}\label{Commute1}
\pi_{1,\mathfrak{p}_2}\circ \mathfrak{T}_{\tau_{\mathfrak{p}_2^{\alpha+1}}}
=
\mathfrak{T}_{\tau_{\mathfrak{p}_2^\alpha}}\circ\pi_{2,\mathfrak{p}_2}
,\qquad
\pi_{2,\mathfrak{p}_2}\circ \mathfrak{T}_{\tau_{\mathfrak{p}_2^{\alpha+1}}}
=
\langle\varpi_{\mathfrak{p}_2}^{-1},1\rangle \circ \mathfrak{T}_{\tau_{\mathfrak{p}_2^\alpha}} \circ \pi_{1,\mathfrak{p}_2}
\end{equation}
and 
\begin{equation}\label{Commute2}
\nu_{1,\mathfrak{p}_1} \circ \mathfrak{T}_{\tau_{\mathfrak{p}_2^{\alpha+1}}}
=
\mathfrak{T}_{\tau_{\mathfrak{p}_2^\alpha}} \circ \nu_{2,\mathfrak{p}_1}
,\qquad
\nu_{2,\mathfrak{p}_1}\circ \mathfrak{T}_{\tau_{\mathfrak{p}_2^{\alpha+1}}}
=
\langle\varpi_{\mathfrak{p}_2}^{-1},1\rangle \circ \mathfrak{T}_{\tau_{\mathfrak{p}_2^\alpha}} \circ \nu_{1,\mathfrak{p}_1}.
\end{equation}
It follows that
\begin{equation}\label{U_pAdjoint}
(\mathfrak{T}_{\tau_{\mathfrak{p}_2^{\alpha}}})_*\circ U_{\mathfrak{p}_2}\circ (\mathfrak{T}_{\tau_{\mathfrak{p}_2^{\alpha}}})^*
=
U^*_{\mathfrak{p}_2}\circ \langle\varpi_{\mathfrak{p}_2},1\rangle.
\end{equation}
Furthermore, it is clear that $U_{\mathfrak{p}_1}$ commutes with $(\mathfrak{T}_{\tau_{\mathfrak{p}_2^{\alpha}}})^*$.

\begin{remark}
	The level subgroups of Definitions $\ref{LevelSubgroups1}, \ref{LevelSubgroups2}$ are related by the Atkin-Lehner map
	\[
	\mathfrak{T}_{\tau_{\mathfrak{p}_2^\alpha}}: S(K_{\diamond,1}(p^\alpha))\longrightarrow S(K_{\mbox{\tiny $\mrm{X}$},1}(p^\alpha)),\qquad
	\mathfrak{T}_{\tau_{\mathfrak{p}_2^\alpha}}: S(K_{\mbox{\tiny $\mrm{X}$}}(p^\alpha)) \longrightarrow
	 S(K_{\diamond,t}(p^\alpha)).
	\]
	\end{remark}

\subsection{On different models of Hilbert modular surfaces}
 From now on we assume that there is no totally positive units in $\cal{O}^\times_{L}$ congruent to $-1$ modulo $p$ and that the open compact subgroup $K\le G(\widehat{\bb{Z}})$ satisfies $\mrm{det}(K)=\widehat{\cal{O}_L}^\times$.
\begin{definition} 
	Given $a\in(\bb{Z}/p^\alpha\bb{Z})^\times$ we denote by $\sigma_a\in\mrm{Gal}(\bb{Q}(\zeta_{p^ \alpha})/\bb{Q})$ the element corresponding to $a$ by class field theory. Specifically, $\sigma_a(\zeta_{p^\alpha})=(\zeta_{p^\alpha})^{a^{-1}}$.
	\end{definition}

\begin{lemma}\label{diffent models}
	There are canonical isomorphisms of $\bb{Q}$-varieties
	\[\xymatrix{
	& S(K(p^\alpha))\ar[dl]_{\sim}\ar[dr]^{\sim}& &\\
	S(K_{\diamond,1}(p^\alpha)) \times_{\bb{Q}}\bb{Q}(\zeta_{p^\alpha})&&
	S(K_{\mbox{\tiny $\mrm{X}$},1}(p^\alpha)) \times_{\bb{Q}}\bb{Q}(\zeta_{p^\alpha}).
	}\]
\end{lemma}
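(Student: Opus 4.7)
The plan is to exhibit the natural projection $\pi \colon S(K(p^\alpha)) \to S(K_{\diamond,1}(p^\alpha))$ as a finite \'etale torsor under the group $H := K_{\diamond,1}(p^\alpha)/K(p^\alpha) \simeq (\bb{Z}/p^\alpha\bb{Z})^\times$ whose Galois descent datum from $\bar{\bb{Q}}$ to $\bb{Q}$ is the cyclotomic character. This will identify $\pi$ canonically with the base change $S(K_{\diamond,1}(p^\alpha)) \times_{\bb{Q}} \bb{Q}(\zeta_{p^\alpha}) \to S(K_{\diamond,1}(p^\alpha))$; the statement for $K_{\mbox{\tiny $\mrm{X}$},1}(p^\alpha)$ follows by the same argument, or else by composing with the Atkin--Lehner isomorphism $\mathfrak{T}_{\tau_{\mathfrak{p}_2^\alpha}}$ recorded in the remark preceding this lemma.

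First I would establish the stabilizer equality $\Gamma(g, K(p^\alpha)) = \Gamma(g, K_{\diamond,1}(p^\alpha))$ for every representative $g$ of a geometric component, by imitating the proof of the preceding proposition. Any $\gamma \in G(\bb{Q})_+$ with $g^{-1}\gamma g \in K_{\diamond,1}(p^\alpha)$ has $\det\gamma \in \cal{O}_{L,+}^\times$, and the reduction of $\det\gamma$ modulo $p^\alpha$ equals the $a$-entry of $g^{-1}\gamma g$ (since $c \equiv 0$ and $d \equiv 1 \pmod{p^\alpha}$). The $K_\diamond$-condition $a_{\mathfrak{p}_1} \equiv a_{\mathfrak{p}_2} \pmod{p^\alpha}$ combined with $\mrm{N}_{L/\bb{Q}}(\det\gamma) = 1$ forces $\det\gamma \equiv \pm 1 \pmod{p^\alpha}$, and the standing hypothesis on units excludes $-1$, so $g^{-1}\gamma g \in K(p^\alpha)$. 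Via the complex uniformization (\ref{complex uniformization}) and the computation $\det K_{\diamond,1}(p^\alpha)/\det K(p^\alpha) \simeq H$, this forces every geometric component of $S(K_{\diamond,1}(p^\alpha))$ to admit exactly $|H| = \phi(p^\alpha)$ preimages in $S(K(p^\alpha))$, each mapping isomorphically down. The free transitive action of $H$ through the diamond operators $\langle (a,a), 1 \rangle$ then exhibits $\pi$ as a finite \'etale $H$-torsor.

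Next I would identify the Galois descent datum. By Shimura's reciprocity law on the canonical $\bb{Q}$-model of $S(K(p^\alpha))$, the action of $\mrm{Gal}(\bar{\bb{Q}}/\bb{Q})^{\mrm{ab}} = \widehat{\bb{Z}}^\times$ on $\pi_0(S(K(p^\alpha))_{\bar{\bb{Q}}}) = \mrm{cl}_L^+(K(p^\alpha))$ factors through the diagonal embedding $\widehat{\bb{Z}}^\times \hookrightarrow \widehat{\cal{O}_L}^\times$ followed by the canonical map to the class group. Pushing down to the $H$-torsor fibers of $\pi$, this action reduces modulo $p^\alpha$ to give precisely the cyclotomic character $\widehat{\bb{Z}}^\times \twoheadrightarrow (\bb{Z}/p^\alpha\bb{Z})^\times = H$. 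Since $\mrm{Spec}\, \bb{Q}(\zeta_{p^\alpha}) \to \mrm{Spec}\, \bb{Q}$ is the unique $H$-torsor over $\bb{Q}$ whose descent cocycle is the cyclotomic character, Galois descent produces the desired canonical isomorphism.

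The main subtlety lies in the precise invocation of Shimura's reciprocity for $S(K)$ rather than for $S^*(K^*)$, as the reflex-field bookkeeping requires care. A clean alternative is to work moduli-theoretically: a $V(p^\alpha)$-level structure determines an $\cal{O}_L$-basis of the $p^\alpha$-torsion of the associated RM abelian variety, and its Weil pairing defines a canonical $\bb{Q}$-morphism $S(K(p^\alpha)) \to \mrm{Spec}\,\bb{Q}(\zeta_{p^\alpha})$ that directly realizes the required descent.
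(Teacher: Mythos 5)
Your proposal is correct and follows essentially the same route as the paper: both reduce the statement to the level of component groups, use the reciprocity law (Artin map through the diagonal embedding $\bb{A}_{\bb{Q},f}^\times\hookrightarrow\bb{A}_{L,f}^\times$) to see that Galois acts on the fibers of $\pi_0(S(K(p^\alpha)))\to\pi_0(S(K_{?,1}(p^\alpha)))$ simply transitively through $(\bb{Z}/p^\alpha\bb{Z})^\times$, and use the hypothesis on totally positive units to identify that fiber group. The only difference is packaging (torsor/descent language, plus an explicit stabilizer computation $\Gamma(g,K(p^\alpha))=\Gamma(g,K_{\diamond,1}(p^\alpha))$ where the paper cites the fibration over $\pi_0$ as well known), which does not change the substance.
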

\begin{proof}
	It is well-known that the canonical projection $S(K(p^\alpha))\rightarrow S(K_{?,1}(p^\alpha))$ induce an isomorphism of $\bb{Q}$-varieties
	\[
		S(K(p^\alpha)) \simeq
	S(K_{?,1}(p^\alpha)) \times_{\pi_0(S(K_{?,1}(p^\alpha)))} \pi_0(S(K(p^\alpha)))\qquad \text{for}\ ?=\diamond,\mbox{\tiny $\mrm{X}$}.
	\]
	Therefore in order to prove the lemma we have to show that there is an isomorphism of $\bb{Q}$-group schemes between $\pi_0(S(K(p^\alpha)))$ and $\pi_0(S(K_{?,1}(p^\alpha))) \times_\bb{Q} \bb{Q}(\zeta_{p^\alpha})$. The component groups of $S(K_{\mbox{\tiny $\mrm{X}$},1}(p^\alpha)), S(K_{\diamond,1}(p^\alpha))$ and $S(K_{\mbox{\tiny $\mrm{X}$},1}(p^\alpha))$ are $0$-dimensional Shimura varieties defined over $\bb{Q}$ and the Galois group $\mrm{Gal}(\bb{Q}(\zeta_{p^\alpha})/\bb{Q})$ acts on their points via the Artin map and the diagonal embedding of $\bb{A}_{\bb{Q},f}^\times$ inside $\bb{A}_{L,f}^\times$. Moreover, all points of $\pi_0(S(K_{\mbox{\tiny $\mrm{X}$},1}(p^\alpha)))$ and $\pi_0(S(K_{\diamond,1}(p^\alpha)))$ are defined over $\bb{Q}$ because
	\[
	\det(K_{\diamond,1}(p^\alpha))
	=\det(K_{\mbox{\tiny $\mrm{X}$},1}(p^\alpha))
	= (\bb{Z}^\times_p+p^\alpha\cal{O}_{L,p})\widehat{\cal{O}}_L^{p,\times}.
	\]
The projection $\pi_0(S(K(p^\alpha))) \rightarrow \pi_0(S(K_{?,1}(p^\alpha)))$ corresponds to the natural quotient map
	\[
	L^\times_+\backslash\bb{A}^\times_{L,f}/(1+p^\alpha\cal{O}_{L,p})\widehat{\cal{O}}_L^{p,\times}
	\longrightarrow
	L^\times_+\backslash\bb{A}^\times_{L,f}/(\bb{Z}^\times_p+p^\alpha\cal{O}_{L,p})\widehat{\cal{O}}_L^{p,\times}.
	\]
	Its kernel can be identified with
	\[
	\big(\bb{Z}^\times_p+p^\alpha\cal{O}_{L,p}\big)\bigg/\Big(\big[\cal{O}_{L,+}^\times\cap\big(\bb{Z}^\times_p+p^\alpha\cal{O}_{L,p}\big)\big]\cdot\big(1+p^\alpha\cal{O}_{L,p}\big)\Big)\cong (\bb{Z}/p^\alpha\bb{Z})^\times
	\]
	 because there is no totally positive unit congruent to $-1$ modulo $p$. Hence the Galois action of $\mrm{Gal}(\bb{Q}(\zeta_{p^\alpha})/\bb{Q})$ on the fibers of the projection $\pi_0(S(K(p^\alpha))) \rightarrow \pi_0(S(K_{?,1}(p^\alpha)))$ can be canonically identified with the simply transitive action of $(\bb{Z}/p^\alpha\bb{Z})^\times$ on itself and we conclude that
	 \[
	 \pi_0(S(K(p^\alpha)))\cong\pi_0(S(K_{?,1}(p^\alpha))) \times_\bb{Q} \bb{Q}(\zeta_{p^\alpha})\qquad \text{for}\ ?=\diamond,\mbox{\tiny $\mrm{X}$}.
	 \]
\end{proof}

\begin{proposition}
\label{nu_alpha}
There is an isomorphism of $\bb{Q}$-varieties
\[
\nu_\alpha:
S(K_{\diamond}(p^\alpha))\times_\bb{Q} \bb{Q}(\zeta_{p^\alpha})
\overset{\sim}{\longrightarrow}
S(K_{\mbox{\tiny $\mrm{X}$}}(p^\alpha))\times_\bb{Q} \bb{Q}(\zeta_{p^\alpha})
\]
such that
$
\nu_\alpha\circ(1\times\sigma_a)
=
(\langle 1,(a,a)\rangle\times\sigma_a)\circ\nu_\alpha.
$
\end{proposition}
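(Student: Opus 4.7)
My approach uses Lemma \ref{diffent models} twice, combined with the canonical $\bb{C}$-identification from the preceding proposition, and computes the Galois twist via Shimura's reciprocity.

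First, Lemma \ref{diffent models} supplies two $\bb{Q}$-isomorphisms
\[
\phi_\diamond\colon S(K(p^\alpha))\xrightarrow{\sim} S(K_{\diamond,1}(p^\alpha))\times_\bb{Q}\bb{Q}(\zeta_{p^\alpha}),\quad
\phi_{\mbox{\tiny $\mrm{X}$}}\colon S(K(p^\alpha))\xrightarrow{\sim} S(K_{\mbox{\tiny $\mrm{X}$},1}(p^\alpha))\times_\bb{Q}\bb{Q}(\zeta_{p^\alpha}).
\]
The natural quotient $S(K_{?,1}(p^\alpha))\twoheadrightarrow S(K_?(p^\alpha))$ is a $(\bb{Z}/p^\alpha)^\times$-torsor under the canonical identification $K_?(p^\alpha)/K_{?,1}(p^\alpha)\cong(\bb{Z}/p^\alpha)^\times$, $\gamma\mapsto d_{\gamma,\mathfrak{p}_1}$, so $S(K_?(p^\alpha))\times_\bb{Q}\bb{Q}(\zeta_{p^\alpha})$ is realized as the $\bb{Q}(\zeta_{p^\alpha})$-quotient of $S(K(p^\alpha))$ by an action $\rho_?$ lifted through $\phi_?$.

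Next, the preceding proposition identifies $S(K_\diamond(p^\alpha))(\bb{C})=S(K_{\mbox{\tiny $\mrm{X}$}}(p^\alpha))(\bb{C})$ canonically via the equality $\Gamma(g,K_\diamond(p^\alpha))=\Gamma(g,K_{\mbox{\tiny $\mrm{X}$}}(p^\alpha))$ on every component. I would show that this identification is algebraic over $\bb{Q}(\zeta_{p^\alpha})$: the orbit equivalence relations on $S(K(p^\alpha))$ induced by $\rho_\diamond$ and $\rho_{\mbox{\tiny $\mrm{X}$}}$ coincide on $\bar{\bb{Q}}$-points (and hence as $\bb{Q}(\zeta_{p^\alpha})$-subschemes of $S(K(p^\alpha))\times S(K(p^\alpha))$ by reducedness), yielding $\nu_\alpha$ as the induced isomorphism on quotients.

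Finally, for the Galois twist, $\mrm{Gal}(\bb{Q}(\zeta_{p^\alpha})/\bb{Q})$ acts on $S(K(p^\alpha))$, viewed as a $\bb{Q}(\zeta_{p^\alpha})$-variety through $\pi_0$, by right multiplication by a central idele $z_a$ with $\det(z_a)\equiv a\pmod{p^\alpha}$, as dictated by Shimura's reciprocity. Through $\phi_\diamond$ this becomes $1\times\sigma_a$ on $S(K_{\diamond,1}(p^\alpha))\times\bb{Q}(\zeta_{p^\alpha})$, but through $\phi_{\mbox{\tiny $\mrm{X}$}}$ one must pick a representative of the Galois action which, modulo $K_{\mbox{\tiny $\mrm{X}$},1}(p^\alpha)$, differs from the trivial one by the diamond operator $\langle 1,(a,a)\rangle$---the precise discrepancy falling out from a matrix computation comparing $K_{\diamond,1}(p^\alpha)$ and $K_{\mbox{\tiny $\mrm{X}$},1}(p^\alpha)$. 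Descending to $\nu_\alpha$, this becomes the stated twist $\nu_\alpha\circ(1\times\sigma_a)=(\langle 1,(a,a)\rangle\times\sigma_a)\circ\nu_\alpha$.

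The main obstacle is the Galois twist: precisely identifying $z_a$, determining its projection in each of $K_{\diamond,1}(p^\alpha)$ and $K_{\mbox{\tiny $\mrm{X}$},1}(p^\alpha)$, and confirming the residual discrepancy is exactly $\langle 1,(a,a)\rangle$. The assumption that $\cal{O}_L^\times$ contains no totally positive unit congruent to $-1$ modulo $p$ is essential throughout for unambiguity, just as in the preceding proposition.
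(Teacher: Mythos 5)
Your architecture parallels the paper's proof (Lemma \ref{diffent models} applied to both level structures, descent from the ``$,1$''-groups, then a computation of how $\sigma_a$ transports through the two identifications), but the two steps that carry the actual content have gaps.

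\emph{The descent step.} The paper does not compare abstract orbit equivalence relations: it observes that $K_\diamond(p^\alpha)$ and $K_{\mbox{\tiny $\mrm{X}$}}(p^\alpha)$ are generated over $K_{\diamond,1}(p^\alpha)$, resp.\ $K_{\mbox{\tiny $\mrm{X}$},1}(p^\alpha)$, by the \emph{same} determinant-one matrices $\gamma_p=\mrm{diag}(d^{-1},d)$ with $d_{\mathfrak{p}_1}\equiv d_{\mathfrak{p}_2}\pmod{p^\alpha}$; since $\det\gamma=1$ these act trivially on $\pi_0$, hence on the $\bb{Q}(\zeta_{p^\alpha})$-factor, so quotienting the single isomorphism $\tilde{\nu}_\alpha=\phi_{\mbox{\tiny $\mrm{X}$}}\circ\phi_\diamond^{-1}$ by this one common action produces $\nu_\alpha$. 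Your substitute claim --- that the two lifted $(\bb{Z}/p^\alpha\bb{Z})^\times$-actions $\rho_\diamond$, $\rho_{\mbox{\tiny $\mrm{X}$}}$ on $S(K(p^\alpha))$ have the same orbits, deduced from the identification $S(K_\diamond(p^\alpha))(\bb{C})=S(K_{\mbox{\tiny $\mrm{X}$}}(p^\alpha))(\bb{C})$ of the preceding proposition --- is not justified by that identification: it identifies the two quotients as abstract complex varieties and says nothing about compatibility with maps from $S(K(p^\alpha))$. In fact the fibers of the two natural projections $S(K(p^\alpha))\to S(K_\diamond(p^\alpha))$ and $S(K(p^\alpha))\to S(K_{\mbox{\tiny $\mrm{X}$}}(p^\alpha))$ do \emph{not} coincide (if they did, $\nu_\alpha$ would commute with $1\times\sigma_a$ and the asserted twist would force $\langle1,(a,a)\rangle$ to act trivially, which is false). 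What is true is the finer statement that the \emph{twisted} actions $\rho_\diamond$ and $\rho_{\mbox{\tiny $\mrm{X}$}}$ coincide, and to see it you must compute them: both are Hecke translation by the common determinant-one representatives above. That computation is exactly the paper's mechanism and is missing from your sketch; also, calling $S(K_{?,1}(p^\alpha))\to S(K_?(p^\alpha))$ a $(\bb{Z}/p^\alpha\bb{Z})^\times$-torsor overstates what is needed and what the standing unit hypothesis guarantees.

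\emph{The twist step.} A central element $z$ has $\det=z^2$, so there is no ``central idele $z_a$ with $\det(z_a)\equiv a$'' for general $a$; worse, a central element induces the same diamond operator in both models, so it could never produce a discrepancy equal to $\langle1,(a,a)\rangle$ on one side only. The correct choice, as in the paper, is the non-central matrix with $p$-component $\mrm{diag}((a^{-1},a^{-1}),1)$: it lies in $K_{\diamond,1}(p^\alpha)$, so through $\phi_\diamond$ it acts as $1\times\sigma_a$ (its determinant is the diagonal image of $a^{-1}$, which acts on $\pi_0$ as $\sigma_a$), whereas modulo $K_{\mbox{\tiny $\mrm{X}$},1}(p^\alpha)$ it represents exactly the diamond class $\langle1,(a,a)\rangle=\mathfrak{T}_{D_{(a,a)^{-1},1}}$, giving $\langle1,(a,a)\rangle\times\sigma_a$ through $\phi_{\mbox{\tiny $\mrm{X}$}}$. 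Replacing your $z_a$ by this matrix and supplying the common-generators observation turns your outline into the paper's argument; as written, both the identification of the quotients and the computation of the twist are unproven.
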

\begin{proof}
Let $a\in\bb{Z}_p^\times$ and consider the action of $\mbox{\tiny $\begin{pmatrix}(a^{-1},a^{-1})&0\\0&1\end{pmatrix}$}\in \mrm{GL}_2(\cal{O}_{L,p})$ on $S(K(p^\alpha))$. On the one hand, it acts on  
$S(K_{\diamond,1}(p^\alpha)) \times_\bb{Q} \bb{Q}(\zeta_{p^\alpha})$ as 
$1\times \sigma_a$ via the isomorphism of Lemma \ref{diffent models}. On the other hand, it acts as $\langle 1,(a,a)\rangle \times \sigma_a$ on 
$S(K_{\mbox{\tiny $\mrm{X}$},1}(p^\alpha)) \times_\bb{Q} \bb{Q}(\zeta_{p^\alpha})$. In other words, the isomorphism
\[
\tilde{\nu}_\alpha:S(K_{\diamond,1}(p^\alpha))\times_\bb{Q} \bb{Q}(\zeta_{p^\alpha})
\overset{\sim}{\longrightarrow} 
S(K_{\mbox{\tiny $\mrm{X}$},1}(p^\alpha))\times_\bb{Q} \bb{Q}(\zeta_{p^\alpha})
\]
obtained from Lemma \ref{diffent models} satisfies
$
\tilde{\nu}_\alpha\circ(1\times\sigma_a)
=
(\langle 1,(a,a)\rangle\times\sigma_a)\circ\tilde{\nu}_\alpha.
$ 

\noindent The quotient of $S(K_{\diamond,1}(p^\alpha))\times_\bb{Q} \bb{Q}(\zeta_{p^\alpha})$ by the subgroup of matrices $\gamma$ of the form
\[\gamma_v=\mathbbm{1}_2\quad\text{for}\ v\not=p,\qquad\gamma_p=\begin{pmatrix}d^{-1}&\\&d \end{pmatrix}\quad\text{with}\quad d_{\mathfrak{p}_1}\equiv d_{\mathfrak{p}_2} \pmod{p^{\alpha}}
\] 
is $
S(K_{\diamond}(p^\alpha))\times_\bb{Q} \bb{Q}(\zeta_{p^\alpha})
$
because those matrices have determinant $1$. Similarly, the quotient of $S(K_{\mbox{\tiny $\mrm{X}$},1}(p^\alpha))\times_\bb{Q} \bb{Q}(\zeta_{p^\alpha})$ by the same group is
$
S(K_{\mbox{\tiny $\mrm{X}$}}(p^\alpha))\times_\bb{Q} \bb{Q}(\zeta_{p^\alpha})
$
on the target. We denote by $\nu_\alpha$ the resulting isomorphism
\[
\nu_\alpha:
S(K_{\diamond}(p^\alpha))\times_\bb{Q} \bb{Q}(\zeta_{p^\alpha})
\overset{\sim}{\longrightarrow}
S(K_{\mbox{\tiny $\mrm{X}$}}(p^\alpha))\times_\bb{Q} \bb{Q}(\zeta_{p^\alpha})
\]
which also satisfies
$
\nu_\alpha\circ(1\times\sigma_a)
=
(\langle 1,(a,a)\rangle\times\sigma_a)\circ\nu_\alpha.
$  
\end{proof}

\begin{corollary}\label{proj-nu_alpha-commute}
	The isomorphism
\[
\nu_\alpha:
\Big(S(K_\diamond(p^\alpha))\times_\bb{Q}\bb{Q}(\zeta_{p^\alpha})\Big)(\bb{C})
\overset{\sim}{\longrightarrow}
\Big(S(K_{\mbox{\tiny $\mrm{X}$}}(p^\alpha))\times_\bb{Q}\bb{Q}(\zeta_{p^\alpha})\Big)(\bb{C})
\]
is the identity with respect to the complex uniformizations ($\ref{complex uniformization}$). In particular $\nu_\alpha$ commutes with the projections
$\nu_{1,\mathfrak{p}}$ and $\pi_{1,\mathfrak{p}}$ for $\mathfrak{p}\mid p$.
\end{corollary}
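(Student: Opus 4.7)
The plan is to unwind the construction of $\nu_\alpha$ given in Proposition \ref{nu_alpha} and verify that on complex points it becomes tautological once both target and source are described by their natural complex uniformizations. The crucial input is the preceding proposition, which under our standing assumption that $\cal{O}_L^\times$ contains no totally positive unit congruent to $-1$ modulo $p$ established the equality $S(K_\diamond(p^\alpha))(\bb{C})=S(K_{\mbox{\tiny $\mrm{X}$}}(p^\alpha))(\bb{C})$ via the identification $\Gamma(g,K_\diamond(p^\alpha))=\Gamma(g,K_{\mbox{\tiny $\mrm{X}$}}(p^\alpha))$ on each representative $g$.

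First I would analyze the auxiliary isomorphism $\tilde{\nu}_\alpha$ arising in the proof of Proposition \ref{nu_alpha}. Both $S(K_{\diamond,1}(p^\alpha))\times_\bb{Q}\bb{Q}(\zeta_{p^\alpha})$ and $S(K_{\mbox{\tiny $\mrm{X}$},1}(p^\alpha))\times_\bb{Q}\bb{Q}(\zeta_{p^\alpha})$ are canonically isomorphic, as $\bb{Q}$-schemes, to $S(K(p^\alpha))$ via the natural projections from Lemma \ref{diffent models}; hence $\tilde{\nu}_\alpha$ is the composition of one identification with the inverse of the other. Note that $\tilde{\nu}_\alpha$ itself is not the identity on the complex uniformizations of the $\diamond,1$ and $\mbox{\tiny $\mrm{X}$},1$ level structures, since their arithmetic subgroups genuinely differ at finite level.

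Second, I would pass to the quotient by the subgroup of matrices $\gamma$ satisfying $\gamma_v=\mathbbm{1}_2$ for $v\neq p$ and $\gamma_p=\mrm{diag}(d^{-1},d)$ with $d_{\mathfrak{p}_1}\equiv d_{\mathfrak{p}_2}\pmod{p^\alpha}$. These matrices have trivial determinant, so they act compatibly on both sides of $\tilde{\nu}_\alpha$, and the induced map on the quotients is by construction $\nu_\alpha$. After this quotient, the arithmetic subgroups on both sides become exactly $\Gamma(g,K_\diamond(p^\alpha))=\Gamma(g,K_{\mbox{\tiny $\mrm{X}$}}(p^\alpha))$, so the two complex uniformizations $G(\bb{Q})_+\backslash\mathfrak{H}^2\times G(\bb{A}_f)/K_\diamond(p^\alpha)$ and $G(\bb{Q})_+\backslash\mathfrak{H}^2\times G(\bb{A}_f)/K_{\mbox{\tiny $\mrm{X}$}}(p^\alpha)$ are canonically identified by the preceding proposition, and a direct inspection shows that $\nu_\alpha$ becomes the identity in this common presentation.

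Finally, the commutativity with $\nu_{1,\mathfrak{p}}$ and $\pi_{1,\mathfrak{p}}$ is automatic: on complex points these projections are obtained by enlarging the level subgroup at $\mathfrak{p}$, an operation which is intrinsic to the double coset description and does not interact with $\nu_\alpha$ once the latter is recognized as the identity. The only delicate step is the second one, verifying that the algebraic isomorphism $\nu_\alpha$ of $\bb{Q}$-schemes really does reduce to the set-theoretic identity on complex uniformizations; this requires a careful bookkeeping involving strong approximation and the equality of arithmetic subgroups, but no new ideas beyond those already used in the preceding proposition.
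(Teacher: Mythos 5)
Your proposal is correct and follows the route the paper intends: the corollary is stated without a separate proof precisely because, as you observe, $\nu_\alpha$ is by construction induced from the identifications of Lemma \ref{diffent models} through $S(K(p^\alpha))$, and after passing to the quotient by the determinant-one diamond matrices the two uniformizations coincide by the preceding proposition ($\Gamma(g,K_\diamond(p^\alpha))=\Gamma(g,K_{\mbox{\tiny $\mrm{X}$}}(p^\alpha))$, with the same representatives since the determinants of the two level groups agree), so $\nu_\alpha$ is the identity and commutes with the level-raising projections. No gap; this matches the paper's argument.
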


\subsubsection{Compatibility with Hecke correspondences.} Even though the complex uniformization of $\nu_\alpha$ is the identity, the map induced in cohomology does not commute in general with the Hecke operators. Indeed, even when $g\in G(\bb{A}_f)$ normalizes the congruence subgroups $K_\diamond(p^\alpha)$ and $K_{\mbox{\tiny $\mrm{X}$}}(p^\alpha)$, the morphisms $\frak{T}_g:S(K_\diamond(p^\alpha))\to S(K_\diamond(p^\alpha))$ and $\frak{T}_g:S(K_{\mbox{\tiny $\mrm{X}$}}(p^\alpha))\to S(K_{\mbox{\tiny $\mrm{X}$}}(p^\alpha))$ may differ. However, there is an important case when the commutativity holds.

\begin{lemma}\label{U_p-nu_alpha-commute}
    The identity
    \[
    U_p\circ(\nu_\alpha)^*
    =
    (\nu_\alpha)^*\circ U_p
    \]
    holds in cohomology.
\end{lemma}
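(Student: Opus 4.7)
The strategy is to lift the isomorphism $\nu_\alpha$ to the intermediate Shimura varieties that carry the $U_p$-correspondence, and then read off the commutation from the fact that the lift is again the identity on complex uniformizations.

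More precisely, first I would repeat verbatim the construction of Proposition \ref{nu_alpha} but with the compact open $K(p^\alpha)$ replaced by the smaller group $K(p^\alpha) \cap K_0(p^{\alpha+1})$. The same determinant calculation as in Lemma \ref{diffent models}, and the same analysis of the action of $\bigl(\begin{smallmatrix}(a^{-1},a^{-1})&0\\0&1\end{smallmatrix}\bigr)$ on geometrically connected components, yield a canonical isomorphism of $\bb{Q}(\zeta_{p^\alpha})$-varieties
\[
\tilde{\nu}_\alpha:
S\bigl(K_\diamond(p^\alpha)\cap K_0(p^{\alpha+1})\bigr)\times_\bb{Q}\bb{Q}(\zeta_{p^\alpha})
\overset{\sim}{\longrightarrow}
S\bigl(K_{\mbox{\tiny $\mrm{X}$}}(p^\alpha)\cap K_0(p^{\alpha+1})\bigr)\times_\bb{Q}\bb{Q}(\zeta_{p^\alpha})
\]
which, by construction, is the identity on the complex uniformizations (\ref{complex uniformization}).

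Next I would verify the two squares
\[
\xymatrix@C=2em{
S(K_\diamond(p^\alpha)\cap K_0(p^{\alpha+1}))_{\bb{Q}(\zeta_{p^\alpha})}  \ar[d]_{\pi_i} \ar[r]^{\tilde{\nu}_\alpha} & S(K_{\mbox{\tiny $\mrm{X}$}}(p^\alpha)\cap K_0(p^{\alpha+1}))_{\bb{Q}(\zeta_{p^\alpha})} \ar[d]^{\pi_i} \\
S(K_\diamond(p^\alpha))_{\bb{Q}(\zeta_{p^\alpha})}  \ar[r]^{\nu_\alpha} & S(K_{\mbox{\tiny $\mrm{X}$}}(p^\alpha))_{\bb{Q}(\zeta_{p^\alpha})}
}
\]
commute for $i=1,2$. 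The square for $\pi_1=\mrm{pr}$ commutes already over the complex points because both horizontal arrows restrict to the identity on $\mathfrak{H}^2\times G(\bb{A}_f)$ and $\pi_1$ is described by the same formula $[x,h]\mapsto[x,h]$ on both sides. For $\pi_2=\mrm{pr}'\circ\mathfrak{T}_{g_p}$ with $g_p=\bigl(\begin{smallmatrix}\varpi_p&0\\0&1\end{smallmatrix}\bigr)$, the map $[x,h]\mapsto[x,hg_p^{-1}]$ is likewise given by the same formula in both uniformizations, so the square commutes on $\mathfrak{H}^2\times G(\bb{A}_f)$. Commutativity of morphisms of smooth $\bb{Q}(\zeta_{p^\alpha})$-varieties is detected on complex points, so both squares commute as squares of $\bb{Q}(\zeta_{p^\alpha})$-morphisms.

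Finally, applying $(-)^*$ to the first square and $(-)_*$ to the second, together with proper base change and the identity $U_p=(\pi_2)_*\circ(\pi_1)^*$ on both sides, gives
\[
(\nu_\alpha)^*\circ U_p
=(\nu_\alpha)^*\circ(\pi_2)_*\circ(\pi_1)^*
=(\pi_2)_*\circ(\tilde{\nu}_\alpha)^*\circ(\pi_1)^*
=(\pi_2)_*\circ(\pi_1)^*\circ(\nu_\alpha)^*
=U_p\circ(\nu_\alpha)^*,
\]
which is the desired equality in cohomology.

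The main technical point to watch is step two: although $\mathfrak{T}_{g_p}$ multiplies the determinant by $\varpi_p$ and thus permutes geometrically connected components nontrivially, the permutation is dictated entirely by the $G(\bb{A}_f)$-coordinate and is therefore the same in both models. This is what ultimately makes the $\pi_2$-square commute, and it is the only place where one has to be careful because the two canonical models $S(K_\diamond)$ and $S(K_{\mbox{\tiny $\mrm{X}$}})$ differ precisely in how the action on $\pi_0$ is encoded over $\bb{Q}$.
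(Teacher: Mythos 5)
Your overall route coincides with the paper's: lift $\nu_\alpha$ to the intermediate level $S(K_?(p^\alpha)\cap K_0(p^{\alpha+1}))$ and deduce the lemma from the commutation of $\pi_1$ and $\pi_2$ with this lift. The gap is in your verification of the $\pi_2$-square. The identification $\nu_\alpha$ (and its intermediate-level lift) is the identity with respect to the component-wise uniformization (\ref{complex uniformization}), \emph{not} with respect to the adelic double-coset presentation: since $K_\diamond(p^\alpha)\neq K_{\mbox{\tiny $\mrm{X}$}}(p^\alpha)$, there is no map ``$[x,h]\mapsto[x,h]$'' between the two quotients, and rewriting a class $[x,hg_p^{-1}]$ in the component-wise coordinates requires a strong-approximation decomposition whose $K$-part lies in $K_\diamond(p^\alpha)$ on one side and in $K_{\mbox{\tiny $\mrm{X}$}}(p^\alpha)$ on the other. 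So ``$\mathfrak{T}_{g_p}$ is given by the same formula in both uniformizations'' does not by itself give commutativity on $\bb{C}$-points; this is exactly the phenomenon the paper flags in the paragraph preceding the lemma, and the proof of Proposition \ref{nu_alpha} exhibits a concrete instance: translation by $\mathrm{diag}((a^{-1},a^{-1}),1)$ induces $1\times\sigma_a$ on the $\diamond$-model but $\langle1,(a,a)\rangle\times\sigma_a$ on the $\mrm{X}$-model, even though it, too, induces ``the same permutation'' of components in both models. Hence your closing justification (the permutation of components is dictated by the $G(\bb{A}_f)$-coordinate and is therefore the same in both models) is not sufficient, and your assertion that $\mathfrak{T}_{g_p}$ permutes the geometrically connected components nontrivially is in fact the opposite of the relevant point.

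What closes the argument, and what the paper uses, is the computation $\det(g_p)=\varpi_p\in L^\times_+\cdot(1+p^\alpha\cal{O}_{L,p})\widehat{\cal{O}}_L^{p,\times}$: consequently both projections $\pi_1,\pi_2:S(K(p^\alpha)\cap K_0(p^{\alpha+1}))\to S(K(p^\alpha))$ induce the \emph{identity} on the full-level component group $L^\times_+\backslash\bb{A}^\times_{L,f}/(1+p^\alpha\cal{O}_{L,p})\widehat{\cal{O}}_L^{p,\times}$, hence are compatible with both descents of Lemma \ref{diffent models} and descend to $\pi_1\times1$, $\pi_2\times1$ on the source and target of the intermediate-level isomorphism. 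Replacing your ``same formula on complex points'' step by this determinant computation turns your outline into the paper's proof; without it, the step where the $\pi_2$-square is claimed to commute is unjustified.
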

\begin{proof}
  The argument in Proposition $\ref{nu_alpha}$ also yields an isomorphism
    \begin{equation}\label{TwistVariety3}
    \nu_\alpha:
    S\big(K_{\diamond}(p^\alpha)\cap K_0(p^{\alpha+1})\big)\times_\bb{Q} \bb{Q}(\zeta_{p^\alpha})
    \overset{\sim}{\longrightarrow}
    S\big(K_{\mbox{\tiny $\mrm{X}$}}(p^\alpha))\cap K_0(p^{\alpha+1})\big)\times_\bb{Q} \bb{Q}(\zeta_{p^\alpha}),
    \end{equation}
   for any $\alpha\ge1$ factoring through a quotient of $S(K(p^\alpha)\cap K_0(p^{\alpha+1}))$. Since the determinant of the matrix $g_p = \mbox{\tiny $\begin{pmatrix}\varpi_p&0\\0&1\end{pmatrix}$}$  lies in $L^\times_+\cdot(1+p^\alpha\cal{O}_{L,p})\widehat{\cal{O}}_L^{p,\times}$, both projections
    \[
    \pi_{1}, \pi_{2}:
    S\big(K(p^\alpha)\cap K_0(p^{\alpha+1})\big)
    \longrightarrow
    S(K(p^\alpha))
    \] 
	induce the identity map between the component groups. Therefore the projections descend to  $\pi_{1}\times 1$, $\pi_{2}\times 1$ on both domain and target of the isomorphism ($\ref{TwistVariety3}$).
    In other words, both $\pi_{1}\times 1$ and $\pi_{2}\times1$ commute with $\nu_\alpha$ which implies the claim.
\end{proof}

\subsection{Atkin--Lehner correspondence}
\begin{definition}
Let $\frak{p}$ be a $\cal{O}_L$-prime ideal above $p$ and $\alpha\ge 1$ be a positive integer. The Atkin-Lehner correspondence 
$w_{\mathfrak{p}^\alpha_2} :=  (\mathfrak{T}_{\tau_{\mathfrak{p}_2^\alpha}}\times1) \circ \nu_{\alpha}$
 is an isomorphism of $\bb{Q}$-varieties
\[
w_{\mathfrak{p}^\alpha} 
:
S(K_{\diamond}(p^\alpha))\times_\bb{Q} \bb{Q}(\zeta_{p^\alpha})
\overset{\sim}{\longrightarrow}
S(K_{\diamond,t}(p^\alpha))\times_\bb{Q} \bb{Q}(\zeta_{p^\alpha}).
\]
\end{definition}
\noindent By Lemma $\ref{U_p-nu_alpha-commute}$ and equation (\ref{U_pAdjoint}), the cohomological map $(w_{\frak{p}_2^\alpha})^*$ is intertwined with the Hecke correspondences at $p$ through the rule
    \begin{equation}\label{U_pAtkinLehner}
    U_p\circ (w_{\mathfrak{p}_2^\alpha})^*
    =
    (w_{\mathfrak{p}_2^\alpha})^*\circ U_{\mathfrak{p}_1}\circ U_{\mathfrak{p}_2}^*\circ \langle \varpi_{\mathfrak{p}_2},1 \rangle.
    \end{equation}

\subsubsection{Galois action.}
Consider the Galois character
\begin{equation}\label{GalCal}
	\delta _\alpha:\Gamma_\bb{Q}\longrightarrow O[\bb{G}_L^\alpha(K)]^\times
\end{equation} 
obtained by composing the projection $\Gamma_\bb{Q}\rightarrow\mrm{Gal}(\bb{Q}(\zeta_{p^ \alpha})/\bb{Q})$ with the homomorphism  
\begin{equation}
\sigma_a\mapsto\big\langle (1,a^{-1}),(a,a^{-1})\big\rangle.
\end{equation}

\begin{lemma}\label{Cohomology}
There is a $\Gamma_\bb{Q}$-equivariant isomorphism 
\[
(w_{\mathfrak{p}^\alpha_2})_*:
\mrm{H}^{\bfcdot}_\et\big(S(K_{\diamond}(p^\alpha))_{\bar{\bb{Q}}},O\big)
\overset{\sim}{\longrightarrow}
\mrm{H}^{\bfcdot}_\et\big(S(K_{\diamond,t}(p^\alpha))_{\bar{\bb{Q}}},O\big)(\delta _\alpha).
\]
\end{lemma}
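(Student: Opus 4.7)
The strategy is to exploit that the isomorphism $w_{\mathfrak{p}_2^\alpha}$ is already defined over $F := \bb{Q}(\zeta_{p^\alpha})$, so the induced pushforward $(w_{\mathfrak{p}_2^\alpha})_*$ on étale cohomology over $\bar{\bb{Q}}$ is automatically $\Gamma_F$-equivariant. What remains is to measure the defect of $\Gamma_\bb{Q}$-equivariance by computing the Galois conjugate ${}^\sigma w_{\mathfrak{p}_2^\alpha}$ for $\sigma\in\Gamma_\bb{Q}$ projecting non-trivially onto $\mrm{Gal}(F/\bb{Q})$, and to identify this defect with the character $\delta_\alpha$.

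To carry out the computation I would combine Proposition~\ref{nu_alpha} with the intertwining relation~(\ref{AL-diamonds}). For $\sigma\in\Gamma_\bb{Q}$ restricting to $\sigma_a$ on $F$, Proposition~\ref{nu_alpha} gives
\[
\nu_\alpha \circ (1\times\sigma_a) = (\langle 1,(a,a)\rangle\times\sigma_a)\circ\nu_\alpha.
\]
Composing on the left with $\mathfrak{T}_{\tau_{\mathfrak{p}_2^\alpha}}\times 1$ and using (\ref{AL-diamonds}) to commute $\mathfrak{T}_{\tau_{\mathfrak{p}_2^\alpha}}$ past $\langle 1,(a,a)\rangle$---noting that for $(a,a)\in\cal{O}_{L,p}^\times$ the $\mathfrak{p}_2$-component is $a$, so the $Z_L(K)$-factor in the output is an idele supported at $\mathfrak{p}_2$ and the $\cal{O}_{L,p}^\times$-factor picks up an extra $a_{\mathfrak{p}_2}^{-2}$---I obtain a variety-level identity of the shape
\[
w_{\mathfrak{p}_2^\alpha}\circ(1\times\sigma_a) = (H_\alpha(\sigma_a)\times\sigma_a)\circ w_{\mathfrak{p}_2^\alpha}
\]
for an explicit $H_\alpha(\sigma_a)\in\bb{G}_L(K_{\diamond,t}(p^\alpha))$. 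Once the conventions $\sigma_a(\zeta_{p^\alpha})=\zeta_{p^\alpha}^{a^{-1}}$ and $\langle z\rangle=\mathfrak{T}_z$ (right-translation by $z^{-1}$) are spelled out, and the answer is reduced modulo the stabilizer at level $K_{\diamond,t}(p^\alpha)$, the operator $H_\alpha(\sigma_a)$ matches $\delta_\alpha(\sigma_a)=\langle(1,a^{-1}),(a,a^{-1})\rangle$.

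Passing to étale cohomology, the variety identity translates into
\[
(w_{\mathfrak{p}_2^\alpha})_*\circ\sigma = \delta_\alpha(\sigma)\cdot\sigma\circ (w_{\mathfrak{p}_2^\alpha})_*\qquad\forall\,\sigma\in\Gamma_\bb{Q},
\]
which is exactly the defining equation of a $\Gamma_\bb{Q}$-equivariant isomorphism into the $\delta_\alpha$-twist; the extension from $\sigma_a$ to all of $\Gamma_\bb{Q}$ is immediate from the already-established $\Gamma_F$-equivariance and the fact that $\delta_\alpha$ factors through $\mrm{Gal}(F/\bb{Q})$. The principal obstacle will be the careful bookkeeping needed to reconcile the three convention issues at play---whether Galois acts on cohomology through pullback or pushforward of the Spec-automorphism, the inversion built into $\sigma_a(\zeta)=\zeta^{a^{-1}}$, and the relationship between pushforward by $\mathfrak{T}_z$ and the abstract element $\langle z\rangle\in\bb{G}_L(K)$---so that the raw output $H_\alpha(\sigma_a)$ of the matrix computation matches $\delta_\alpha(\sigma_a)$ on the nose and not up to an extraneous central factor.
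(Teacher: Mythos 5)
Your proposal follows essentially the same route as the paper: the paper's proof is exactly the two-line computation $w_{\mathfrak{p}_2^\alpha}\circ(1\times\sigma_a)=\mathfrak{T}_{\tau_{\mathfrak{p}_2^\alpha}}\circ(\langle 1,(a,a)\rangle\times\sigma_a)\circ\nu_\alpha=(\langle(1,a^{-1}),(a,a^{-1})\rangle\times\sigma_a)\circ w_{\mathfrak{p}_2^\alpha}$, i.e.\ Proposition~\ref{nu_alpha} combined with the intertwining relation (\ref{AL-diamonds}) and the fact that $\mathfrak{T}_{\tau_{\mathfrak{p}_2^\alpha}}$ is defined over $\bb{Q}$, which is precisely your plan. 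The "bookkeeping" you flag as the remaining obstacle is all the paper's proof actually consists of, so your argument is correct and matches the paper's.
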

\begin{proof}
Since $\mathfrak{T}_{\tau_{\mathfrak{p}_2^\alpha}}$ is defined over $\bb{Q}$ we can use equation ($\ref{AL-diamonds}$) to compute
\begin{equation}\label{ALGalois}
\begin{split}
w_{\mathfrak{p}^\alpha_2}\circ(1\times\sigma_a)
&=
\mathfrak{T}_{\tau_{\mathfrak{p}_2^\alpha}} \circ (\langle 1,(a,a)\rangle\times\sigma_a)\circ\nu_\alpha\\
&=
(\langle(1,a^{-1}),(a,a^{-1})\rangle\times\sigma_a) \circ w_{\mathfrak{p}^\alpha_2}.    
\end{split}\end{equation}
\end{proof}

\section{Hirzebruch--Zagier classes}

We keep the notations and assumptions made at the beginning of the previous chapter. In particular, $M$ is the integer appearing in Theorem \ref{Ichino}. Let $K=V_1(M\cal{O}_L)$, $K' := \mrm{GL}_2(\bb{A}_\bb{Q}) \cap K$ and note that
$K'_0(p^\alpha)=\mrm{GL}_2(\bb{A}_\bb{Q}) \cap K_\diamond(p^\alpha)$.
We consider the following diagram of Shimura varieties over $\bb{Q}$
\begin{equation}\label{cartesiano}
	\xymatrix{ \ar @{} [drr] |{\mbox{\large $\diamond$}}
	Y(K'_0(p^{{\alpha+1}}))
	\ar[d]^{\pi_1}\ar@{^{(}->}[rr]^{\zeta(\mathfrak{p}_1)}
	&&
	S(K_\diamond(p^\alpha)\cap V_0(\mathfrak{p}_1^{\alpha+1}))
	\ar[d]^{\pi_{1,\mathfrak{p}_1}}
	\\
    Y(K'_0(p^{\alpha}))
    \ar@{^{(}->}[rr]^{\zeta}
    &&
    S(K_\diamond(p^\alpha))
}\end{equation}
where the horizontal arrows are induced by the inclusions of groups $K'_0(p^{\alpha})\le K_\diamond(p^\alpha)$ and $K'_0(p^{{\alpha+1}})\le K_\diamond(p^\alpha)\cap V_0(\mathfrak{p}_1^{\alpha+1})$. Since the horizontal arrows are closed embeddings and the vertical ones are finite of degree $p$, diagram \eqref{cartesiano} is cartesian.

\begin{definition}
For $\alpha\ge1$ we set
\[
\Delta^\flat_\alpha
:=
\zeta_*\left[Y(K'_0(p^{\alpha}))\right]\in \mrm{CH}^1\big(S(K_\diamond(p^\alpha)\big)(\bb{Q})
\] 
and
\[
\Delta_\alpha^\flat(\mathfrak{p}_1)
:=
\zeta(\mathfrak{p}_1)_*\left[Y(K'_0(p^{\alpha+1}))\right]
\]
as a codimension one cycle class in $\mrm{CH}^1\big( S(K_\diamond(p^\alpha)\cap V_0(\mathfrak{p}_1^{\alpha+1}))\big)(\bb{Q})$.
\end{definition}

\begin{lemma}\label{firstformula}
We have
\begin{equation}
(\pi_{1,\mathfrak{p}_1})^*\Delta_{\alpha}^{\flat}=\Delta^\flat_{\alpha}(\mathfrak{p}_1)
\end{equation}
 in $\mrm{CH}^1\big(S(K_\diamond(p^\alpha)\cap V_0(\mathfrak{p}_1^{\alpha+1}))\big)(\bb{Q})$.
\end{lemma}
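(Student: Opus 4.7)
The plan is to exploit the Cartesian diagram (\ref{cartesiano}) together with flat base change for Chow groups. More precisely, in that diagram the two horizontal arrows $\zeta$ and $\zeta(\mathfrak{p}_1)$ are closed immersions (Hilbert modular varieties contain the modular curve as a diagonal sub-Shimura variety), while the two vertical arrows $\pi_1$ and $\pi_{1,\mathfrak{p}_1}$ are finite flat morphisms of degree $p$. Under these hypotheses, the general functoriality result of Fulton's intersection theory (\emph{Intersection Theory}, Proposition 1.7) applied to the Cartesian square yields the identity of operators on cycle classes
\[
(\pi_{1,\mathfrak{p}_1})^{*}\circ \zeta_{*} \;=\; \zeta(\mathfrak{p}_1)_{*}\circ (\pi_1)^{*}.
\]

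Once this base-change identity is in hand, the proof is a one-line computation. Applying both sides to the fundamental class of $Y(K_0'(p^\alpha))$ gives
\[
(\pi_{1,\mathfrak{p}_1})^{*}\Delta_\alpha^{\flat}
\;=\;(\pi_{1,\mathfrak{p}_1})^{*}\zeta_{*}\bigl[Y(K_0'(p^\alpha))\bigr]
\;=\;\zeta(\mathfrak{p}_1)_{*}\,(\pi_1)^{*}\bigl[Y(K_0'(p^\alpha))\bigr].
\]
Since $\pi_1: Y(K_0'(p^{\alpha+1}))\to Y(K_0'(p^\alpha))$ is flat, the pullback of the fundamental class of the target is the fundamental class of the source, so $(\pi_1)^{*}[Y(K_0'(p^\alpha))]=[Y(K_0'(p^{\alpha+1}))]$, and the right-hand side is precisely $\Delta_\alpha^{\flat}(\mathfrak{p}_1)$ by definition.

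The only non-trivial input is the verification that the diagram (\ref{cartesiano}) is genuinely Cartesian (already asserted in the statement of the diagram) and that the vertical maps are flat, which is guaranteed by the fact that the forgetful maps between Hilbert modular varieties of different level structures at $p$ are finite \'etale (after restricting to the open locus) and at worst finite flat in general. There is essentially no obstacle here; the content of the lemma is a formal consequence of the Cartesian square, and the computation will serve merely as a template for the more involved compatibilities one has to establish later for the higher-dimensional Hirzebruch--Zagier cycles $\Delta_\alpha^\circ$ on the threefold $Z_\alpha(K)$.
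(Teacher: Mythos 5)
Your argument is the same as the paper's: both deduce the identity from the Cartesianness of diagram (\ref{cartesiano}) via the push-pull (flat base change) formula $(\pi_{1,\mathfrak{p}_1})^{*}\circ \zeta_{*}=\zeta(\mathfrak{p}_1)_{*}\circ (\pi_1)^{*}$, applied to the fundamental class of $Y(K_0'(p^\alpha))$. The extra remarks on flatness of the vertical maps and on $(\pi_1)^{*}[Y(K_0'(p^\alpha))]=[Y(K_0'(p^{\alpha+1}))]$ merely spell out what the paper leaves implicit, so the proposal is correct.
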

\begin{proof}
Since the diagram ($\ref{cartesiano}$)
is cartesian, the push-pull formula  
\[
(\pi_{1,\mathfrak{p}_1})^*\circ\zeta_*
=
\zeta(\mathfrak{p}_1)_*\circ (\pi_1)^*
\]
implies the claim.
\end{proof}

\subsubsection{Twisting by Atkin--Lehner.}
\begin{definition}
We consider the class of the codimension one cycle
\[
\Delta_{\alpha}^{\mbox{\tiny $\sharp$}}
=
(w_{\mathfrak{p}_{2}^{\alpha}})_*\Delta_{\alpha}^{\flat}\in \mrm{CH}^{1}\big(S(K_{\diamond,t}(p^{\alpha}))\big)(\mathbb{Q}(\zeta_{p^\alpha}))
\]
defined over $\bb{Q}(\zeta_{p^\alpha})$.
\end{definition}
\noindent There are two natural degeneracy maps $\varpi_1, \varpi_2: S(K_{\diamond,t}(p^{\alpha+1}))\to S(K_{\diamond,t}(p^{\alpha}))$ which are described by the following commutative diagrams
    \[\resizebox{\displaywidth}{!}{
	\xymatrix{
    S(K_{\diamond,t}(p^{\alpha+1}))
    \ar[d]^{\mu} \ar[drr]^{\varpi_1}
    &&&
    S(K_{\diamond,t}(p^{\alpha+1}))
    \ar[d]^{\mu} \ar[drr]^{\varpi_2}
    & \\
     S(K_{\diamond,t}(p^\alpha)\cap K_0(p^{\alpha+1}))
     \ar[rr]_{\qquad \pi_{1}} 
     && 
     S(K_{\diamond,t}(p^\alpha)),     
     &      
     S(K_{\diamond,t}(p^\alpha)\cap K_0(p^{\alpha+1}))
     \ar[rr]_{\qquad \pi_{2}} 
     &&
     S(K_{\diamond,t}(p^\alpha)).
    }}\]

\begin{proposition}\label{proposition:sharp-relation}
We have 
\[
 (\varpi_{2})_*\Delta_{\alpha+1}^{\mbox{\tiny $\sharp$}}
 =
 \langle\varpi_{\mathfrak{p}_2}^{-1},1\rangle_*\circ U_{\mathfrak{p}_1}\Delta_\alpha^{\mbox{\tiny $\sharp$}}.
\]
\end{proposition}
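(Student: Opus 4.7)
The proof is a bookkeeping argument that slides the Atkin-Lehner twist past the degeneracy map $\varpi_2$. My strategy would be as follows.

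\emph{Step 1 (Factor $\varpi_2$).} Write $\varpi_2 = \pi_2 \circ \mu$ on the $K_{\diamond,t}$-tower and further factor $\pi_2 = \pi_{2,\mathfrak{p}_1} \circ \nu_{2,\mathfrak{p}_1}$ (permissible by the Remark at the end of Section 5.1.3). This singles out the $\mathfrak{p}_1$-part of the degeneracy, leaving the $\mathfrak{p}_2$-part available to interact with the Atkin-Lehner $w_{\mathfrak{p}_2^{\alpha+1}}$.

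\emph{Step 2 (Slide the Atkin-Lehner past $\nu_{2,\mathfrak{p}_1}$).} Invoking the $K_{\diamond,t}$-tower analogue of the second relation of (\ref{Commute2}),
\[
\nu_{2,\mathfrak{p}_1}\circ \mathfrak{T}_{\tau_{\mathfrak{p}_2^{\alpha+1}}}
=
\langle\varpi_{\mathfrak{p}_2}^{-1},1\rangle \circ \mathfrak{T}_{\tau_{\mathfrak{p}_2^\alpha}} \circ \nu_{1,\mathfrak{p}_1},
\]
the Atkin-Lehner is pulled past $\nu_{2,\mathfrak{p}_1}$ at the cost of producing the diamond $\langle\varpi_{\mathfrak{p}_2}^{-1},1\rangle$ and swapping $\nu_{2,\mathfrak{p}_1}$ into $\nu_{1,\mathfrak{p}_1}$.

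\emph{Step 3 (Move the $\mathfrak{p}_2$-factors across $\pi_{2,\mathfrak{p}_1}$ and reassemble $w_{\mathfrak{p}_2^\alpha}$).} Since $\pi_{2,\mathfrak{p}_1}$ is a Hecke operation at $\mathfrak{p}_1$, it commutes with both the diamond $\langle\varpi_{\mathfrak{p}_2}^{-1},1\rangle$ and the Atkin-Lehner $\mathfrak{T}_{\tau_{\mathfrak{p}_2^\alpha}}$, which act at $\mathfrak{p}_2$. Pulling these factors out and using Corollary \ref{proj-nu_alpha-commute} to commute $\nu_{\alpha+1}$ with $\mu$ and $\nu_{1,\mathfrak{p}_1}$, the remaining $\mathfrak{p}_2$-data reassembles into $w_{\mathfrak{p}_2^\alpha} = \mathfrak{T}_{\tau_{\mathfrak{p}_2^\alpha}} \circ \nu_\alpha$ acting on the $K_\diamond$-tower.

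\emph{Step 4 (Identify the residual cycle with $U_{\mathfrak{p}_1}\Delta_\alpha^\flat$).} The composition $\nu_{1,\mathfrak{p}_1} \circ \mu \circ \zeta$ from $Y(K'_0(p^{\alpha+1}))$ coincides, by the Cartesian diagram (\ref{cartesiano}), with the embedding $\zeta(\mathfrak{p}_1)$ into $S(K_\diamond(p^\alpha) \cap U_0(\mathfrak{p}_1^{\alpha+1}))$, because both maps are simply the diagonal on adelic representatives. Thus the inner pushforward produces $\zeta(\mathfrak{p}_1)_*[Y(K'_0(p^{\alpha+1}))] = \Delta_\alpha^\flat(\mathfrak{p}_1)$, and applying $(\pi_{2,\mathfrak{p}_1})_*$ together with Lemma \ref{firstformula} yields
\[
(\pi_{2,\mathfrak{p}_1})_* \Delta_\alpha^\flat(\mathfrak{p}_1) = (\pi_{2,\mathfrak{p}_1})_* (\pi_{1,\mathfrak{p}_1})^* \Delta_\alpha^\flat = U_{\mathfrak{p}_1}\Delta_\alpha^\flat.
\]

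\emph{Step 5 (Final reassembly).} Since $w_{\mathfrak{p}_2^\alpha}$ and $U_{\mathfrak{p}_1}$ commute (they act at disjoint primes, in the same spirit as the observation following (\ref{Commute2})), applying $(w_{\mathfrak{p}_2^\alpha})_*$ to $U_{\mathfrak{p}_1}\Delta_\alpha^\flat$ gives $U_{\mathfrak{p}_1}\Delta_\alpha^{\mbox{\tiny $\sharp$}}$. Combining with the external diamond from Step 2 produces
\[
(\varpi_2)_* \Delta_{\alpha+1}^{\mbox{\tiny $\sharp$}} = \langle\varpi_{\mathfrak{p}_2}^{-1},1\rangle_* \circ U_{\mathfrak{p}_1}\,\Delta_\alpha^{\mbox{\tiny $\sharp$}},
\]
as required.

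The main obstacle is Step 2: ensuring the commutation relation (\ref{Commute2}), originally established on the $K(p^\alpha)$-tower, transfers with the same diamond factor to the $K_{\diamond,t}$-tower and its intermediate levels. The Remark in the paper asserts that the discussion carries over verbatim, but verifying this requires tracing the matrix identity at $\mathfrak{p}_2$ through the different level subgroups. The fields-of-definition discrepancy $\bb{Q}(\zeta_{p^{\alpha+1}}) \supset \bb{Q}(\zeta_{p^\alpha})$ is handled automatically: both sides of the asserted equality, when computed in $\mrm{CH}^1$ over $\bb{Q}(\zeta_{p^{\alpha+1}})$, are equal, and the Galois twist of Lemma \ref{Cohomology} guarantees the LHS actually descends.
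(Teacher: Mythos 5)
Your proposal follows essentially the same route as the paper's proof: the same factorization $\varpi_2=\pi_{2,\mathfrak{p}_1}\circ\nu_{2,\mathfrak{p}_1}\circ\mu$, the same use of relation (\ref{Commute2}) together with Corollary \ref{proj-nu_alpha-commute} to slide the Atkin--Lehner twist, the identification $\nu_{1,\mathfrak{p}_1}\circ\mu\circ\zeta=\zeta(\mathfrak{p}_1)$ coming from the cartesian square (\ref{cartesiano}), and Lemma \ref{firstformula} to assemble $U_{\mathfrak{p}_1}=(\pi_{2,\mathfrak{p}_1})_*(\pi_{1,\mathfrak{p}_1})^*$ -- this is precisely the commutative diagram the paper builds. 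The one deviation is the ordering in your Step 5: you form $U_{\mathfrak{p}_1}\Delta_\alpha^\flat$ on the $K_\diamond$-tower and then commute $U_{\mathfrak{p}_1}$ past $(w_{\mathfrak{p}_2^\alpha})_*$, a claim whose justification ("disjoint primes") covers the $\mathfrak{T}_{\tau_{\mathfrak{p}_2^\alpha}}$-factor but not the $\nu_\alpha$-factor of $w_{\mathfrak{p}_2^\alpha}$ against $\pi_{2,\mathfrak{p}_1}$; this is not a genuine gap (it can be repaired by noting $\nu_\alpha$ is the identity on complex uniformizations), but the paper's arrangement -- rewriting $\Delta_\alpha^\flat(\mathfrak{p}_1)=(\pi_{1,\mathfrak{p}_1})^*\Delta_\alpha^\flat$ first, so that only commutation of $w_{\mathfrak{p}_2^\alpha}$ with $\pi_{1,\mathfrak{p}_1}$ is ever needed -- avoids the issue entirely.
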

\begin{proof}
Combining equation (\ref{Commute2}) with Corollary \ref{proj-nu_alpha-commute}, we have the following diagram commutes
\begin{equation}\label{diagramsharp}
\xymatrix{
    Y(K'_0(p^{\alpha+1}))
    \ar@{=}[d] \ar@{^{(}->}[rr]^{\zeta} 
    &&
    S(K_\diamond(p^{\alpha+1}))  \ar[d]^{\nu_{1,\mathfrak{p}_1}\circ\mu} \ar[rr]^{w_{\mathfrak{p}_2^{\alpha+1}}} 
    &&
    S(K_{\diamond,t}(p^{\alpha+1}))
    \ar[d]^{\nu_{2,\mathfrak{p}_1}\circ\mu} 
    \\
	Y(K'_0(p^{\alpha+1}))
	\ar@{^{(}->}[rr]^{\zeta(\mathfrak{p}_1)} 
	&&
	S(K_\diamond(p^\alpha)\cap K_0(\mathfrak{p}_1^{\alpha+1}))  
	\ar[rr]^{\langle\varpi_{\mathfrak{p}_2}^{-1},1\rangle \circ w_{\mathfrak{p}_2^{\alpha}}} 
	&&
	S(K_{\diamond,t}(p^\alpha)\cap K_0(\mathfrak{p}_1^{\alpha+1}))
	\ar[d]^{\pi_{2,\mathfrak{p}_1}}
	\\
	&& 
	&&
	S(K_{\diamond,t}(p^\alpha)).
	}
\end{equation}
By definition,
$(\varpi_{2})_*\Delta_{\alpha+1}^{\mbox{\tiny $\sharp$}}$
is the pushforward of the cycle class
$\left[Y(K'_0(p^{\alpha+1}))\right]$ along the top arrows and the rightmost vertical arrows. Therefore
\[\begin{split}
 (\varpi_{2})_*\Delta_{\alpha+1}^{\mbox{\tiny $\sharp$}} 
 &=
 (\pi_{2,\mathfrak{p}_1})_*\circ \langle\varpi_{\mathfrak{p}_2}^{-1},1\rangle_*\circ (w_{\mathfrak{p}_2^\alpha})_*\circ(\zeta(\mathfrak{p}_1))_*\left[Y(K'_0(p^{\alpha+1}))\right]\\
 &=
 \langle\varpi_{\mathfrak{p}_2}^{-1},1\rangle_*\circ(\pi_{2,\mathfrak{p}_1})_*\circ(w_{\mathfrak{p}_2^\alpha})_*\Delta_{\alpha}^\flat(\mathfrak{p}_1)\\
 &=
 \langle\varpi_{\mathfrak{p}_2}^{-1},1\rangle_*\circ(\pi_{2,\mathfrak{p}_1})_*\circ(w_{\mathfrak{p}_2^\alpha})_*\circ(\pi_{1,\mathfrak{p}_1})^*\Delta_{\alpha}^{\flat}\\
 &= 
 \langle\varpi_{\mathfrak{p}_2}^{-1},1\rangle_*\circ(\pi_{2,\mathfrak{p}_1})_*(\pi_{1,\mathfrak{p}_1})^*\circ(w_{\mathfrak{p}_2^\alpha})_*\Delta_{\alpha}^{\flat}\\
 &= 
 \langle\varpi_{\mathfrak{p}_2}^{-1},1\rangle_*\circ U_{\mathfrak{p}_1}\Delta_\alpha^{\mbox{\tiny $\sharp$}},
\end{split}\]
where the third equality is due to Lemma \ref{firstformula} and the second to last follows from the fact that $w_{\mathfrak{p}_2^\alpha}$ is an isomorphism commuting with $\pi_{1,\mathfrak{p}_1}$ (Corollary \ref{proj-nu_alpha-commute}).
\end{proof}

\subsection{Hirzebruch--Zagier cycles}
\begin{definition}
 Consider the Shimura threefold 
\begin{equation}
	Z_\alpha(K) =  S(K_{\diamond,t}(p^\alpha))\times X_{1,0}(N,p)
\end{equation} 
where $X_{1,0}(N,p)$ denotes the compactified modular curve $X(V_1(N)\cap V_0(p))$. Then the Hirzebruch--Zagier cycles of level $\alpha\ge1$ is defined by
\begin{equation}
\Delta_{\alpha} 
=
(\langle\varpi_{\mathfrak{p}_2}^{\alpha},1\rangle \circ w_{\mathfrak{p}_2^\alpha}\circ\zeta_\alpha,\ \pi_{1,\alpha})_*[Y(K'_0(p^\alpha))]\in\mrm{CH}^2\big(Z_\alpha(K)\big)(\bb{Q}(\zeta_{p^\alpha})).
\end{equation} 
where $\pi_{1,\alpha}: Y(K'_0(p^\alpha))\longrightarrow X_{1,0}(N,p)$  the natural projection. 
\end{definition}

\begin{remark}
In general Hecke correspondences do not extend to any \emph{fixed} compactification of a Shimura variety. However, Kai-Wen Lan has shown that they do extend to maps between compatible toroidal compactifications (\cite{Lan13}, Proposition 6.4.3.4). In this article we will work with the non-compact Hilbert modular surfaces $S(K_{\diamond,t}(p^\alpha))$'s -- and we will only consider their compactifications whenever strictly necessary -- because the machinery of finite polynomial cohomology has been extended to general smooth varieties in \cite{BLZ}, and because we are ultimately interested in the localization of our Hecke modules at non-Eisenstein maximal ideals.
\end{remark}

\noindent Proposition \ref{proposition:sharp-relation} implies a precise relation between Hirzebruch--Zagier cycles of different levels.

\begin{proposition}\label{proposition:flat-relation}
The following identity holds in $\mrm{CH}^{2}(Z_{\alpha}(K))(\mathbb{Q}(\zeta_{p^\alpha}))$:
\[
(\varpi_{2}, \mrm{id})_{*}\Delta_{\alpha+1}
=
(U_{\mathfrak{p}_1}, \mrm{id})\Delta_{\alpha}.
\]
\end{proposition}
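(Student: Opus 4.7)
The plan is to reduce the claimed identity to the already-established Proposition \ref{proposition:sharp-relation} by carefully unpacking the definition of $\Delta_\alpha$ as the pushforward of $[Y(K'_0(p^\alpha))]$ under the pair of maps $\bigl(\langle\varpi_{\mathfrak{p}_2}^\alpha,1\rangle\circ w_{\mathfrak{p}_2^\alpha}\circ\zeta_\alpha,\ \pi_{1,\alpha}\bigr)$, and then tracking the various Atkin-Lehner, diamond, and degeneracy maps on each factor separately.

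First I would rewrite the left-hand side as
\[
(\varpi_2,\mrm{id})_*\Delta_{\alpha+1}
=
\bigl(\varpi_2\circ\langle\varpi_{\mathfrak{p}_2}^{\alpha+1},1\rangle\circ w_{\mathfrak{p}_2^{\alpha+1}}\circ\zeta_{\alpha+1},\ \pi_{1,\alpha+1}\bigr)_*[Y(K'_0(p^{\alpha+1}))].
\]
The central diamond $\langle\varpi_{\mathfrak{p}_2}^{\alpha+1},1\rangle$ commutes with the degeneracy $\varpi_2$, and the commutative diagram \eqref{diagramsharp} (which is the geometric engine of Proposition \ref{proposition:sharp-relation}) identifies
\[
\varpi_2\circ w_{\mathfrak{p}_2^{\alpha+1}}\circ\zeta_{\alpha+1}
=
\pi_{2,\mathfrak{p}_1}\circ\langle\varpi_{\mathfrak{p}_2}^{-1},1\rangle\circ w_{\mathfrak{p}_2^\alpha}\circ\zeta(\mathfrak{p}_1).
\]
Since $\pi_{1,\alpha+1}=\pi_{1,\alpha}\circ\pi_1$, and since the two diamonds collapse as $\langle\varpi_{\mathfrak{p}_2}^{\alpha+1},1\rangle\circ\langle\varpi_{\mathfrak{p}_2}^{-1},1\rangle=\langle\varpi_{\mathfrak{p}_2}^\alpha,1\rangle$, this rewrites the left-hand side as
\[
\bigl(\pi_{2,\mathfrak{p}_1}\circ\langle\varpi_{\mathfrak{p}_2}^{\alpha},1\rangle\circ w_{\mathfrak{p}_2^\alpha}\circ\zeta(\mathfrak{p}_1),\ \pi_{1,\alpha}\circ\pi_1\bigr)_*[Y(K'_0(p^{\alpha+1}))].
\]

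Next I would compute the right-hand side by unpacking $(U_{\mathfrak{p}_1},\mrm{id})\Delta_\alpha=(\pi_{2,\mathfrak{p}_1},\mrm{id})_*\circ(\pi_{1,\mathfrak{p}_1},\mrm{id})^*\Delta_\alpha$. The base-change of the cartesian square \eqref{cartesiano} along the isomorphism $w_{\mathfrak{p}_2^\alpha}$, which commutes with the projections $\pi_{1,\mathfrak{p}_1}$ by Corollary \ref{proj-nu_alpha-commute}, produces a cartesian square
\[
\xymatrix{
Y(K'_0(p^{\alpha+1}))
\ar[d]^{\pi_1}\ar@{^{(}->}[rr]^-{w_{\mathfrak{p}_2^\alpha}\circ\zeta(\mathfrak{p}_1)}
&&
S(K_{\diamond,t}(p^\alpha)\cap K_0(\mathfrak{p}_1^{\alpha+1}))
\ar[d]^{\pi_{1,\mathfrak{p}_1}}\\
Y(K'_0(p^\alpha))\ar@{^{(}->}[rr]^-{w_{\mathfrak{p}_2^\alpha}\circ\zeta_\alpha}
&&
S(K_{\diamond,t}(p^\alpha)),
}
\]
and the diamond $\langle\varpi_{\mathfrak{p}_2}^\alpha,1\rangle$ commutes with $\pi_{1,\mathfrak{p}_1}$. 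Proper base change combined with $(\pi_1)^*[Y(K'_0(p^\alpha))]=[Y(K'_0(p^{\alpha+1}))]$ then yields exactly the same expression obtained on the left, completing the proof.

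The computation is essentially bookkeeping: the non-trivial geometric input is already contained in Proposition \ref{proposition:sharp-relation} (and in the commutativity of diamonds/Atkin-Lehner with degeneracies recorded in the previous section), so the only real obstacle is verifying that the twist $\langle\varpi_{\mathfrak{p}_2}^\alpha,1\rangle$ built into the definition of $\Delta_\alpha$ precisely cancels the parasitic $\langle\varpi_{\mathfrak{p}_2}^{-1},1\rangle$ appearing in the $\sharp$-relation, which it does by design.
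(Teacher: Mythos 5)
Your proposal is correct and follows essentially the paper's intended route: the paper deduces Proposition \ref{proposition:flat-relation} directly from Proposition \ref{proposition:sharp-relation}, whose geometric content is exactly the commutative diagram (\ref{diagramsharp}) and the $w_{\mathfrak{p}_2^\alpha}$-twisted cartesian square (\ref{cartesiano}) (via Corollary \ref{proj-nu_alpha-commute}) that you invoke. Your additional bookkeeping --- commuting the central diamonds $\langle\varpi_{\mathfrak{p}_2}^{\alpha+1},1\rangle$ and $\langle\varpi_{\mathfrak{p}_2}^{-1},1\rangle$ past the degeneracies so they collapse to $\langle\varpi_{\mathfrak{p}_2}^{\alpha},1\rangle$, and tracking the modular-curve factor through $\pi_{1,\alpha+1}=\pi_{1,\alpha}\circ\pi_1$ --- simply makes explicit what the paper leaves implicit.
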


\subsubsection{Galois twisting.}
By equation (\ref{ALGalois}), the Galois group $\Gamma_\bb{Q}$ acts on $\Delta_\alpha$ through the finite quotient $\mrm{Gal}(\bb{Q}(\zeta_{p^\alpha})/\bb{Q})$ as
\[
(\sigma_a)_* \Delta_\alpha 
=
(\langle(1,a),(a^{-1},a)\rangle,\mrm{id})_*\Delta_\alpha.
\]

\begin{definition}
Let $S^\dagger(K_{\diamond,t}(p^\alpha))$ be the twist of the $\bb{Q}$-variety $S(K_{\diamond,t}(p^\alpha))$ by the $1$-cocycle 
\[
\mrm{Gal}(\bb{Q}(\zeta_{p^\alpha})/\bb{Q})\ni\sigma_a
\mapsto
 \langle(1,a^{-1}),(a,a^{-1})\rangle.
\]
We define
$
Z_\alpha^\dagger(K)
=
S^\dagger(K_{\diamond,t}(p^\alpha))\times X(K'_0(p)).
$
By construction 
\begin{equation} \mrm{H}^{\bfcdot}_\et(S^\dagger(K_{\diamond,t}(p^\alpha))_{\bar{\bb{Q}}},O)
\simeq
\mrm{H}^{\bfcdot}_\et(S(K_{\diamond,t}(p^\alpha))_{\bar{\bb{Q}}},O)(\delta _\alpha).
\end{equation}
 and $\Delta_\alpha$ corresponds to a codimension $2$ cycle of $Z_\alpha^\dagger(K)$ defined over $\bb{Q}$.
\end{definition}

\subsubsection{Null-homologous cycles.}
Finally, we apply a suitable correspondence to make Hirzebruch--Zagier cycles null-homologous.
We define a correspondence $\varepsilon_{\msf{f}_\circ}$ on $X_{1,0}(N,p)$ following \cite{DR2} (after equation (47)). Since $p$ is assumed to be non-Eisenstein for $\msf{f}_\circ$, i.e. $\msf{f}_\circ$ is not congruent to an Eisenstein series modulo $p$, there exists an auxiliary prime $\ell\nmid Np$ for which $\ell+1-\msf{a}_p(\ell, \msf{f}_\circ)$ lies in $O^\times$. Then the correspondence
\[
\varepsilon_{\msf{f}_\circ} = (\ell+1-T(\ell))/(\ell+1-\msf{a}_p(\ell, \msf{f}_\circ))
\]
has coefficients in $O$, it annihilates $\mrm{H}^0(X_{1,0}(N,p))$ and $\mrm{H}^2(X_{1,0}(N,p))$ and acts as the identity on the $\msf{f}_\circ$-isotypic subspace $\mrm{H}^1(X_{1,0}(N,p))[\msf{f}_\circ]$.

\begin{definition}
The modified Hirzebruch--Zagier cycle is given by
\[
\Delta_\alpha^\circ := (\mrm{id}, \varepsilon_{\msf{f}_\circ})_*\Delta_\alpha \in \mrm{CH}^2\big(Z^\dagger_\alpha(K)\big)(\bb{Q})\otimes_{\bb{Z}} O.
\]
\end{definition}

\begin{proposition}\label{nullhomo}
The cycle class $\Delta_\alpha^\circ$ is null-homologous.
\end{proposition}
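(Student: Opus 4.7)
The goal is to show that the cycle class $\mrm{cl}(\Delta_\alpha^\circ)$ vanishes in $\mrm{H}^4_\et(Z^\dagger_\alpha(K)_{\bar{\bb{Q}}}, O(2))$. My plan is to decompose the target cohomology via the K\"unneth formula, use the defining properties of $\varepsilon_{\msf{f}_\circ}$ to kill all but one K\"unneth piece, and then show that the surviving piece is zero by exploiting the fact that the cycle comes from an \emph{open} modular curve.

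For the threefold $Z^\dagger_\alpha(K) = S^\dagger(K_{\diamond,t}(p^\alpha)) \times X_0(p)$, the K\"unneth formula gives
\[
\mrm{H}^4_\et(Z^\dagger_\alpha(K)_{\bar{\bb{Q}}}, O(2)) = \bigoplus_{i+j = 4} \mrm{H}^i_\et(S^\dagger(K_{\diamond,t}(p^\alpha))_{\bar{\bb{Q}}}, O) \otimes \mrm{H}^j_\et(X_0(p)_{\bar{\bb{Q}}}, O(2)).
\]
Only $j \in \{0,1,2\}$ can contribute since $X_0(p)$ is a smooth proper curve, and $\mrm{H}^4_\et(S^\dagger_{\bar{\bb{Q}}}, O) = 0$ because $S^\dagger$ is a smooth surface that is not proper: by Poincar\'e--Lefschetz duality this group is dual to $\mrm{H}^0_{\et,c}$ of a non-proper variety, which vanishes. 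Hence only the $(2,2)$ and $(3,1)$ pieces remain. By the construction of $\varepsilon_{\msf{f}_\circ}$, it annihilates both $\mrm{H}^0_\et$ and $\mrm{H}^2_\et$ of $X_0(p)$, so applying $(\mrm{id},\varepsilon_{\msf{f}_\circ})_*$ kills the $(2,2)$ piece, and $\mrm{cl}(\Delta_\alpha^\circ)$ is forced into
\[
\mrm{H}^3_\et(S^\dagger_{\bar{\bb{Q}}}, O) \otimes \mrm{H}^1_\et(X_0(p)_{\bar{\bb{Q}}}, O(2))[\msf{f}_\circ].
\]

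The main obstacle is the vanishing of this last piece. I would factor the defining map as $\iota = (f\times g)\circ\Delta_C$, where $C = Y(K'_0(p^\alpha))$, $f = \langle\varpi_{\mathfrak{p}_2}^{\alpha},1\rangle\circ w_{\mathfrak{p}_2^\alpha}\circ\zeta_\alpha$ is the closed immersion into $S(K_{\diamond,t}(p^\alpha))$, and $g = \pi_{1,\alpha}$ is the finite degeneracy map, so that $\mrm{cl}(\Delta_\alpha) = (f\times g)_*[\Delta_C]$. Because $C$ is affine, Artin's vanishing forces $\mrm{H}^2_\et(C_{\bar{\bb{Q}}}, O) = 0$; therefore the diagonal class $[\Delta_C]$ sits in the pure K\"unneth type $\mrm{H}^1_\et(C)\otimes \mrm{H}^1_\et(C)(1)$, and after pushforward the $(3,1)$-component takes the explicit form $\sum_k f_*\omega_k \otimes g_*\bar\omega_k$ for a suitable basis $\{\omega_k\}$ of $\mrm{H}^1_\et(C_{\bar{\bb{Q}}}, O)$. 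The hardest step will then be to show that after projecting the $X_0(p)$-factor onto the $\msf{f}_\circ$-isotypic subspace this sum vanishes; I expect this to follow from the Hecke-equivariance of $g_*$ combined with the non-Eisenstein hypothesis on $\msf{f}_\circ$ and the compatibilities of the Atkin--Lehner twist with the $U_p$-action worked out in the preceding subsection.
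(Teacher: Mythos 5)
Your reduction to the $(3,1)$ K\"unneth piece is fine as far as it goes (the $(4,0)$, $(2,2)$, $(1,3)$, $(0,4)$ terms die as you say), but the proof stops exactly where the actual content of the proposition lies: the vanishing of the component in $\mrm{H}^3_\et(S^\dagger(K_{\diamond,t}(p^\alpha))_{\bar{\bb{Q}}},O)\otimes \mrm{H}^1_\et(X_0(p)_{\bar{\bb{Q}}},O(2))[\msf{f}_\circ]$ is only ``expected'', and the tools you invoke would not deliver it. Note first that $\mrm{H}^3_\et$ of the \emph{open} Hilbert modular surface is genuinely nonzero (it is built out of boundary/cusp classes: from the pair sequence for a smooth compactification with boundary divisor $D$ one gets $\mrm{H}^3(S^\dagger)\simeq\ker\big(\oplus_i O(-2)\to O(-2)\big)$ up to torsion), so the surviving piece does not vanish for group-theoretic reasons; one must show the specific class is zero. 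Second, your explicit formula $\sum_k f_*\omega_k\otimes g_*\bar\omega_k$ is not well defined: $g=\pi_{1,\alpha}:Y(K'_0(p^\alpha))\to X_0(p)$ is not proper (its image misses the cusps), so there is no pushforward $g_*$ on \'etale cohomology, and correspondingly $(f\times g)_*$ does not exist even though the cycle-level pushforward of the diagonal does (the map restricted to the diagonal is a closed immersion). Third, the mechanism you propose --- Hecke equivariance plus the non-Eisenstein hypothesis on $\msf{f}_\circ$ --- acts only through the $X_0(p)$-factor and gives no handle on the classes $f_*\omega_k\in\mrm{H}^3(S^\dagger)(1)$; the natural reason the $(3,1)$ piece dies is a statement about the surface side, not about $\msf{f}_\circ$. (There are also integral K\"unneth/Tor issues over $O$ that you pass over, which the paper has to address via torsion-freeness.)

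The paper's proof avoids all of this by compactifying: it passes to $Z^\dagger_\alpha(K)^{\mrm{c}}=S^{\mrm{c}}\times X_0(p)$ with $S^{\mrm{c}}$ the minimal resolution of the Baily--Borel compactification, uses the commutative square relating $\mrm{cl}_\et$ of the cycle and of its closure to reduce to showing the class vanishes upstairs, and then observes that $S^{\mrm{c}}$ is simply connected, so $\mrm{H}^1(S^{\mrm{c}})=\mrm{H}^3(S^{\mrm{c}})=0$; hence \emph{every} nonzero K\"unneth term of $\mrm{H}^4_\et(Z^\dagger_\alpha(K)^{\mrm{c}})$ involves $\mrm{H}^0$ or $\mrm{H}^2$ of $X_0(p)$ and is annihilated by $(\mrm{id},\varepsilon_{\msf{f}_\circ})$. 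In particular the troublesome $(3,1)$ component of your open-variety class is zero because it is the restriction of a class from $\mrm{H}^3(S^{\mrm{c}})\otimes\mrm{H}^1(X_0(p))=0$. If you want to salvage a proof on the open threefold, you would in effect need this compactification input (or a weight/strictness argument showing $f_*$ kills the interior part of $\mrm{H}^1(C)$ and a separate treatment of the cusp classes), so I recommend adopting the compactification route.
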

\begin{proof}
Consider the smooth compactification $\iota:Z_\alpha^\dagger(K)\hookrightarrow Z_\alpha^\dagger(K)^\mrm{c}$ obtained by taking the minimal resolution of the Baily-Borel compactification of the Hilbert modular surface, and denote by $\Delta_\alpha^{\circ,\mrm{c}}$ the closure of $\Delta_\alpha^{\circ}$ in $Z_\alpha^\dagger(K)^\mrm{c}$. Thanks to the commuting diagram
\[\xymatrix{
\mrm{CH}^2\big(Z^\dagger_\alpha(K)^\mrm{c}\big)(\bb{Q})\otimes_{\bb{Z}}O\ar[r]^{\mrm{cl}_\et}\ar[d]^{\iota^*} & \mrm{H}_\mrm{et}^4\big(Z_\alpha^\dagger(K)^\mrm{c}_{\bar{\bb{Q}}},O(2)\big)\ar[d]^{\iota^*}\\
\mrm{CH}^2\big(Z^\dagger_\alpha(K)\big)(\bb{Q})\otimes_{\bb{Z}}O\ar[r]^{\mrm{cl}_\et}& \mrm{H}_\mrm{et}^4\big(Z_\alpha^\dagger(K)_{\bar{\bb{Q}}},O(2)\big),
}\]
it suffices to show that $\mrm{cl}_\et\big(\Delta_\alpha^{\circ,\mrm{c}}\big)=0$.
As the integral cohomology of smooth projective curves is torsion free and the minimal resolution of the Baily-Borel compactification of a Hilbert modular surface is simply connected, the group $\mrm{H}_\mrm{et}^4(Z_\alpha^\dagger(K)^\mrm{c}_{\overline{\bb{Q}}},O(2))$ has a K\"unneth decomposition whose every non-zero term is annihilated by the correspondence $(\mrm{id}, \varepsilon_{\msf{f}_\circ})$.
\end{proof}

\subsection{Big cohomology classes}
For any number field $D$, the $p$-adic \'etale Abel--Jacobi map 
\[
\mrm{AJ}_p^\et:
\mrm{CH}^2(Z_\alpha^\dagger(K))_0(D)
\longrightarrow 
\mrm{H}^1\big(D,\mrm{H}^3_\mrm{et}\big(Z_\alpha^\dagger(K)_{\bar{\bb{Q}}},O(2)\big)\big)
=
\mrm{H}^1\big(D,\mrm{H}^3_\mrm{et}\big(Z_\alpha(K)_{\bar{\bb{Q}}},O(2)\big)(\delta_\alpha)\big)
\]
sends null-homologous cycles to Galois cohomology classes. 
\begin{definition}
	We set
	\[
	\bs{\cal{V}}_\alpha(K)
	:=
	e_\mrm{n.o.}\mrm{H}^2_\et\big(S(K_{\diamond,t}(p^\alpha))_{\bar{\bb{Q}}},O(1)\big)(\delta_\alpha)
	\otimes
	\mrm{H}^1_\et\big(X_{1,0}(N,p)_{\bar{\bb{Q}}},O(1)\big).
	\]
\end{definition}
\noindent Then the modified Hirzebruch--Zagier cycles $\Delta^\circ_\alpha$ give rise to cohomology classes 
\begin{equation}
\kappa^\circ_\alpha
:=
\mrm{AJ}_p^\et(\Delta^\circ_\alpha)
\in
\mrm{H}^1\big(\bb{Q},\bs{\cal{V}}_\alpha(K)\big).
\end{equation}

\begin{lemma}\label{compatibility - classes}
	For $\alpha\ge1$ we have 
	\[
	(\varpi_{2}, \mrm{id})_{*}\kappa^\circ_{\alpha+1}
	=
	(U_{\mathfrak{p}_1}, \mrm{id})\kappa^\circ_{\alpha}.
	\]
\end{lemma}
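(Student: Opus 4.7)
The plan is to descend the cycle-level identity of Proposition \ref{proposition:flat-relation} to a cohomological identity using the naturality of the $p$-adic \'etale Abel-Jacobi map. First, I would record that since $\varepsilon_{\msf{f}_\circ}$ is a correspondence supported on the modular curve factor $X_0(p)$, while $(\varpi_2, \mrm{id})_*$ and $(U_{\mathfrak{p}_1}, \mrm{id})$ act only on the Hilbert surface factor, the former commutes with both of the latter on the product threefold $Z_\alpha^\dagger(K)$. Combined with the definition $\Delta_\alpha^\circ = (\mrm{id}, \varepsilon_{\msf{f}_\circ})_*\Delta_\alpha$, Proposition \ref{proposition:flat-relation} then yields
\[
(\varpi_2, \mrm{id})_*\Delta_{\alpha+1}^\circ = (\mrm{id}, \varepsilon_{\msf{f}_\circ})_*(\varpi_2, \mrm{id})_*\Delta_{\alpha+1} = (\mrm{id}, \varepsilon_{\msf{f}_\circ})_*(U_{\mathfrak{p}_1}, \mrm{id})\Delta_\alpha = (U_{\mathfrak{p}_1}, \mrm{id})\Delta_\alpha^\circ,
\]
an equality of null-homologous cycle classes in $\mrm{CH}^2\big(Z_\alpha^\dagger(K)\big)(\bb{Q})\otimes O$, where both sides inherit null-homologousness from Proposition \ref{nullhomo} via the fact that proper pushforward and Hecke correspondences preserve null-homologous cycles.

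Next, I would apply the \'etale Abel-Jacobi map to this identity. Functoriality of $\mrm{AJ}_p^\et$ with respect to proper pushforward along the morphism $(\varpi_2, \mrm{id})$, together with its equivariance under the action of the finite correspondence $(U_{\mathfrak{p}_1}, \mrm{id})$, produces the claimed identity in $\mrm{H}^1\big(\bb{Q}, \mrm{H}^3_\et(Z_\alpha^\dagger(K)_{\bar{\bb{Q}}}, O(2))\big)$. Since the nearly ordinary projector $e_\mrm{n.o.}$, the projection onto interior cohomology, and the K\"unneth projector carving out $\mrm{H}^1_\et(X_0(p)_{\bar{\bb{Q}}}, O(1))$ all commute with $(\varpi_2)_*$ and $U_{\mathfrak{p}_1}$ (being built from commuting Hecke operators and surface-only degeneracy maps), the identity descends to the target $\mrm{H}^1\big(\bb{Q}, \bs{\cal{V}}_\alpha(K)\big)$.

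The main bookkeeping obstacle concerns the Galois twist by $\delta_\alpha$: Proposition \ref{proposition:flat-relation} is stated on the untwisted threefold $Z_\alpha(K)$ with the cycles \emph{a priori} rational only over $\bb{Q}(\zeta_{p^\alpha})$, so to apply it on $Z_\alpha^\dagger(K)$ one must verify that $(\varpi_2, \mrm{id})_*$ and $(U_{\mathfrak{p}_1}, \mrm{id})$ intertwine the characters $\delta_{\alpha+1}$ and $\delta_\alpha$ compatibly. This is immediate from the explicit action of diamond operators recorded in equation (\ref{ALGalois}) and the fact that the degeneracy map $\varpi_2$ and the correspondence $U_{\mathfrak{p}_1}$ are each defined over $\bb{Q}$, so that both sides of the cycle identity are automatically $\bb{Q}$-rational on the twisted threefold. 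Once this compatibility is recorded, the rest of the argument is formal.
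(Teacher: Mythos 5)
Your proposal is correct and follows essentially the same route as the paper, whose proof simply invokes Proposition \ref{proposition:flat-relation} together with the commutativity of the cycle class / Abel--Jacobi map with correspondences; your additional remarks (that $(\mrm{id},\varepsilon_{\msf{f}_\circ})$ commutes with the surface-only operators, and that the twist by $\delta_\alpha$ is compatible with $\varpi_2$ and $U_{\mathfrak{p}_1}$) just make explicit the details the paper leaves implicit.
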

\begin{proof}
	It follows from Proposition $\ref{proposition:flat-relation}$ and the commutativity of cycle class map and correspondences.
\end{proof}

\noindent Since $U_{\mathfrak{p}_1}$ acts invertibly on the nearly ordinary part, we may define
\begin{equation}\label{normalizationHZclasses}
\kappa_{\alpha}^\mrm{n.o.}
:=
\big( U_{\mathfrak{p}_{1}}^{-\alpha}, \mrm{id}\big)\kappa_{\alpha}^{\circ}\in \mrm{H}^{1}\big(\mathbb{Q},\bs{\cal{V}}_\alpha(K)\big),
\end{equation}
then the equality
\begin{equation}\label{compttt}
(\varpi_{2},\mrm{id})_*\kappa_{\alpha+1}^\mrm{n.o.}=\kappa_{\alpha}^\mrm{n.o.},
\end{equation}
 follows directly from the commutativity of $U_{\mathfrak{p}_1}$ with $(\varpi_{{2}})_*$ and Lemma $\ref{compatibility - classes}$.

\begin{definition}\label{BigCohomology}
We consider the inverse limit 
$\bs{\cal{V}}_\infty(K)
:=
\varprojlim_\alpha \bs{\cal{V}}_\alpha(K)$
taken with respect to the trace maps $(\varpi_{2})_*$. Then equation ($\ref{compttt}$) allows us to define 
\[
\boldsymbol{\kappa}_\infty^\mrm{n.o.}=\varprojlim_\alpha\ \kappa_{\alpha}^\mrm{n.o.}\in \mrm{H}^{1}\big(\mathbb{Q},\bs{\cal{V}}_\infty(K)\big).
\]
\end{definition}


\subsection{Specializations of the diagonal restriction of $\Lambda$-adic forms}

Let 
\[
K'_{\det}(p^\alpha)
:=
K_{\mbox{\tiny $\mrm{X}$}}(p^\alpha) \cap \mrm{GL}_2(\bb{A}_{\bb{Q}})
=
\left\{\gamma\in K'_0(p^\alpha)\Big\lvert\  \det(\gamma)\equiv  1 \pmod{p^{\alpha}}\right\},
\]
then the inclusion $K'_{\det}(p^\alpha)\le K_{\mbox{\tiny $\mrm{X}$}}(p^\alpha)$ induces an embedding
$\zeta:Y(K'_{\det}(p^\alpha))\hookrightarrow 
S(K_{\mbox{\tiny $\mrm{X}$}}(p^\alpha))$
of Shimura varieties fitting in the following commutative diagram
\begin{equation}\label{complex points}
\xymatrix{
Y(K'_{\det}(p^{\alpha}))(\bb{C})\ar@{^{(}->}[r]^\zeta  & S(K_{\mbox{\tiny $\mrm{X}$}}(p^\alpha))(\bb{C})\ar[r]^{\mathfrak{T}_{\tau_{\mathfrak{p}_2^\alpha}}} & S(K_{\diamond,t}(p^{\alpha}))(\bb{C})\\
 Y(K'_{0}(p^{\alpha}))(\bb{C})
      \ar@{^{(}->}[r]^{\zeta} &  S(K_\diamond(p^\alpha))(\bb{C}) \ar[u]_{\nu_\alpha}\ar[ru]_{w_{\mathfrak{p}_2^\alpha}}  &.}
\end{equation}

\begin{proposition}\label{analysis comp geometry}
    Let $\mrm{P}\in \cal{A}_{\bs{\chi}}(\mbf{I}_\scr{G})$ be an arithmetic point of weight $(2t_L,t_L)$ and character $(\chi_\circ\theta_L^{-1}\chi^{-1},\mathbbm{1})$ with $\chi$ a of conductor $p^\alpha$. Then
    \[
    e_{\mrm{ord}}\zeta^*\big(d_\frak{p}^{\bfcdot}\breve{\scr{G}}^{\mbox{\tiny $[\cal{P}]$}}\big)^\dagger(\mrm{P})
    =
	\msf{a}_p(\varpi_\mathfrak{p},\breve{\msf{g}}_\mrm{P})^\alpha \cdot G(\theta_{L,\mathfrak{p}}^{-1}\chi_\mathfrak{p}^{-1})^{-1}\cdot
    e_\mrm{ord}\zeta^*\Big[d^{-1}_\mu(w_{\mathfrak{p}_2^\alpha})^*(\breve{\msf{g}}_\mrm{P})^{\mbox{\tiny $[\cal{P}]$}}\Big].
    \]
\end{proposition}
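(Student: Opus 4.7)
The plan is to combine three ingredients: the specialization formula from Proposition \ref{specializations}, the classical Atkin-Lehner identity of Lemma \ref{Atkin-Lehner}, and a geometric comparison between the pullback by $w_{\mathfrak{p}_2^\alpha}$ and right translation by $\tau_{\mathfrak{p}_2^\alpha}^{-1}$ on forms.

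First, I would set $\ell=2$ in Proposition \ref{specializations}: the factor $\lvert-\rvert_{\bb{A}_\bb{Q}}^{\ell-2}$ becomes trivial and the formula reads
\[
e_\mrm{ord}\zeta^*\big(d_\mathfrak{p}^{\bfcdot}\breve{\scr{G}}^{\mbox{\tiny $[\cal{P}]$}}\big)^\dagger(\mrm{P})
=e_\mrm{ord}\zeta^*\Big[d_\mu^{-1}\big(\breve{\msf{g}}_\mrm{P}^{\mbox{\tiny $[\cal{P}]$}}\star\chi_\mathfrak{p}^{-1}\theta_{L,\mathfrak{p}}^{-1}\big)\Big]\otimes\theta_\bb{Q}\chi_{\mbox{\tiny $\spadesuit$}}.
\]
Next I would apply Lemma \ref{Atkin-Lehner} with $\mathfrak{p}=\mathfrak{p}_2$ to convert the local $\star$-twist into a classical Atkin-Lehner operation. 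The cuspform $\breve{\msf{g}}_\mrm{P}$ has weight $(2t_L,t_L)$ and character $\psi=\chi_\circ\theta_L^{-1}\chi^{-1}$, $\psi'=\mathbbm{1}$; since $\mathfrak{Q}$ is coprime to $p$ the component $\chi_{\circ,\mathfrak{p}}$ is trivial on $\cal{O}_{L,\mathfrak{p}}^\times$, so $\psi_\mathfrak{p}(\psi'_\mathfrak{p})^2$ restricts on units to $\theta_{L,\mathfrak{p}}^{-1}\chi_\mathfrak{p}^{-1}$. Depleting both sides of Lemma \ref{Atkin-Lehner} further at $\mathfrak{p}_1$, which commutes with the operations at $\mathfrak{p}_2$ and is automatically absorbed by the local $\star$-twist, yields
\[
\breve{\msf{g}}_\mrm{P}^{\mbox{\tiny $[\cal{P}]$}}\star\chi_\mathfrak{p}^{-1}\theta_{L,\mathfrak{p}}^{-1}
=\msf{a}_p(\varpi_\mathfrak{p},\breve{\msf{g}}_\mrm{P})^\alpha\,G(\theta_{L,\mathfrak{p}}^{-1}\chi_\mathfrak{p}^{-1})^{-1}\,\big(\breve{\msf{g}}_\mrm{P}\lvert\tau_{\mathfrak{p}_2^\alpha}^{-1}\big)^{\mbox{\tiny $[\cal{P}]$}}.
\]

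Finally, I would identify
\[
e_\mrm{ord}\zeta^*\Big[d_\mu^{-1}\big(\breve{\msf{g}}_\mrm{P}\lvert\tau_{\mathfrak{p}_2^\alpha}^{-1}\big)^{\mbox{\tiny $[\cal{P}]$}}\Big]\otimes\theta_\bb{Q}\chi_{\mbox{\tiny $\spadesuit$}}
=e_\mrm{ord}\zeta^*\Big[d_\mu^{-1}(w_{\mathfrak{p}_2^\alpha})^*(\breve{\msf{g}}_\mrm{P})^{\mbox{\tiny $[\cal{P}]$}}\Big].
\]
By definition $w_{\mathfrak{p}_2^\alpha}=(\mathfrak{T}_{\tau_{\mathfrak{p}_2^\alpha}}\times 1)\circ\nu_\alpha$, and Corollary \ref{proj-nu_alpha-commute} shows that $\nu_\alpha$ is the identity on complex uniformizations, so $(w_{\mathfrak{p}_2^\alpha})^*$ acts as right translation by $\tau_{\mathfrak{p}_2^\alpha}^{-1}$ on the underlying adelic form. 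The discrepancy between the two $\bb{Q}$-rational structures $S(K_\diamond(p^\alpha))$ and $S(K_{\mbox{\tiny $\mrm{X}$}}(p^\alpha))$ is given by the $1$-cocycle $\sigma_a\mapsto\langle 1,(a,a)\rangle$ of Proposition \ref{nu_alpha}, which acts on $\breve{\msf{g}}_\mrm{P}$ through its central character; after diagonal restriction this descent datum descends to multiplication by the Hecke character $\theta_\bb{Q}\chi_{\mbox{\tiny $\spadesuit$}}$ of $\bb{A}_\bb{Q}^\times$, accounting exactly for the tensor factor. Combining the three displays proves the proposition.

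The main obstacle is this last identification, where one must verify that the Galois descent relating the two $\bb{Q}$-models of the Hilbert modular surface produces, after diagonal restriction, precisely the twist $\otimes\theta_\bb{Q}\chi_{\mbox{\tiny $\spadesuit$}}$. This can be checked by a direct computation on adelic Fourier coefficients, using Lemma \ref{twist classical expansion} and the central character calculation that underlies the proof of Proposition \ref{specializations}.
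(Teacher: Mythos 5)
Your proposal is correct and follows essentially the same route as the paper: Proposition \ref{specializations} at $\ell=2$, Lemma \ref{Atkin-Lehner} at $\mathfrak{p}_2$ to trade the $\star$-twist for $\msf{a}_p(\varpi_\mathfrak{p},\breve{\msf{g}}_\mrm{P})^\alpha G(\theta_{L,\mathfrak{p}}^{-1}\chi_\mathfrak{p}^{-1})^{-1}\big(\breve{\msf{g}}_\mrm{P}\lvert\tau^{-1}_{\mathfrak{p}_2^\alpha}\big)^{\mbox{\tiny $[\cal{P}]$}}$, and the identification $(w_{\mathfrak{p}_2^\alpha})^*\breve{\msf{g}}_\mrm{P}=\nu_\alpha^*(\breve{\msf{g}}_\mrm{P}\lvert\tau^{-1}_{\mathfrak{p}_2^\alpha})$ with $\nu_\alpha$ the identity on complex uniformizations. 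The one place the paper is sharper than your closing heuristic is the absorption of the factor $\otimes\,\theta_\bb{Q}\chi_{\mbox{\tiny $\spadesuit$}}$: instead of tracking the descent cocycle or adelic Fourier coefficients (which the twist genuinely changes), it compares classical $q$-expansions on the identity component via Lemma \ref{twist classical expansion} and concludes by the $q$-expansion principle, using that $Y(K'_0(p^\alpha))$ is geometrically connected.
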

\begin{proof}
Combining Proposition \ref{specializations} and Lemma \ref{Atkin-Lehner} we see that,
\[
e_{\mrm{ord}}\zeta^*\big(d_\frak{p}^{\bfcdot}\breve{\scr{G}}^{\mbox{\tiny $[\cal{P}]$}}\big)^\dagger(\mrm{P})=
\msf{a}_p(\varpi_\mathfrak{p},\breve{\msf{g}}_\mrm{P})^\alpha \cdot G(\theta_{L,\mathfrak{p}}^{-1}\chi_\mathfrak{p}^{-1})^{-1} \cdot
e_\mrm{ord}\zeta^*\Big[d_\mu^{-1}(\breve{\msf{g}}_\mrm{P}\lvert\tau^{-1}_{\mathfrak{p}^\alpha})^{\mbox{\tiny $[\cal{P}]$}}\Big]\otimes\theta_\bb{Q}\chi_{\mbox{\tiny $\spadesuit$}}.
\]
The cuspform
$\breve{\msf{g}}_\mrm{P}$
can be interpreted as a differential form on $S(K_{\diamond,t}(p^\alpha))(\bb{C})$, $\breve{\msf{g}}_\mrm{P}\lvert\tau^{-1}_{\mathfrak{p}^\alpha}=(\mathfrak{T}_{\mathfrak{p}_2^\alpha})^*\breve{\msf{g}}_\mrm{P}$
as a differential on $S(K_{\mbox{\tiny $\mrm{X}$}}(p^\alpha))(\bb{C})$ and  $(w_{\mathfrak{p}_2^\alpha})^*\breve{\msf{g}}_\mrm{P}=\nu_\alpha^*(\breve{\msf{g}}_\mrm{P}\lvert\tau^{-1}_{\mathfrak{p}^\alpha})$ as a differential on $S(K_{\diamond}(p^\alpha))(\bb{C})$. Since the morphism $\nu_\alpha:S(K_{\diamond}(p^\alpha))(\bb{C})\to S(K_{\mbox{\tiny $\mrm{X}$}}(p^\alpha))(\bb{C})$ is the identity with respect to the complex uniformizations (Corollary \ref{proj-nu_alpha-commute}), it preserves classical $q$-expansions. More precisely for all $\xi\in L_+$, every index $i$ we have
\[
a\Big(\xi,\big(\nu_\alpha^*(\breve{\msf{g}}_\mrm{P}\lvert\tau^{-1}_{\mathfrak{p}^\alpha})^{\mbox{\tiny $[\cal{P}]$}}\big)_i\Big)
=a\Big(\xi,(\breve{\msf{g}}_\mrm{P}\lvert\tau^{-1}_{\mathfrak{p}^\alpha})^{\mbox{\tiny $[\cal{P}]$}}_i\Big).
\]
Therefore, interpreting
$\nu_\alpha^*(\breve{\msf{g}}_\mrm{P}\lvert\tau^{-1}_{\mathfrak{p}^\alpha})$ 
and 
$\breve{\msf{g}}_\mrm{P}\lvert\tau^{-1}_{\mathfrak{p}^\alpha}$
as $p$-adic modular forms, we see that
\[
    a\Big(\xi,d^{-1}_\mu\big((w_{\mathfrak{p}_2^\alpha})^*(\breve{\msf{g}}_\mrm{P})^{\mbox{\tiny $[\cal{P}]$}}\big)_i\Big)
    =a\Big(\xi,d^{-1}_\mu(\breve{\msf{g}}_\mrm{P}\lvert\tau^{-1}_{\mathfrak{p}^\alpha})_i^{\mbox{\tiny $[\cal{P}]$}}\Big).
\]
The diagonal restriction $\zeta^*\Big[d^{-1}_\mu(\breve{\msf{g}}_\mrm{P}\lvert\tau^{-1}_{\mathfrak{p}^\alpha})^{\mbox{\tiny $[\cal{P}]$}}\Big]$
is a $p$-adic elliptic cuspform on $Y(K'_{\det}(p^\alpha))$. Its twist $\zeta^*\Big[d^{-1}_\mu(\breve{\msf{g}}_\mrm{P}\lvert\tau^{-1}_{\mathfrak{p}^\alpha})^{\mbox{\tiny $[\cal{P}]$}}\Big]\otimes\theta_\bb{Q}\chi_{\mbox{\tiny $\spadesuit$}}$ has character $(\psi_\circ^{-1},\mathbbm{1})$ and so descends to a $p$-adic cuspform on $Y(K'_0(p^\alpha))$ where it can be compared with $\zeta^*\Big[d^{-1}_\mu(w_{\mathfrak{p}_2^\alpha})^*(\breve{\msf{g}}_\mrm{P})^{\mbox{\tiny $[\cal{P}]$}}\Big]$. By Lemma \ref{twist classical expansion}, twisting by $\theta_\bb{Q}\chi_{\mbox{\tiny $\spadesuit$}}$ does not change the classical $q$-expansion of elliptic cuspforms on the identity component, thus the equality
\[
e_\mrm{ord}\zeta^*\Big[d^{-1}_\mu(\breve{\msf{g}}_\mrm{P}\lvert\tau^{-1}_{\mathfrak{p}^\alpha})^{\mbox{\tiny $[\cal{P}]$}}\Big]\otimes\theta_\bb{Q}\chi_{\mbox{\tiny $\spadesuit$}}=e_\mrm{ord}\zeta^*\Big[d^{-1}_\mu(w_{\mathfrak{p}_2^\alpha})^*(\breve{\msf{g}}_\mrm{P})^{\mbox{\tiny $[\cal{P}]$}}\Big]
\] follows from the $q$-expansion principle since $Y(K'_0(p^\alpha))$ is geometrically connected.
\end{proof}

\section{Review of big Galois representations}

Let $\mbf{Q}_\scr{G}=\mrm{Frac}(\mbf{I}_\scr{G})$, then the ordinary family $\scr{G}$ passing through a choice of ordinary $p$-stabilization  $\msf{g}_\circ^{\mbox{\tiny $(p)$}}$  has an associated big Galois representation $\bs{\varrho}_\scr{G}:\Gamma_L\to\mrm{GL}_2(\mbf{Q}_\scr{G})$ acting on $\mbf{V}_\scr{G}=(\mbf{Q}_\scr{G})^{\oplus2}$ (\cite{HidaGalois}, Theorem 1).
The representation $\boldsymbol{\varrho}_\scr{G}$ is unramified outside $\frak{Q}p$ with determinant  
\[
\det(\boldsymbol{\varrho}_\scr{G})(z)=\phi_{\bs{\chi}}([z,1])\cdot\varepsilon_L(z)\qquad \forall\ z\in\bb{A}_{L,f},
\]
and characteristic polynomial at a prime $\frak{q}\nmid\frak{Q}p$  given by  
\[
\det(1-\boldsymbol{\varrho}_\scr{G}(\mrm{Fr}_\frak{q})X)=1-\scr{G}(\mbf{T}(\frak{q}))X+\phi_{\bs{\chi}}([\varpi_\frak{q},1])\mrm{N}_{L/\bb{Q}}(\frak{q})X^2.
\]
Furthermore, fixing a decomposition group $D_\frak{p}$ in $\Gamma_L$ for an $\cal{O}_L$-prime $\frak{p}$ above $p$, there is an unramified character $\bs{\Psi}_{\scr{G},\frak{p}}:D_\frak{p}\to \mbf{I}_\scr{G}^\times$, $\mrm{Fr}_\frak{p}\mapsto\scr{G}(\mbf{T}(\varpi_\frak{p}))$, such that (\cite{HidaGalois}, Proposition 2.3)
 	\begin{equation}\label{ordinary-G}(\boldsymbol{\varrho}_{\scr{G}})_{\lvert D_\frak{p}}\sim\begin{pmatrix}
 	\bs{\Psi}_{\scr{G},\frak{p}}^{-1}\cdot\det(\bs{\varrho}_\scr{G})_{\lvert D_\frak{p}} &*\\
 	0&\bs{\Psi}_{\scr{G},\frak{p}}
 	\end{pmatrix}.
 	\end{equation}

\begin{definition}
Let \[
\mrm{As}(\mbf{V}_\scr{G}):=\otimes\mbox{-}\mrm{Ind}_L^\bb{Q}\left(\mbf{V}_\scr{G}\right)
\] denote the tensor induction of $\mbf{V}_\scr{G}$ to $\Gamma_\bb{Q}$.
\end{definition}

 \begin{definition}
 		Recall that according to \eqref{splitcyc} and \eqref{GaloisDecomposition}, the non-torsion part of the $p$-adic cyclotomic character $\eta_\bb{Q}:\bb{A}^\times_\bb{Q}\to\Gamma$ is given by $\eta_\bb{Q}(z)=\xi_z^{-1}$. We define  $\boldsymbol{\eta}_\bb{Q}:\Gamma_\bb{Q}\rightarrow \bs{\Lambda}^\times$ to be the Galois character associated to the idele character 
 	\[
	\bb{A}^\times_\bb{Q}\ni z\mapsto \big[\eta_\bb{Q}(z)\big].
 	\] 
Then, the restriction $(\bs{\eta}_\bb{Q})_{\lvert D_p}$  at the decomposition group at $p$ is the Galois character associated by local class field theory to the homomorphism $\bb{Q}_p^\times\to\Gamma\to \bs{\Lambda}^\times$, $x\mapsto \big[\langle x\rangle^{-1}\big]$.
 \end{definition}
\noindent A direct computation shows that
 \begin{equation} \mrm{As}\big(\det(\bs{\varrho}_\scr{G})\big)=\psi_\circ^{-1}\cdot\bs{\eta}_\bb{Q}^2\cdot\eta_\bb{Q}^2.
 \end{equation}
Since $p\cal{O}_L=\mathfrak{p}_1\mathfrak{p}_2$ splits in the real quadratic field $L$, the decomposition group  $D_p$ is contained in $\Gamma_L$. Hence, the characters $\bs{\Psi}_{\scr{G},\frak{p}}$ can be interpreted as characters of $D_p$ and it makes sense to define 
\begin{equation}
\bs{\Psi}_{\scr{G},p}:=\bs{\Psi}_{\scr{G},\frak{p}_1}\bs{\Psi}_{\scr{G},\frak{p}_2}.
\end{equation} 
From equation (\ref{ordinary-G}) we can deduce a concrete description of the action of the decomposition group at $p$ on $\mrm{As}(\mbf{V}_\scr{G})$.

\begin{proposition} \label{AsaiFil}
The restriction of $\mrm{As}(\mbf{V}_\scr{G})$ to $D_p$ admits a three step $D_p$-stable filtration 
\[
\mrm{As}(\mbf{V}_\scr{G})
\supset
\mrm{Fil}^1\mrm{As}(\mbf{V}_\scr{G})
\supset
\mrm{Fil}^2\mrm{As}(\mbf{V}_\scr{G})
\supset
0
\]
with graded pieces \[\mrm{Gr}^0\mrm{As}(\mbf{V}_\scr{G})=\mbf{Q}_\scr{G}\Big(\bs{\Psi}_{\scr{G},p}\Big),\qquad \mrm{Gr}^2\mrm{As}(\mbf{V}_\scr{G})=\mbf{Q}_\scr{G}\Big(\bs{\Psi}_{\scr{G},p}^{-1}\cdot\big(\psi_\circ^{-1}\cdot\bs{\eta}_\bb{Q}^2\cdot\eta_\bb{Q}^2\big)_{\lvert D_p}\Big),\]  \[\resizebox{\displaywidth}{!}{\xymatrix{
\mrm{Gr}^1\mrm{As}(\mbf{V}_\scr{G})=\mbf{Q}_\scr{G}\Big(\big(\bs{\Psi}_{\scr{G},\frak{p}_1}\bs{\Psi}_{\scr{G},\frak{p}_2}^{-1}\big)^{-1}\cdot\big(\psi^{-1}_{\circ,\frak{p}_1}\cdot\bs{\eta}_\bb{Q}\cdot\eta_\bb{Q}\big)_{\lvert D_{p}}\Big)
\oplus
\mbf{Q}_\scr{G}\Big(\bs{\Psi}_{\scr{G},\frak{p}_1}\bs{\Psi}_{\scr{G},\frak{p}_2}^{-1}\cdot\big(\psi^{-1}_{\circ,\frak{p}_2}\cdot\bs{\eta}_\bb{Q}\cdot\eta_\bb{Q}\big)_{\lvert D_{p}}\Big)}}\] 
where, for any prime $\frak{p}\mid p$, we let $\psi_{\circ,\frak{p}}^{-1}:D_p\to O^\times$ be the unramified character determined by $\psi_{\circ,\frak{p}}^{-1}(\mrm{Fr}_p)=\chi_\circ(\mrm{Fr}_\frak{p})$. In particular, $\psi_{\circ,\frak{p}_1}\cdot\psi_{\circ,\frak{p}_2}=\big(\psi_\circ\big)_{\lvert D_p}$.
\end{proposition}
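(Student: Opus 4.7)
The plan is to exploit the fact that $p$ splits in $L$, so that the decomposition group $D_p$ sits inside $\Gamma_L$, and the nontrivial element $\sigma\in\mrm{Gal}(L/\bb{Q})$ swaps the two primes $\frak{p}_1,\frak{p}_2$ above $p$. Fixing a lift $\tilde\sigma\in\Gamma_\bb{Q}$ of $\sigma$, the tensor induction yields a canonical $\Gamma_L$-equivariant identification
\[
\mrm{As}(\mbf{V}_\scr{G})\big|_{\Gamma_L}\ \cong\ \mbf{V}_\scr{G}\otimes\mbf{V}_\scr{G}^{\tilde\sigma},
\]
where $\mbf{V}_\scr{G}^{\tilde\sigma}$ denotes the $\tilde\sigma$-conjugate module. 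Identifying $D_p$ with $D_{\frak{p}_1}\subset\Gamma_L$, conjugation by $\tilde\sigma$ identifies $\mbf{V}_\scr{G}^{\tilde\sigma}|_{D_p}$ with $\mbf{V}_\scr{G}|_{D_{\frak{p}_2}}$, giving
\[
\mrm{As}(\mbf{V}_\scr{G})\big|_{D_p}\ \cong\ \mbf{V}_\scr{G}\big|_{D_{\frak{p}_1}}\ \otimes\ \mbf{V}_\scr{G}\big|_{D_{\frak{p}_2}}.
\]

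Next, I would apply the ordinary filtration (\ref{ordinary-G}): each $\mbf{V}_\scr{G}|_{D_{\frak{p}_i}}$ admits a distinguished sub-line $L_i$ on which $D_{\frak{p}_i}$ acts via $\bs{\Psi}_{\scr{G},\frak{p}_i}^{-1}\cdot\det(\bs\varrho_\scr{G})|_{D_{\frak{p}_i}}$, with quotient character $\bs{\Psi}_{\scr{G},\frak{p}_i}$. Tensoring these two filtrations gives a canonical $D_p$-stable three-step filtration with
\[
\mrm{Fil}^2=L_1\otimes L_2,\qquad \mrm{Fil}^1=L_1\otimes \mbf{V}_\scr{G}|_{D_{\frak{p}_2}}\,+\,\mbf{V}_\scr{G}|_{D_{\frak{p}_1}}\otimes L_2,
\]
so that $\mrm{Gr}^0=(\mbf{V}_\scr{G}/L_1)\otimes(\mbf{V}_\scr{G}/L_2)$, $\mrm{Gr}^2=L_1\otimes L_2$, and $\mrm{Gr}^1=L_1\otimes(\mbf{V}_\scr{G}/L_2)\oplus(\mbf{V}_\scr{G}/L_1)\otimes L_2$. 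The characters on $\mrm{Gr}^0$ and the two summands of $\mrm{Gr}^1$ follow by straightforward multiplication of the contributions from each tensor factor, and one recovers the $\mrm{Gr}^2$-character by observing that $\det(\bs\varrho_\scr{G})|_{D_{\frak{p}_1}}\cdot\det(\bs\varrho_\scr{G})|_{D_{\frak{p}_2}}=\mrm{As}\bigl(\det(\bs\varrho_\scr{G})\bigr)|_{D_p}=(\psi_\circ^{-1}\bs\eta_\bb{Q}^2\eta_\bb{Q}^2)|_{D_p}$.

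The remaining technical step is to identify
\[
\det(\bs\varrho_\scr{G})\big|_{D_{\frak{p}_i}}\ =\ \bigl(\psi_{\circ,\frak{p}_i}^{-1}\cdot \bs\eta_\bb{Q}\cdot \eta_\bb{Q}\bigr)\big|_{D_p}\qquad (i=1,2).
\]
This follows from the explicit formula $\det(\bs\varrho_\scr{G})(z)=\phi_{\bs\chi}([z,1])\cdot\varepsilon_L(z)$ by unwinding the definitions. On a Frobenius lift $\varpi_{\frak{p}_i}$, the factorization $\bs\chi=\chi_\circ\cdot\theta_L^{-1}$ produces $\chi_\circ(\mrm{Fr}_{\frak{p}_i})$, which matches $\psi_{\circ,\frak{p}_i}^{-1}(\mrm{Fr}_p)$ by definition. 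On inertia, the decomposition $\varepsilon_L=\eta_L\cdot\theta_L$ together with the splitting $\cal{O}_{L,p}^\times\cong\bb{Z}_p^\times\times\bb{Z}_p^\times$ produces $\bs\eta_\bb{Q}\cdot\eta_\bb{Q}$ restricted from $\Gamma_\bb{Q}$ to the $i$-th factor. Multiplying the $i=1$ and $i=2$ identities recovers the global formula $\mrm{As}(\det\bs\varrho_\scr{G})=\psi_\circ^{-1}\bs\eta_\bb{Q}^2\eta_\bb{Q}^2$, providing a useful consistency check.

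The main obstacle is the bookkeeping in this determinant identification: one must carefully match the $L$-level data packaged inside $\phi_{\bs\chi}$ and $\varepsilon_L$ with the $\bb{Q}$-level characters $\bs\eta_\bb{Q}$, $\eta_\bb{Q}$, $\psi_{\circ,\frak{p}_i}$, while keeping track of the distinction between the action of $D_{\frak{p}_1}$ (intrinsic to $\Gamma_L$) and that of $D_{\frak{p}_2}$ (pulled back through $\tilde\sigma$-conjugation) on the two factors of the tensor product.
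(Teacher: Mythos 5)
Your proposal is correct and follows essentially the same route as the paper: both identify $\mrm{As}(\mbf{V}_\scr{G})\lvert_{D_p}$ with $\mbf{V}_\scr{G}\lvert_{D_{\frak{p}_1}}\otimes\mbf{V}_\scr{G}\lvert_{D_{\frak{p}_2}}$ via a fixed element of $\Gamma_\bb{Q}\setminus\Gamma_L$, take the tensor product of the two ordinary filtrations from (\ref{ordinary-G}), and compute the graded characters using $\bs{\Psi}^{\theta}_{\scr{G},\frak{p}_1}=\bs{\Psi}_{\scr{G},\frak{p}_2}$ together with the identification of $\det(\bs{\varrho}_\scr{G})\lvert_{D_{\frak{p}_i}}$ with $\big(\psi^{-1}_{\circ,\frak{p}_i}\cdot\bs{\eta}_\bb{Q}\cdot\eta_\bb{Q}\big)\lvert_{D_p}$. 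Your extra unwinding of the determinant identity on Frobenius and inertia is consistent with what the paper asserts without detail.
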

\begin{proof}
Let $\mbf{V}_\scr{G}^+$ denote the subvector space of $\mbf{V}_\scr{G}$ coming from the upper left corner in equation (\ref{ordinary-G}), and $\mbf{V}_\scr{G}^- = \mbf{V}_\scr{G}/\mbf{V}_\scr{G}^+$ the dimension 1 quotient. We fix $\theta\in\Gamma_\bb{Q}\setminus\Gamma_L$ and define
\[
\mrm{Fil}^2\mrm{As}(\mbf{V}_\scr{G})
:=
\mbf{V}_\scr{G}^+ \otimes (\mbf{V}_\scr{G}^{+})^{\theta}\quad\text{and}\quad
\mrm{Fil}^1\mrm{As}(\mbf{V}_\scr{G})
:=
\mbf{V}_\scr{G}^+ \otimes (\mbf{V}_\scr{G})^{\theta}
+ \mbf{V}_\scr{G} \otimes (\mbf{V}_\scr{G}^{+})^{\theta},
\]
then $\mrm{Fil}^2\mrm{As}(\mbf{V}_\scr{G})$ has dimension 1 over $\mbf{Q}_\scr{G}$  while $\mrm{Fil}^1\mrm{As}(\mbf{V}_\scr{G})$ has dimension 3. By the description in (\ref{ordinary-G}), $D_p$ acts on $\mrm{Fil}^2\mrm{As}(\mbf{V}_\scr{G})$ through the character
$\bs{\Psi}_{\scr{G},p}^{-1}\cdot\big(\psi_\circ^{-1}\cdot\bs{\eta}_\bb{Q}^2\cdot\eta_\bb{Q}^2\big)_{\lvert D_p}$,
while it acts on the zero-th graded piece $\mrm{Gr}^0\mrm{As}(\mbf{V}_\scr{G})
=\mbf{V}^-_\scr{G} \otimes (\mbf{V}_\scr{G}^{-})^{\theta}$ through $\bs{\Psi}_{\scr{G},p}$.
Finally, the first graded piece is
\[
\mrm{Gr}^1\mrm{As}(\mbf{V}_\scr{G})=\mbf{Q}_\scr{G}\left(\bs{\Psi}_{\scr{G},\frak{p}_1}^{-1}\bs{\Psi}^\theta_{\scr{G},\frak{p}_1}\cdot\det(\bs{\varrho}_\scr{G})_{\lvert D_{\frak{p}_1}}\right)
\oplus
\mbf{Q}_\scr{G}\left(\bs{\Psi}_{\scr{G},\frak{p}_1}(\bs{\Psi}^{\theta}_{\scr{G},\frak{p}_1})^{-1}\cdot\det(\bs{\varrho}_\scr{G})^\theta_{\lvert D_{\frak{p}_1}}\right).\] 
Using the identification $D_{\frak{p}_1}=D_p$ we see that $\bs{\Psi}^\theta_{\scr{G},\frak{p}_1}=\bs{\Psi}_{\scr{G},\frak{p}_2}$ and that \[
\det(\bs{\varrho}_\scr{G})_{\lvert D_{\frak{p}_1}}=\big(\psi^{-1}_{\circ,\frak{p}_1}\cdot\bs{\eta}_\bb{Q}\cdot\eta_\bb{Q}\big)_{\lvert D_{p}},\qquad \det(\bs{\varrho}_\scr{G})_{\lvert D_{\frak{p}_1}}^\theta= \big(\psi^{-1}_{\circ,\frak{p}_2}\cdot\bs{\eta}_\bb{Q}\cdot\eta_\bb{Q}\big)_{\lvert D_{p}}.
\]
\end{proof}

\begin{definition}\label{selfdual remark}
Let $\mrm{V}_{\msf{f}_\circ}$ denote the representation attached to the elliptic cuspform $\msf{f}_\circ$ and set
 \[
\mrm{As}(\mbf{V}_\scr{G})^\dagger:=\mrm{As}(\mbf{V}_\scr{G})(\theta_\bb{Q}\cdot\bs{\eta}_\bb{Q}^{-1})
 \]
 for $\theta_\bb{Q}$ the torsion part of the $p$-adic cyclotomic character (see \eqref{splitcyc}).
We also set 
 \begin{equation}
\mbf{V}^\dagger_{\scr{G},\msf{f}_\circ}:=\mrm{As}(\mbf{V}_\scr{G})^\dagger(-1)\otimes\mrm{V}_{\msf{f}_\circ}.
 \end{equation}
\end{definition} 

\begin{remark}
 The big Galois representation $\mbf{V}^\dagger_{\scr{G},\msf{f}_\circ}$ interpolates Kummer self-dual representations. The claim follows from the following isomorphisms of Galois representations
  \[
 \mrm{As}(\mbf{V}_\scr{G})_{\mrm{P}}^\dagger(-1)\overset{\sim}{\longrightarrow} \big(\mrm{As}(\mbf{V}_\scr{G})_{\mrm{P}}^\dagger(\psi_\circ)\big)^*(1),\qquad \mrm{V}_{\msf{f}_\circ}\overset{\sim}{\longrightarrow}\big(\mrm{V}_{\msf{f}_\circ}(\psi_\circ^{-1})\big)^*(1),
 \]
where $\mrm{As}(\mbf{V}_\scr{G})_{\mrm{P}}^\dagger$ denotes the specialization at an arithmetic point $\mrm{P}\in\cal{A}_{\bs{\chi}}(\mbf{I}_\scr{G})$ of weight $(2t_L,t_L)$ and character $(\chi_\circ,\theta_L^{-1}\chi^{-1},\mathbbm{1})$. Those isomorphisms are induced by the Hecke equivariant twists of the Poincar\'e pairing for Hilbert modular surfaces and modular curves (see also \eqref{heckeequivariantpoincare}) once the representations are realized in the appropriate \'etale cohomology group. 
\end{remark}

\noindent The explicit realization of $\mbf{V}^\dagger_{\scr{G},\msf{f}_\circ}$ in the cohomology of a tower of Shimura threefolds with increasing level at $p$ plays a crucial role in the understanding of the arithmetic applications of Hirzebruch--Zagier classes. We conclude this section by analyzing the ordinary filtration at $p$. Recall that, analogously to $\mbf{V}_\scr{G}$, the representation $\mrm{V}_{\msf{f}_\circ}$ is unramified outside $Np$ with determinant  $\varepsilon_\bb{Q}\cdot \psi_\circ$, and that there exists an unramified character $\delta_p(\msf{f}_\circ):D_p\to O^\times$ for a choice of decomposition group $D_p$ in $\Gamma_\bb{Q}$ such that 
 	\[\big(\mrm{V}_{\msf{f}_\circ}\big)_{\lvert D_\frak{p}}\sim\begin{pmatrix}
 	\delta_p(\msf{f}_\circ)^{-1}\cdot(\varepsilon_\bb{Q}\cdot \psi_\circ)_{\lvert D_\frak{p}} &*\\
 	0&\delta_p(\msf{f}_\circ)
 	\end{pmatrix}.
 	\]

\begin{lemma}\label{GaloisStructure}
	The restriction to a decomposition group at $p$ of the Galois representation $\mbf{V}^\dagger_{\scr{G},\msf{f}_\circ}$ is endowed with a $4$-steps $D_p$-stable filtration with graded pieces given by
	\[\resizebox{\displaywidth}{!}{\xymatrix{
	\mrm{Gr}^0\mbf{V}^\dagger_{\scr{G},\msf{f}_\circ}=\mbf{Q}_\scr{G}\Big(\bs{\Psi}_{\scr{G},p}\cdot\delta_p(\msf{f}_\circ)\cdot\big(\bs{\eta}_\bb{Q}^{-1}\cdot\eta_\bb{Q}^{-1}\big)_{\lvert D_p}\Big),\qquad \mrm{Gr}^3\mbf{V}^\dagger_{\scr{G},\msf{f}_\circ}=\mbf{Q}_\scr{G}\Big(\bs{\Psi}_{\scr{G},p}^{-1}\cdot \delta_p(\msf{f}_\circ)^{-1} \cdot\big(\bs{\eta}_\bb{Q}\cdot\eta^2_\bb{Q}\cdot\theta_\bb{Q}\big)_{\lvert D_p}\Big),
	}}\]
	\[\resizebox{\displaywidth}{!}{\xymatrix{
	\mrm{Gr}^1\mbf{V}^\dagger_{\scr{G},\msf{f}_\circ}=
\mbf{Q}_\scr{G}\Big(\bs{\Psi}_{\scr{G},p}\cdot\delta_p(\msf{f}_\circ)^{-1}\cdot\big(\bs{\eta}_\bb{Q}^{-1}\cdot\theta_\bb{Q}\cdot\psi_\circ\big)_{\lvert D_p}\Big)	
\oplus \mbf{Q}_\scr{G}\Big(\big(\bs{\Psi}_{\scr{G},\frak{p}_1}\bs{\Psi}_{\scr{G},\frak{p}_2}^{-1}\big)^{-1}\cdot\delta_p(\msf{f}_\circ)\cdot\big(\psi^{-1}_{\circ,\frak{p}_1}\big)_{\lvert D_{p}}\Big)
\oplus
\mbf{Q}_\scr{G}\Big(\bs{\Psi}_{\scr{G},\frak{p}_1}\bs{\Psi}_{\scr{G},\frak{p}_2}^{-1}\cdot\delta_p(\msf{f}_\circ)\cdot\big(\psi^{-1}_{\circ,\frak{p}_2}\big)_{\lvert D_p}\Big),
}}\]
\[\resizebox{\displaywidth}{!}{\xymatrix{
\mrm{Gr}^2\mbf{V}^\dagger_{\scr{G},\msf{f}_\circ}=
\mbf{Q}_\scr{G}\Big(\big(\bs{\Psi}_{\scr{G},\frak{p}_1}\bs{\Psi}_{\scr{G},\frak{p}_2}^{-1}\big)^{-1}\cdot\delta_p(\msf{f}_\circ)^{-1}\cdot\big(\psi_{\circ,\frak{p}_2}\cdot\varepsilon_{\bb{Q}}\big)_{\lvert D_p}\Big)
\oplus
\mbf{Q}_\scr{G}\Big(\bs{\Psi}_{\scr{G},\frak{p}_1}\bs{\Psi}_{\scr{G},\frak{p}_2}^{-1}\cdot\delta_p(\msf{f}_\circ)^{-1}\cdot\big(\psi_{\circ,\frak{p}_1}\cdot\varepsilon_{\bb{Q}}\big)_{\lvert D_p}\Big)	
\oplus \mbf{Q}_\scr{G}\Big(\bs{\Psi}_{\scr{G},p}^{-1}\cdot\delta_p(\msf{f}_\circ)\cdot\big(\bs{\eta}_\bb{Q}\cdot\eta_\bb{Q}\cdot \psi_\circ^{-1}\big)_{\lvert D_p}\Big).
}}\]
\end{lemma}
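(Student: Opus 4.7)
The plan is to obtain the desired four–step filtration as the tensor product of two already-constructed filtrations: the three–step $D_p$–stable filtration on $\mathrm{As}(\mathbf{V}_\mathscr{G})$ provided by Proposition \ref{AsaiFil}, and the two–step ordinary filtration on $\mathrm{V}_{\mathsf{f}_\circ}$ coming from the $p$-ordinariness of $\mathsf{f}_\circ$. Since $\mathrm{As}(\mathbf{V}_\mathscr{G})$ has graded pieces of dimensions $1,2,1$ and $\mathrm{V}_{\mathsf{f}_\circ}$ has graded pieces of dimensions $1,1$, the natural product filtration $\mathrm{Fil}^k(A \otimes B) := \sum_{i+j=k} \mathrm{Fil}^i A \otimes \mathrm{Fil}^j B$ gives graded pieces of total dimensions $1,3,3,1 = 8 = \dim \mathbf{V}^\dagger_{\mathscr{G},\mathsf{f}_\circ}$, matching the structure to be proved.

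First I would record the $D_p$–stable filtration on $\mathrm{V}_{\mathsf{f}_\circ}$ with rank-one unramified quotient of character $\delta_p(\mathsf{f}_\circ)$, so that by comparison of determinants the sub-line has character $\psi_\circ \cdot \varepsilon_\bb{Q} \cdot \delta_p(\mathsf{f}_\circ)^{-1}$. Next I would translate the passage from $\mathrm{As}(\mathbf{V}_\mathscr{G})$ to $\mathrm{As}(\mathbf{V}_\mathscr{G})^\dagger(-1)$ into the twist of characters on graded pieces: twisting by $\theta_\bb{Q}\cdot\bs{\eta}_\bb{Q}^{-1}$ together with the Tate twist $(-1)$, and then using the restriction identity $(\varepsilon_\bb{Q})_{\lvert D_p} = \eta_\bb{Q}\cdot\theta_\bb{Q}$, introduces the overall factor $\bs{\eta}_\bb{Q}^{-1}\cdot\eta_\bb{Q}^{-1}$ on each graded character of $\mathrm{As}(\mathbf{V}_\mathscr{G})$ from Proposition \ref{AsaiFil}. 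I would then read off the three twisted pieces $\mrm{Gr}^0,\mrm{Gr}^1,\mrm{Gr}^2$ of $\mathrm{As}(\mathbf{V}_\mathscr{G})^\dagger(-1)$ explicitly.

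Tensoring with the two-step filtration on $\mathrm{V}_{\mathsf{f}_\circ}$ then gives the four-step filtration on $\mathbf{V}^\dagger_{\mathscr{G},\mathsf{f}_\circ}$ whose graded pieces decompose as stated: the extremal lines $\mrm{Gr}^0$ and $\mrm{Gr}^3$ are obtained respectively from $\mrm{Gr}^0\mathrm{As}(\mathbf{V}_\mathscr{G})^\dagger(-1)\otimes(\mathrm{V}_{\mathsf{f}_\circ})^-$ and $\mrm{Gr}^2\mathrm{As}(\mathbf{V}_\mathscr{G})^\dagger(-1)\otimes(\mathrm{V}_{\mathsf{f}_\circ})^+$, while $\mrm{Gr}^1$ and $\mrm{Gr}^2$ each split as a direct sum of three lines coming from $\mrm{Gr}^0(A)\otimes\mrm{Gr}^1(B)\oplus\mrm{Gr}^1(A)\otimes\mrm{Gr}^0(B)$ and $\mrm{Gr}^1(A)\otimes\mrm{Gr}^1(B)\oplus\mrm{Gr}^2(A)\otimes\mrm{Gr}^0(B)$ respectively, each piece containing exactly one of the two characters of $\mrm{Gr}^1\mathrm{As}(\mathbf{V}_\mathscr{G})$. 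All the claimed characters then follow from multiplying the corresponding Asai-side character by either $\delta_p(\mathsf{f}_\circ)$ or by $\psi_\circ\cdot\varepsilon_\bb{Q}\cdot\delta_p(\mathsf{f}_\circ)^{-1}$, using $\psi_{\circ,\mathfrak{p}_1}\cdot\psi_{\circ,\mathfrak{p}_2}=(\psi_\circ)_{\lvert D_p}$ from Proposition \ref{AsaiFil} to split the factors of $\psi_\circ$ attached to the two summands of $\mrm{Gr}^1\mathrm{As}(\mathbf{V}_\mathscr{G})$.

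There is no real conceptual obstacle here; the content is purely a bookkeeping exercise in comparing the twist conventions. The only point requiring slight care is the cancellation $\theta_\bb{Q}\cdot(\varepsilon_\bb{Q})^{-1}_{\lvert D_p} = (\eta_\bb{Q})^{-1}$, which explains why the extremal graded pieces involve no $\theta_\bb{Q}$ factor while the middle graded pieces still do (via the $\varepsilon_\bb{Q}$ appearing in $\mrm{Gr}^2$). Once this single identity is in place, the statement follows by direct multiplication of characters piece by piece.
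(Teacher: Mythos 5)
Your plan is correct and is essentially the paper's own proof: one rewrites $\mbf{V}^\dagger_{\scr{G},\msf{f}_\circ}=\mrm{As}(\mbf{V}_\scr{G})(\bs{\eta}_\bb{Q}^{-1}\cdot\eta_\bb{Q}^{-1})\otimes\mrm{V}_{\msf{f}_\circ}$, takes the convolution of the three-step filtration of Proposition \ref{AsaiFil} with the two-step ordinary filtration on $\mrm{V}_{\msf{f}_\circ}$ (unramified quotient $\delta_p(\msf{f}_\circ)$, sub-line $\psi_\circ\varepsilon_\bb{Q}\delta_p(\msf{f}_\circ)^{-1}$), and multiplies characters exactly as you indicate. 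Only your closing aside is slightly off: $\mrm{Gr}^3$ as stated does carry a $\theta_\bb{Q}$ factor, coming from $\varepsilon_\bb{Q}=\eta_\bb{Q}\theta_\bb{Q}$ in the sub-line of $\mrm{V}_{\msf{f}_\circ}$, but this does not affect the argument.
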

\begin{proof}
 As $\mbf{V}^\dagger_{\scr{G},\msf{f}_\circ}=\mrm{As}(\mbf{V}_\scr{G})(\bs{\eta}_\bb{Q}^{-1}\cdot\eta^{-1}_\bb{Q})\otimes\mrm{V}_{\msf{f}_\circ}$, its graded pieces are given by
\[
		\mrm{Gr}^0\mbf{V}^\dagger_{\scr{G},\msf{f}_\circ}=\mrm{Gr}^0\mrm{As}(\mbf{V}_\scr{G})(\bs{\eta}_\bb{Q}^{-1}\cdot\eta^{-1}_\bb{Q})\otimes\mrm{Gr}^0\mrm{V}_{\msf{f}_\circ},
		\]
		\[
		\mrm{Gr}^1\mbf{V}^\dagger_{\scr{G},\msf{f}_\circ}=[\mrm{Gr}^0\mrm{As}(\mbf{V}_\scr{G})(\bs{\eta}_\bb{Q}^{-1}\cdot\eta^{-1}_\bb{Q})\otimes\mrm{Gr}^1\mrm{V}_{\msf{f}_\circ}]\oplus [\mrm{Gr}^1\mrm{As}(\mbf{V}_\scr{G})(\bs{\eta}_\bb{Q}^{-1}\cdot\eta^{-1}_\bb{Q})\otimes\mrm{Gr}^0\mrm{V}_{\msf{f}_\circ}],
		\]
		\[
		\mrm{Gr}^2\mbf{V}^\dagger_{\scr{G},\msf{f}_\circ}=[\mrm{Gr}^1\mrm{As}(\mbf{V}_\scr{G})(\bs{\eta}_\bb{Q}^{-1}\cdot\eta^{-1}_\bb{Q})\otimes\mrm{Gr}^1\mrm{V}_{\msf{f}_\circ}]\oplus [\mrm{Gr}^2\mrm{As}(\mbf{V}_\scr{G})(\bs{\eta}_\bb{Q}^{-1}\cdot\eta^{-1}_\bb{Q})\otimes\mrm{Gr}^0\mrm{V}_{\msf{f}_\circ}],
		\]
		\[
		\mrm{Gr}^3\mbf{V}^\dagger_{\scr{G},\msf{f}_\circ}=\mrm{Gr}^2\mrm{As}(\mbf{V}_\scr{G})(\bs{\eta}_\bb{Q}^{-1}\cdot\eta^{-1}_\bb{Q})\otimes\mrm{Gr}^1\mrm{V}_{\msf{f}_\circ}.
		\]
	Hence, the statement follows from Proposition $\ref{AsaiFil}$ and a direct computation.	
\end{proof}

\begin{definition}\label{somedefgal}
	We define the direct summand $\mbf{V}^{\mrm{f}_\circ}_\scr{G}$ of $\mrm{Gr}^2\mbf{V}^\dagger_{\scr{G},\msf{f}_\circ}$ by setting
		\[
		\mbf{V}^{\mrm{f}_\circ}_\scr{G}:=\mbf{Q}_\scr{G}\Big(\bs{\Psi}_{\scr{G},p}^{-1}\cdot\delta_p(\msf{f}_\circ)\cdot\big(\bs{\eta}_\bb{Q}\cdot\eta_\bb{Q}\cdot \psi_\circ^{-1}\big)_{\lvert D_p}\Big).
		\]
	
\end{definition}

\subsection{Geometric realization}\label{geomrealiz}

Recall that $M$ is the integer appearing in Theorem \ref{Ichino}.
\begin{definition}
For the compact open $K=V_{1}(M\cal{O}_L)$ we define the \emph{anemic} Hecke algebra 
\[
\widetilde{\mbf{h}}^\mrm{n.o.}_L(K;O)\subseteq\mbf{h}^\mrm{n.o.}_L(K;O)
\] 
to be the $O$-subalgebra generated by the Hecke operators $\mbf{T}(y)$ with $y_M=1$. 
\end{definition}
\noindent Given the Hida family $\scr{G}:\mbf{h}^\mrm{n.o.}_L(K;O)_{\bs{\chi}}\to \mbf{I}_\scr{G}$ we denote by 
\begin{equation}\label{heartform}
\scr{G}_{\mbox{\tiny $\heartsuit$}}:\widetilde{\mbf{h}}^\mrm{n.o.}_L(K;O)_{\bs{\chi}}\rightarrow\mbf{I}_\scr{G}
\end{equation} 
its restriction  to the anemic Hecke algebra. It is used to single out the part of Hecke modules most relevant for our applications.

\begin{definition}
   We define  $\bs{\cal{V}}_\mathscr{G}(M)$ to be the projective limit of
   	\[
   \bs{\cal{V}}_\mathscr{G}(M)_\alpha:= \cal{V}_\alpha \otimes_{\scr{G}_{\mbox{\tiny $\heartsuit$}}}\mbf{I}_\scr{G}(\theta_\bb{Q}\cdot\bs{\eta}_\bb{Q}^{-1})\qquad 	  \forall\ \alpha\ge1
   	\]
    with respect to the trace maps $(\varpi_2)_*$ where $\cal{V}_\alpha:=e_\mrm{n.o.}\mrm{H}^2_\et\big(S(K_{\diamond,t}(p^\alpha))_{\bar{\bb{Q}}},O(2)\big)$, and set
		 \[
		 	\bs{\cal{V}}_{\mathscr{G},\msf{f}_\circ}(M):= \bs{\cal{V}}_\mathscr{G}(M)(-1)\otimes \mrm{V}_{\msf{f}_\circ}(p)
		 \] 
		 where  $\mrm{V}_{\msf{f}_\circ}(p)$ denotes the $\msf{f}_\circ$-isotypic quotient of  $\mrm{H}^1_\et\big(X_{1,0}(N,p)_{\bar{\bb{Q}}},O(1)\big)$.
	\end{definition} 
	
\noindent  To highlight the dependence on the integer $M$, we  slightly modify the notation introduced in Definition \ref{BigCohomology} by setting $\bs{\cal{V}}_\infty(M):=\bs{\cal{V}}_\infty(K)$ where $K=V_1(M\cal{O}_L)$. Now, let $\bs{\delta}: \Gamma_\bb{Q}\to O\llbracket \bb{G}_L(K)\rrbracket^\times$ be the projective limit of the Galois characters $\delta_\alpha$ defined in ($\ref{GalCal}$). Since the Galois character $\scr{G}_{\mbox{\tiny $\heartsuit$}}\circ\boldsymbol{\delta}:\Gamma_\bb{Q}\to(\mbf{I}_\scr{G})^\times$ satisfies
\[\begin{split}
\scr{G}_{\mbox{\tiny $\heartsuit$}}\circ\boldsymbol{\delta}(\sigma_a) 
&=
\phi_{\bs{\chi}}\big([(1,a^{-1}),(a,a^{-1})]\big)\\
 &=
 \theta_\bb{Q}(a)\cdot\bs{\eta}_\bb{Q}^{-1}(a)
\end{split}
\qquad \forall\ a\in\bb{Z}_p^\times,
\]
we see that there is a natural Galois equivariant surjection
\begin{equation}
	\xymatrix{
\mrm{pr}_{\scr{G},\msf{f}_\circ}:\bs{\cal{V}}_\infty(M)\ar@{->>}[r]& \bs{\cal{V}}_{\mathscr{G},\msf{f}_\circ}(M).
}\end{equation}

\begin{theorem}\label{wishingDimitrov}
	Suppose that $\varrho$ is residually not solvable, then for all odd primes $p$ the Galois module $\bs{\cal{V}}_\mathscr{G}(M)$ is finite free over $\mbf{I}_\scr{G}$ and it satisfies exact control 
	\[
	\bs{\cal{V}}_\mathscr{G}(M)\otimes_{\bs{\Lambda}}\Lambda_\alpha\cong \bs{\cal{V}}_\mathscr{G}(M)_\alpha\qquad \forall\ \alpha\ge1.
	\]
\end{theorem}
\begin{proof}
	This can be proved as (\cite{DimitrovAutSym}, Theorem 3.8 (i),(ii)). The key new input is the recent work of Caraiani--Tamiozzo (\cite{Caraiani-Tamiozzo}, Theorem 7.1.1 $\&$ Corollary 7.1.2).
\end{proof}

\begin{corollary}\label{correctspec}
	Suppose $p>2$ and $\varrho$ residually not solvable, then the Galois representation $\bs{\cal{V}}_\mathscr{G}(M)$ is isomorphic to a direct sum of copies of $\mrm{As}(\mbf{V}_\scr{G})^\dagger$. Moreover, the specialization $\bs{\cal{V}}_\mathscr{G}(M)(-1)\otimes_{\mrm{P}_\circ}E_\wp$ is  a sum of copies of the Artin representation $\mrm{As}(\varrho)$.
\end{corollary}
\begin{proof}
	It follows by Theorem \ref{wishingDimitrov} and a comparison of traces (\cite{Mazur}, Section 5).
\end{proof}

\begin{proposition}\label{somekindoffil}
If the Jordan--Holder factors of the residual representation $\mrm{As}(\mbf{V}_\scr{G})\otimes_{\mbf{I}_\scr{G}}\overline{\bb{F}}_p$ are all distinct, then the $\mbf{I}_{\scr{G}}$-module $\bs{\cal{V}}_\mathscr{G}(M)$ is endowed with a three step $\Gamma_{\bb{Q}_p}$-stable filtration 
\[
\bs{\cal{V}}_\mathscr{G}(M)
\supset
\mrm{Fil}^1\bs{\cal{V}}_\mathscr{G}(M)
\supset
\mrm{Fil}^2\bs{\cal{V}}_\mathscr{G}(M)
\supset
0.
\]
Furthermore, there are $\mbf{I}_\scr{G}$-modules $\mbf{A}, \mbf{B}, \mbf{B}', \mbf{C}$ with trivial $\Gamma_{\bb{Q}_p}$-action such that
\[\resizebox{\displaywidth}{!}{\xymatrix{
\mrm{Gr}^0\bs{\cal{V}}_\mathscr{G}(M)=\mbf{A}\Big(\bs{\Psi}_{\scr{G},p}\cdot (\theta_\bb{Q}\cdot\bs{\eta}_\bb{Q}^{-1})_{\lvert D_p}\Big),
\qquad
\mrm{Gr}^2\bs{\cal{V}}_\mathscr{G}(M)=\mbf{C}\Big(\bs{\Psi}_{\scr{G},p}^{-1}\cdot\big(\psi_\circ^{-1}\cdot\varepsilon_\bb{Q}\cdot\bs{\eta}_\bb{Q}\cdot\eta_\bb{Q}\big)_{\lvert D_p}\Big),
}}\]
and the first graded piece is an extension
\[\resizebox{\displaywidth}{!}{\xymatrix{ \mbf{B}\Big(\big(\bs{\Psi}_{\scr{G},\frak{p}_1}\bs{\Psi}_{\scr{G},\frak{p}_2}^{-1}\big)^{-1}\cdot\big(\psi^{-1}_{\circ,\frak{p}_1}\cdot\varepsilon_\bb{Q}\big)_{\lvert D_{p}}\Big)\ar@{^{(}->}[r]& \mrm{Gr}^1\bs{\cal{V}}_\mathscr{G}(M)\ar@{->>}[r]& \mbf{B}'\Big(\bs{\Psi}_{\scr{G},\frak{p}_1}\bs{\Psi}_{\scr{G},\frak{p}_2}^{-1}\cdot\big(\psi^{-1}_{\circ,\frak{p}_2}\cdot\varepsilon_\bb{Q}\big)_{\lvert D_{p}}\Big).
}}\]
\end{proposition}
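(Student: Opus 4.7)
The filtration on $\bs{\cal{V}}_\mathscr{G}(M)$ should be the geometric incarnation of the $D_p$-stable filtration on $\mrm{As}(\mbf{V}_\scr{G})$ from Proposition~\ref{AsaiFil}, transferred through the expected control theorem for the nearly ordinary cohomology of Hilbert modular surfaces. My plan combines three inputs: Assumption~$(4)$ of~(\ref{assumptions}), which up to multiplicity identifies $\bs{\cal{V}}_\mathscr{G}(M)$ with $\mrm{As}(\mbf{V}_\scr{G})(\theta_\bb{Q}\cdot\bs{\eta}_\bb{Q}^{-1})$; the explicit filtration from Proposition~\ref{AsaiFil}; and the Jordan-Holder distinctness hypothesis, which separates the graded pieces into Galois-isotypic components.

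First, I would invoke Assumption~$(4)$ to obtain an $\mbf{I}_\scr{G}[\Gamma_\bb{Q}]$-equivariant comparison between $\bs{\cal{V}}_\mathscr{G}(M)$ and the tensor product of the natural $\scr{G}_{\mbox{\tiny $\heartsuit$}}$-stable lattice inside $\mrm{As}(\mbf{V}_\scr{G})(\theta_\bb{Q}\cdot\bs{\eta}_\bb{Q}^{-1})$ with a finitely generated $\mbf{I}_\scr{G}$-module on which Galois acts trivially. Such multiplicity modules are what eventually play the roles of $\mbf{A},\mbf{B},\mbf{B}',\mbf{C}$ in the statement. Pulling back the three-step filtration of Proposition~\ref{AsaiFil} along this comparison then supplies the desired filtration on $\bs{\cal{V}}_\mathscr{G}(M)|_{\Gamma_{\bb{Q}_p}}$.

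Second, I would verify the graded pieces by a direct character computation using $\varepsilon_\bb{Q}=\eta_\bb{Q}\cdot\theta_\bb{Q}$ and $\mrm{As}(\det\bs{\varrho}_\scr{G})=\psi_\circ^{-1}\cdot\bs{\eta}_\bb{Q}^2\cdot\eta_\bb{Q}^2$. Twisting the characters from Proposition~\ref{AsaiFil} by $\theta_\bb{Q}\cdot\bs{\eta}_\bb{Q}^{-1}$ reproduces exactly those in the statement: the zero-th piece becomes $\bs{\Psi}_{\scr{G},p}\cdot\theta_\bb{Q}\cdot\bs{\eta}_\bb{Q}^{-1}$; the top piece simplifies via $\bs{\eta}_\bb{Q}^2\cdot\eta_\bb{Q}^2\cdot\theta_\bb{Q}\cdot\bs{\eta}_\bb{Q}^{-1}=\bs{\eta}_\bb{Q}\cdot\eta_\bb{Q}\cdot\varepsilon_\bb{Q}$ to $\bs{\Psi}_{\scr{G},p}^{-1}\cdot\psi_\circ^{-1}\cdot\bs{\eta}_\bb{Q}\cdot\eta_\bb{Q}\cdot\varepsilon_\bb{Q}$; and the two middle characters reduce to $(\bs{\Psi}_{\scr{G},\frak{p}_1}\bs{\Psi}_{\scr{G},\frak{p}_2}^{-1})^{\pm1}\cdot\psi_{\circ,\frak{p}_i}^{-1}\cdot\varepsilon_\bb{Q}$.

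Third, the distinct Jordan-Holder hypothesis ensures that the four characters above are pairwise distinct modulo the maximal ideal of $\mbf{I}_\scr{G}$, so the residual action of the completed Hecke algebra on $\bs{\cal{V}}_\mathscr{G}(M)$ decomposes into four blocks whose primitive idempotents lift to $\mbf{I}_\scr{G}$ by Hensel's lemma; applying them presents $\mrm{Gr}^0$ and $\mrm{Gr}^2$ as the character twists of Galois-trivial $\mbf{I}_\scr{G}$-modules. The two middle lines of $\mrm{Gr}^1\mrm{As}(\mbf{V}_\scr{G})$ are interchanged by $\theta\in\Gamma_\bb{Q}\setminus\Gamma_L$, and the canonical labelling of $\frak{p}_1,\frak{p}_2$ together with the ordering of the filtration on $\mbf{V}_\scr{G}$ at each $D_{\frak{p}_i}$ singles one of them out as sub-object, yielding the indicated extension. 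The main obstacle is Assumption~$(4)$ itself, as no full analog of Ohta's control theorem has yet been published for Hilbert modular surfaces; granted that input, the remainder of the argument amounts to bookkeeping plus a standard idempotent decomposition.
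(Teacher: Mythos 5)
There is a genuine gap, and it starts with the very first input. Proposition \ref{somekindoffil} is meant to be unconditional given only the distinctness of the Jordan--Holder factors: the paper's proof never invokes assumptions $(4)$ or $(5)$ of (\ref{assumptions}). You have also misread assumption $(4)$ — it is the implication $\scr{L}_p^\mrm{mot}(\breve{\scr{G}},\msf{f}_\circ)(\mrm{P}_\circ)\neq 0\Rightarrow \exp^*_\mrm{BK}\big(\bs{\kappa}^{\msf{f}_\circ}_p(\scr{G})(\mrm{P}_\circ)\big)\neq 0$, not an Ohta-type control theorem identifying $\bs{\cal{V}}_\mathscr{G}(M)$ with $\mrm{As}(\mbf{V}_\scr{G})(\theta_\bb{Q}\cdot\bs{\eta}_\bb{Q}^{-1})$ up to a multiplicity module; the Remark in the introduction only says $(4)$ \emph{would follow from} such a generalization of Ohta, which is precisely what is not available for Hilbert modular surfaces. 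Resting the construction of the filtration on this nonexistent comparison makes the proposition conditional and, in effect, assumes the hardest part. The paper instead works at finite level: by the cited results (\cite{BL}, Chapter 3.4 and \cite{ES-Nekovar}, Theorem 5.20) there is a Galois-equivariant injection of $\bs{\cal{V}}_\mathscr{G}(M)_\alpha$ into a finite direct sum of the twisted specializations $\mrm{As}(\mrm{V}_{\scr{G}_\mrm{P}})(\theta_\bb{Q}\cdot\chi_{\mbox{\tiny $\spadesuit,\mrm{P}$}})$ over weight-two arithmetic points of level $p^\alpha$; the filtration on the target comes from Proposition \ref{AsaiFil}, is pulled back to $\bs{\cal{V}}_\mathscr{G}(M)_\alpha$, and the distinctness hypothesis is used to show the transition maps $(\varpi_2)_*$ preserve these filtrations so that one can pass to the projective limit. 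Your proposal never addresses this limit step, which is exactly where the hypothesis does its work in the paper.

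Your third step is also not sound as stated. Distinctness of the residual characters on the graded pieces does not make the local Galois module semisimple: extensions of distinct characters of $\Gamma_{\bb{Q}_p}$ are typically non-split (the ordinary filtration itself is the standard example, and the statement you are proving only asserts that $\mrm{Gr}^1$ is an \emph{extension}, not a direct sum). So there is no decomposition of $\bs{\cal{V}}_\mathscr{G}(M)$ into four Galois-isotypic blocks, and Hensel-lifting idempotents of the completed Hecke algebra cannot produce one, since the Hecke action does not detect the local filtration at $p$ at all. The identification of $\mrm{Gr}^0$ and $\mrm{Gr}^2$ as character twists of Galois-trivial modules $\mbf{A},\mbf{C}$ must be extracted, as in the paper, from the finite-level embedding into sums of specializations (where the multiplicity spaces appear naturally) and then propagated through the projective limit, not from a splitting of $\bs{\cal{V}}_\mathscr{G}(M)$ that does not exist. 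The character bookkeeping in your second step, on the other hand, is correct and matches Lemma \ref{GaloisStructure} and the statement.
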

\begin{proof}
	By (\cite{BL}, Chapter 3.4 $\&$ \cite{ES-Nekovar}, Theorem 5.20)  there is a Galois equivariant injection
	\begin{equation}\label{nekBL}
		\xymatrix{ \bs{\cal{V}}_\mathscr{G}(M)_\alpha\ar@{^{(}->}[r]& 
	\bigoplus_\mrm{P}\Big(\mrm{As}(\mrm{V}_{\scr{G}_\mrm{P}})(\theta_\bb{Q}\cdot\chi_{\mbox{\tiny $\spadesuit,\mrm{P}$}})\Big)^{\oplus n}
	}\end{equation}
	where the sum taken over arithmetic points $\mrm{P}\in\cal{A}_{\bs{\chi}}(\mbf{I}_\scr{G})$ of weight $(2t_L,t_L)$ and level $p^\alpha$ and where $n$ is the number of divisors of $M/\frak{Q}$.
	 The right-hand side of ($\ref{nekBL}$) is endowed with a nearly ordinary filtration (Proposition $\ref{AsaiFil}$). Therefore, if we set $\mbf{I}_{\scr{G},\alpha}:=\mbf{I}_{\scr{G}}\otimes\Lambda_\alpha$, the Galois module $\bs{\cal{V}}_\mathscr{G}(M)_\alpha$ inherits a three step $\Gamma_{\bb{Q}_p}$-stable filtration consisting of $\mbf{I}_{\scr{G},\alpha}$-modules 
\[
\bs{\cal{V}}_\mathscr{G}(M)_\alpha
\supset
\mrm{Fil}^1\bs{\cal{V}}_\mathscr{G}(M)_\alpha
\supset
\mrm{Fil}^2\bs{\cal{V}}_\mathscr{G}(M)_\alpha
\supset
0.
\]
Moreover, there are $\mbf{I}_{\scr{G},\alpha}$-modules $\mbf{A}_\alpha, \mbf{B}_\alpha, \mbf{B}_\alpha', \mbf{C}_\alpha$ with trivial $\Gamma_{\bb{Q}_p}$-action such that
\[\resizebox{\displaywidth}{!}{\xymatrix{
\mrm{Gr}^0\bs{\cal{V}}_\mathscr{G}(M)_\alpha=\mbf{A}_\alpha\Big(\bs{\Psi}_{\scr{G},p}\cdot (\theta_\bb{Q}\cdot\bs{\eta}_\bb{Q}^{-1})_{\lvert D_p}\Big),
\qquad
\mrm{Gr}^2\bs{\cal{V}}_\mathscr{G}(M)_\alpha=\mbf{C}_\alpha\Big(\bs{\Psi}_{\scr{G},p}^{-1}\cdot\big(\psi_\circ^{-1}\cdot\varepsilon_\bb{Q}\cdot\bs{\eta}_\bb{Q}\cdot\eta_\bb{Q}\big)_{\lvert D_p}\Big),
}}\]
\[\resizebox{\displaywidth}{!}{\xymatrix{
0\ar[r]& \mbf{B}_\alpha\Big(\big(\bs{\Psi}_{\scr{G},\frak{p}_1}\bs{\Psi}_{\scr{G},\frak{p}_2}^{-1}\big)^{-1}\cdot\big(\psi^{-1}_{\circ,\frak{p}_1}\cdot\varepsilon_\bb{Q}\big)_{\lvert D_{p}}\Big)\ar[r]& \mrm{Gr}^1\bs{\cal{V}}_\mathscr{G}(M)_\alpha\ar[r]& \mbf{B}_\alpha'\Big(\bs{\Psi}_{\scr{G},\frak{p}_1}\bs{\Psi}_{\scr{G},\frak{p}_2}^{-1}\cdot\big(\psi^{-1}_{\circ,\frak{p}_2}\cdot\varepsilon_\bb{Q}\big)_{\lvert D_{p}}\Big)\ar[r]&0.
}}\]
Since the Jordan--Holder factors of the residual representation $\mrm{As}(\mbf{V}_\scr{G})\otimes_{\mbf{I}_\scr{G}}\overline{\bb{F}}_p$ are all distinct, the characters appearing in the graded pieces of $\bs{\cal{V}}_\mathscr{G}(M)_\alpha$ are all distinct. It follows that the Galois equivariant transition maps
$\bs{\cal{V}}_\mathscr{G}(M)_{\alpha+1}\rightarrow \bs{\cal{V}}_\mathscr{G}(M)_{\alpha}$
respect the filtration and the claim follows by taking projective limits.
\end{proof}

\begin{remark}
	Recall the primitive eigenform $\msf{g}_\circ\in S_{t_L,t_L}(\mathfrak{Q};\chi_\circ;O)$ and denote by  $\alpha_i,\beta_i$ the eigenvalues of $\varrho_{\msf{g}_\circ}(\mrm{Fr}_{\mathfrak{p}_i})$ for $\mathfrak{p}_1,\mathfrak{p}_2$ the $\cal{O}_L$-prime ideals above $p$.
	Thanks to Proposition $\ref{AsaiFil}$, the Jordan--Holder factors of the residual representation $\mrm{As}(\mbf{V}_\scr{G})\otimes_{\mbf{I}_\scr{G}}\overline{\bb{F}}_p$ are all distinct if and only if  the products $\alpha_1\alpha_2,\ \alpha_1\beta_2,\ \beta_1\alpha_2,\ \beta_1\beta_2$
	are all distinct in $\overline{\bb{F}}_p$.
\end{remark}

\noindent Let $K/\bb{Q}$ be a non-totally real $S_5$-quintic extension whose Galois closure contain a real quadratic field $L$. Recall there is a parallel weight one Hilbert eigenform $\msf{g}_K$ over $L$  such that $\mrm{As}(\varrho_{\msf{g}_K})\cong\mrm{Ind}_K^\bb{Q}\mathbbm{1}-\mathbbm{1}$ (\cite{MicAnalytic}, Corollary 4.3).
\begin{proposition}\label{distinct eigenvalues quintic}
	If $p\not=5$ is a rational prime unramified in $K$ whose Frobenius conjugacy class is that of $5$-cycles in $S_5$, then $p$ splits in $L$ and the residual $\Gamma_{\bb{Q}_p}$-representation $\left(\mrm{As}(\varrho_{\msf{g}_K})\otimes\overline{\bb{F}}_p\right)_{\lvert D_p}$  has distinct Jordan--Holder factors.
\end{proposition}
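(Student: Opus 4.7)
The plan is to show (a) that $p$ splits in $L$ is an immediate consequence of the Frobenius being a $5$-cycle, and (b) that upon restriction to the decomposition group at $p$, the Asai representation becomes a sum of four distinct characters of order $5$, which remain distinct modulo $p$ because $p\neq 5$.

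First I would identify $L$ with the fixed field of $A_5$. The Galois closure $\tilde K/\bb{Q}$ has group $S_5$, and $L\subset\tilde K$ is quadratic, so it corresponds to the unique index-$2$ subgroup $A_5\le S_5$, i.e.\ $L=\tilde K^{A_5}$ and $\mrm{Gal}(\tilde K/L)=A_5$. Since $p$ is unramified in $\tilde K$ (being unramified in $K$ and the Galois closure being generated by conjugates) and $\mrm{Fr}_p$ is a $5$-cycle, we have $\mrm{Fr}_p\in A_5=\mrm{Gal}(\tilde K/L)$, so $\mrm{Fr}_p$ acts trivially on $L$. This forces $p$ to split in $L$.

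Next I would compute $\big(\mrm{Ind}_K^{\bb{Q}}\mathbbm{1}\big)_{|D_p}$ using the relation $\mrm{As}(\varrho_{\msf{g}_K})\oplus\mathbbm{1}\cong\mrm{Ind}_K^{\bb{Q}}\mathbbm{1}$. This induced representation factors through $\mrm{Gal}(\tilde K/\bb{Q})=S_5$ and there identifies with the standard permutation action on the $5$-element set $X=S_5/H$, where $H\le S_5$ is the stabilizer of an embedding $K\hookrightarrow\overline{\bb{Q}}$. Since $p$ is unramified and $\mrm{Fr}_p$ is a $5$-cycle, the image of $D_p$ in $S_5$ is the cyclic group $C=\langle\sigma\rangle$ of order $5$ generated by this $5$-cycle, acting freely and transitively on the five points of $X$. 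Mackey's formula (or the elementary orbit decomposition of the permutation representation) then gives
\[
\big(\mrm{Ind}_K^{\bb{Q}}\mathbbm{1}\big)_{|D_p} \;\cong\; \mrm{Ind}_{\{1\}}^{C}\mathbbm{1} \;=\; \bb{C}[C] \;\cong\; \bigoplus_{i=0}^{4}\chi^{i},
\]
where $\chi$ is a faithful character of $C\cong\bb{Z}/5$. Subtracting the trivial summand,
\[
\mrm{As}(\varrho_{\msf{g}_K})_{|D_p} \;\cong\; \chi\oplus\chi^{2}\oplus\chi^{3}\oplus\chi^{4}.
\]

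Finally I would pass to the residual representation. The characters $\chi,\chi^{2},\chi^{3},\chi^{4}$ take values in $\mu_5\subset\overline{\bb{Q}}^{\times}$, and since $p\neq 5$ the reduction map $\mu_5\hookrightarrow\overline{\bb{F}}_p^{\times}$ is injective, so these four characters remain pairwise distinct after reduction. In particular the four Jordan--H\"older factors of $\big(\mrm{As}(\varrho_{\msf{g}_K})\otimes\overline{\bb{F}}_p\big)_{|D_p}$ are distinct, completing the proof. There is no real obstacle here: the only point to double-check is that the image of $D_p$ in $S_5$ is exactly the cyclic group generated by $\mrm{Fr}_p$, which uses that $p$ is unramified in the Galois closure $\tilde K$; this in turn follows from the hypothesis that $p$ is unramified in $K$ together with the fact that $\tilde K$ is the compositum of the Galois conjugates of $K$.
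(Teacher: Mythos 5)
Your proof is correct and follows essentially the same route as the paper: identify $\mrm{As}(\varrho_{\msf{g}_K})$ with the nontrivial $4$-dimensional summand of the permutation representation of $S_5$, note the decomposition group at $p$ is cyclic of order $5$ generated by the $5$-cycle Frobenius, and observe that the resulting eigenvalues (the nontrivial $5$-th roots of unity) stay distinct in $\overline{\bb{F}}_p$ because $p\neq 5$. Your explicit verification that $p$ splits in $L$ (a $5$-cycle is even, hence lies in $A_5=\mrm{Gal}(\tilde K/L)$) just spells out a step the paper leaves implicit.
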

\begin{proof}
	The representation $\mrm{As}(\varrho_{\msf{g}_K}):\Gamma_\bb{Q}\to\mrm{GL}_4(O)$ factors through the Galois group of the Galois closure of $K$. As a representation of the symmetric group $S_5$ it is isomorphic to the irreducible $4$-dimensional direct summand of the permutation representation of $S_5$ acting on $5$ elements. If $p\not=5$ is a rational prime unramified in $K$ whose Frobenius conjugacy class is that of $5$-cycles in $S_5$, then the decomposition group $D_p$ is cyclic of order $5$ and we can conclude by noting that  $\big(\mrm{As}(\varrho_{\msf{g}_K})\otimes\overline{\bb{F}}_p\big)(\mrm{Fr}_p)$ has four distinct eigenvalues given by the non-trivial $5$-th roots of unity.
\end{proof}

\subsection{Hodge--Tate numerology}
Let $\msf{g}$ be a primitive Hilbert cuspform over $L$ of weight $(\ell t_L,t_L)$
and normalize Hodge--Tate weights by stating that the character $\varepsilon_\bb{Q}$ has weight $-1$. Then for every $\cal{O}_L$-prime ideal $\mathfrak{p}\mid p$, the restriction $\big(\mrm{V}_\msf{g}\big)_{\lvert D_\mathfrak{p}}$ has a $D_\frak{p}$-stable filtration
\[\xymatrix{
0\ar[r]& \mrm{V}^+_\frak{p}\ar[r]& \big(\mrm{V}_\msf{g}\big)_{\lvert D_\mathfrak{p}}\ar[r]& \mrm{V}^-_\frak{p}\ar[r]&0
}\] 
where $\mrm{V}^+_\frak{p}$ is a one-dimensional subrepresentation with Hodge--Tate weights equal to $ 1-\ell$ and $\mrm{V}^-_\frak{p}$ is a one-dimensional quotient with Hodge--Tate weights equal to $0$ (\cite{pHida}, Introduction). Therefore, the twist 
\[
\mrm{V}_\msf{g}^\dagger:= \mrm{V}_\msf{g}\left(\eta_L^{\frac{2-\ell}{2}}\right)
\]
 has Hodge--Tate weights at $\frak{p}$ given by $\{ -\frac{\ell}{2}, \frac{\ell-2}{2}\}_{\tau\in\mrm{I}_{L,\mathfrak{p}}}$. As in Proposition $\ref{AsaiFil}$, when $p\cal{O}_L=\mathfrak{p}_1\mathfrak{p}_2$ splits, the restriction at $p$ of the Asai representation 
 \[
 \mrm{As}\big(\mrm{V}^\dagger_\msf{g}\big)_{\lvert D_p}=\big(\mrm{V}^\dagger_\msf{g}\big)_{\lvert D_{\mathfrak{p}_1}}\otimes\big(\mrm{V}^\dagger_\msf{g}\big)_{\lvert D_{\mathfrak{p}_2}},
 \] 
 is endowed with a $3$-step $D_p$-stable filtration 
\[
\mrm{As}\big(\mrm{V}^\dagger_\msf{g}\big)\supset\mrm{Fil}^1\mrm{As}\big(\mrm{V}^\dagger_\msf{g}\big)\supset\mrm{Fil}^2\mrm{As}\big(\mrm{V}^\dagger_\msf{g}\big)\supset\{0\}
\]
whose graded pieces have dimension $1,2$ and $1$ respectively and whose
Hodge--Tate weights are given in the following table.

\begin{center}
    \begin{tabular}{lc}
    \toprule
    Graded piece & Hodge--Tate weights \\ 
	\midrule
    $\mrm{Gr}^0\mrm{As}(\mrm{V}^\dagger_\msf{g})$
	& $\ell-2$  \\ 
	\midrule
 $\mrm{Gr}^1\mrm{As}(\mrm{V}^\dagger_\msf{g})$
 &  $(-1, -1)$ \\ 
 \midrule
   $\mrm{Gr}^2\mrm{As}(\mrm{V}^\dagger_\msf{g})$
   &  $-\ell$ \\ 
   \bottomrule
    \end{tabular}
\end{center}

\noindent Furthermore, as in Lemma $\ref{GaloisStructure}$, the restriction at $p$ of the $\Gamma_\bb{Q}$-representation 
\[
\mrm{V}^\dagger_{\msf{g},\msf{f}_\circ}=\mrm{As}\big(\mrm{V}^\dagger_\msf{g}\big)(-1)\otimes\mrm{V}_{\msf{f}_\circ}
\] 
inherits a $4$-step $D_p$-stable filtration
\[
\mrm{V}^\dagger_{\msf{g},\msf{f}_\circ}\supset\mrm{Fil}^1\mrm{V}^\dagger_{\msf{g},\msf{f}_\circ}\supset\mrm{Fil}^2\mrm{V}^\dagger_{\msf{g},\msf{f}_\circ}\supset\mrm{Fil}^3\mrm{V}^\dagger_{\msf{g},\msf{f}_\circ}\supset\{0\}
\]
with graded pieces of dimension $1,3,3$ and $1$ respectively and whose Hodge--Tate weights are presented in the following table.

\begin{center}
    \begin{tabular}{lc}
    \toprule
    Graded piece & Hodge--Tate weights \\ 
	\midrule
    $\mrm{Gr}^0\mrm{V}^\dagger_{\msf{g},\msf{f}_\circ}$
	& $\ell-1$  \\ \midrule
 $\mrm{Gr}^1\mrm{V}^\dagger_{\msf{g},\msf{f}_\circ}$
 &  $(\ell-2,\ 0,\ 0)$ \\ \midrule
   $\mrm{Gr}^2\mrm{V}^\dagger_{\msf{g},\msf{f}_\circ}$
   &  $(-1,\ -1,\ 1-\ell)$ \\ \midrule
    $\mrm{Gr}^3\mrm{V}^\dagger_{\msf{g},\msf{f}_\circ}$
	&  $-\ell$ \\ \bottomrule
    \end{tabular}
\end{center}

\begin{corollary}\label{negative HT weights}
	The H--T weights of $\mrm{Fil}^2\mrm{V}^\dagger_{\msf{g},\msf{f}_\circ}$ are all strictly negative if and only if $\ell\ge 2$.
\end{corollary}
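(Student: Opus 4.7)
The plan is to read off the Hodge–Tate weights of $\mrm{Fil}^2\mrm{V}^\dagger_{\msf{g},\msf{f}_\circ}$ directly from the table preceding the statement. Since the filtration satisfies $\mrm{Fil}^2\supset \mrm{Fil}^3\supset\{0\}$ and the successive quotients $\mrm{Gr}^2\mrm{V}^\dagger_{\msf{g},\msf{f}_\circ}$ and $\mrm{Gr}^3\mrm{V}^\dagger_{\msf{g},\msf{f}_\circ}$ have Hodge–Tate weights $(-1,-1,1-\ell)$ and $-\ell$ respectively, the multiset of Hodge–Tate weights of $\mrm{Fil}^2\mrm{V}^\dagger_{\msf{g},\msf{f}_\circ}$ is exactly
\[
\{-1,\ -1,\ 1-\ell,\ -\ell\}.
\]

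The first step is to note that the weights $-1$ are already strictly negative, and $-\ell<0$ as soon as $\ell\ge 1$, which is automatic since $\msf{g}$ has cohomological weight $(\ell t_L,t_L)$ with $\ell\ge 1$. The only weight whose sign depends on $\ell$ is $1-\ell$, which is strictly negative if and only if $\ell\ge 2$; this yields the forward implication and pins down the threshold on $\ell$.

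For the converse, assuming $\ell\ge 2$, every element of the multiset $\{-1,-1,1-\ell,-\ell\}$ is bounded above by $-1<0$, so all Hodge–Tate weights of $\mrm{Fil}^2\mrm{V}^\dagger_{\msf{g},\msf{f}_\circ}$ are strictly negative. Conversely, if $\ell=1$ the weight $1-\ell=0$ appears in $\mrm{Gr}^2$ and hence in $\mrm{Fil}^2\mrm{V}^\dagger_{\msf{g},\msf{f}_\circ}$, violating strict negativity. There is no serious obstacle here: the only thing to verify is that the weights really do assemble additively across the graded pieces (which follows from the exactness of the Hodge–Tate functor on the filtration steps of a de Rham representation), so the proof amounts to a one-line inspection of the table.
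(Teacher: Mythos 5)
Your proof is correct and follows exactly the route the paper intends: the corollary is stated as an immediate consequence of the preceding table, with the weights of $\mrm{Fil}^2\mrm{V}^\dagger_{\msf{g},\msf{f}_\circ}$ being those of $\mrm{Gr}^2$ and $\mrm{Gr}^3$, namely $\{-1,-1,1-\ell,-\ell\}$, which are all strictly negative precisely when $\ell\ge 2$. Your remark on additivity of Hodge--Tate weights across the filtration is the right (and only) justification needed, so there is nothing to add.
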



\subsection{Local cohomology classes}
From now onward we suppose that the Jordan--Holder factors of the residual representation $\mrm{As}(\varrho_\circ)\otimes_{O}\overline{\bb{F}}_p$ are all distinct. Then, by Proposition \ref{somekindoffil}, the Galois module $\bs{\cal{V}}_{\scr{G},\msf{f}_\circ}(M)$ has a $4$-step $D_p$-stable filtration, and the Galois action on its graded pieces is given by characters appearing in Lemma \ref{GaloisStructure}.

\begin{lemma}\label{fil2 inj}
The natural map
\[
\mrm{H}^1(\mathbb{Q}_p,\mrm{Fil}^2\bs{\cal{V}}_{\scr{G},\msf{f}_\circ}(M))
\longrightarrow
\mrm{H}^1(\mathbb{Q}_p,\bs{\cal{V}}_{\scr{G},\msf{f}_\circ}(M))
\]
induced by the $D_p$-stable filtration is an injection.
\end{lemma}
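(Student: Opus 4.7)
The plan is to reduce the injectivity to a vanishing of $\mrm{H}^0$ groups via the long exact sequence attached to
\[
0 \to \mrm{Fil}^2\bs{\cal{V}}_{\scr{G},\msf{f}_\circ}(M) \to \bs{\cal{V}}_{\scr{G},\msf{f}_\circ}(M) \to \bs{\cal{V}}_{\scr{G},\msf{f}_\circ}(M)/\mrm{Fil}^2 \to 0.
\]
The kernel of the map in the statement is identified with the cokernel of the connecting $\mrm{H}^0(\mathbb{Q}_p,\bs{\cal{V}}_{\scr{G},\msf{f}_\circ}(M)) \to \mrm{H}^0(\mathbb{Q}_p,\bs{\cal{V}}_{\scr{G},\msf{f}_\circ}(M)/\mrm{Fil}^2)$, so it suffices to show that the target vanishes.

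Next I would filter $\bs{\cal{V}}_{\scr{G},\msf{f}_\circ}(M)/\mrm{Fil}^2$ by the image of $\mrm{Fil}^1$, getting a short exact sequence
\[
0 \to \mrm{Gr}^0\bs{\cal{V}}_{\scr{G},\msf{f}_\circ}(M) \to \bs{\cal{V}}_{\scr{G},\msf{f}_\circ}(M)/\mrm{Fil}^2 \to \mrm{Gr}^1\bs{\cal{V}}_{\scr{G},\msf{f}_\circ}(M) \to 0,
\]
and invoke a further devissage of $\mrm{Gr}^1$ into its two lines. Left exactness of $\mrm{H}^0$ reduces the problem to showing that each of the four one-dimensional $\mbf{I}_\scr{G}$-line characters of $\Gamma_{\bb{Q}_p}$ appearing in $\mrm{Gr}^0$ and $\mrm{Gr}^1$ has no nonzero invariants. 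These characters are obtained by combining the ones in the Proposition 5.8 filtration of $\bs{\cal{V}}_\scr{G}(M)$ with the ordinary filtration on $\mrm{V}_{\msf{f}_\circ}(p)$, exactly mirroring Lemma \ref{GaloisStructure}; in particular each one is a twist of a non-trivial power of $\bs{\Psi}_{\scr{G},\frak{p}_1}$ or $\bs{\Psi}_{\scr{G},\frak{p}_1}\bs{\Psi}_{\scr{G},\frak{p}_2}^{-1}$ by a product of finite-order and cyclotomic characters.

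The key observation is that the unramified character $\bs{\Psi}_{\scr{G},\frak{p}_i}:D_p\to\mbf{I}_\scr{G}^\times$ sends $\mrm{Fr}_p$ to $\scr{G}(\mbf{T}(\varpi_{\frak{p}_i}))$, which specializes to the (varying) $U_{\frak{p}_i}$-eigenvalues of classical forms in $\scr{G}$. Hence none of the relevant characters can evaluate to $1$ on Frobenius as an element of $\mbf{I}_\scr{G}^\times$: if it did, density of arithmetic points in $\bs{\cal{W}}_\scr{G}$ would force equality of the corresponding Satake parameters at every weight, contradicting the fact that weights vary while the finite-order/cyclotomic twist is controlled. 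A direct specialization at $\mrm{P}_\circ$ (using the assumption that $\mrm{As}(\varrho_\circ)\otimes\overline{\bb{F}}_p$ has distinct Jordan-Holder factors, which forces the four characters appearing in the filtration to be pairwise distinct modulo $p$) suffices to rule out triviality. Consequently $\mrm{H}^0(\mathbb{Q}_p,-)$ vanishes on each graded line, hence on $\bs{\cal{V}}_{\scr{G},\msf{f}_\circ}(M)/\mrm{Fil}^2$, and the claim follows.

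The main obstacle is the bookkeeping: one must verify cleanly that every character on the first two graded steps of the filtration is non-trivial on $D_p$, and this requires combining the distinctness assumption on residual Jordan-H\"older factors with the fact that $\bs{\Psi}_{\scr{G},p}$ is not cyclotomic-of-finite-order. Given the explicit description in Lemma \ref{GaloisStructure}, this amounts to a routine but slightly tedious check.
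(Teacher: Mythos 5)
Your overall route is the same as the paper's: both proofs run the long exact sequence for $0\to\mrm{Fil}^2\to\bs{\cal{V}}_{\scr{G},\msf{f}_\circ}(M)\to\bs{\cal{V}}_{\scr{G},\msf{f}_\circ}(M)/\mrm{Fil}^2\to 0$, reduce to $\mrm{H}^0(\bb{Q}_p,\bs{\cal{V}}_{\scr{G},\msf{f}_\circ}(M)/\mrm{Fil}^2)=0$, and get that vanishing by d\'evissage through the first two graded pieces, whose $D_p$-action is read off from Lemma \ref{GaloisStructure} and Proposition \ref{somekindoffil}. (Minor bookkeeping: with the paper's indexing the image of $\mrm{Fil}^1$ in the quotient is $\mrm{Gr}^1$, not $\mrm{Gr}^0$, and $\mrm{Gr}^1$ of the four-step filtration contributes three constituents, not two; neither point affects the argument.)

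The one place where your write-up has a genuine soft spot is the justification that the purely unramified constituents of $\mrm{Gr}^1$ have no invariants. Your description of the characters omits the unramified factor $\delta_p(\msf{f}_\circ)$, and you attribute their non-triviality to the hypothesis that $\mrm{As}(\varrho)\otimes\overline{\bb{F}}_p$ has distinct Jordan--H\"older factors. Pairwise distinctness of the four Asai characters modulo $p$ does not rule out that, say, $\bs{\Psi}_{\scr{G},\frak{p}_1}\bs{\Psi}_{\scr{G},\frak{p}_2}^{-1}\cdot\delta_p(\msf{f}_\circ)\cdot(\psi^{-1}_{\circ,\frak{p}_2})_{\lvert D_p}$ takes the value $1$ on $\mrm{Fr}_p$: that assumption only guarantees the filtration of Proposition \ref{somekindoffil} exists and that its constituents are mutually distinct, not that each product character occurring in $\mbf{V}^\dagger_{\scr{G},\msf{f}_\circ}$ is non-trivial. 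The input that actually does the job is the weight gap carried by $\delta_p(\msf{f}_\circ)$: at any classical specialization (e.g.\ $\mrm{P}_\circ$, or any weight-two point) the ratio $\scr{G}(\mbf{T}(\varpi_{\frak{p}_1})^{\pm1}\mbf{T}(\varpi_{\frak{p}_2})^{\mp1})$ becomes an algebraic number of complex absolute value $1$, while $\alpha_{\msf{f}_\circ}$ has complex absolute value $p^{1/2}$, so the Frobenius eigenvalue on these lines is never $1$; this is exactly the estimate the paper uses for $\bs{\xi}_{\scr{G},\msf{f}_\circ}$ after Proposition \ref{classINsel}. For the remaining constituents ($\mrm{Gr}^0$ and the line of $\mrm{Gr}^1$ involving $\bs{\eta}_\bb{Q}$, $\theta_\bb{Q}$, $\varepsilon_\bb{Q}$) it is non-triviality on inertia, not on Frobenius, that gives the vanishing. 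With the justification repaired in this way your argument closes, and it is then the (slightly more detailed) version of the paper's two-line proof.
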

\begin{proof}
	Lemma $\ref{GaloisStructure}$ implies that
	\[\mrm{H}^0(\bb{Q}_p,\mrm{Gr}^1\bs{\cal{V}}_{\scr{G},\msf{f}_\circ}(M))=0,\qquad \mrm{H}^0(\bb{Q}_p,\mrm{Gr}^0\bs{\cal{V}}_{\scr{G},\msf{f}_\circ}(M))=0.\]
Therefore taking the long exact sequence in Galois cohomology associated with the short exact sequence of $D_p$-modules
	\[\xymatrix{
	0\ar[r]& \mrm{Gr}^1\bs{\cal{V}}_{\scr{G},\msf{f}_\circ}(M)\ar[r]&\bs{\cal{V}}_{\scr{G},\msf{f}_\circ}(M)/\mrm{Fil}^2\ar[r]&\mrm{Gr}^0\bs{\cal{V}}_{\scr{G},\msf{f}_\circ}(M)\ar[r]&0,
	}\]
we deduce that	 $\mrm{H}^0(\bb{Q}_p,\bs{\cal{V}}_{\scr{G},\msf{f}_\circ}(M)/\mrm{Fil}^2)=0$ and the lemma follows.
\end{proof}

\subsubsection{Local properties.}
\begin{definition}
The $\mbf{I}_\scr{G}$-adic cohomology class attached to the pair $(\scr{G}, \msf{f}_\circ)$ is the projection
\[
\boldsymbol{\kappa}_{\scr{G},\msf{f}_\circ}
:=
\mrm{pr}_{\scr{G},\msf{f}_\circ}(\bs{\kappa}^\mrm{n.o.}_\infty)
\in
\mrm{H}^1\big(\mathbb{Q},\bs{\cal{V}}_{\scr{G},\msf{f}_\circ}(M)\big).
\]
We denote its restriction at the decomposition group at $p$ by
\[
\bs{\kappa}_p(\scr{G},\msf{f}_\circ):=\mrm{loc}_p(\boldsymbol{\kappa}_{\scr{G},\msf{f}_\circ})
\in
\mrm{H}^1\big(\mathbb{Q}_p,\bs{\cal{V}}_{\scr{G},\msf{f}_\circ}(M)\big).
\]

\end{definition}

\noindent 	Consider the following element of the ring $\mbf{I}_\scr{G}$ 
	\begin{equation}
	\bs{\xi}_{\scr{G},\msf{f}_\circ} := 
	\Big(1-\alpha_{\msf{f}_\circ}\chi_\circ(\mathfrak{p}_1)\scr{G}\big(\mbf{T}(\varpi_{\mathfrak{p}_1})^{-1}\mbf{T}(\varpi_{\mathfrak{p}_2})\big)\Big)
	\Big(1-\alpha_{\msf{f}_\circ}\chi_\circ(\mathfrak{p}_2)\scr{G}\big(\mbf{T}(\varpi_{\mathfrak{p}_1})\mbf{T}(\varpi_{\mathfrak{p}_2})^{-1}\big)\Big).
	\end{equation}
	
\begin{proposition}\label{classINsel}
	
We have
\[
\bs{\xi}_{\scr{G},\msf{f}_\circ} \cdot \boldsymbol{\kappa}_p(\scr{G},\msf{f}_\circ)
\in
\mrm{H}^1\big(\mathbb{Q}_p,\mrm{Fil}^2\bs{\cal{V}}_{\scr{G},\msf{f}_\circ}(M)\big).
\]
\end{proposition}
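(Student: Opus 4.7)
The plan is to apply Lemma~\ref{fil2 inj}, reducing the claim to the vanishing of $\bs{\xi}_{\scr{G},\msf{f}_\circ}\cdot\bs{\kappa}_p(\scr{G},\msf{f}_\circ)$ in $\mrm{H}^1(\mathbb{Q}_p,\bs{\cal{V}}_{\scr{G},\msf{f}_\circ}(M)/\mrm{Fil}^2)$. Tensoring the three-step filtration of Proposition~\ref{somekindoffil} with the Tate twist $O(-1)$ and with $\mrm{V}_{\msf{f}_\circ}(p)$ exhibits $\bs{\cal{V}}_{\scr{G},\msf{f}_\circ}(M)/\mrm{Fil}^2$ as an extension
\[
0 \longrightarrow \mrm{Gr}^1\bs{\cal{V}}_{\scr{G},\msf{f}_\circ}(M) \longrightarrow \bs{\cal{V}}_{\scr{G},\msf{f}_\circ}(M)/\mrm{Fil}^2 \longrightarrow \mrm{Gr}^0\bs{\cal{V}}_{\scr{G},\msf{f}_\circ}(M) \longrightarrow 0
\]
whose top quotient is a rank-one unramified character and whose sub is a direct sum of three rank-one unramified characters. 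A direct Frobenius eigenvalue computation, cross-referenced with Lemma~\ref{GaloisStructure}, identifies two of the rank-one summands of $\mrm{Gr}^1$ as the characters $\mbf{I}_\scr{G}(\eta_i)$ ($i=1,2$) with $\eta_i(\mrm{Fr}_p) = \alpha_{\msf{f}_\circ}\chi_\circ(\mathfrak{p}_i)\scr{G}(\mbf{T}(\varpi_{\mathfrak{p}_i})^{-1}\mbf{T}(\varpi_{\mathfrak{p}_{3-i}}))$, exactly matching the two factors of $\bs{\xi}_{\scr{G},\msf{f}_\circ}$.

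The argument then splits according to which constituent we are projecting onto. For the two Euler-matching summands $\mbf{I}_\scr{G}(\eta_i)$ of $\mrm{Gr}^1$, I would use the norm-compatibility $(\varpi_{2},\mrm{id})_\ast\kappa_{\alpha+1}^{\mathrm{n.o.}}=\kappa_{\alpha}^{\mathrm{n.o.}}$ from equation~(\ref{compttt}), together with the unramifiedness of each $\eta_i$, to conclude that the image of $\bs{\kappa}_p(\scr{G},\msf{f}_\circ)$ in $\mrm{H}^1(\mathbb{Q}_p,\mbf{I}_\scr{G}(\eta_i))$ actually lies in the unramified subgroup $\mrm{H}^1_{\mathrm{ur}}(\mathbb{Q}_p,\mbf{I}_\scr{G}(\eta_i)) \cong \mbf{I}_\scr{G}/(1-\eta_i(\mrm{Fr}_p))\mbf{I}_\scr{G}$. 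The corresponding factor $(1-\eta_i(\mrm{Fr}_p))$ of $\bs{\xi}_{\scr{G},\msf{f}_\circ}$ then manifestly annihilates this projection.

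For the two remaining constituents---the top quotient $\mrm{Gr}^0$ and the third rank-one summand of $\mrm{Gr}^1$, whose Frobenius eigenvalues involve $\delta_p(\msf{f}_\circ)^{-1}$ rather than $\delta_p(\msf{f}_\circ)$ and so do not match either factor of $\bs{\xi}_{\scr{G},\msf{f}_\circ}$---I would argue that the projection of $\bs{\kappa}_p(\scr{G},\msf{f}_\circ)$ is identically zero without needing any Euler factor. The natural source of this vanishing is that the \'etale Abel-Jacobi image of $\Delta_\alpha^\circ$ factors, via the K\"unneth decomposition of $\mrm{H}^3$ of the threefold $Z_\alpha^\dagger(K)$, through the nearly ordinary components on which $U_{\mathfrak{p}_i}$ acts as a unit; the normalization $\kappa_\alpha^{\mathrm{n.o.}} = (U_{\mathfrak{p}_1}^{-\alpha},\mrm{id})\kappa_\alpha^\circ$ together with the compatibilities in Proposition~\ref{proposition:flat-relation} then forces the summands with incompatible $U_{\mathfrak{p}_i}$-eigenvalues to disappear in the projective limit.

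The main obstacle is this last step: certifying rigorously that the projections to the two non-matching constituents vanish. This requires carefully tracking the interplay between the Atkin-Lehner twist encoded in the definition of $\Delta_\alpha$, the Galois twist producing $Z_\alpha^\dagger(K)$, and the eigenvalues of $U_{\mathfrak{p}_1}, U_{\mathfrak{p}_2}$ on each graded piece of $\bs{\cal{V}}_{\scr{G},\msf{f}_\circ}(M)$; in particular, one must be sure that the image of $e_{\mrm{n.o.}}\mrm{AJ}^\et_p(\Delta_\alpha^\circ)$ avoids the $\delta_p(\msf{f}_\circ)^{-1}$-isotypic piece of $\mrm{V}_{\msf{f}_\circ}(p)$. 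The remaining ingredients---the reduction via Lemma~\ref{fil2 inj}, the Frobenius eigenvalue matching, and the unramified cohomology computation---are formal consequences of the results established earlier in the paper.
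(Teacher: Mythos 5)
Your opening reduction via Lemma~\ref{fil2 inj} is the same as the paper's, but the heart of the argument has a genuine gap, and the character bookkeeping that sets it up is off. First the bookkeeping: neither $\mrm{Gr}^0\bs{\cal{V}}_{\scr{G},\msf{f}_\circ}(M)$ nor all three summands of $\mrm{Gr}^1$ are unramified. By Lemma~\ref{GaloisStructure} and Proposition~\ref{somekindoffil}, $\mrm{Gr}^0$ carries the character $\bs{\Psi}_{\scr{G},p}\cdot\delta_p(\msf{f}_\circ)\cdot(\bs{\eta}_\bb{Q}^{-1}\eta_\bb{Q}^{-1})_{\lvert D_p}$ and one summand of $\mrm{Gr}^1$ carries $\bs{\Psi}_{\scr{G},p}\cdot\delta_p(\msf{f}_\circ)^{-1}\cdot(\bs{\eta}_\bb{Q}^{-1}\theta_\bb{Q}\psi_\circ)_{\lvert D_p}$; both are \emph{ramified} because of the $\bs{\eta}_\bb{Q}$-twist. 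This matters because the correct mechanism for these two constituents is not to prove that the projection of $\bs{\kappa}_p(\scr{G},\msf{f}_\circ)$ to their $\mrm{H}^1$ vanishes (a statement the cycle construction does not give you, and which your K\"unneth/$U_{\frak{p}}$-eigenvalue heuristic will not certify), but that their $\mrm{I}_p$-invariants vanish, so they simply do not contribute once one knows the image of the class in $\mrm{H}^1\big(\bb{Q}_p,\bs{\cal{V}}_{\scr{G},\msf{f}_\circ}(M)/\mrm{Fil}^2\big)$ is unramified: that image then lies in $\big(\bs{\cal{V}}_{\scr{G},\msf{f}_\circ}(M)/\mrm{Fil}^2\big)^{\mrm{I}_p}/(\mrm{Fr}_p-1)=\big(\mrm{Gr}^1\big)^{\mrm{I}_p}/(\mrm{Fr}_p-1)$, an extension of the two unramified Euler-matching pieces, and each factor of $\bs{\xi}_{\scr{G},\msf{f}_\circ}$ kills the corresponding Frobenius coinvariants.

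The real gap is your justification of unramifiedness. You argue that norm-compatibility of $\kappa_\alpha^{\mrm{n.o.}}$ (equation~(\ref{compttt})) together with unramifiedness of the target characters forces the projection into $\mrm{H}^1_{\mrm{ur}}(\bb{Q}_p,\mbf{I}_\scr{G}(\eta_i))\cong\mbf{I}_\scr{G}/(1-\eta_i(\mrm{Fr}_p))$. This does not follow: the tower here is a tower of level structures on the Shimura varieties, not a tower of base fields, so compatibility under $(\varpi_2)_*$ carries no local Selmer information; moreover $\mrm{H}^1(\bb{Q}_p,\mbf{I}_\scr{G}(\eta_i))$ is generically of $\mbf{I}_\scr{G}$-rank one while the unramified subgroup is $\mbf{I}_\scr{G}$-torsion, so a generic class is not unramified. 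The paper supplies the missing geometric input at each finite level: $\kappa^\circ_\alpha$ is an \'etale Abel--Jacobi image, hence lies in $\mrm{H}^1_g$ by Nekov\'a\v{r}, and $\mrm{H}^1_g=\mrm{H}^1_f$ by purity of the monodromy filtration for Hilbert modular varieties (Saito, Skinner); then the ordinarity of $\bs{\cal{V}}_{\scr{G},\msf{f}_\circ}(M)_\alpha$ together with the Hodge--Tate weight computation (Corollary~\ref{negative HT weights}) identifies $\mrm{H}^1_f$ with the kernel of restriction to $\mrm{H}^1\big(\mrm{I}_p,\,\cdot/\mrm{Fil}^2\big)$ (Flach, Weston), and passing to the limit gives exactly the unramifiedness you need. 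Without this de Rham/crystalline input on the finite-level classes, the Euler-factor annihilation cannot be run, so as written the proposal does not prove the proposition.
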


\begin{proof}
We follow the argument of (\cite{DR2}, Proposition 2.2).  The module
\[
\bs{\cal{V}}_{\scr{G},\msf{f}_\circ}(M)_\alpha=\bs{\cal{V}}_{\scr{G}}(M)_\alpha\otimes\mrm{V}_{\msf{f}_\circ}(p)
\]
is realized as a quotient of $\mrm{H}_\et^{3}\big(Z^\dagger_\alpha(K)^c_{\bar{\bb{Q}}},O(2)\big)$ for $\iota:Z^\dagger_\alpha(K)\hookrightarrow Z^\dagger_\alpha(K)^c$ the smooth compactification appearing in the proof of Proposition $\ref{nullhomo}$. 
Let
\[
\mrm{H}^1_f\big(\mathbb{Q}_p,\bs{\cal{V}}_{\scr{G},\msf{f}_\circ}(M)_\alpha\big)
\subseteq
\mrm{H}^1_g\big(\mathbb{Q}_p,\bs{\cal{V}}_{\scr{G},\msf{f}_\circ}(M)_\alpha\big)
\subseteq
\mrm{H}^1\big(\mathbb{Q}_p,\bs{\cal{V}}_{\scr{G},\msf{f}_\circ}(M)_\alpha\big)
\]
denote the finite and geometric parts of the local Galois cohomology (\cite{Bloch-Kato}, Section 3).  The purity conjecture for the monodromy filtration holds for the middle cohomology of Hilbert modular varieties by work of Saito and Skinner (\cite{p-adicHodgeHilbert}, \cite{SkinnerHilbert}), and hence for the middle cohomology of $Z^\dagger_\alpha(K)^c$. Therefore, the image of $\boldsymbol{\kappa}_p(\scr{G},\msf{f}_\circ)$ in $\mrm{H}^1\big(\mathbb{Q}_p,\bs{\cal{V}}_{\scr{G},\msf{f}_\circ}(M)_\alpha\big)$ lies in
$\mrm{H}^1_g\big(\mathbb{Q}_p,\bs{\cal{V}}_{\scr{G},\msf{f}_\circ}(M)_\alpha\big)=\mrm{H}^1_f\big(\mathbb{Q}_p,\bs{\cal{V}}_{\scr{G},\msf{f}_\circ}(M)_\alpha\big)$ by (\cite{NekovarAbel-Jacobi}, Theorem 3.1).

\noindent Lemma $\ref{GaloisStructure}$ shows that $\bs{\cal{V}}_{\scr{G},\msf{f}_\circ}(M)_\alpha$ is an ordinary $\Gamma_{\mathbb{Q}_p}$-representation in the sense of (\cite{Weston}, Section 1.1). Therefore, one can deduce that
\[
\mrm{H}^1_f\big(\mathbb{Q}_p,\bs{\cal{V}}_{\scr{G},\msf{f}_\circ}(M)_\alpha\big)
=
\ker \Big(\mrm{H}^1\big(\mathbb{Q}_p,\bs{\cal{V}}_{\scr{G},\msf{f}_\circ}(M)_\alpha\big)
\rightarrow
\mrm{H}^1\big(\mrm{I}_p,\bs{\cal{V}}_{\scr{G},\msf{f}_\circ}(M)_\alpha/\mrm{Fil}^2\big)\Big)
\]
using Corollary $\ref{negative HT weights}$ and the argument in the proof of (\cite{Flach}, Lemma 2). In particular, the class $\bs{\kappa}_p(\scr{G},\msf{f}_\circ)$ has trivial image in the projective limit 
\[
\varprojlim_\alpha\ \mrm{H}^1\Big(\mrm{I}_p,\bs{\cal{V}}_{\scr{G},\msf{f}_\circ}(M)_\alpha/\mrm{Fil}^2\Big)
\cong
\mrm{H}^1\Big(\mrm{I}_p, \bs{\cal{V}}_{\scr{G},\msf{f}_\circ}(M)/\mrm{Fil}^2\Big),
\]
and consequently
the image of $\bs{\kappa}_p(\scr{G},\msf{f}_\circ)$ in $\mrm{H}^1\big(\bb{Q}_p, \bs{\cal{V}}_{\scr{G},\msf{f}_\circ}(M)/\mrm{Fil}^2\big)$ lies in
\[
\mrm{H}^1\Big(\mathbb{Q}_p^\mrm{ur}/\mathbb{Q}_p,\big(\bs{\cal{V}}_{\scr{G},\msf{f}_\circ}(M)/\mrm{Fil}^2\big)^{\mrm{I}_p}\Big)\cong\ker\Big(\mrm{H}^1\big(\bb{Q}_p, \bs{\cal{V}}_{\scr{G},\msf{f}_\circ}(M)/\mrm{Fil}^2\big)\rightarrow \mrm{H}^1\big(\mrm{I}_p, \bs{\cal{V}}_{\scr{G},\msf{f}_\circ}(M)/\mrm{Fil}^2\big)\Big).
\]
Taking into account Lemma $\ref{fil2 inj}$, we are left to show that 
\[
\bs{\xi}_{\scr{G},\msf{f}_\circ}\cdot \mrm{H}^1\Big(\mathbb{Q}_p^\mrm{ur}/\mathbb{Q}_p,\big(\bs{\cal{V}}_{\scr{G},\msf{f}_\circ}(M)/\mrm{Fil}^2\big)^{\mrm{I}_p}\Big)=0.
\]
By choosing the arithmetic Frobenius $\mrm{Fr}_p$ we can make the identification $\mrm{Gal}(\mathbb{Q}_p^\mrm{ur}/\mathbb{Q}_p)\cong\widehat{\mathbb{Z}}$ and compute that
\[
\mrm{H}^1\Big(\mathbb{Q}_p^\mrm{ur}/\mathbb{Q}_p,\big(\bs{\cal{V}}_{\scr{G},\msf{f}_\circ}(M)/\mrm{Fil}^2\big)^{\mrm{I}_p}\Big)
\cong
\big(\bs{\cal{V}}_{\scr{G},\msf{f}_\circ}(M)/\mrm{Fil}^2\big)^{\mrm{I}_p}/(\mrm{Fr}_p-1).
\]
Considering the short exact sequence
\[
0\rightarrow
\mrm{Gr}^1\bs{\cal{V}}_{\scr{G},\msf{f}_\circ}(M)
\rightarrow
\bs{\cal{V}}_{\scr{G},\msf{f}_\circ}(M)/\mrm{Fil}^2
\rightarrow
\mrm{Gr}^0\bs{\cal{V}}_{\scr{G},\msf{f}_\circ}(M)
\rightarrow
0
\]
and the vanishing (Lemma \ref{GaloisStructure} $\&$ Proposition \ref{somekindoffil})
\[
\mrm{H}^0\Big(\mrm{I}_p, \mrm{Gr}^0\bs{\cal{V}}_{\scr{G},\msf{f}_\circ}(M)\Big)=\underset{\leftarrow,\alpha}{\lim}\ \mrm{H}^0\Big(\mrm{I}_p, \mrm{Gr}^0\bs{\cal{V}}_{\scr{G},\msf{f}_\circ}(M)_\alpha\Big)=0,
\] 
we deduce that $\big(\bs{\cal{V}}_{\scr{G},\msf{f}_\circ}(M)/\mrm{Fil}^2\big)^{\mrm{I}_p}
=
\big(\mrm{Gr}^1\bs{\cal{V}}_{\scr{G},\msf{f}_\circ}(M)\big)^{\mrm{I}_p}$ sits in a short exact sequence
\[\resizebox{\displaywidth}{!}{\xymatrix{ \mbf{D}\Big(\big(\bs{\Psi}_{\scr{G},\frak{p}_1}\bs{\Psi}_{\scr{G},\frak{p}_2}^{-1}\big)^{-1}\cdot\delta_p(\msf{f}_\circ)\cdot\big(\psi^{-1}_{\circ,\frak{p}_1}\big)_{\lvert D_{p}}\Big)\ar@{^{(}->}[r]& \Big(\bs{\cal{V}}_{\scr{G},\msf{f}_\circ}(M)/\mrm{Fil}^2\Big)^{\mrm{I}_p}\ar@{->>}[r]& \mbf{D}'\Big(\bs{\Psi}_{\scr{G},\frak{p}_1}\bs{\Psi}_{\scr{G},\frak{p}_2}^{-1}\cdot\delta_p(\msf{f}_\circ)\cdot\big(\psi^{-1}_{\circ,\frak{p}_2}\big)_{\lvert D_p}\Big)
}}\]
where $\mbf{D},\mbf{D}'$ are $\mbf{I}_\scr{G}$-modules with trivial Galois action. Therefore, if we set
\[
	\bs{\xi}_{\scr{G},\msf{f}_\circ} = 
	\Big(1-\alpha_{\msf{f}_\circ}\chi_\circ(\mathfrak{p}_1)\scr{G}\big(\mbf{T}(\varpi_{\mathfrak{p}_1})^{-1}\mbf{T}(\varpi_{\mathfrak{p}_2})\big)\Big)
	\Big(1-\alpha_{\msf{f}_\circ}\chi_\circ(\mathfrak{p}_2)\scr{G}\big(\mbf{T}(\varpi_{\mathfrak{p}_1})\mbf{T}(\varpi_{\mathfrak{p}_2})^{-1}\big)\Big)
\]
 the claim follows.
\end{proof}

\noindent In light of Proposition $\ref{classINsel}$, from now on we replace the ring $\mbf{I}_\scr{G}$ and the various modules over it with their respective localizations at the multiplicative set generated by $\bs{\xi}_{\scr{G},\msf{f}_\circ}$. Observe that the arithmetic specializations of $\bs{\xi}_{\scr{G},\msf{f}_\circ}$ never vanish: for any $\mrm{P}\in\cal{A}(\mbf{I}_\scr{G})$, the specialization 
$\mrm{P}\circ\scr{G}\big(\mbf{T}(\varpi_{\mathfrak{p}_1})^{-1}\mbf{T}(\varpi_{\mathfrak{p}_2})\big)$ is an algebraic integer with complex absolute value 1, whereas $\alpha_{\msf{f}_\circ}$ has complex absolute value $p^{1/2}$. 

\begin{corollary}\label{placing the class}
With the above convention in place,
\[
\boldsymbol{\kappa}_p(\scr{G},\msf{f}_\circ)
\in
\mrm{H}^1\big(\mathbb{Q}_p,\mrm{Fil}^2\bs{\cal{V}}_{\scr{G},\msf{f}_\circ}(M)\big).
\]
\end{corollary}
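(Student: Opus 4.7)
The plan is to deduce this from Proposition \ref{classINsel} and Lemma \ref{fil2 inj} by a straightforward localization argument. First I would observe that the entire diagram of Galois cohomology modules is functorial in the coefficient ring $\mbf{I}_\scr{G}$, so tensoring everything with the localization $(\mbf{I}_\scr{G})_{\bs{\xi}_{\scr{G},\msf{f}_\circ}}$ commutes with formation of $\mrm{H}^1(\bb{Q}_p,-)$ (the modules in question are finitely generated over a Noetherian base, and localization is exact on cohomology of profinite groups over such coefficients).

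Next, I would promote Lemma \ref{fil2 inj} to the localized setting: since localization is exact, the injection
\[
\mrm{H}^1\big(\bb{Q}_p,\mrm{Fil}^2\bs{\cal{V}}_{\scr{G},\msf{f}_\circ}(M)\big)
\hookrightarrow
\mrm{H}^1\big(\bb{Q}_p,\bs{\cal{V}}_{\scr{G},\msf{f}_\circ}(M)\big)
\]
remains an injection after inverting $\bs{\xi}_{\scr{G},\msf{f}_\circ}$. With the convention adopted just before the corollary, all relevant modules are now understood to be so localized.

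By Proposition \ref{classINsel}, the element $\bs{\xi}_{\scr{G},\msf{f}_\circ}\cdot\bs{\kappa}_p(\scr{G},\msf{f}_\circ)$ lies in $\mrm{H}^1(\bb{Q}_p,\mrm{Fil}^2\bs{\cal{V}}_{\scr{G},\msf{f}_\circ}(M))$. Since $\bs{\xi}_{\scr{G},\msf{f}_\circ}$ has been inverted, it now acts as a unit on both cohomology groups, and hence I can divide by it. The resulting element agrees with $\bs{\kappa}_p(\scr{G},\msf{f}_\circ)$ inside $\mrm{H}^1(\bb{Q}_p,\bs{\cal{V}}_{\scr{G},\msf{f}_\circ}(M))$, and the injectivity from Lemma \ref{fil2 inj} pins this common element down as a bona fide class in $\mrm{H}^1(\bb{Q}_p,\mrm{Fil}^2\bs{\cal{V}}_{\scr{G},\msf{f}_\circ}(M))$.

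The only point that requires any care --- and the closest thing to an obstacle --- is the justification that inverting $\bs{\xi}_{\scr{G},\msf{f}_\circ}$ does not destroy arithmetic specialization, so that the localized class still genuinely interpolates the Abel--Jacobi classes $\bs{\kappa}^\mrm{n.o.}_\infty$ at every arithmetic point. This is precisely the content of the remark immediately preceding the statement: the specializations $\mrm{P}\circ\scr{G}(\mbf{T}(\varpi_{\frak{p}_1})^{\pm1}\mbf{T}(\varpi_{\frak{p}_2})^{\mp1})$ have complex absolute value $1$ by the Ramanujan--Petersson bound for Hilbert eigenforms, whereas $\alpha_{\msf{f}_\circ}$ has complex absolute value $p^{1/2}$, so no arithmetic point is lost in the localization and the corollary is the desired clean statement.
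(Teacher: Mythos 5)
Your proposal is correct and is essentially the paper's own (implicit) argument: with the localization convention in force, $\bs{\xi}_{\scr{G},\msf{f}_\circ}$ acts invertibly, so Proposition \ref{classINsel} combined with the injectivity of Lemma \ref{fil2 inj} (whose proof carries over verbatim to the localized modules) immediately yields $\bs{\kappa}_p(\scr{G},\msf{f}_\circ)\in\mrm{H}^1\big(\bb{Q}_p,\mrm{Fil}^2\bs{\cal{V}}_{\scr{G},\msf{f}_\circ}(M)\big)$, and your remark on the non-vanishing of the arithmetic specializations of $\bs{\xi}_{\scr{G},\msf{f}_\circ}$ is exactly the paper's justification that nothing is lost. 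The only inessential overstatement is the blanket claim that localization commutes with continuous $\mrm{H}^1(\bb{Q}_p,-)$; all that is needed is functoriality along the coefficient map to the localized module, on whose cohomology $\bs{\xi}_{\scr{G},\msf{f}_\circ}$ is a unit.
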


\begin{definition}\label{TwistedGradedPiece}
	Let $\bs{\cal{V}}^{\msf{f}_\circ}_{\scr{G}}(M)
	:=
	\mrm{Fil}^2 \bs{\cal{V}}_\mathscr{G}(M) (-1) \otimes\mrm{Gr}^0\mrm{V}_{\mrm{f}_\circ}(p)$
and denote by
\[
\boldsymbol{\kappa}^{\msf{f}_\circ}_p(\scr{G}) \in \mrm{H}^1\big(\mathbb{Q}_p,\bs{\cal{V}}^{\msf{f}_\circ}_{\scr{G}}(M)\big)
\]
the image of $\boldsymbol{\kappa}_p(\scr{G},\msf{f}_\circ)$ under the natural surjection 
$\mrm{Fil}^2\bs{\cal{V}}_{\scr{G},\msf{f}_\circ}(M) \twoheadrightarrow \bs{\cal{V}}^{\msf{f}_\circ}_{\scr{G}}(M)$.
\end{definition}
\begin{remark}
The local Galois group $\Gamma_{\bb{Q}_p}$ acts on
$\bs{\cal{V}}^{\msf{f}_\circ}_{\scr{G}}(M)$
through the character
	\[
	\bs{\Psi}_{\scr{G},p}^{-1}\cdot\delta_p(\msf{f}_\circ)\cdot\big(\bs{\eta}_\bb{Q}\cdot\eta_\bb{Q}\cdot \psi_\circ^{-1}\big)_{\lvert D_p}.
	\]
\end{remark}

\section{Big pairing}\label{generalizingOhta}
\subsection{Algebra interlude}
Recall $\Gamma=1+p\bb{Z}_p$, $\Gamma_\alpha=\Gamma/\Gamma^{p^\alpha}$, $\Lambda_\alpha=O[\Gamma_\alpha]$ and $E_\wp=\mrm{Frac}(O)$.
\begin{definition}\label{bigpi}
 Let $\bs{\Pi}_\alpha= (\Lambda_\alpha\otimes_OE_\wp)$ and define
 \begin{equation}
 \bs{\Pi} :=\varprojlim_\alpha\ \bs{\Pi}_\alpha.
 \end{equation}
\end{definition}

\noindent Let $(M_\alpha)_\alpha$ be a projective system where each $M_\alpha$ is a $\bs{\Pi}_\alpha$-module, then the projective limit $\mbf{M} = \varprojlim_\alpha M_\alpha$ inherits a $\bs{\Pi}$-module structure. Any finite order character $\chi:\Gamma \rightarrow \bb{C}^\times_p$ factors through $\Gamma_\alpha$ for some $\alpha\ge1$, and determines a homomorphism 
\begin{equation}
\chi:\mbf{M}\longrightarrow M_\alpha\otimes_{\chi}\bb{C}_p, \qquad x\mapsto \chi(x).
\end{equation}

\begin{lemma}\label{lem: vanishing criterion}
Let $\mbf{M} = \varprojlim_\alpha M_\alpha$ be a projective limit where each $M_\alpha$ is a flat $\bs{\Pi}_\alpha$-module. Then $x\in\mbf{M}$ equals zero if and only if
$\chi(x) = 0$ for every finite order character 
$\chi: \Gamma \to \bb{C}_p^\times$.
\end{lemma}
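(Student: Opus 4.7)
The forward direction is trivial since any finite order character $\chi$ factors through some $\Gamma_\alpha$ and the maps $x\mapsto \chi(x)$ are $E_\wp$-linear. The task, then, is to prove the converse: assuming $\chi(x)=0$ in $M_\alpha \otimes_\chi \bb{C}_p$ for every finite order character $\chi:\Gamma\to\bb{C}_p^\times$, show $x=0$ in $\mbf{M}$. Since $\mbf{M}=\varprojlim_\alpha M_\alpha$, it suffices to check that the image $x_\alpha\in M_\alpha$ is zero for each $\alpha\ge 1$.

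The plan is to fix $\alpha$ and pick a finite extension $F$ of $E_\wp$ containing all $p^\alpha$-th roots of unity, so that every character of $\Gamma_\alpha$ is valued in $F^\times$. By Maschke's theorem applied to the group algebra of the finite abelian group $\Gamma_\alpha$ (whose order $p^\alpha$ is invertible in $F$), one obtains the standard splitting
\[
\bs{\Pi}_\alpha\otimes_{E_\wp}F \;\xrightarrow{\sim}\; \prod_{\chi\in\widehat{\Gamma_\alpha}}F,\qquad \gamma\longmapsto\bigl(\chi(\gamma)\bigr)_\chi,
\]
and tensoring with $M_\alpha$ over $\bs{\Pi}_\alpha$ yields a decomposition into $\chi$-isotypic pieces
\[
M_\alpha\otimes_{E_\wp}F \;\cong\; \bigoplus_{\chi\in\widehat{\Gamma_\alpha}}\bigl(M_\alpha\otimes_{\bs{\Pi}_\alpha,\chi}F\bigr).
\]
Under this identification, the image of $x_\alpha\otimes 1$ has $\chi$-component equal to the image of $\chi(x)$ in $M_\alpha\otimes_\chi F$; since $F\hookrightarrow\bb{C}_p$ is a faithful extension of fields, the vanishing $\chi(x)=0$ in $M_\alpha\otimes_\chi \bb{C}_p$ forces $\chi(x)=0$ already in $M_\alpha\otimes_\chi F$. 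Hence $x_\alpha\otimes 1=0$ in $M_\alpha\otimes_{E_\wp}F$.

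To descend this vanishing back to $M_\alpha$, I would use that $F/E_\wp$ is faithfully flat, so $M_\alpha\hookrightarrow M_\alpha\otimes_{E_\wp}F$ is injective, forcing $x_\alpha=0$. As $\alpha$ was arbitrary, this gives $x=0$ in $\mbf{M}$, completing the proof. No serious obstacle is anticipated: the flatness of $M_\alpha$ over $\bs{\Pi}_\alpha$ is in fact automatic, since $\bs{\Pi}_\alpha$ is semisimple by Maschke, and the argument is the familiar $\chi$-isotypic decomposition used throughout Iwasawa theory. The only point requiring a line of care is the transfer of the vanishing of $\chi(x)$ from $M_\alpha\otimes_\chi\bb{C}_p$ to $M_\alpha\otimes_\chi F$, which is handled by the faithful flatness of field extensions noted above.
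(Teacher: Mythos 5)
Your proof is correct. It reaches the same reduction as the paper (check each $x_\alpha$ separately, for fixed $\alpha$), but the mechanism differs: the paper's own argument keeps everything over $E_\wp$, observing that the evaluation map $\oplus_\chi:\bs{\Pi}_\alpha\hookrightarrow\oplus_\chi\bb{C}_p$ is injective and then tensoring this injection with $M_\alpha$, where the stated flatness of $M_\alpha$ over $\bs{\Pi}_\alpha$ is exactly what preserves injectivity and gives $M_\alpha\hookrightarrow\oplus_\chi(M_\alpha\otimes_\chi\bb{C}_p)$. You instead base-change to a finite extension $F\supset\mu_{p^\alpha}$ of $E_\wp$, use semisimplicity of $F[\Gamma_\alpha]$ to split $M_\alpha\otimes_{E_\wp}F$ into its $\chi$-isotypic pieces, transfer the vanishing from $M_\alpha\otimes_\chi\bb{C}_p$ down to $M_\alpha\otimes_\chi F$ by injectivity of extension of scalars along $F\subset\bb{C}_p$, and then descend to $M_\alpha$ by injectivity of $M_\alpha\to M_\alpha\otimes_{E_\wp}F$. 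What your route buys is the observation that the flatness hypothesis is redundant: $\bs{\Pi}_\alpha=E_\wp[\Gamma_\alpha]$ is semisimple by Maschke (characteristic $0$, $|\Gamma_\alpha|=p^\alpha$), so every $\bs{\Pi}_\alpha$-module is automatically flat, and indeed your argument never invokes flatness. What the paper's route buys is brevity: no auxiliary field $F$ or descent step is needed, at the cost of carrying the flatness hypothesis explicitly (which it anyway verifies later, in the setting of Lemma \ref{inverse limit}, where it is needed for the limit arguments). One phrasing in your write-up deserves a touch more care: $\chi(x)$ is by definition an element of $M_\alpha\otimes_\chi\bb{C}_p$, so the precise statement is that the $\chi$-component of $x_\alpha\otimes 1$, an element of $M_\alpha\otimes_\chi F$, maps to $\chi(x)=0$ under the injective map $M_\alpha\otimes_\chi F\to(M_\alpha\otimes_\chi F)\otimes_F\bb{C}_p\cong M_\alpha\otimes_\chi\bb{C}_p$, hence vanishes; this is what you mean, and it is correct.
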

\begin{proof}
	Let $x=(x_\alpha)_\alpha\in\mbf{M}$ be such that $\chi(x) = 0$ for every finite order character 
$\chi: \Gamma \to \bb{C}_p^\times$. By definition, this means that for every $\alpha\ge1$
\[
\chi(x_\alpha)=0\qquad\forall\ \chi:\Gamma_\alpha\to\bb{C}_p^\times.
\] 
Note that  $\bs{\Pi}_\alpha=E_\wp[\Gamma_\alpha]\subseteq\bb{C}_p[\Gamma_\alpha]\cong \oplus_\chi\bb{C}_p$ where the sum is taken
over all the characters of $\Gamma_\alpha$. 
Therefore, flatness of $M_\alpha$ implies the injectivity of 
\[
\oplus_\chi : M_\alpha \hookrightarrow \oplus_\chi (M_\alpha\otimes_{\chi}\bb{C}_p).
\]
We deduce $x_\alpha=0$ for every $\alpha\ge1$.
\end{proof}

\noindent We are interested in generalizing the vanishing criterion of Lemma $\ref{lem: vanishing criterion}$ to $\bs{\Pi}$-modules of the form
$\mbf{I}\otimes_{\bs{\Lambda}}\bs{\Pi}$ where $\mbf{I}$ is an algebra which is finitely generated and flat over $\bs{\Lambda}$.  We start with a couple of technical lemmas.

\begin{lemma}\label{lemma surjective}
If $A\twoheadrightarrow B$ is a surjective morphism of $\bs{\Lambda}$-modules then
\[\xymatrix{
\varprojlim_\alpha\ \big(A\otimes_{\bs{\Lambda}}\bs{\Pi}_\alpha\big)\ar@{->>}[r]& \varprojlim_\alpha\ \big(B\otimes_{\bs{\Lambda}}\bs{\Pi}_\alpha\big).
}\]
\end{lemma}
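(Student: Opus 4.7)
The plan is to reduce the assertion to the vanishing of a suitable $\varprojlim^1$. Let $K := \ker(A \twoheadrightarrow B)$ so that we have a short exact sequence $0 \to K \to A \to B \to 0$ of $\bs{\Lambda}$-modules. Since tensor product is right exact, for each $\alpha \ge 1$ there is a right exact sequence
\[
K \otimes_{\bs{\Lambda}} \bs{\Pi}_\alpha \longrightarrow A \otimes_{\bs{\Lambda}} \bs{\Pi}_\alpha \longrightarrow B \otimes_{\bs{\Lambda}} \bs{\Pi}_\alpha \longrightarrow 0.
\]
Let $C_\alpha$ denote the image of the leftmost arrow, which is also the kernel of the rightmost arrow. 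The natural short exact sequences
\[
0 \longrightarrow C_\alpha \longrightarrow A \otimes_{\bs{\Lambda}} \bs{\Pi}_\alpha \longrightarrow B \otimes_{\bs{\Lambda}} \bs{\Pi}_\alpha \longrightarrow 0
\]
fit together into a short exact sequence of inverse systems. The surjectivity on inverse limits would then follow from the vanishing $\varprojlim^1_\alpha C_\alpha = 0$.

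Next I would verify that the transition maps $C_{\alpha+1} \twoheadrightarrow C_\alpha$ are surjective, which trivially forces $\varprojlim^1_\alpha C_\alpha = 0$. The transition map $\bs{\Pi}_{\alpha+1} \to \bs{\Pi}_\alpha$ is induced from the quotient map $\Gamma_{\alpha+1} \twoheadrightarrow \Gamma_\alpha$, which yields a surjection of group algebras $O[\Gamma_{\alpha+1}] \twoheadrightarrow O[\Gamma_\alpha]$ and therefore a surjection $\bs{\Pi}_{\alpha+1} \twoheadrightarrow \bs{\Pi}_\alpha$ after tensoring with $E_\wp$. Tensoring this surjection with the $\bs{\Lambda}$-module $K$ (using right exactness of $-\otimes_{\bs{\Lambda}} K$) gives a surjection $K \otimes_{\bs{\Lambda}} \bs{\Pi}_{\alpha+1} \twoheadrightarrow K \otimes_{\bs{\Lambda}} \bs{\Pi}_\alpha$, and since $C_\alpha$ is by definition the image of the corresponding map into $A \otimes_{\bs{\Lambda}} \bs{\Pi}_\alpha$, a diagram chase shows that the induced map $C_{\alpha+1} \to C_\alpha$ is surjective.

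This reduces everything to a standard Mittag-Leffler argument: the inverse system $(C_\alpha)_\alpha$ has surjective transition maps, so $\varprojlim^1 C_\alpha = 0$, and taking the inverse limit of the exact triples produces the desired surjection. There is no real obstacle here; the only point to be careful about is that the transition map in the inverse system $(\bs{\Pi}_\alpha)_\alpha$ is genuinely induced by the group-theoretic projection (so that surjectivity is preserved), which is immediate from the construction $\bs{\Pi}_\alpha = O[\Gamma_\alpha] \otimes_O E_\wp$.
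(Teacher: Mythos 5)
Your proof is correct and follows essentially the same route as the paper: both set $C_\alpha$ (the paper's $\kappa_\alpha$) equal to the kernel of $A\otimes_{\bs{\Lambda}}\bs{\Pi}_\alpha\to B\otimes_{\bs{\Lambda}}\bs{\Pi}_\alpha$, use right exactness of $-\otimes_{\bs{\Lambda}}\bs{\Pi}_\alpha$ to see that $C_\alpha$ is the image of $K\otimes_{\bs{\Lambda}}\bs{\Pi}_\alpha$, deduce surjectivity of the transition maps $C_{\alpha+1}\to C_\alpha$ from the surjections $\bs{\Pi}_{\alpha+1}\twoheadrightarrow\bs{\Pi}_\alpha$, and conclude via $\varprojlim^1 C_\alpha=0$. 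No issues.
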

\begin{proof}
    Let $Q=\ker(A\to B)$, and $\kappa_\alpha=\ker(A\otimes\bs{\Pi}_\alpha\to B\otimes\bs{\Pi}_\alpha)$ for every $\alpha\ge1$, then there is a commutative diagram
    \[\xymatrix{
    Q\otimes\bs{\Pi}_\alpha\ar@{->>}[dr]\ar[drr]\ar@{->>}[dd] & & &\\
    & \kappa_\alpha\ar@{^{(}->}[r]\ar@{.>>}[dd] & A\otimes\bs{\Pi}_\alpha\ar[r]\ar@{->>}[dd] & B\otimes\bs{\Pi}_\alpha\ar[r]\ar@{->>}[dd]& 0\\
    Q\otimes\bs{\Pi}_{\alpha-1}\ar@{->>}[dr]\ar[drr] & & &\\
    & \kappa_{\alpha-1}\ar@{^{(}->}[r] & A\otimes\bs{\Pi}_{\alpha-1}\ar[r] & B\otimes\bs{\Pi}_{\alpha-1}\ar[r]& 0\\
    }\]
    Since tensoring  by $\otimes_{\bs{\Lambda}}\bs{\Pi}_\alpha$ is right exact, $Q\otimes\bs{\Pi}_\alpha\twoheadrightarrow\kappa_\alpha$ is surjective for every $\alpha\ge1$. Therefore the transition map $\kappa_\alpha\to\kappa_{\alpha-1}$ is surjective for every $\alpha>1$ and  $\varprojlim_\alpha^1\ \kappa_\alpha=0$ as required.
\end{proof}

\begin{lemma}\label{inverse limit}
Suppose $\mbf{I}$ is a finitely generated, flat $\bs{\Lambda}$-module. Then
\[\xymatrix{
\mbf{I}\otimes_{\bs{\Lambda}}\bs{\Pi}\ar[r]^-\sim&\varprojlim_\alpha\ \big(\mbf{I}\otimes_{\bs{\Lambda}}\bs{\Pi}_\alpha)
}\]
is a projective limit of flat $\bs{\Pi}_\alpha$-modules.
\end{lemma}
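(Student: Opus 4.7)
The plan is to reduce the statement to the observation that, under the given hypotheses, $\mbf{I}$ is actually free of finite rank over $\bs{\Lambda}$, after which both assertions become essentially formal.

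First I would unwind what $\bs{\Lambda}$ is. Since $\Gamma = 1+p\bb{Z}_p$ is procyclic isomorphic to $\bb{Z}_p$, and $O$ is a valuation ring finite flat over $\bb{Z}_p$ (hence a discrete valuation ring), the ring $\bs{\Lambda} = O\llbracket\Gamma\rrbracket$ is isomorphic to $O\llbracket T\rrbracket$, a regular local Noetherian ring of Krull dimension two. Over a Noetherian local ring, any finitely generated flat module is projective, and finitely generated projective over a local ring is free. Therefore the hypothesis that $\mbf{I}$ is a finitely generated flat $\bs{\Lambda}$-module upgrades to an isomorphism of $\bs{\Lambda}$-modules $\mbf{I} \simeq \bs{\Lambda}^k$ for some integer $k \geq 0$.

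Given this identification, the flatness half of the lemma is immediate: base changing along $\bs{\Lambda}\to\bs{\Pi}_\alpha$ yields $\mbf{I}\otimes_{\bs{\Lambda}}\bs{\Pi}_\alpha \simeq \bs{\Pi}_\alpha^k$, which is free, and in particular flat, as a $\bs{\Pi}_\alpha$-module. For the isomorphism with the projective limit, tensor product commutes with finite direct sums, so $\mbf{I}\otimes_{\bs{\Lambda}}\bs{\Pi} \simeq \bs{\Pi}^k$; and since finite direct sums commute with inverse limits, combined with $\bs{\Pi} = \varprojlim_\alpha\bs{\Pi}_\alpha$ (the very definition), one gets
\[
\mbf{I}\otimes_{\bs{\Lambda}}\bs{\Pi} \;\simeq\; \bs{\Pi}^k \;\simeq\; \varprojlim_\alpha \bs{\Pi}_\alpha^k \;\simeq\; \varprojlim_\alpha\bigl(\mbf{I}\otimes_{\bs{\Lambda}}\bs{\Pi}_\alpha\bigr).
\]
A small verification checks that the composite is the canonical map induced by the projections $\bs{\Pi}\twoheadrightarrow\bs{\Pi}_\alpha$.

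There is no serious obstacle; the only point to emphasize is that the hypothesis \emph{flat}, combined with \emph{finitely generated}, is genuinely stronger than either alone over $\bs{\Lambda}$, and it is this combined strength that collapses the problem to the trivial free case. A more hands-on alternative, which would avoid invoking the regular local structure of $\bs{\Lambda}$, is to pick a finite presentation $\bs{\Lambda}^m \to \bs{\Lambda}^n \to \mbf{I} \to 0$, use the flatness of $\mbf{I}$ to conclude that the image $J$ of the first arrow satisfies $0 \to J\otimes_{\bs{\Lambda}}\bs{\Pi}_\alpha \to \bs{\Pi}_\alpha^n \to \mbf{I}\otimes_{\bs{\Lambda}}\bs{\Pi}_\alpha \to 0$, then invoke the Mittag-Leffler condition (since $\bs{\Pi}_\alpha\twoheadrightarrow\bs{\Pi}_{\alpha-1}$ is surjective, the transition maps on each of these systems are as well) to kill the relevant $\varprojlim^1$. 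The free-module shortcut via the structure of $\bs{\Lambda}$ is however by far the cleanest route.
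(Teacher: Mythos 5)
Your proof is correct, and your main route is genuinely different from the paper's. The paper does not invoke freeness: it picks a finite presentation $\bs{\Lambda}^{\oplus m}\to\bs{\Lambda}^{\oplus n}\to\mbf{I}\to 0$, uses right-exactness of $-\otimes_{\bs{\Lambda}}\bs{\Pi}_\alpha$ together with $\mrm{Tor}_1^{\bs{\Lambda}}(\mbf{I},\bs{\Pi}_\alpha)=0$ (flatness) to identify the kernels at each finite level, and then kills the relevant $\varprojlim^1$ via its Lemma \ref{lemma surjective} (surjections stay surjective after tensoring and passing to the limit, because the transition maps on the kernels are surjective) — i.e.\ exactly the ``hands-on alternative'' you sketch at the end, which is in substance the paper's own argument. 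Your primary shortcut — $\bs{\Lambda}=O\llbracket\Gamma\rrbracket$ is Noetherian local, so finitely generated $+$ flat $\Rightarrow$ free, whence $\mbf{I}\otimes_{\bs{\Lambda}}\bs{\Pi}\simeq\bs{\Pi}^k\simeq\varprojlim\bs{\Pi}_\alpha^k$ and flatness of each $\mbf{I}\otimes_{\bs{\Lambda}}\bs{\Pi}_\alpha$ is immediate — is valid and cleaner, with two small remarks: the explicit identification $\bs{\Lambda}\simeq O\llbracket T\rrbracket$ uses that $\Gamma=1+p\bb{Z}_p$ is procyclic, i.e.\ $p$ odd (harmless here, since the paper assumes $p>[L:\bb{Q}]=2$), and in any case regularity is irrelevant — all you need is that $\bs{\Lambda}$ is Noetherian local so that finitely generated flat modules are free; also, as you note, one should check the abstract isomorphism is the canonical map, which is clear after fixing a basis. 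What the paper's presentation-based argument buys is independence from the local structure of the base: it works verbatim for any Noetherian ring $\bs{\Lambda}$ and finitely generated flat module, whereas your argument trades that generality for brevity.
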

\begin{proof}
Since $\bs{\Lambda}$ is Noetherian, $\mbf{I}$ is finitely presented, and thus fits in an exact sequence of the form
\begin{equation}\label{presentation}
(\bs{\Lambda})^{\oplus m}\rightarrow (\bs{\Lambda})^{\oplus n}\rightarrow \mbf{I}\rightarrow0.
\end{equation}
Tensoring is right exact, hence we deduce presentations for $\mbf{I}\otimes_{\bs{\Lambda}}\bs{\Pi}$ and $\mbf{I}\otimes_{\bs{\Lambda}}\bs{\Pi}_\alpha$:
\[
(\bs{\Pi})^{\oplus m}\rightarrow (\bs{\Pi})^{\oplus n}\rightarrow \mbf{I}\otimes_{\bs{\Lambda}}\bs{\Pi}\rightarrow0, \qquad
(\bs{\Pi}_\alpha)^{\oplus m}\rightarrow (\bs{\Pi}_\alpha)^{\oplus n}\rightarrow \mbf{I}\otimes_{\bs{\Lambda}}\bs{\Pi}_\alpha\rightarrow0.
\]
Now it suffices to show that 
\[
(\bs{\Pi})^{\oplus m}\rightarrow (\bs{\Pi})^{\oplus n}\rightarrow \underset{\leftarrow,\alpha}{\lim}\ (\mbf{I}\otimes_{\bs{\Lambda}}\bs{\Pi}_\alpha)\rightarrow0
\]
is exact. The surjectivity of $(\bs{\Pi})^{\oplus n}\twoheadrightarrow \varprojlim_\alpha\ (\mbf{I}\otimes_{\bs{\Lambda}}\bs{\Pi}_\alpha)$ follows from Lemma $\ref{lemma surjective}$, so we are left to prove exactness at the middle of the sequence. Let $Q=\ker(\bs{\Lambda}^{\oplus n}\to\mbf{I})$, then $Q$ comes equipped with a surjection $\bs{\Lambda}^{\oplus m}\twoheadrightarrow Q$ because ($\ref{presentation}$) is a presentation. Moreover, 
\[
Q\otimes_{\bs{\Lambda}}\bs{\Pi}_\alpha=\ker\big((\bs{\Pi}_\alpha)^{\oplus n}\rightarrow \mbf{I}\otimes_{\bs{\Lambda}}\bs{\Pi}_\alpha\big)
\]
because $\mrm{Tor}_1^{\bs{\Lambda}}(\mbf{I},\bs{\Pi}_\alpha)=0$ as $\mbf{I}$ is a flat $\bs{\Lambda}$-module. Therefore, 
\[
\underset{\leftarrow,\alpha}{\lim}\ (Q\otimes_{\bs{\Lambda}}\bs{\Pi}_\alpha)=\ker\big( (\bs{\Pi})^{\oplus n}\twoheadrightarrow \underset{\leftarrow,\alpha}{\lim}\ (\mbf{I}\otimes_{\bs{\Lambda}}\bs{\Pi}_\alpha)\big)
\]
and we are left to show that the induced map $\bs{\Pi}^{\oplus m}\to \varprojlim_\alpha\ (Q\otimes_{\bs{\Lambda}}\bs{\Pi}_\alpha)$ is surjective, which it is because of Lemma $\ref{lemma surjective}$.
\end{proof}

\noindent Lemma $\ref{inverse limit}$ shows that the vanishing criterion of Lemma $\ref{lem: vanishing criterion}$ applies to any $\bs{\Pi}$-module of the form $\mbf{I}\otimes_{\bs{\Lambda}}\bs{\Pi}$ for $\mbf{I}$ a finitely generated and flat $\bs{\Lambda}$-module.
When comparing the automorphic and motivic $p$-adic $L$-functions in Section $\ref{Sect: Comparison}$ we will be interested in the case  $\mbf{I}=\mbf{I}_\scr{G}$ and we will have information about the specializations at arithmetic points of weight $2$. 
Observe that if $\mrm{P}\in\cal{A}_{\bs{\chi}}(\mbf{I}_\scr{G})$ has weight $(2t_L,t_L)$ and level $p^\alpha$ then it induces a map
\[
\mrm{P}:\mbf{I}_\scr{G}\otimes_{\bs{\Lambda}}\bs{\Pi}\longrightarrow\bb{C}_p,\qquad x\mapsto \mrm{P}(x).
\]
The following vanishing criterion will be of crucial importance.

\begin{theorem}\label{cor: vanishing criterion}
    An element $x \in \mbf{I}_\scr{G} \otimes_{\bs{\Lambda}} \bs{\Pi}$ equals zero if and only if
$\mrm{P}(x) = 0$ for all arithmetic points $\mrm{P}\in\cal{A}_{\bs{\chi}}(\mbf{I}_\scr{G})$ of weight $(2t_L,t_L)$.
\end{theorem}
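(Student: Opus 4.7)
The forward direction being trivial, the plan is to establish the converse by reducing to the vanishing criterion of Lemma \ref{lem: vanishing criterion} and then ``unpacking'' each finite order character of $\Gamma$ into a finite collection of weight-two arithmetic points of $\mbf{I}_\scr{G}$. Since $\mbf{I}_\scr{G}$ is finite and torsion-free over $\bs{\Lambda}_L$ by (\cite{nearlyHida}, Theorem 2.4), and hence finite and flat over the DVR $\bs{\Lambda}$ after base change along $\phi_{\bs{\chi}}$, Lemma \ref{inverse limit} identifies
\[
\mbf{I}_\scr{G} \otimes_{\bs{\Lambda}} \bs{\Pi} \;\overset{\sim}{\longrightarrow}\; \underset{\leftarrow,\alpha}{\lim}\ \big(\mbf{I}_\scr{G} \otimes_{\bs{\Lambda}} \bs{\Pi}_\alpha\big)
\]
as a projective limit of flat $\bs{\Pi}_\alpha$-modules. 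Lemma \ref{lem: vanishing criterion} therefore applies and shows that $x = 0$ if and only if $\chi(x) = 0$ for every finite order character $\chi : \Gamma \to \bb{C}_p^\times$.

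Next I would identify these characters with weight-two arithmetic points. By definition of $\msf{w}_{\ell,\chi_{\mbox{\tiny $\spadesuit$}}}$, a finite order character $\chi_{\mbox{\tiny $\spadesuit$}} : \Gamma \to \bb{C}_p^\times$ is precisely the restriction of the homomorphism $\msf{w}_{2,\chi_{\mbox{\tiny $\spadesuit$}}} : \bs{\Lambda} \to \bb{C}_p$ to group-like elements. Via the remark following the definition of $\phi_{\bs{\chi}}$, the arithmetic points of $\cal{A}_{\bs{\chi}}(\mbf{I}_\scr{G})$ of weight $(2t_L, t_L)$ that lie above $\msf{w}_{2,\chi_{\mbox{\tiny $\spadesuit$}}}$ are exactly the $O$-algebra maps $\mrm{P} : \mbf{I}_\scr{G} \to \overline{\bb{Q}}_p$ whose restriction to $\bs{\Lambda}$ factors as $\msf{w}_{2,\chi_{\mbox{\tiny $\spadesuit$}}} \circ \phi_{\bs{\chi}}$. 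Computing $\chi_{\mbox{\tiny $\spadesuit$}}(x)$ amounts to forming the base change
\[
\mbf{I}_\scr{G} \otimes_{\bs{\Lambda}, \msf{w}_{2,\chi_{\mbox{\tiny $\spadesuit$}}}} \bb{C}_p,
\]
and so the claim will follow once we know that the kernel of this tensor product is the intersection of the kernels of all specializations $\mrm{P}$ above $\msf{w}_{2,\chi_{\mbox{\tiny $\spadesuit$}}}$.

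The crux of the argument is therefore the étaleness of $\mbf{I}_\scr{G}$ over $\bs{\Lambda}$ at arithmetic points of weight two (as invoked in the overview of the introduction): this guarantees that the finite $\bb{C}_p$-algebra $\mbf{I}_\scr{G} \otimes_{\bs{\Lambda}, \msf{w}_{2,\chi_{\mbox{\tiny $\spadesuit$}}}} \bb{C}_p$ is reduced and decomposes as a product
\[
\mbf{I}_\scr{G} \otimes_{\bs{\Lambda}, \msf{w}_{2,\chi_{\mbox{\tiny $\spadesuit$}}}} \bb{C}_p \;\cong\; \prod_{\mrm{P} \mapsto \msf{w}_{2,\chi_{\mbox{\tiny $\spadesuit$}}}} \bb{C}_p,
\]
indexed by the finitely many arithmetic points $\mrm{P}$ above $\msf{w}_{2,\chi_{\mbox{\tiny $\spadesuit$}}}$. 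The image of $x$ vanishes in the product precisely when each coordinate does, i.e.\ when $\mrm{P}(x) = 0$ for every such $\mrm{P}$. Combining this with the criterion from Lemma \ref{lem: vanishing criterion} yields the theorem. The only nontrivial ingredient is the étaleness of $\mbf{I}_\scr{G}$ at weight-two points, which is a standard consequence of Hida's control theorem for the nearly ordinary Hecke algebra and the absence of congruences among classical newforms of weight two within the family; once this is invoked the remaining steps are formal.
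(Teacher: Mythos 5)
Your argument is correct and follows essentially the same route as the paper: reduce via Lemma \ref{inverse limit} and the injection of Lemma \ref{lem: vanishing criterion} to vanishing at each finite order character $\chi$ of $\Gamma$, then use that $\mbf{I}_\scr{G}$ is finite \'etale over $\bs{\Lambda}$ (after localizing at $\mathfrak{p}_\chi$) at weight-two points to split the fibre $\mbf{I}_\scr{G}\otimes_{\bs{\Lambda},\chi}\bb{C}_p$ into a product of copies of $\bb{C}_p$ indexed by the weight-two arithmetic points above $\chi$. One small slip: $\bs{\Lambda}=O\llbracket\Gamma\rrbracket$ is a two-dimensional regular local ring, not a DVR, so flatness of $\mbf{I}_\scr{G}$ over $\bs{\Lambda}$ is not automatic from torsion-freeness but is the standard input (and is what the paper uses as well).
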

\begin{proof}
  By Lemma $\ref{inverse limit}$, $\mbf{I}_\scr{G} \otimes_{\bs{\Lambda}} \bs{\Pi} 
\overset{\sim}{\to}
\varprojlim_\alpha (\mbf{I}_\scr{G} \otimes_{\bs{\Lambda}} \bs{\Pi}_\alpha)$, thus an element  $x\in \mbf{I}_\scr{G} \otimes_{\bs{\Lambda}} \bs{\Pi}$ is equal to zero if and only if its image $x_\alpha\in \mbf{I}_\scr{G} \otimes_{\bs{\Lambda}} \bs{\Pi}_\alpha$ is equal to zero for every $\alpha\ge1$. As in the proof of Lemma $\ref{lem: vanishing criterion}$ there is an injection
\[
\mbf{I}_\scr{G} \otimes_{\bs{\Lambda}} \bs{\Pi}_\alpha\hookrightarrow\bigoplus_{\chi}\ \big(\mbf{I}_\scr{G}\otimes_{\bs{\Lambda},\chi}\bb{C}_p\big)
\]
because $\mbf{I}_\scr{G}$ is $\bs{\Lambda}$-flat. Let $\frak{p}_\chi$ be the kernel of $\chi:\bs{\Lambda}\to\bb{C}_p$ and denote by $\bs{\Lambda}_{\mathfrak{p}_\chi}$ the localization, then $\mbf{I}_\scr{G}\otimes_{\bs{\Lambda}}\bs{\Lambda}_{\mathfrak{p}_\chi}$ is finite \'etale over $\bs{\Lambda}_{\mathfrak{p}_\chi}$ because Hida families are finite \'etale over arithmetic points of weight $\ge 2t_L$ (\cite{nearlyHida}). Therefore
\[
(\mbf{I}_\scr{G}\otimes_{\bs{\Lambda}}\bs{\Lambda}_{\mathfrak{p}_\chi})\otimes_{\bs{\Lambda}_{\mathfrak{p}_\chi}}\bb{C}_p\cong \mbf{I}_\scr{G}\otimes_{\bs{\Lambda},\chi}\bb{C}_p
\]
is a finite product of copies of $\bb{C}_p$, indexed by the arithmetic points $\mrm{P}\in\cal{A}_{\bs{\chi}}(\mbf{I}_\scr{G})$ above $\mathfrak{p}_\chi$. In summary
\[
\mbf{I}_\scr{G}\otimes_{\bs{\Lambda},\chi}\bb{C}_p\cong\bigoplus_{\mrm{P}}\bb{C}_p
\]
the sum over the arithmetic points $\mrm{P}\in\cal{A}_{\bs{\chi}}(\mbf{I}_\scr{G})$ of weight $(2t_L,t_L)$ and the claim follows.
\end{proof}

\subsection{Generalizing Ohta}
\begin{definition}\label{def 2 I-adic forms}
	For a $\Lambda$-algebra $\mbf{I}$ we define the space of ordinary $\mbf{I}$-adic cuspforms of tame level $K$ and character $\bs{\chi}$ to be
	\[
	\overline{\mathbf{S}}_L^{\text{ord}}(K;\bs{\chi};\mbf{I}):=\mrm{Hom}_{\bs{\Lambda}\mbox{-}\mrm{mod}}\big(\mbf{h}^\mrm{n.o.}_L(K;O)\otimes_{\phi_{\bs{\chi}}}\bs{\Lambda}, \mbf{I}\big).
	\]
\end{definition}
\noindent Let $\scr{G}\in \overline{\mathbf{S}}_L^{\text{ord}}(K;\bs{\chi};\bs{\Lambda})$ then exact control for the nearly ordinary Hecke algebra implies that extending scalars for $\scr{G}$ to $\Lambda_\alpha$  produces a $\Lambda_\alpha$-module homomorphism
\[
\underline{\scr{G}}_\alpha:\mrm{h}^\mrm{ord}_{2t_L,t_L}(K_{\diamond,t}(p^\alpha);O)\longrightarrow \Lambda_\alpha.
\]
Since $\Lambda_\alpha=\bigoplus_{\sigma\in\Gamma_\alpha}O\cdot[\sigma]$, we may write $\underline{\scr{G}}_\alpha=\bigoplus_{\sigma \in \Gamma_\alpha}\scr{G}_{\alpha,\sigma^{-1}}\cdot[\sigma]$, and the $\Lambda_\alpha$-linearity implies $\scr{G}_{\alpha,\sigma^{-1}}(-)=\scr{G}_{\alpha,1}([\sigma]-)$. In order to lighten the notation we write $\scr{G}_\alpha$ for the Hilbert cuspform 
\[
\scr{G}_\alpha:=\scr{G}_{\alpha,1}\in S^\mrm{ord}_{2t_L,t_L}(K_{\diamond,t}(p^\alpha);O).
\] 
The compatibility
\[\xymatrix{
\mrm{h}^\mrm{ord}_{2t_L,t_L}(K_{\diamond,t}(p^{\alpha+1});O)\ar[d]\ar[rr]^-{\underline{\scr{G}}_{\alpha+1}}& &\Lambda_{\alpha+1}\ar[d]\\
\mrm{h}^\mrm{ord}_{2t_L,t_L}(K_{\diamond,t}(p^\alpha);O)\ar[rr]^-{\underline{\scr{G}}_\alpha}& & \Lambda_\alpha
}\]
translates into 
\[
\sum_{\sigma\in\ker(\Gamma_{\alpha+1}\to\Gamma_\alpha)}\scr{G}_{\alpha+1}([\sigma]-)=\scr{G}_\alpha(-),
\]
or equivalently,
\begin{equation}\label{tracecompatible}
(\mu)_*\scr{G}_{\alpha+1} = (\pi_1)^*\scr{G}_{\alpha}.
\end{equation}
Recall that in Section \ref{ontheconjectures} of the introduction we set
\[
\cal{V}_\alpha:=e_\mrm{n.o.}\mrm{H}^2_{\et,c}\big(S(K_{\diamond,t}(p^\alpha))_{\bar{\bb{Q}}},O(2)\big).
\]
Its $0$-th graded piece $\mrm{Gr}^0\cal{V}_\alpha$ of the \'etale cohomology, with respect to the ordinary filtration, 
is an unramified $\Gamma_{\bb{Q}_p}$-representation, therefore 
\[
\bb{D}\big(\mrm{Gr}^0\cal{V}_\alpha\big):=\big(\mrm{Gr}^0\cal{V}_\alpha\otimes\widehat{\bb{Z}}_p^\mrm{ur}\big)^{\Gamma_{\bb{Q}_p}}
\] is a lattice in the de-Rham cohomology $\mrm{D}_\mrm{dR}\big(\mrm{Gr}^0\cal{V}_\alpha\otimes_OE_\wp\big)$. The following conjectures compares the integral structures for the de Rham cohomology of Hilbert modular surfaces coming from integral \'etale cohomology, and ordinary Hilbert modular forms of parallel weight two.

\begin{conjecture}\label{wishingOhta}
	For every large enough prime $p$ and every $\alpha\ge1$ the image of the natural map
	\[
	S^\mrm{ord}_{2t_L,t_L}\big(K_{\diamond,t}(p^\alpha);O\big)
	\longrightarrow
	\mrm{D}_\mrm{dR}\big(\mrm{Gr}^0\cal{V}_\alpha\otimes_OE_\wp\big)
	\]
	is contained in the lattice $\bb{D}\big(\mrm{Gr}^0\cal{V}_\alpha\big)$.
\end{conjecture}

\noindent We consider the projective limits 
	\begin{equation}
	\bb{D}\big(\mrm{Gr}^0\cal{V}_\infty\big):=
	\underset{\leftarrow, \varpi_{2}}{\lim}\ \bb{D}\big(\mrm{Gr}^0\cal{V}_\alpha\big),\qquad \mbf{D}_\mrm{dR}\big(\mrm{Gr}^0\mrm{V}_\infty\big)
	:=
	\underset{\leftarrow, \varpi_{2}}{\lim}\ \mrm{D}_\mrm{dR}\big(\mrm{Gr}^0\cal{V}_\alpha\otimes_OE_\wp\big).
	\end{equation}

\begin{lemma}\label{BigPeriodMap}
	There is a Hecke-equivariant morphism 
	\[
	\raisebox{\depth}{\scalebox{1}[-1]{$ \Omega $}}_\infty:\overline{\mathbf{S}}_L^{\text{ord}}(K;\bs{\chi};\mbf{I}_\scr{G})
	\longrightarrow 
	\mbf{D}_\mrm{dR}\big(\mrm{Gr}^0\mrm{V}_\infty\big)\otimes_{\bs{\Lambda}}\mbf{I}_\scr{G}.
	\]
	Further, assuming Conjecture \ref{wishingOhta}, the homomorphism $\raisebox{\depth}{\scalebox{1}[-1]{$ \Omega $}}_\infty$ takes values in $\bb{D}\big(\mrm{Gr}^0\cal{V}_\infty\big)\otimes_{\bs{\Lambda}}\mbf{I}_\scr{G}$.

\end{lemma}
\begin{proof}
	Pushing forward equation ($\ref{tracecompatible}$) along $\pi_2$ gives $(\varpi_{2})_*\scr{G}_{\alpha+1} = U_p\scr{G}_{\alpha}$. Hence, the collection $(U_p^{-\alpha}\scr{G}_{\alpha})_\alpha$ is compatible under projection along $\varpi_{2}$. The homomorphism
	\[
	\overline{\mathbf{S}}_L^{\text{ord}}(K;\bs{\chi};\bs{\Lambda}) \longrightarrow \underset{\leftarrow,\varpi_2}{\lim}\ S^\mrm{ord}_{2t_L,t_L}(K_{\diamond,t}(p^\alpha);O),
	\qquad 
	\scr{G}\mapsto\big(U_p^{-\alpha}\scr{G}_{\alpha}\big)_\alpha
	\] 
combined with the natural map $S^\mrm{ord}_{2t_L,t_L}(K_{\diamond,t}(p^\alpha);E_\wp)
	\longrightarrow \mrm{D}_\mrm{dR}\big(\mrm{Gr}^0\cal{V}_\alpha\otimes_OE_\wp\big)$
	gives
	\begin{equation}\label{periodlambda}
	\raisebox{\depth}{\scalebox{1}[-1]{$ \Omega $}}_\infty:\overline{\mathbf{S}}_L^{\text{ord}}(K;\bs{\chi};\bs{\Lambda})
	\longrightarrow 
	\mbf{D}_\mrm{dR}\big(\mrm{Gr}^0\mrm{V}_\infty\big).
	\end{equation}
Since $\bs{\Lambda}$ is Noetherian and $\mbf{h}^\mrm{n.o.}_L(K;O)\otimes_{\phi_{\bs{\chi}}}\bs{\Lambda}$ is finite over $\bs{\Lambda}$, the Hecke algebra is also finitely presented as a $\bs{\Lambda}$-module. As $\mbf{I}_\scr{G}$ is flat over $\bs{\Lambda}$ it follows that
\[
\overline{\mathbf{S}}_L^{\text{ord}}(K;\bs{\chi};\mbf{I}_\scr{G})
\simeq \overline{\mathbf{S}}_L^{\text{ord}}(K;\bs{\chi};\bs{\Lambda}) \otimes_{\bs{\Lambda}} \mbf{I}_\scr{G}.
\]  
The claimed Hecke equivariant morphism is obtained from ($\ref{periodlambda}$) by extension of scalars.
\end{proof}

\begin{definition}\label{BigGPeriodMap}
	For $M$ a $\widetilde{\mbf{h}}^\mrm{n.o.}_L(K;O)_{\bs{\chi}}$-module we denote by 
	\[
	M[\scr{G}_{\mbox{\tiny $\heartsuit$}}]:=\big\{m\in M\lvert\ Tm=\scr{G}_{\mbox{\tiny $\heartsuit$}}(T)m\ \  \forall T\in \widetilde{\mbf{h}}^\mrm{n.o.}_L(K;O)\big\}
	\]
	its $\scr{G}_{\mbox{\tiny $\heartsuit$}}$-isotypic submodule, where $\scr{G}_{\mbox{\tiny $\heartsuit$}}$ was defined in \eqref{heartform}.  With a small abuse of notation, we write $\mbf{D}_\mrm{dR}\big(\mrm{Gr}^0\mrm{V}_\infty\big)[\scr{G}_{\mbox{\tiny $\heartsuit$}}]$ for $\big(\mbf{D}_\mrm{dR}\big(\mrm{Gr}^0\mrm{V}_\infty\big)\otimes_{\bs{\Lambda}}\mbf{I}_\scr{G}\big)[\scr{G}_{\mbox{\tiny $\heartsuit$}}]$ and consider the induced morphism
  \[
    \raisebox{\depth}{\scalebox{1}[-1]{$ \Omega $}}_\scr{G}:
    \overline{\mathbf{S}}_L^{\text{ord}}(K;\bs{\chi};\mbf{I}_\scr{G})[\scr{G}_{\mbox{\tiny $\heartsuit$}}]
	\longrightarrow 
	\mbf{D}_\mrm{dR}\big(\mrm{Gr}^0\mrm{V}_\infty\big)[\scr{G}_{\mbox{\tiny $\heartsuit$}}].
    \]
	\end{definition}

\subsubsection{Big pairing.}
Let $m_\alpha\in\bb{A}_{L,f}^\times$ be the image of the integer $Mp^\alpha$ and consider the matrix
\begin{equation}
\tau_\alpha
=
\begin{pmatrix}
0& -1\\
m_\alpha& 0
\end{pmatrix}\in G_L(\bb{A}_{f}).
\end{equation}
If we denote by 
$(-)^*:G_L(\bb{A}_{f})\to G_L(\bb{A}_{f})$ the involution $g^*=\det(g)^{-1}g$, then for every $\alpha\ge1$ there is a morphism 
$\lambda_\alpha:S(K_{\diamond,t}(p^\alpha))\to S(K_{\diamond,t}(p^\alpha))$ defined as the composition
\begin{equation}
\xymatrix{
S(K_{\diamond,t}(p^\alpha))\ar@{.>}[rr]^{\lambda_\alpha}\ar[rd]^{\mathfrak{T}_{\tau_\alpha}}& & S(K_{\diamond,t}(p^\alpha))\\
& S(^{\tau_\alpha} K_{\diamond,t}(p^\alpha))\ar[ru]^{(-)^*}& &.
}\end{equation}

\begin{lemma}\label{AtkinLehnerGalois}
 If $\sigma\in \Gamma_{\bb{Q}}$ corresponds to $a\in\bb{A}_{f}^\times$ under the global Artin map, then
  \[
\langle \Delta(a^{-1}),1\rangle\circ\lambda_\alpha \circ \sigma
=
\sigma \circ \lambda_\alpha
\]
where $\Delta:\bb{A}_{f}\hookrightarrow \bb{A}_{L,f}$ is the natural inclusion.  In particular, $\lambda_\alpha$ is defined over $\bb{Q}(\zeta_{Mp^\alpha})$. 
\end{lemma}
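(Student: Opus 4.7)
The proof is a direct computation using the complex uniformization and Shimura's reciprocity law. My approach unfolds in three steps.

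First, I would unpack $\lambda_\alpha$ explicitly on the complex uniformization. Combining the definition $\mathfrak{T}_{\tau_\alpha}([x,h]) = [x, h\tau_\alpha^{-1}]$ with the involution $(h')^* = \det(h')^{-1} h'$ and using that $\det(\tau_\alpha^{-1}) = m_\alpha^{-1}$, one obtains
\[
\lambda_\alpha([x,h]) = [x,\ m_\alpha\det(h)^{-1}\cdot h\,\tau_\alpha^{-1}].
\]
The decisive feature is the determinant-dependent central twist $\det(h)^{-1}$ introduced by $(-)^*$; since $\det(h)$ varies with the class $[x,h]$, this twist is not a standard Hecke correspondence, and it is the source of the diamond-operator discrepancy.

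Second, I would invoke Shimura's reciprocity law as used in the proof of Proposition~\ref{nu_alpha}: the Galois action of $\sigma_a$ on $S(K_{\diamond,t}(p^\alpha))(\bar{\mbf{Q}})$ is obtained by descent from the action on the finer cover $S(K(p^\alpha))$, where it is realized as right multiplication by $\mrm{diag}(\Delta(a^{-1}),1)$. The key algebraic identity needed for the commutation is
\[
\mrm{diag}(\Delta(a^{-1}),1)\cdot \tau_\alpha^{-1} \;=\; \tau_\alpha^{-1}\cdot \mrm{diag}(1,\Delta(a^{-1})),
\]
which migrates the Galois twist past the Atkin--Lehner matrix.

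Third, I would plug these descriptions into the two compositions and compare. Computing directly,
\[
\sigma\circ\lambda_\alpha([x,h]) = [x,\ m_\alpha\det(h)^{-1}h\tau_\alpha^{-1}\mrm{diag}(\Delta(a^{-1}),1)],
\]
whereas applying $\sigma$ first makes $\det(h)$ pick up a factor of $\Delta(a^{-1})$, so
\[
\lambda_\alpha\circ\sigma([x,h]) = [x,\ m_\alpha\Delta(a)\det(h)^{-1}h\tau_\alpha^{-1}\mrm{diag}(1,\Delta(a^{-1}))].
\]
The two outputs differ by right multiplication by a central element whose determinant discrepancy is $\Delta(a^2)$, and this is exactly undone by right multiplication by $\Delta(a)\cdot\mbf{1}_2$, i.e.~the action of $\langle \Delta(a^{-1}),1\rangle = \mathfrak{T}_{\Delta(a^{-1})}$, modulo the congruences defining $K_{\diamond,t}(p^\alpha)$. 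Concretely, one needs to verify that $\mrm{diag}(\Delta(a^{-1}),1)$ together with the $\det(h)^{-1}$-correction produces a representative that lies in the same $K_{\diamond,t}(p^\alpha)$-coset as $\Delta(a)\mrm{diag}(1,\Delta(a^{-1}))\cdot\mrm{diag}(\Delta(a),\Delta(a))$, which is a direct check using the level conditions $a_{\mfr{p}_1}d_{\mfr{p}_1}\equiv a_{\mfr{p}_2}d_{\mfr{p}_2}$ and $d_{\mfr{p}_1}d_{\mfr{p}_2}\equiv 1\pmod{p^\alpha}$.

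For the last assertion, if $\sigma\in\Gamma_{\mbf{Q}(\zeta_{Mp^\alpha})}$, then the corresponding idele $a$ lies in the kernel of $\bb{A}_f^\times\to(\bb{Z}/Mp^\alpha\bb{Z})^\times$, so $\Delta(a^{-1})\equiv 1\pmod{Mp^\alpha\widehat{\cal{O}}_L}$. Consequently the diamond operator $\langle \Delta(a^{-1}),1\rangle$ acts as the identity on $S(K_{\diamond,t}(p^\alpha))$, and the intertwining relation collapses to $\sigma\circ\lambda_\alpha = \lambda_\alpha\circ\sigma$, proving $\lambda_\alpha$ is defined over $\mbf{Q}(\zeta_{Mp^\alpha})$. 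The main obstacle is bookkeeping conventions: tracking the exact normalisation of Shimura's reciprocity map on the cover $S(K(p^\alpha))$, and verifying the cancellation up to an element of $K_{\diamond,t}(p^\alpha)$, requires careful use of the congruence conditions defining this level subgroup.
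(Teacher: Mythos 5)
There is a genuine gap at the heart of your argument, namely the description of the Galois action in Step 2. You assert that the action of $\sigma_a$ on the $\bar{\bb{Q}}$-points of $S(K_{\diamond,t}(p^\alpha))$ is "realized as right multiplication by $\mrm{diag}(\Delta(a^{-1}),1)$" on the cover $S(K(p^\alpha))$ and then descended. What Lemma \ref{diffent models} and Proposition \ref{nu_alpha} actually establish is much weaker: under the isomorphism $S(K(p^\alpha))\simeq S(K_{\diamond,1}(p^\alpha))\times_{\bb{Q}}\bb{Q}(\zeta_{p^\alpha})$, the \emph{Hecke} action of a specific diagonal matrix agrees with the twist $1\times\sigma_a$ of the cyclotomic factor, i.e.\ this identity only concerns the descent datum on the geometric component group $\pi_0$. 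The action of a general $\sigma\in\Gamma_{\bb{Q}}$ on arbitrary $\bar{\bb{Q}}$-points of a Shimura surface is not given by right translation by any fixed adelic matrix (if it were, the Galois action on all points would factor through an abelian quotient), so your Step 3 computation — in particular the assertion that "applying $\sigma$ first makes $\det(h)$ pick up a factor of $\Delta(a^{-1})$" — is unjustified: $\sigma$ does not act on the coset coordinate $h$ of a general point by any such formula, and the bookkeeping with $K_{\diamond,t}(p^\alpha)$-cosets cannot repair this.

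The obstruction is real because $\lambda_\alpha$ is not a Hecke correspondence: $\mathfrak{T}_{\tau_\alpha}$ is $\bb{Q}$-rational, but the twist $(-)^*\colon[x,h]\mapsto[x,\det(h)^{-1}h]$ is not induced by a morphism of the Shimura datum (composing $g\mapsto\det(g)^{-1}g$ with $h_x$ changes the weight), so its behaviour under $\Gamma_{\bb{Q}}$ has to be computed. The paper does this via the Shimura reciprocity law at CM points: for a CM point $x$ with reflex field $E$ and $\sigma\in\mrm{Aut}(\bb{C}/E)$ corresponding to $s\in\bb{A}_E^\times$, one has $\sigma[x,h]=[x,r_x(s)h]$ with $r_x(s)=N_{E/\bb{Q}}(\mu_x(s_f))$ acting by \emph{left} multiplication and depending on $x$; for Hilbert modular varieties $\det(r_x(s))=N_{E/\bb{Q}}(s_f)$, which corresponds to $\sigma|_{\bar{\bb{Q}}}$ by functoriality of class field theory, and this is exactly the diamond discrepancy $\langle\det(r_x(s)),1\rangle$ produced by $(-)^*$. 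One then concludes by Zariski density of $\{[x,h]\}_h$ for fixed $x$, the $\bb{Q}$-rationality of $\mathfrak{T}_{\tau_\alpha}$, and the fact that the groups $\mrm{Aut}(\bb{C}/E)$ generate $\mrm{Aut}(\bb{C}/\bb{Q})$. Your matrix identity $\mrm{diag}(\Delta(a^{-1}),1)\tau_\alpha^{-1}=\tau_\alpha^{-1}\mrm{diag}(1,\Delta(a^{-1}))$ is correct but irrelevant to this missing input, and the final deduction about $\bb{Q}(\zeta_{Mp^\alpha})$-rationality is fine only once the intertwining relation has been established by an argument of the above type.
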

\begin{proof}
    This is a standard computation using the reciprocity laws of Shimura varieties at CM points. We will sketch a proof below following the notations of (\cite{Milne-SVI}, Chapters 12 and 13).
    
\noindent    Let $x\in \frak{H}^2$ be a CM point defined over $E$. Suppose $\sigma \in \mrm{Aut}(\bb{C}/E)$ and choose $s\in\bb{A}_E^\times$ so that $s$ corresponds to $\sigma_{\lvert\bar{\bb{Q}}}$ under the reciprocal of the global Artin map. Then we have the following commutative diagram
    \[\xymatrix{
    [x,h]\in S(^{\tau_\alpha} K_{\diamond,t}(p^\alpha))
    \ar[d]^{\sigma}\ar[rrr]^{(-)^*} &&&
    [x,\det(h)^{-1}h]\in S(K_{\diamond,t}(p^\alpha))\ar[d]^{\sigma}\\
    [x,r_x(s)h]\in S(^{\tau_\alpha} K_{\diamond,t}(p^\alpha))
    \ar[rrr]^{(-)^*\circ\langle \det(r_x(s)),1\rangle} &&& 
    [x,\det(h)^{-1}r_x(s)h]\in S(K_{\diamond,t}(p^\alpha))
}\]
where $r_x(s) = N_{E/\bb{Q}}(\mu_x(s_f))$ as defined as in (\cite{Milne-SVI}, Chapter 12, equation (52)), and $\mu_x:\bb{G}_m\rightarrow G$ is the $E$-rational cocharacter of $G$ characterized by $\mu_x(z)=h_{x,\bb{C}}(z,1)$ for $z\in\bb{C}$. In the case of Hilbert modular varieties, $\det(\mu_x)$ is the natural embedding $\bb{G}_m\hookrightarrow \mrm{Res}_{L/\bb{Q}}\bb{G}_m$ and thus $\det(r_x(s))=\mrm{N}_{E/\bb{Q}}(s_f)\in \bb{A}_{f}^\times$. 
By functoriality of class field theory, $N_{E/\bb{Q}}(s_f)\in \bb{A}_{f}^\times$ corresponds  to $\sigma_{\lvert\bar{\bb{Q}}}$, seen inside $\Gamma_{\bb{Q}}$.
Therefore for any $\sigma\in \Gamma_\bb{Q}$, corresponding to $a\in\bb{A}_{f}^\times$ under the global Artin map and fixing the reflex field of a CM point, we have
\[
\langle \Delta(a^{-1}),1\rangle\circ\lambda_\alpha \circ \sigma 
=
\sigma \circ \lambda_\alpha
\]
because points of the form $[x,h]$ for a fixed $x$ are Zariski dense  (\cite{Milne-SVI}, Lemma 13.5), and the map $\mathfrak{T}_{\tau_\alpha}$ is defined over $\bb{Q}$.
Clearly such $\sigma$'s generate $\mrm{Aut}(\bb{C}/\bb{Q})$, and thus the above relation holds for all $\sigma \in \mrm{Aut}(\bb{C}/\bb{Q})$. 
\end{proof}

\noindent A direct calculation shows that
\begin{equation}\label{formula-lambdapi}
\pi_{1,p}\circ\lambda_{\alpha+1}=\lambda_\alpha\circ\pi_{2,p},\qquad \pi_{2,p}\circ\lambda_{\alpha+1}=\lambda_\alpha\circ\pi_{1,p},
\end{equation}
so that
\begin{equation}\label{UandU}
U_p=(\lambda_{\alpha})_*\circ U^*_p\circ(\lambda_\alpha)^*.
\end{equation}
Consider the group 
\[
\bb{G}^{\alpha}_{\diamond,t}(K)
:=
K_0(p^\alpha)\cal{O}_L^\times/K_{\diamond,t}(p^\alpha)\cal{O}_L^\times
\]
of diamond operators acting on 
$S(K_{\diamond,t}(p^\alpha))$.
There is an inclusion 
\[
\Gamma_\alpha 
\hookrightarrow
\bb{G}^{\alpha}_{\diamond,t}(K),
\qquad
z \mapsto 
\begin{pmatrix}
\Delta(z)&0\\0& \Delta(z)\\
\end{pmatrix},
\]
where $\Delta:1+p\bb{Z}_p\hookrightarrow 1+p\cal{O}_p$ denotes the diagonal embedding. Under this inclusion, an element $z\in \Gamma_\alpha$ acts on cohomology as the diamond operator $\langle \Delta(z),1 \rangle$. To shorten the notation, we set
\[
\mrm{H}^2_{\et,?}\big(K_{\diamond,t}(p^\alpha);O\big):=\mrm{H}^2_{\et,?}\big(S(K_{\diamond,t}(p^\alpha))_{\bar{\bb{Q}}},O\big),\qquad ?\in\{\emptyset,c\},
\]
and define a twisted group-ring-valued pairing 
\[
\{\ ,\}_\alpha:
\mrm{H}^2_\et\big(K_{\diamond,t}(p^\alpha);O(2)\big)
\times
\mrm{H}^2_{\et,c}\big(K_{\diamond,t}(p^\alpha); O\big)
\longrightarrow \Lambda_\alpha
\]
by 
\begin{equation}\label{FiniteLevelPairing}
\{x_\alpha,y_\alpha\}_\alpha
=
\sum_{z\in\Gamma_\alpha}\Big\langle\langle \Delta(z),1\rangle^* x_\alpha, (\lambda_\alpha)^*\circ U_p^\alpha y_\alpha\Big\rangle_\alpha [z^{-1}],
\end{equation}
where the Poincar\'e pairing $\langle\ , \rangle_\alpha$ is defined modulo torsion. 

\begin{proposition}\label{basicsPairing}
The pairing $\{\ , \}_\alpha$ is $\Lambda_\alpha$-bilinear and all the Hecke operators are self-adjoint with respect to it. In particular, $\{\ , \}_\alpha$ induces a pairing on nearly ordinary parts. 

\end{proposition}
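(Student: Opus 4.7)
The plan is to verify $\Lambda_\alpha$-bilinearity, Hecke self-adjointness, and factorisation through nearly ordinary parts in sequence, each reducing to a short formal manipulation. First I would record two preliminary intertwinings. Since the matrices $\Delta(z)\mathbbm{1}_2$ lie in the centre of $G(\bb{A}_f)$ they commute with $\mathfrak{T}_{\tau_\alpha}$, while the involution $(-)^{*}\colon g\mapsto\det(g)^{-1}g$ inverts central elements; combining these on Shimura varieties gives $\lambda_\alpha\circ\langle\Delta(z),1\rangle=\langle\Delta(z^{-1}),1\rangle\circ\lambda_\alpha$, hence in cohomology $\lambda_\alpha^{*}\circ\langle\Delta(z),1\rangle^{*}=\langle\Delta(z^{-1}),1\rangle^{*}\circ\lambda_\alpha^{*}$. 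Second, the Poincar\'e pairing satisfies the usual adjunction for isomorphisms, so $\langle\langle\Delta(z),1\rangle^{*}u,v\rangle_\alpha=\langle u,\langle\Delta(z^{-1}),1\rangle^{*}v\rangle_\alpha$.

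With these in hand, $\Lambda_\alpha$-bilinearity becomes a reindexing exercise. Linearity in the first variable uses $\langle\Delta(zw),1\rangle^{*}=\langle\Delta(z),1\rangle^{*}\circ\langle\Delta(w),1\rangle^{*}$ together with the substitution $z\mapsto zw^{-1}$ in the defining sum. For the second variable one pushes $\langle\Delta(w),1\rangle^{*}$ through $U_p^\alpha$ (which commutes with central diamonds) and then through $\lambda_\alpha^{*}$ via the first preliminary identity, transfers the resulting $\langle\Delta(w^{-1}),1\rangle^{*}$ to the first slot by Poincar\'e adjunction, and closes with the same change of index.

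For Hecke self-adjointness the operators $\mbf{T}(y)$ with $y_p=1$ commute with $\lambda_\alpha^{*}$, with $U_p^\alpha$, and with the central diamonds, and are self-adjoint for $\langle\cdot,\cdot\rangle_\alpha$, so the claim is immediate on the twisted pairing. For $U_p$ itself the essential input is relation (\ref{UandU}): since $\lambda_\alpha$ is an isomorphism, $(\lambda_\alpha)^{*}\circ(\lambda_\alpha)_{*}=\mrm{id}$, and (\ref{UandU}) becomes $(\lambda_\alpha)^{*}\circ U_p=U_p^{*}\circ(\lambda_\alpha)^{*}$. Combined with the standard adjunction $\langle U_p u,v\rangle_\alpha=\langle u,U_p^{*}v\rangle_\alpha$, this transfers a power of $U_p$ from the first argument of $\{U_p x,y\}_\alpha$ through $\lambda_\alpha^{*}$ into the $U_p^\alpha$-buffer acting on $y$, recovering $\{x,U_p y\}_\alpha$. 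The $U_\mathfrak{p}$-operators are handled analogously using the corresponding Atkin-Lehner twist $\mathfrak{T}_{\tau_{\mathfrak{p}_2^\alpha}}$.

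The passage to nearly ordinary parts is then automatic: self-adjointness of $U_p$ propagates to the idempotent $e_{\mrm{n.o.}}=\lim_n U_p^{n!}$, so $\{e_{\mrm{n.o.}}x,y\}_\alpha=\{x,e_{\mrm{n.o.}}y\}_\alpha$ and the pairing descends to the nearly ordinary quotients. The main obstacle is the preliminary commutation $\lambda_\alpha^{*}\circ\langle\Delta(z),1\rangle^{*}=\langle\Delta(z^{-1}),1\rangle^{*}\circ\lambda_\alpha^{*}$: although a quick consequence of the fact that $(-)^{*}$ inverts the centre, one must track carefully how this inversion interacts with the $[z^{-1}]$ indexing in the definition of $\{\cdot,\cdot\}_\alpha$ so that the two separate inversions---one from $\lambda_\alpha$, one from Poincar\'e adjunction---conspire to produce $[w]$ on the nose rather than with a spurious inversion.
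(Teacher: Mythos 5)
Your proposal is correct and follows essentially the same route as the paper: the intertwining $\langle\Delta(b),1\rangle\circ\lambda_\alpha=\lambda_\alpha\circ\langle\Delta(b)^{-1},1\rangle$ plus a change of index in the defining sum gives $\Lambda_\alpha$-bilinearity, equation (\ref{UandU}) together with Poincar\'e adjunction gives self-adjointness of $U_p$, and the idempotent $e_{\mrm{n.o.}}=\lim_n U_p^{n!}$ then yields the descent to nearly ordinary parts. The only imprecision is your treatment of the remaining Hecke operators, which are neither literally self-adjoint for the untwisted Poincar\'e pairing nor literally commuting with $\lambda_\alpha^*$; as with $U_p$, one needs that conjugation by $\lambda_\alpha$ intertwines each $T(\varpi_\mathfrak{q})$ (resp.\ $U(\varpi_\mathfrak{q})$) with its Poincar\'e adjoint, which is exactly the ``similar argument'' the paper also leaves implicit.
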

\begin{proof}
The Hecke operator $U_p$ is self-adjoint with respect to $\{\ , \}_\alpha$ because $U^*_p$ and $U_p$ are adjoint with respect the Poincar\'e pairing and equation ($\ref{UandU}$). A similar argument works for all the other Hecke operators $T(\varpi_\mathfrak{q})$ (or $U(\varpi_\mathfrak{q})$ if $\mathfrak{q}\mid Mp$). 
To check $\Lambda_\alpha$-bilinearity, let $b$ be any element in $\Gamma_\alpha$, then we have 
$\langle \Delta(b),1\rangle\circ\lambda_\alpha=\lambda_\alpha\circ\langle \Delta(b)^{-1},1\rangle$,
which implies 
\[
\lambda_\alpha^*\circ\langle \Delta(b),1\rangle^*=\langle \Delta(b),1\rangle_*\circ\lambda_\alpha^*.
\] 
Therefore
\[\begin{split}
\{x_\alpha, \langle \Delta(b),1\rangle^*y_\alpha\}_\alpha 
&= 
\sum_{z\in \Gamma_\alpha}\Big\langle\langle \Delta(z),1\rangle^* x_\alpha, (\lambda_\alpha)^*\circ U_p^\alpha\circ\langle \Delta(b),1\rangle^* y_\alpha\Big\rangle_\alpha [z^{-1}]\\
&= 
\sum_{z\in \Gamma_{\alpha}}\Big\langle\langle \Delta(zb),1\rangle^* x_\alpha, (\lambda_\alpha)^*\circ U_p^\alpha y_\alpha\Big\rangle_\alpha [z^{-1}]\\
&=[b]\{x_\alpha,y_\alpha\}_\alpha.
\end{split}
\]
and similarly
\[
\{\langle \Delta(b),1\rangle^* x_\alpha, y_\alpha\}_\alpha 
=
[b]\{x_\alpha,y_\alpha\}_\alpha.
\]
\end{proof}

\begin{lemma}\label{lemma: invariant}
The finite Galois covering
$\mu:S(K_{\diamond,t}(p^{\alpha+1}))
\rightarrow S(K_{\diamond,t}(p^\alpha)\cap K_0(p^{\alpha+1}))$
induces an isomorphism
\[
\mu^*:\mrm{H}^2_\et\big(K_{\diamond,t}(p^\alpha)\cap K_0(p^{\alpha+1});E_\wp(2)\big)\longrightarrow
\mrm{H}^2_\et\big(K_{\diamond,t}(p^{\alpha+1});E_\wp(2)\big)^{\mathfrak{I}_{\diamond,t}^{\alpha+1}(K)},
\]
where
$\mathfrak{I}_{\diamond,t}^{\alpha+1}(K) =  \ker (\bb{G}^{\alpha+1}_{\diamond,t}(K) \to \bb{G}^{\alpha}_{\diamond,t}(K))$
is the Galois group.
\end{lemma}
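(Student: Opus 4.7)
The plan is to identify $\mu$ as a finite Galois \'etale covering and then invoke the Hochschild--Serre spectral sequence in \'etale cohomology, exploiting that the characteristic-zero coefficient field $E_\wp$ makes group cohomology of the finite Galois group vanish in positive degrees.

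First I would check the identification of the Galois group. The complex uniformization of $S(K_{\diamond,t}(p^\alpha)\cap K_0(p^{\alpha+1}))$ is a disjoint union of arithmetic quotients of $\mathfrak{H}^2$ by congruence subgroups $\Gamma(g, K_{\diamond,t}(p^\alpha)\cap K_0(p^{\alpha+1}))$, and $\mu$ is simply the quotient by the image of $K_{\diamond,t}(p^\alpha)\cap K_0(p^{\alpha+1})$ inside $\mathbb{G}^{\alpha+1}_{\diamond,t}(K)$. Unpacking the definitions this image is exactly $\mathfrak{I}_{\diamond,t}^{\alpha+1}(K)$. Under the running neatness assumption on the tame level (with $p$ large as in Section~\ref{sect Hida families}), the stabilizers in $G(\mathbb{Q})_+$ are trivial, so this quotient is free and $\mu$ is \'etale.

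Second I would apply Hochschild--Serre in \'etale cohomology to the finite Galois covering $\mu$ to produce a convergent spectral sequence
\[
E_2^{p,q}
=
\mathrm{H}^p\bigl(\mathfrak{I}_{\diamond,t}^{\alpha+1}(K),\, \mathrm{H}^q_\et\big(S(K_{\diamond,t}(p^{\alpha+1}))_{\bar{\bb{Q}}},E_\wp(2)\big)\bigr)
\Longrightarrow
\mathrm{H}^{p+q}_\et\big(S(K_{\diamond,t}(p^\alpha)\cap K_0(p^{\alpha+1}))_{\bar{\bb{Q}}},E_\wp(2)\big),
\]
with edge map given by $\mu^*$.

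Third, because $\mathfrak{I}_{\diamond,t}^{\alpha+1}(K)$ is a finite abelian group and the coefficients lie in the characteristic-zero field $E_\wp$, the averaging idempotent $\tfrac{1}{|\mathfrak{I}_{\diamond,t}^{\alpha+1}(K)|}\sum_{g} g$ exists and splits the invariants off as a direct summand, hence $\mathrm{H}^p(\mathfrak{I}_{\diamond,t}^{\alpha+1}(K),-)=0$ for $p \geq 1$ on $E_\wp$-modules. The spectral sequence therefore degenerates at $E_2$ and collapses to the edge isomorphism
\[
\mu^*:\mathrm{H}^2_\et\big(K_{\diamond,t}(p^\alpha)\cap K_0(p^{\alpha+1});E_\wp(2)\big)\xrightarrow{\;\sim\;}\mathrm{H}^2_\et\big(K_{\diamond,t}(p^{\alpha+1});E_\wp(2)\big)^{\mathfrak{I}_{\diamond,t}^{\alpha+1}(K)}.
\]

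The only genuinely delicate step is the first one, namely the identification of the Galois group and the \'etaleness of $\mu$; the cohomological collapse itself is formal once the coefficients are allowed to be rational. I expect the main bookkeeping obstacle to lie in correctly comparing the quotient $(K_{\diamond,t}(p^\alpha)\cap K_0(p^{\alpha+1}))/K_{\diamond,t}(p^{\alpha+1})$ modulo its intersection with the center $\mathcal{O}_L^\times$, so as to match it exactly with the definition of $\mathfrak{I}_{\diamond,t}^{\alpha+1}(K)$ as a kernel of transition maps between groups of diamond operators.
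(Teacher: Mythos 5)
Your proposal is correct and matches the paper's own argument: the proof there is exactly the Hochschild--Serre spectral sequence for the covering $\mu$, which degenerates because $\mathfrak{I}_{\diamond,t}^{\alpha+1}(K)$ is finite and the coefficients form an $E_\wp$-vector space, giving the edge isomorphism onto the invariants. Your additional discussion of the identification of the Galois group and of \'etaleness is a reasonable elaboration of what the paper takes for granted in the statement, but the core of the argument is the same.
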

\begin{proof}
The claim follows by analyzing the Hochschild-Serre spectral sequence
\[
\mrm{E}_2^{p,q}
=
\mrm{H}^p\big(\mathfrak{I}_{\diamond,t}^{\alpha+1}(K),\mrm{H}_\et^q(K_{\diamond,t}(p^{\alpha+1});E_\wp(2))\big)\implies \mrm{H}_\et^{p+q}\big(K_{\diamond,t}(p^\alpha)\cap K_0(p^{\alpha+1});E_\wp(2)\big).
\]
It degenerates at the second page because  $\mrm{E}_2^{p,q}=0$  for all $p>0$ as $\mathfrak{I}_{\diamond,t}^{\alpha+1}(K)$ is a finite group and $\mrm{H}_\et^q(K_{\diamond,t}(p^{\alpha+1});E_\wp(2))$ is an $E_\wp$-vector space.
\end{proof}

\begin{proposition}\label{compatibility}
Let
$p_{\alpha+1}: \Lambda_{\alpha+1}\to \Lambda_\alpha$ be the homomorphism induced by the natural projection
$\Gamma_{\alpha+1}\to \Gamma_{\alpha}.$
Then the diagram
\[
\xymatrix{
\mrm{H}^2_\et\big(K_{\diamond,t}(p^{\alpha+1});O(2)\big)
\times
\mrm{H}^2_{\et,c}\big(K_{\diamond,t}(p^{\alpha+1});O\big)
\ar[rr]^-{\{ , \}_{\alpha+1}} \ar[d]^{(\varpi_{2})_*\times(\varpi_{2})_*}
&&
\Lambda_{\alpha+1} \ar[d]^{p_{\alpha+1}}\\
	\mrm{H}^2_\et\big(K_{\diamond,t}(p^\alpha);O(2)\big)
	\times
	\mrm{H}^2_{\et,c}\big(K_{\diamond,t}(p^\alpha);O\big) \ar[rr]^-{\{ ,\}_\alpha} && \Lambda_{\alpha}
	}
\]
commutes.
\end{proposition}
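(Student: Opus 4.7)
The plan is to expand both sides using the definition \eqref{FiniteLevelPairing} of the pairing, and then compare them term-by-term using the adjointness of push-forward and pull-back for the Poincar\'e pairing together with the intertwining identities already proved in this section. Let $K=\ker\big(\Gamma_{\alpha+1}\twoheadrightarrow\Gamma_\alpha\big)$ and choose for every $z\in\Gamma_\alpha$ a lift $\tilde z\in\Gamma_{\alpha+1}$. Writing $w\in\Gamma_{\alpha+1}$ uniquely as $w=k\tilde z$ with $k\in K$ and applying $p_{\alpha+1}$ to the definition yields
\[
p_{\alpha+1}\big(\{x_{\alpha+1},y_{\alpha+1}\}_{\alpha+1}\big)
=
\sum_{z\in\Gamma_\alpha}
\Big\langle \Big(\sum_{k\in K}\langle\Delta(k),1\rangle^*\Big)\langle\Delta(\tilde z),1\rangle^* x_{\alpha+1},\ \lambda_{\alpha+1}^*\circ U_p^{\alpha+1}\, y_{\alpha+1}\Big\rangle_{\alpha+1}[z^{-1}].
\]

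The first key step is to identify the operator $\sum_{k\in K}\langle\Delta(k),1\rangle^*$ with $\mu^*\circ\mu_*$. For this I would use Lemma~\ref{lemma: invariant}: the image of $K$ under $z\mapsto\langle\Delta(z),1\rangle$ lands in the deck-transformation group $\mathfrak{I}_{\diamond,t}^{\alpha+1}(K)$ of the finite \'etale covering $\mu$, and upon analysing the structure of $\bb{G}^{\alpha+1}_{\diamond,t}(K)$ one checks that $\Delta(K)$ generates a complement such that summing over $K$ has the same effect on cohomology as $\mu^*\mu_*$. The second key step is then a straightforward application of adjointness for $\mu$:
\[
\big\langle \mu^*\mu_*\langle\Delta(\tilde z),1\rangle^*x_{\alpha+1},\ \lambda_{\alpha+1}^*\circ U_p^{\alpha+1}\,y_{\alpha+1}\big\rangle_{\alpha+1}
=
\big\langle \mu_*\langle\Delta(\tilde z),1\rangle^*x_{\alpha+1},\ \mu_*\big(\lambda_{\alpha+1}^*\circ U_p^{\alpha+1}\,y_{\alpha+1}\big)\big\rangle_{\text{int}},
\]
after which one applies the analogous adjointness for $\pi_2$ to move $\pi_{2,*}$ across, so as to land in the level-$\alpha$ pairing.

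The third step is the computation that re-expresses the right factor in the form $\lambda_\alpha^*\circ U_p^\alpha\circ(\varpi_2)_*$. Using $\varpi_2=\pi_2\circ\mu$, equation \eqref{UandU}, the Atkin--Lehner relations $\pi_1\circ\lambda_{\alpha+1}=\lambda_\alpha\circ\pi_2$ of \eqref{Commute1} (transported to the $K_{\diamond,t}$ level), together with $U_p=\pi_{2,*}\circ\pi_1^*$, one computes
\[
\pi_{2,*}\circ\mu_*\circ\lambda_{\alpha+1}^*\circ U_p^{\alpha+1}
=
\lambda_\alpha^*\circ U_p^\alpha\circ\pi_{2,*}\circ\mu_*
=
\lambda_\alpha^*\circ U_p^\alpha\circ(\varpi_2)_*,
\]
the crucial mechanism being that one factor of $U_p$ gets ``absorbed'' by the trace $\pi_{2,*}\circ\pi_1^*$ coming out of the Atkin--Lehner swap, thereby reducing the exponent from $\alpha+1$ to $\alpha$. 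Finally, commuting the remaining diamond $\langle\Delta(\tilde z),1\rangle^*$ across $(\varpi_2)_*$ into its level-$\alpha$ counterpart $\langle\Delta(z),1\rangle^*$ (which is automatic since $\varpi_2$ is induced by $g_p\in G(\bb{A}_f)$ and diamonds are central in the relevant quotient) produces exactly $\{(\varpi_2)_*x_{\alpha+1},(\varpi_2)_*y_{\alpha+1}\}_\alpha$.

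The main obstacle will be the first step: identifying $\sum_{k\in K}\langle\Delta(k),1\rangle^*$ with $\mu^*\mu_*$. A priori $\mathfrak{I}_{\diamond,t}^{\alpha+1}(K)$ is strictly larger than the image of $\Delta(K)$, so one must argue that the ``extra'' deck transformations either act trivially on the targeted cohomology or contribute only through diamonds already present at level $\alpha$, allowing the identification to hold after applying $p_{\alpha+1}$. A secondary technical point, which should be straightforward but requires care, is tracking the normalizations of degree factors arising from $\mu^*\mu_*$ and from the iteration of $U_p$ through the Atkin--Lehner involution; these factors must cancel so that the identity holds on the nose rather than up to a power of $p$.
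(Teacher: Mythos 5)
Your outline follows the paper's computation in its overall shape (group the sum over the fibres of $\Gamma_{\alpha+1}\twoheadrightarrow\Gamma_\alpha$, descend along $\mu$, use adjunction for the Poincar\'e pairing, absorb one factor of $U_p$ via $\varpi_2=\pi_2\circ\mu$ and the $\lambda$-intertwining, and finally commute the diamond across $(\varpi_2)_*$), but your ``first key step'' is a genuine gap, and you have correctly located it yourself. The identity $\sum_{k\in K}\langle\Delta(k),1\rangle^*=\mu^*\mu_*$ can only hold if the image of $\Delta(K)$ is the \emph{entire} deck group $\mathfrak{I}_{\diamond,t}^{\alpha+1}(K)$: applying both sides to a pulled-back class $\mu^*w$ gives $|K|\cdot\mu^*w$ on the left and $\deg(\mu)\cdot\mu^*w$ on the right, so the identity forces $\deg\mu=|K|=p$. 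This is exactly the point you admit you cannot settle, and it is not a formality: the level structure $K_{\diamond,t}$ imposes two congruence conditions that get refined from modulo $p^\alpha$ to modulo $p^{\alpha+1}$, so a priori the kernel $\ker(\bb{G}^{\alpha+1}_{\diamond,t}(K)\to\bb{G}^{\alpha}_{\diamond,t}(K))$ is larger than the image of the scalar embedding of $K=\ker(\Gamma_{\alpha+1}\to\Gamma_\alpha)$. As written, your step would fail (or at best hold only after a group-theoretic analysis you have not supplied), and everything downstream depends on it.

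The paper's proof circumvents precisely this point by never asserting the identity you need. It only observes that the fibre-sum $\sum_{z\mapsto b}\langle\Delta(z),1\rangle^*x_{\alpha+1}$ is invariant under $\mathfrak{I}_{\diamond,t}^{\alpha+1}(K)$ and invokes Lemma \ref{lemma: invariant} (after inverting $p$, which is harmless since the pairing is defined modulo torsion) to write the sum as $\mu^*\eta_b$ for \emph{some} class $\eta_b$ on $S(K_{\diamond,t}(p^\alpha)\cap K_0(p^{\alpha+1}))$, without identifying $\eta_b$ at this stage. The adjunction for $\mu$, the absorption of one $U_p$, and the adjunction for $\pi_2$ then proceed with the unspecified $\eta_b$, and only at the very end is $\eta_b$ pinned down via $\eta_b=\deg(\mu)^{-1}\mu_*(\mu^*\eta_b)$, so that $(\pi_2)_*\eta_b=\deg(\mu)^{-1}(\varpi_2)_*\bigl(\sum_{z\mapsto b}\langle\Delta(z)^{-1},1\rangle_*x_{\alpha+1}\bigr)=\langle\Delta(b),1\rangle^*(\varpi_2)_*x_{\alpha+1}$; the degree factor you worry about in your ``secondary technical point'' cancels here against the fibre count. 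So the cure for your obstacle is not to prove that $\Delta(K)$ exhausts the deck group, but to weaken the claim to ``the fibre-sum lies in the image of $\mu^*$'' and postpone the explicit description of $\eta_b$ to the final push-forward computation.
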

\begin{proof}
We prove the proposition through a direct computation. Since the pairing is defined modulo torsion, it suffices to prove the lemma after inverting $p$. We have
\begin{equation*}
\begin{split}
p_{\alpha+1}\big(\{x_{\alpha+1},y_{\alpha+1}\}_{\alpha+1}\big)
&=
p_{\alpha+1}\Big(\sum_{z\in\Gamma_{\alpha+1}}\Big\langle\langle \Delta(z),1\rangle^* x_{\alpha+1}, (\lambda_{\alpha+1})^*\circ U_p^{\alpha+1} y_{\alpha+1}\rangle\Big\rangle_{\alpha+1} [z^{-1}]\Big) \\
& =\sum_{b\in \Gamma_{\alpha}}\Big\langle \sum_{z\in\Gamma_{\alpha+1},\ z\mapsto b}\langle \Delta(z),1\rangle^*x_{\alpha+1},\ (\lambda_{\alpha+1})^*\circ U_p^{\alpha+1} y_{\alpha+1}\Big\rangle_{\alpha+1 } [b^{-1}].
\end{split}
\end{equation*}
Note that
\[
\sum_{z\in\Gamma_{\alpha+1},\ z\mapsto b} \langle \Delta(z),1\rangle^*x_{\alpha+1}
=\sum_{z\in\Gamma_{\alpha+1},\ z\mapsto b} \langle \Delta(z)^{-1},1\rangle_*x_{\alpha+1}
\]
is invariant under the action of $\mathfrak{I}_{\diamond,t}^{\alpha+1}(K)$, and thus equals to $(\mu)^*\eta_b$ for some
$\eta_b \in \mrm{H}^2_\et\big(K_{\diamond,t}(p^\alpha)\cap K_0(p^{\alpha+1});E_\wp(2)\big)$
by Lemma $\ref{lemma: invariant}$. We compute that
\begin{equation}\label{eq: number 1}
	\begin{split}
p_{\alpha+1}\circ\{x_{\alpha+1},y_{\alpha+1}\}_{\alpha+1}
&=
\sum_{b\in \Gamma_{\alpha}}\Big\langle (\mu)^*\eta_b,\ (\lambda_{\alpha+1})^*\circ U_p^{\alpha+1} y_{\alpha+1}\Big\rangle_{\alpha+1} [b^{-1}]\\
&= 
\sum_{b\in \Gamma_{\alpha}}\Big\langle \eta_b,\ (\mu)_*\circ(\lambda_{\alpha+1})^*\circ U_p^{\alpha+1} y_{\alpha+1}\Big\rangle_{\alpha} [b^{-1}]\\
&= 
\sum_{b\in \Gamma_{\alpha}}\Big\langle \eta_b,(\pi_2)^*\circ\ (\lambda_{\alpha})^*\circ U_p^{\alpha}\circ(\varpi_{2})_* y_{\alpha+1}\Big\rangle_{\alpha} [b^{-1}]\\
&= \sum_{b\in \Gamma_{\alpha}}\Big\langle (\pi_{2})_*\eta_b,\ (\lambda_{\alpha})^*\circ U_p^{\alpha}\circ(\varpi_{2})_* y_{\alpha+1}\Big\rangle_{\alpha} [b^{-1}]
\end{split}\end{equation}
using $(\mu)_*\circ U_p=(\pi_1)^*\circ(\varpi_2)_*$ and $\eqref{formula-lambdapi}$ to obtain the third equality. Observing that
\[\begin{split}
	(\pi_{2})_*\eta_b
	&= \frac{1}{\deg(\mu)}\cdot(\pi_{2})_*\circ(\mu)_*\Big(\sum_{z\in\Gamma_{\alpha+1},\ z\mapsto b}  \langle \Delta(z)^{-1},1\rangle_*x_{\alpha+1}\Big)\\
	&=
	\langle\Delta(b)^{-1},1\rangle_*\circ(\varpi_{2})_*x_{\alpha+1}\\
	&=
	\langle \Delta(b),1\rangle^*\circ(\varpi_{2})_*x_{\alpha+1},
\end{split}
\]
the claim follows
\[
p_{\alpha+1}\circ\{x_{\alpha+1},y_{\alpha+1}\}_{\alpha+1}
= \{(\varpi_{2})_*x_{\alpha+1},(\varpi_{2})_*y_{\alpha+1}\}_{\alpha}.
\]
\end{proof}

%
%

Consider $\mrm{V}_\infty^\mrm{dR}(-2)=\underset{\leftarrow, \varpi_{2}}{\lim}\ \cal{V}_\alpha(-2)\otimes_OB_\mrm{dR}$, then extending scalars in the pairing $\{,\}_\alpha$ in (\ref{FiniteLevelPairing}), restricting to the $\scr{G}_{\mbox{\tiny $\heartsuit$}}$-isotypic subspace in the second argument and taking projective limits gives
\[
\{\ , \}_\scr{G}\colon \Big(\bs{\cal{V}}_\mathscr{G}(M)(\theta_\bb{Q}^{-1}\cdot\bs{\eta}_\bb{Q})\Big)\widehat{\otimes}_{\bb{Z}_p} \widehat{\bb{Z}}_p^\mrm{ur}\
\times\
\mrm{V}_\infty^\mrm{dR}(-2)[\scr{G}_{\mbox{\tiny $\heartsuit$}}]
\longrightarrow 
 B_\mrm{dR}\llbracket\Gamma\rrbracket\otimes_{\bs{\Lambda}}\mbf{I}_\scr{G}.
\]

\begin{proposition}\label{prop: Big pairing}
Let $\bs{\Theta}=(\bs{\eta}_\bb{Q}\cdot\eta_\bb{Q})_{\lvert\Gamma_{\bb{Q}_p}}$, then the pairing 
\[
	\{\ , \}_\scr{G}\colon \bs{\cal{V}}_\mathscr{G}(M)(-1)(\bs{\Theta}^{-1})\widehat{\otimes}_{\bb{Z}_p} \widehat{\bb{Z}}_p^\mrm{ur}\
	\times \
	\mrm{V}_\infty^\mrm{dR}[\scr{G}_{\mbox{\tiny $\heartsuit$}}]
	\longrightarrow 
	B_\mrm{dR}\llbracket\Gamma\rrbracket\otimes_{\bs{\Lambda}}\mbf{I}_\scr{G}(-\psi_\circ)
\]
is $\Gamma_{\bb{Q}_p}$-equivariant.
\end{proposition}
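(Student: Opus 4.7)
The plan is to verify the claimed equivariance at finite level by propagating the action of $\sigma\in\Gamma_{\bb{Q}_p}$ through the defining formula of $\{\ ,\ \}_\alpha$, and then assemble the finite-level identities into the $\Gamma_{\bb{Q}_p}$-equivariance of $\{\ ,\ \}_\scr{G}$ via Proposition \ref{compatibility}. Fix $\sigma\in\Gamma_{\bb{Q}_p}$ and let $a\in\bb{Z}_p^\times$ be the image of $\sigma$ under local reciprocity. The diamond operators $\langle\Delta(z),1\rangle$ and the Hecke operator $U_p^\alpha$ are defined over $\bb{Q}$, so they commute with $\sigma$. The Poincar\'e pairing is Galois equivariant with the chosen Tate twists $\mrm{H}^2(-,O(2))\times\mrm{H}^2(-,O)\to O$. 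The only obstruction to naive equivariance comes from $\lambda_\alpha$, which is defined over $\bb{Q}(\zeta_{Mp^\alpha})$ only; Lemma \ref{AtkinLehnerGalois} yields the transformation law
\[
\sigma\circ\lambda_\alpha^{*} \;=\; \lambda_\alpha^{*}\circ\langle\Delta(a^{-1}),1\rangle^{*}\circ\sigma.
\]

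Plugging these ingredients into $\{\sigma x_\alpha,\sigma y_\alpha\}_\alpha$ and using the adjointness $\langle\langle\Delta(b),1\rangle^{*}x,y\rangle_\alpha = \langle x,\langle\Delta(b^{-1}),1\rangle^{*}y\rangle_\alpha$ of Poincar\'e duality together with the intertwining rule $\lambda_\alpha^{*}\circ\langle\Delta(a),1\rangle^{*} = \langle\Delta(a^{-1}),1\rangle^{*}\circ\lambda_\alpha^{*}$ (a direct consequence of the central action of $\Delta(a)\mathbbm{1}_2$ and the $\det^{-1}$-twist in $\lambda_\alpha = (-)^{*}\circ\mathfrak{T}_{\tau_\alpha}$), the entire cost of propagating $\sigma$ can be gathered into a single diamond $\langle\Delta(a^{-1}),1\rangle^{*}$ acting on the first argument. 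Decomposing $a = \tilde a\cdot\omega(a)$ with $\tilde a\in 1+p\bb{Z}_p$ and $\omega(a)\in\mu_{p-1}$, the change of variable $z\mapsto\tilde a^{-1}z$ in the sum $\sum_z[z^{-1}]$ absorbs the wild part $\langle\Delta(\tilde a^{-1}),1\rangle^{*}$ at the cost of an overall factor $[\tilde a^{-1}]\in\Lambda_\alpha$, which is precisely the value $\bs{\eta}_\bb{Q}(\sigma)$. The identity obtained at finite level therefore reads
\[
\{\sigma x_\alpha,\sigma y_\alpha\}_\alpha \;=\; [\tilde a^{-1}]\cdot\bigl\{\langle\Delta(\omega(a)^{-1}),1\rangle^{*}x_\alpha,\;y_\alpha\bigr\}_\alpha.
\]

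The residual tame diamond $\langle\Delta(\omega(a)^{-1}),1\rangle^{*}$ acts on $\bs{\cal{V}}_\mathscr{G}(M)_\alpha$; evaluating it via the homomorphism $\phi_{\bs{\chi}}$ and using the canonical splitting $(\ref{GaloisDecomposition})$ gives an explicit scalar in terms of $\chi_\circ(\omega(a))$, $\theta_L^{-1}$, and the diagonal cyclotomic character. Combined with the prescribed $\bs{\Theta}^{-1} = (\bs{\eta}_\bb{Q}\cdot\eta_\bb{Q})^{-1}_{|\Gamma_{\bb{Q}_p}}$-twist of the first factor, the defining tensor twist $\theta_\bb{Q}\cdot\bs{\eta}_\bb{Q}^{-1}$ built into $\bs{\cal{V}}_\mathscr{G}(M)$, and the identity $\chi_{\circ\lvert\bb{Q}}\cdot\psi_\circ\equiv 1$, this scalar collapses to $\psi_\circ^{-1}(\sigma)\cdot\varepsilon_\bb{Q}^{-1}(\sigma)$, which is exactly the Galois character on the target $\mbf{I}_\scr{G}(-\psi_\circ-1)$. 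Passing to the inverse limit via Proposition \ref{compatibility} then yields the claimed $\Gamma_{\bb{Q}_p}$-equivariance; the base change to $\widehat{\bb{Z}}_p^\mrm{ur}$ is needed to absorb the unramified ambiguity in the CM reciprocity input of Lemma \ref{AtkinLehnerGalois} and to accommodate the natural Galois action on $B_\mrm{dR}$.

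The main obstacle is the character-chase of the third paragraph: one must track the interplay between the Teichm\"uller-part evaluation of the central diamond via $\phi_{\bs{\chi}}$, the $\theta_\bb{Q}\cdot\bs{\eta}_\bb{Q}^{-1}$-twist intrinsic to $\bs{\cal{V}}_\mathscr{G}(M)$, and the additional $\bs{\Theta}^{-1}$-twist on the domain. The cleanest route is to check the resulting identity separately on each factor of $\bb{Z}_p^\times = (1+p\bb{Z}_p)\times\mu_{p-1}$, with the relation $\chi_{\circ\lvert\bb{Q}}\cdot\psi_\circ\equiv 1$ providing the essential cancellation.
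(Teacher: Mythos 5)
Your proposal is correct and follows essentially the same route as the paper's proof: commute $\sigma$ past $\lambda_\alpha^*$ via Lemma \ref{AtkinLehnerGalois} at the cost of a central diamond, strip the $\sigma$'s using Galois equivariance of the Poincar\'e pairing, move the diamond onto one argument and reindex the sum over $\Gamma_\alpha$, then evaluate the residual diamond through $\scr{G}_{\mbox{\tiny $\heartsuit$}}$ (equivalently $\phi_{\bs{\chi}}$) using $\chi_{\circ\lvert\bb{Q}}\cdot\psi_\circ\equiv 1$, and pass to the limit. The only cosmetic differences are that you split $a$ into its wild and Teichm\"uller parts and aim directly at the $\bs{\Theta}^{-1}$-twisted statement, whereas the paper keeps the full diamond, records the equivariance character as $\psi_\circ^{-1}\cdot\theta_\bb{Q}^{-2}\cdot\bs{\eta}_\bb{Q}^{2}$ for the $(\theta_\bb{Q}^{-1}\bs{\eta}_\bb{Q})$-twisted pairing, and only twists at the very end; the residual inversion ambiguities ($\Delta(a)$ versus $\Delta(a^{-1})$) are Artin-map normalization issues shared with the paper and do not affect the final character $\psi_\circ^{-1}\varepsilon_\bb{Q}^{-1}$.
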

\begin{proof}
  Let $\sigma\in \Gamma_{\bb{Q}_p}$ such that $a\in\bb{Q}_p^\times$ corresponds to $\sigma$ via the local Artin map. Lemma \ref{AtkinLehnerGalois} implies
    \[
    (\lambda_\alpha)^*\circ\sigma^* 
    =
    \langle \Delta(a^{-1}),1\rangle^*\circ\sigma^*\circ (\lambda_\alpha)^*,
    \]
    which, combined with (\ref{FiniteLevelPairing}) yields
    \[\begin{split}
    \big\{\sigma^*(x_\alpha), \sigma^*(y_\alpha)\big\}_\alpha 
    &= 
    \sum_{z\in \Gamma_\alpha}\Big\langle\sigma^*\circ\langle \Delta(z),1\rangle^* x_\alpha, 
    \sigma^*\circ\langle \Delta(a^{-1}),1\rangle^*\circ(\lambda_\alpha)^*\circ U_p^\alpha y_\alpha\Big\rangle_\alpha [z^{-1}]\\
    &= 
    \sum_{z\in \Gamma_\alpha}\Big\langle\langle \Delta(za),1\rangle^* x_\alpha, 
    (\lambda_\alpha)^*\circ U_p^\alpha y_\alpha\Big\rangle_\alpha [z^{-1}]\\
    &= 
    [a]\big\{x_\alpha, y_\alpha\big\}_\alpha 
    \end{split}\]
   where in the second equality we used the Galois equivariance of the Poincar\'e pairing
    \[
    \langle\ ,\rangle_\alpha\colon
    \mrm{H}^2_\et(S(K_{\diamond,t}(p^\alpha))_{\bar{\bb{Q}}},O(2))
    \times
    \mrm{H}^2_{\et,c}(S(K_{\diamond,t}(p^\alpha))_{\bar{\bb{Q}}},O)
    \longrightarrow
    O.
    \]
   As
    \[ \scr{G}_{\mbox{\tiny $\heartsuit$}}([\Delta(a),1])=\boldsymbol{\chi}(\Delta(a))\big[\xi_{\Delta(a)}^{-t_L}\big]
    =
    \psi^{-1}_\circ(a)\cdot\theta_\bb{Q}^{-2}(a)\cdot\bs{\eta}^{2}_\bb{Q}(a),
    \]
    we see that
    \[
   \big\{\sigma^*(x_\alpha), \sigma^*(y_\alpha)\big\}_\alpha
    = \psi^{-1}_\circ(a)\cdot\theta_\bb{Q}^{-2}(a)\cdot\bs{\eta}^{2}_\bb{Q}(a)\cdot\{x_\alpha, y_\alpha\}_\alpha.
    \]
	Therefore, the pairing $\{\ , \}_\scr{G}$
	\[ \bs{\cal{V}}_\mathscr{G}(M)(\theta_\bb{Q}^{-1}\cdot\bs{\eta}_\bb{Q})\widehat{\otimes}_{\bb{Z}_p} \widehat{\bb{Z}}_p^\mrm{ur}
	\times 
	\mrm{V}_\infty^\mrm{dR}(-2)[\scr{G}_{\mbox{\tiny $\heartsuit$}}]
	\longrightarrow 
	B_\mrm{dR}\llbracket\Gamma\rrbracket\otimes_{\bs{\Lambda}}\mbf{I}_\scr{G}(\psi^{-1}_\circ\cdot\theta_\bb{Q}^{-2}\cdot\bs{\eta}^{2}_\bb{Q}).
	\]
	is  $\Gamma_{\bb{Q}_p}$-equivariant and the claim follows by twisting.
\end{proof}

\subsection{On Dieudonn\'e modules}
Given $\msf{f}^*_\circ\in S^\mrm{ord}_{2,1}(N;\psi_\circ^{-1};\overline{\bb{Q}})$  an ordinary elliptic cuspform, one can define a linear map
\[
\varphi\colon S^\mrm{ord}_{2,1}(V_{1,0}(N,p);\psi_\circ^{-1};\overline{\bb{Q}})\longrightarrow\overline{\bb{Q}},\qquad \msf{h}\mapsto\frac{\big\langle \msf{h},\msf{f}_\circ^{*\mbox{\tiny $(p)$}}\big\rangle}{\big\langle \msf{f}_\circ^{*\mbox{\tiny $(p)$}}, \msf{f}_\circ^{*\mbox{\tiny $(p)$}}\big\rangle}
\]
which satisfies $\varphi(T^*(\ell)\msf{h})=\msf{a}_p(\ell,\msf{f}_\circ)\cdot\varphi(\msf{h})$. As in  (\cite{DR2}, Section 2.3 $\&$ Equation (118)) we can give the following definition.

\begin{definition}\label{def eta}
	Let $\eta_\circ\in \mrm{H}^1_\mrm{dR}(X_{1,0}(N,p))^{\mrm{ord},\mrm{ur}}[\msf{f}^*_\circ]$ denote the  unique class that satisfies
	\[
	\Phi(\eta_\circ)=\alpha_{\msf{f}^*_\circ}\cdot\eta_\circ
	\]
	and for any $\msf{h}\in S^\mrm{ord}_{2,1}(V_{1,0}(N,p);\psi_\circ^{-1};\overline{\bb{Q}})$
	\[
	\big\langle\omega_\msf{h},(\lambda_1)^*\eta_\circ\big\rangle_\mrm{dR}=\varphi(\msf{h}).
	\]
\end{definition}

\begin{remark}
	For any $\phi\in S^\mrm{ord}_{2,1}(V_{1,0}(N,p);\psi_\circ;\overline{\bb{Q}})$ we have
	\[
	\big\langle\omega_\phi,\eta_\circ\big\rangle_\mrm{dR}=\varphi\big((\lambda_1)_*\phi\big)=\frac{\big\langle (\lambda_1)_*\phi,\msf{f}_\circ^{*\mbox{\tiny $(p)$}}\big\rangle}{\big\langle \msf{f}_\circ^{*\mbox{\tiny $(p)$}}, \msf{f}_\circ^{*\mbox{\tiny $(p)$}}\big\rangle}.
	\]
\end{remark}

\noindent The Hecke equivariant twist of the Poincar\'e pairing
\begin{equation}\label{heckeequivariantpoincare}
\{\ ,\}_\bb{Q}\colon
\mrm{H}^1_\et(X_{1,0}(N,p)_{\bar{\bb{Q}}},O(1))
\times \mrm{H}^1_\et(X_{1,0}(N,p)_{\bar{\bb{Q}}},O)
\longrightarrow O,
\qquad
\big\{x,y\big\}_\bb{Q} = \big\langle x,(\lambda_1)^* y\big\rangle_\mrm{dR}
\end{equation}
 induces a $\Gamma_{\bb{Q}_p}$-equivariant perfect pairing on $\msf{f}_\circ$-isotypic components
\[
    \{\ ,\}_{\msf{f}_\circ}\colon
\mrm{V}_{\msf{f}_\circ}(p) \times \mrm{V}_{\msf{f}_\circ}(p)(-1)
\longrightarrow O(\psi_\circ).
\]
Furthermore, by looking at the Galois action, one sees that $\mrm{Fil}^1\mrm{V}_{\msf{f}_\circ}(p)$ and  $\mrm{Fil}^1\mrm{V}_{\msf{f}_\circ}(p)(-1)$ are orthogonal with respect to $ \{\ ,\}_{\msf{f}_\circ}$. Therefore there is an induced perfect pairing
\begin{equation}\label{QPairing}
    \{,\}_{\msf{f}_\circ}\colon
\mrm{Gr}^0\mrm{V}_{\msf{f}_\circ}(p) \times \mrm{Fil}^1\mrm{V}_{\msf{f}_\circ}(p)(-1)
\longrightarrow O(\psi_\circ).
\end{equation}
which we can use to make the identification
\[
\mrm{D}_\mrm{dR}\big(\mrm{Fil}^1\mrm{V}_{\msf{f}_\circ}(p)(-1)\big)\overset{\sim}{\longrightarrow}\mrm{Hom}_{E_\wp}\Big(\mrm{D}_\mrm{dR}\big(\mrm{Gr}^0\mrm{V}_{\msf{f}_\circ}(p)\big),E_\wp\Big).
\]

\begin{definition}\label{etaprime}
We denote by 
\[
\eta_\circ'\in \mrm{D}_\mrm{dR}\big(\mrm{Fil}^1\mrm{V}_{\msf{f}_\circ}(p)(-1)\big)
\]
the element corresponding to the homomorphism
\[ \mrm{D}_\mrm{dR}\big(\mrm{Gr}^0(\mrm{V}_{\msf{f}_\circ}(p))\big)\longrightarrow E_\wp,\qquad
\omega_\phi\mapsto\big\langle\omega_\phi,\eta_\circ\big\rangle_\mrm{dR}.
\]
It satisfies
\[
\big\{\omega,\eta_\circ'\big\}_{\msf{f}_\circ}= \big\langle\omega,\eta_\circ\big\rangle_\mrm{dR}\qquad \forall\ \omega\in \mrm{D}_\mrm{dR}\big(\mrm{Gr}^0\mrm{V}_{\msf{f}_\circ}(p)\big).
\]
\end{definition}

\begin{proposition}\label{prop: huge period map}
Let	$\bs{\cal{U}}_\scr{G}^{\msf{f}_\circ}(M)
	=
	\mrm{Fil}^2 \bs{\cal{V}}_\mathscr{G}(M)(-1) (\boldsymbol{\Theta}^{-1}) \otimes
	\mrm{Gr}^0\mrm{V}_{\mrm{f}_\circ}(p)$, then 
 there exists a homomorphism of $\mbf{I}_\scr{G}$-modules
	\[
	\Big\langle\ ,\omega_{\breve{\scr{G}}}\otimes\eta_\circ'\Big\rangle\colon \bb{D}\big(\bs{\cal{U}}_\scr{G}^{\msf{f}_\circ}(M)\big)\longrightarrow \bs{\Pi} \otimes_{\bs{\Lambda}} \mbf{I}_\scr{G}
	\]
	whose specialization at any arithmetic point $\mrm{P}\in\cal{A}_{\bs{\chi}}(\mbf{I}_\scr{G})$ of weight 2 is 
	\[
	\mrm{P}\circ\Big\langle\ ,\omega_{\breve{\scr{G}}}\otimes\eta_\circ'\Big\rangle
	=
	\Big\langle\ ,(\lambda_\alpha)^*\omega_{\breve{\scr{G}}_{\msf{P}}}\otimes \eta_\circ\Big\rangle_\mrm{dR}\colon \mrm{D}_\mrm{dR}\big(\cal{U}_{\scr{G}_\mrm{P}}^{\msf{f}_\circ}(M)\big) \longrightarrow \bb{C}_p.
	\]
	where $\cal{U}_{\scr{G}_\mrm{P}}^{\msf{f}_\circ}(M)=\bs{\cal{U}}_{\scr{G}}^{\msf{f}_\circ}(M)\otimes_{\mbf{I}_\scr{G},\mrm{P}}E_\wp$.
\end{proposition}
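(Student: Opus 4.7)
The plan is to combine three already-available ingredients: the big pairing $\{\,\cdot\,,\,\cdot\,\}_\scr{G}$ of Proposition \ref{prop: Big pairing}, the big period map $\raisebox{\depth}{\scalebox{1}[-1]{$ \Omega $}}_\scr{G}$ of Definition \ref{BigGPeriodMap}, and the pairing against $\eta_\circ'$ coming from the perfect duality (\ref{QPairing}) and Definition \ref{etaprime}. The overall strategy is to use $\raisebox{\depth}{\scalebox{1}[-1]{$ \Omega $}}_\scr{G}(\omega_{\breve{\scr{G}}})$ as a $\Gamma_{\bb{Q}_p}$-invariant element to plug into the second slot of the big pairing on the Hilbert surface side, while $\eta_\circ'$ plays the analogous role for the modular curve factor, and then to check that everything assembled specializes to the expected de Rham pairing.

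Concretely, first I would feed $\omega_{\breve{\scr{G}}}\in \overline{\mathbf{S}}_L^{\text{ord}}(K;\bs{\chi};\mbf{I}_\scr{G})[\scr{G}_{\mbox{\tiny $\heartsuit$}}]$ into $\raisebox{\depth}{\scalebox{1}[-1]{$ \Omega $}}_\scr{G}$ to obtain a $\Gamma_{\bb{Q}_p}$-invariant class $\raisebox{\depth}{\scalebox{1}[-1]{$ \Omega $}}_\scr{G}(\omega_{\breve{\scr{G}}})\in\mrm{H}^2_\mrm{n.o.}(K_{\diamond,t}(p^\infty);B_\mrm{dR})[\scr{G}_{\mbox{\tiny $\heartsuit$}}]$. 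Inserting this as the second argument of $\{\,\cdot\,,\,\cdot\,\}_\scr{G}$ yields a $\Gamma_{\bb{Q}_p}$-equivariant homomorphism out of $\mrm{Fil}^2\bs{\cal{V}}_\mathscr{G}(M)(\bs{\Theta}^{-1})\widehat{\otimes}\widehat{\bb{Z}}_p^\mrm{ur}$ into a twist of $(\varprojlim_\alpha B_\mrm{dR}[\Gamma_\alpha])\otimes_{\bs{\Lambda}}\mbf{I}_\scr{G}$ by the character dictated by Proposition \ref{prop: Big pairing}. Tensoring this with the $B_\mrm{dR}$-linear extension of the homomorphism $\langle\,\cdot\,,\eta_\circ'\rangle:\mrm{Gr}^0\mrm{V}_{\msf{f}_\circ}(p)(-1)\to E_\wp$ afforded by (\ref{QPairing}) and Definition \ref{etaprime}, the Galois twists introduced by the two pairings cancel, and the composite is a $\Gamma_{\bb{Q}_p}$-equivariant map from $\bs{\cal{U}}_\scr{G}^{\msf{f}_\circ}(M)\otimes_{\bb{Q}_p} B_\mrm{dR}$ into $(\varprojlim_\alpha B_\mrm{dR}[\Gamma_\alpha])\otimes_{\bs{\Lambda}}\mbf{I}_\scr{G}$ with trivial action on the target. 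Passing to $\Gamma_{\bb{Q}_p}$-invariants and using $B_\mrm{dR}^{\Gamma_{\bb{Q}_p}}=E_\wp$ then defines the desired
\[
\Big\langle\,\cdot\,,\omega_{\breve{\scr{G}}}\otimes\eta_\circ'\Big\rangle:\bb{D}\big(\bs{\cal{U}}_\scr{G}^{\msf{f}_\circ}(M)\big)\longrightarrow\bs{\Pi}\otimes_{\bs{\Lambda}}\mbf{I}_\scr{G}.
\]

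To verify the specialization formula at an arithmetic point $\mrm{P}\in\cal{A}_{\bs{\chi}}(\mbf{I}_\scr{G})$ of weight $(2t_L,t_L)$ and level $p^\alpha$, I would unwind the finite-level pairing (\ref{FiniteLevelPairing}). By construction of $\raisebox{\depth}{\scalebox{1}[-1]{$ \Omega $}}_\infty$ in Lemma \ref{BigPeriodMap}, $\raisebox{\depth}{\scalebox{1}[-1]{$ \Omega $}}_\scr{G}(\omega_{\breve{\scr{G}}})$ specializes at level $\alpha$ to $U_p^{-\alpha}\omega_{\breve{\scr{G}}_\mrm{P}}$. Plugging this into (\ref{FiniteLevelPairing}), the explicit $U_p^\alpha$ exactly cancels the $U_p^{-\alpha}$ normalization, and the sum $\sum_{z\in\Gamma_\alpha}[z^{-1}]\langle\Delta(z),1\rangle^*$ carries out the isotypic projection onto the character of $\mrm{P}$. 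What remains is the de Rham Poincar\'e pairing against $(\lambda_\alpha)^*\omega_{\breve{\scr{G}}_\mrm{P}}$, while the $\eta_\circ'$-component collapses to $\langle\,\cdot\,,\eta_\circ\rangle_\mrm{dR}$ on the nose by Definition \ref{etaprime}, producing the claimed identity.

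The hard part will be the character bookkeeping needed so that the composite pairing actually lands in a $\Gamma_{\bb{Q}_p}$-trivial module. Proposition \ref{prop: Big pairing} twists the target by $\psi_\circ^{-1}$ together with a Tate twist; the duality (\ref{QPairing}) contributes a compensating $\psi_\circ$; the $\bs{\Theta}^{-1}$-twist appearing in $\bs{\cal{U}}_\scr{G}^{\msf{f}_\circ}(M)$ must absorb the $\bs{\Theta}^{-1}$ on the first slot of $\{\,\cdot\,,\,\cdot\,\}_\scr{G}$, and the $(-1)$ twist in $\mrm{Gr}^0\mrm{V}_{\msf{f}_\circ}(p)(-1)$ must absorb the remaining Tate twist. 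Reading off the characters from Lemma \ref{GaloisStructure} and Proposition \ref{somekindoffil} and checking that they telescope is a delicate but finite calculation; once this is in place, everything else reduces to the formal unwinding outlined above.
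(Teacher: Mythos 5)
Your proposal is correct and follows essentially the same route as the paper: tensor the big pairing $\{\,,\}_\scr{G}$ of Proposition \ref{prop: Big pairing} with the duality (\ref{QPairing}), check that the $\psi_\circ$, $\bs{\Theta}^{-1}$ and Tate twists cancel, pass to $\Gamma_{\bb{Q}_p}$-invariants, evaluate at $\raisebox{\depth}{\scalebox{1}[-1]{$ \Omega $}}_\scr{G}(\breve{\scr{G}})\otimes\eta_\circ'$, and verify the weight-two specialization by unwinding (\ref{FiniteLevelPairing}), where the $U_p^{\alpha}$ cancels the $U_p^{-\alpha}$ normalization and the $\Gamma_\alpha$-sum implements the projection onto $\breve{\scr{G}}_\mrm{P}$ while $\{\,\cdot\,,\eta_\circ'\}_{\msf{f}_\circ}=\langle\,\cdot\,,\eta_\circ\rangle_\mrm{dR}$. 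The character bookkeeping you flag as the hard part is in fact already packaged in the statement of Proposition \ref{prop: Big pairing} (whose first slot is $\bs{\cal{V}}_\scr{G}(M)(\bs{\Theta}^{-1})$ and whose target is $\mbf{I}_\scr{G}(-\psi_\circ-1)$), so it reduces to the telescoping you describe.
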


\begin{proof}
By tensoring the $\Gamma_{\bb{Q}_p}$-equivariant pairing of Proposition \ref{prop: Big pairing}
 with the $\Gamma_{\bb{Q}_p}$ equivariant pairing in (\ref{QPairing}),
we obtain
\[
\bs{\cal{U}}_\scr{G}^{\msf{f}_\circ}(M)\widehat{\otimes}_{\bb{Z}_p} \widehat{\bb{Z}}_p^\mrm{ur}\ 
\times\ 
\mrm{V}_\infty^\mrm{dR}[\scr{G}_{\mbox{\tiny $\heartsuit$}}]
\otimes_O \mrm{Fil}^1\mrm{V}_{\msf{f}_\circ}(p)(-1)
\longrightarrow
B_\mrm{dR}\llbracket\Gamma\rrbracket \otimes_{\bs{\Lambda}}\mbf{I}_\scr{G}.
\]
 Restricting the pairing to $\Gamma_{\bb{Q}_p}$-invariants we obtain
\begin{equation}\label{hereisthepairing}
\big\langle\ , \big\rangle\colon\bb{D}\big(\bs{\cal{U}}_\scr{G}^{\msf{f}_\circ}(M)\big)\ 
\times\ 
\mbf{D}_\mrm{dR}\big(\mrm{Gr}^0\mrm{V}_\infty\big)[\scr{G}_{\mbox{\tiny $\heartsuit$}}]
\otimes_{\bb{Q}_p}
\mrm{D}_\mrm{dR}\big(\mrm{Fil}^1\mrm{V}_{\msf{f}_\circ}(p)(-1)\big)
\longrightarrow 
\bs{\Pi}\otimes_{\bs{\Lambda}} \mbf{I}_\scr{G}
\end{equation}
because
\[
\Big(B_\mrm{dR}\llbracket\Gamma\rrbracket\Big)^{\Gamma_{\bb{Q}_p}} \otimes_{\bs{\Lambda}}\mbf{I}_\scr{G}\cong\Big(\varprojlim_{\alpha} \bb{Q}_p[\Gamma_\alpha]\Big)\otimes_{\bs{\Lambda}}\mbf{I}_\scr{G}
=
\bs{\Pi}\otimes_{\bs{\Lambda}} \mbf{I}_\scr{G}.
\]
Let 
$\omega_{\breve{\scr{G}}} := \raisebox{\depth}{\scalebox{1}[-1]{$ \Omega $}}_\scr{G}(\breve{\scr{G}})\in \mbf{D}_\mrm{dR}\big(\mrm{Gr}^0\mrm{V}_\infty\big)[\scr{G}_{\mbox{\tiny $\heartsuit$}}]$ be the class represented by the compatible collection $(U_p^{-\alpha}\breve{\scr{G}}_\alpha)_\alpha$ of cuspforms, and let $\eta_\circ'\in \mrm{D}_\mrm{dR}\big(\mrm{Fil}^1\mrm{V}_{\msf{f}_\circ}(p)(-1)\big)$ be the class of Definition $\ref{etaprime}$. Then
evaluating the pairing ($\ref{hereisthepairing}$) at $\omega_{\breve{\scr{G}}}\otimes\eta_\circ'$ gives the homomorphism
\begin{equation}\label{dR pairing}
\big\langle\ ,\omega_{\breve{\scr{G}}}\otimes\eta_\circ'\big\rangle\colon
\bb{D}\big(\bs{\cal{U}}_\scr{G}^{\msf{f}_\circ}(M)\big)
\longrightarrow \bs{\Pi}\otimes_{\bs{\Lambda}} \mbf{I}_\scr{G}.
\end{equation}
Now we study the specialization of the pairing at  arithmetic points $\mrm{P}\in\cal{A}_{\bs{\chi}}(\mbf{I}_\scr{G})$ of weight $(2t_L,t_L)$ and character $(\chi_\circ\theta_L^{-1}\chi^{-1})$ of level $p^\alpha$. Let 
\[
\bs{z}=\sum_i x_i\otimes y_i
\in \bb{D}\big(\bs{\cal{U}}_\scr{G}^{\msf{f}_\circ}(M)\big)
\]
be any element, then by construction
\[
\Big\langle\bs{z},\ \omega_{\breve{\scr{G}}}\otimes\eta_\circ'\Big\rangle 
=
\sum_i\big\{x_i,\omega_{\breve{\scr{G}}}\big\}_\scr{G}\cdot\big\{y_i,\eta_\circ'\big\}_{\msf{f}_\circ}.
\]
Firstly we note that
\[
\big\{y_i,\eta_\circ'\big\}_{\msf{f}_\circ}=\big\langle y_i, \eta_\circ\big\rangle_\mrm{dR},
\] 
then we observe that the projection of 
$\big\{x_i,\omega_{\breve{\scr{G}}}\big\}_\scr{G}$
to level $\alpha$ is  
\[\begin{split}
\big\{x_{i,\alpha},U_p^{-\alpha}\breve{\scr{G}}_\alpha\big\}_\alpha
&=
\sum_{z\in \Gamma_\alpha}
\Big\langle \langle \Delta(z),1\rangle^*x_{i,\alpha}, (\lambda_\alpha)^*\circ U_{p}^\alpha U_p^{-\alpha}\breve{\scr{G}}_\alpha\Big\rangle[z^{-1}]\\
&=
\sum_{z\in \Gamma_\alpha}
\Big\langle x_{i,\alpha}, (\lambda_\alpha)^*\circ\langle \Delta(z),1\rangle^*\breve{\scr{G}}_\alpha\Big\rangle[z^{-1}].
\end{split}\]
Therefore
\[\begin{split}
\mrm{P}\circ\big\{x_i,\omega_{\breve{\scr{G}}}\big\}_\scr{G}
&=
\Big\langle x_{i,\alpha},
(\lambda_\alpha)^*\sum_{z}\chi_{\mbox{\tiny $\spadesuit$}}(z^{-1})\langle \Delta(z)^{-1},1\rangle_* \breve{\scr{G}}_\alpha\Big\rangle\\
&=
\Big\langle x_{i,\alpha},(\lambda_\alpha)^*\breve{\scr{G}}_\mrm{P}\Big\rangle
=
\Big\langle x_{i,\mrm{P}},(\lambda_\alpha)^*\breve{\scr{G}}_\mrm{P}\Big\rangle
\end{split}\]
where the last equality results from the $\Lambda_\alpha$-equivariance of the twisted Poincar\'e pairing. It follows that
\[
\mrm{P}\circ \Big\langle\bs{z},\ \omega_{\breve{\scr{G}}}\otimes\eta_\circ'\Big\rangle = \Big\langle\bs{z}_\mrm{P},\ (\lambda_\alpha)^*\omega_{\breve{\scr{G}}_{\msf{P}}}\otimes \eta_\circ \Big\rangle_\mrm{dR}.
\]
\end{proof}

\begin{remark}\label{remintegral}
Under Conjecture \ref{wishingOhta}, the homomorphism $\big\langle\ ,\omega_{\breve{\scr{G}}}\otimes\eta_\circ'\big\rangle\colon \bb{D}\big(\bs{\cal{U}}_\scr{G}^{\msf{f}_\circ}(M)\big)\rightarrow \bs{\Pi} \otimes_{\bs{\Lambda}} \mbf{I}_\scr{G}$ of Proposition \ref{prop: huge period map} is actually $\mbf{I}_\scr{G}$-valued, and we are allowed to compose it with the arithmetic point $\mrm{P}_\circ\in\cal{A}_{\bs{\chi}}(\mbf{I}_\scr{G})$ of parallel weight one.
Then, for any $\bs{z}\in\bb{D}\big(\bs{\cal{U}}_\scr{G}^{\msf{f}_\circ}(M)\big)$ we have
\[
	\mrm{P}_\circ\circ\Big\langle\bs{z} ,\ \omega_{\breve{\scr{G}}}\otimes\eta_\circ'\Big\rangle
	=\Big\langle\bs{z}_{\mrm{P}_\circ} ,\ \omega_{\breve{\scr{G}}_{\mrm{P}_\circ}}\otimes\eta_\circ'\Big\rangle
\]
\end{remark}

\section{Motivic $p$-adic $L$-functions}
\subsection{Perrin-Riou's regulator}

Let $\mrm{P}\in\cal{A}_{\bs{\chi}}(\mbf{I}_\scr{G})$ be an arithmetic point of weight $(\ell t_L,t_L)$ and character $(\chi_\circ\theta_L^{1-\ell}\chi^{-1},\mathbbm{1})$, then \[\bs{\Theta}(\mrm{P})=\chi_{\mbox{\tiny $\spadesuit$}}^{-1}\cdot\big(\eta_\bb{Q}^{\ell-1}\big)_{\lvert D_p}\] 
 has negative Hodge--Tate weight if $\ell\ge 2$.  In the case of the arithmetic point $\mrm{P}_\circ$ of weight $(t_L,t_L)$ and character $(\chi_\circ,\mathbbm{1})$, the specialization has Hodge--Tate weight equal to zero:
 \[ 
 \boldsymbol{\Theta}(\mrm{P}_\circ)\equiv 1.
 \]
 The Galois modules $\bs{\cal{U}}^{\msf{f}_\circ}_\scr{G}(M)$ is unramified and recalling Definition \ref{TwistedGradedPiece} we see that
\[
\bs{\cal{V}}^{\msf{f}_\circ}_\scr{G}(M)=\bs{\cal{U}}^{\msf{f}_\circ}_\scr{G}(M)(\bs{\Theta}).
\]
Therefore, if we let $\cal{V}^{\msf{f}_\circ}_{\msf{g}_\mrm{P}}(M)=\bs{\cal{V}}^{\msf{f}_\circ}_\scr{G}(M)\otimes_{\mbf{I}_\scr{G},\mrm{P}}E_\wp$, there are isomorphisms
\begin{equation}\label{step four}
\begin{split}
&\log_\mrm{BK}:\mrm{H}^1\big(\bb{Q}_p, \cal{V}^{\msf{f}_\circ}_{\msf{g}_\mrm{P}}(M)\big)\overset{\sim}{\longrightarrow}\mrm{D}_\mrm{dR}\big(\cal{V}^{\msf{f}_\circ}_{\msf{g}_\mrm{P}}(M)\big),\qquad\quad \text{if}\ \mrm{P}\ \text{has weight}\ \ell\ge 2,\\
&\exp^*_\mrm{BK}:\mrm{H}^1\big(\bb{Q}_p, \cal{V}^{\msf{f}_\circ}_{\msf{g}_{\mrm{P}_\circ}}(M)\big)\overset{\sim}{\longrightarrow}\mrm{D}_\mrm{dR}\big(\cal{V}^{\msf{f}_\circ}_{\msf{g}_{\mrm{P}_\circ}}(M)\big), \qquad \text{if}\ \mrm{P}=\mrm{P}_\circ,
\end{split}
\end{equation}
since $\cal{V}^{\msf{f}_\circ}_{\msf{g}_\mrm{P}}(M)$ never contains $\bb{Q}_p(1)$ nor the trivial $1$-dimensional representation.

\begin{lemma}
Let $\beta:\bb{Z}_p^\times\to E_\beta^\times$ be a finite order character of conductor $p^\alpha$, corresponding to a Galois character $\beta:\Gamma_{\bb{Q}_p}\to E_\beta^\times$ factoring through $\text{Gal}(\bb{Q}_p(\zeta_{p^\alpha})/\bb{Q}_p)$. Consider the $\Gamma_{\bb{Q}_p}$-representation $E_\beta\big(\beta+j\big)$, then the $E_\beta$-vector space 
	\[
	\mrm{D}_\mrm{dR}\big(E_\beta(\beta+j)\big)= E_\beta\cdot b_{\beta,j}.
	\]
has a canonical basis $b_{\beta,j}$.	
\end{lemma}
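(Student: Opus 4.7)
The plan is to construct $b_{\beta,j}$ explicitly as a Gauss-sum times a power of Fontaine's period $t$. I will first note that because $\beta$ becomes trivial on the open subgroup $\Gamma_{\bb{Q}_p(\zeta_{p^\alpha})}$, the representation $E_\beta(\beta+j)$ becomes crystalline upon restriction there and is therefore de Rham as a $\Gamma_{\bb{Q}_p}$-representation. Being one-dimensional over $E_\beta$, its Dieudonn\'e module $\mrm{D}_\mrm{dR}\bigl(E_\beta(\beta+j)\bigr)$ is automatically one-dimensional, so the task reduces to exhibiting one distinguished nonzero Galois-invariant element of $B_\mrm{dR}\otimes_{\bb{Q}_p} E_\beta(\beta+j)$.

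Next I would fix the compatible system $(\zeta_{p^n})_{n\ge 1}$ of $p$-power roots of unity already implicit in the definition of $t$, and form the Gauss sum
\[
G(\beta) := \sum_{a \in (\bb{Z}/p^\alpha\bb{Z})^\times} \beta^{-1}(a)\, \zeta_{p^\alpha}^{a} \ \in\ \bb{Q}_p(\zeta_{p^\alpha})\otimes_{\bb{Q}_p} E_\beta\ \subset\ B_\mrm{dR}\otimes_{\bb{Q}_p} E_\beta .
\]
The change of variables $a\mapsto a\cdot\varepsilon_\bb{Q}(\sigma)^{-1}$, combined with the relation $\sigma(\zeta_{p^\alpha})=\zeta_{p^\alpha}^{\varepsilon_\bb{Q}(\sigma)}$, will yield $\sigma\cdot G(\beta)=\beta(\sigma)\,G(\beta)$ for every $\sigma\in\Gamma_{\bb{Q}_p}$, while the classical identity $G(\beta)\cdot G(\beta^{-1})=\beta(-1)\,p^\alpha$ guarantees that $G(\beta)$ is a unit of $\bb{Q}_p(\zeta_{p^\alpha})\otimes E_\beta$. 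Combined with $\sigma(t)=\varepsilon_\bb{Q}(\sigma)\,t$, this will make
\[
b_{\beta,j}\ :=\ G(\beta)^{-1}\,t^{-j}\otimes v_{\beta,j}
\]
Galois invariant, where $v_{\beta,j}$ denotes a tautological generator of the one-dimensional $E_\beta$-vector space $E_\beta(\beta+j)$. Since neither $G(\beta)$ nor $t$ vanishes in $B_\mrm{dR}$, this element is a basis of $\mrm{D}_\mrm{dR}\bigl(E_\beta(\beta+j)\bigr)$.

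No serious obstacle is present: the substantive input is the transformation law of the Gauss sum, which is a direct substitution, and everything else is formal. The canonicity of $b_{\beta,j}$ is to be understood modulo the single choice of compatible system $(\zeta_{p^n})$, which was already fixed in the definition of $t$; the generator $v_{\beta,j}$ is only needed to identify $E_\beta(\beta+j)$ with $E_\beta$ and drops out of all subsequent formulas in which $b_{\beta,j}$ appears paired against its canonical dual.
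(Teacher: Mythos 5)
Your construction is correct as far as it goes, and its computational core is the same as the paper's: the transformation law of the Gauss sum under $\Gamma_{\bb{Q}_p}$ and its nonvanishing via $G(\beta)G(\beta^{-1})=\beta(-1)p^\alpha$ (the paper's $\theta_{\beta^{-1}}$ is, up to normalization, exactly your Gauss sum, and its $b_{\beta,j}$ is likewise a Gauss sum times $t^{-j}$). The genuine gap is the word \emph{canonical}, which is the actual content of the lemma. Your element $b_{\beta,j}=G(\beta)^{-1}t^{-j}\otimes v_{\beta,j}$ depends on the chosen compatible system $\bs{\zeta}=(\zeta_{p^n})_n$: replacing $\bs{\zeta}$ by $\bs{\zeta}^u$ with $u\in\bb{Z}_p^\times$ rescales $G(\beta)$ by $\beta(u)$ and $t$ by $u$, hence rescales your basis vector by $\beta(u)^{-1}u^{-j}$, while the tautological generator $v_{\beta,j}$ of the abstract twist does not move. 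Saying this is acceptable ``because $\bs{\zeta}$ was already fixed in the definition of $t$'' concedes precisely the point being asserted: the paper's $b_{\beta,j}$ is independent of $\bs{\zeta}$.

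The paper secures this by not using an abstract generator at all: it realizes $E_\beta(\beta+j)$ by the concrete model $E_\beta\cdot(\theta_\beta\otimes\bs{\zeta}^j)$ and sets $b_{\beta,j}=(\theta_\beta\otimes\bs{\zeta}^j)\otimes_{E_\beta}(\theta_{\beta^{-1}}\otimes t^{-j})$, so that under $\bs{\zeta}\mapsto\bs{\zeta}^u$ the representation-side vector picks up $\beta(u)u^j$ and the period-side vector picks up $\beta(u)^{-1}u^{-j}$, and the two factors cancel. That balanced choice is what pins down, with no auxiliary choice, the Gauss-sum constants appearing in the specializations of the Perrin-Riou map (Proposition \ref{prop: big log}); with your normalization those constants are only well defined up to the scalar $\beta(u)^{-1}u^{-j}$. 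To repair your argument, either adopt the paper's model, or specify how $v_{\beta,j}$ must be rescaled when $\bs{\zeta}$ changes and build that rescaling into the definition of $b_{\beta,j}$.
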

\begin{proof}
	For any $j\in\bb{Z}$, the choice of a compatible sequence of $p$-power roots of unity $\bs{\zeta}:=\{\zeta_{p^\alpha}\}_{\alpha\ge0}$ determines a basis $\bs{\zeta}^j$ of the $\Gamma_{\bb{Q}_p}$-representation $\bb{Q}_p(j)$ and an element $t^{-j}\in\mrm{B}_\mrm{dR}$ such that the element $\bs{\zeta}^j\otimes t^{-j}$ gives a canonical basis of $\mrm{D}_\mrm{dR}\big(\bb{Q}_p(j))$ independent of $\bs{\zeta}$. We consider models of the $\Gamma_{\bb{Q}_p}$-representations $E_\beta(\beta)$, $E_\beta(-\beta)$ appearing in the Galois modules $E_\beta\otimes_{\bb{Q}_p}\bb{Q}_p(\zeta_{p^\alpha})$ where $\Gamma_{\bb{Q}_p}$ acts only on the second factor by Galois automorphisms. For a character $\alpha:\text{Gal}(\bb{Q}_p(\zeta_{p^\alpha})/\bb{Q}_p)\to E_\beta^\times$ the element
	\[
	\theta_{\alpha}=\sum_{\tau\in \text{Gal}(\bb{Q}_p(\zeta_{p^\alpha})/\bb{Q}_p)}\alpha^{-1}(\tau)\otimes\zeta_{p^\alpha}^\tau\in E_\beta\otimes_{\bb{Q}_p}\bb{Q}_p(\zeta_{p^\alpha})
\]
satisfies $(\theta_{\alpha})^\sigma=\alpha(\sigma)\theta_{\alpha}$ for all $\sigma\in \Gamma_{\bb{Q}_p}$. Then $E_\beta(\beta)\cong E_\beta\cdot\theta_{\beta}$ and $E_\beta(-\beta)\cong E_\beta\cdot\theta_{\beta^{-1}}$. We choose the model $E_\beta\cdot\theta_\beta\otimes\bs{\zeta}^j$ of the $\Gamma_{\bb{Q}_p}$-representation $E_\beta(\beta+j)$ and we note that 
\[
b_{\beta,j}:=(\theta_\beta\otimes\bs{\zeta}^j)\otimes_{E_\beta}(\theta_{\beta^{-1}} \otimes t^{-j})\in E_\beta(\beta+j)\otimes_{\bb{Q}_p}\mrm{B}_\mrm{dR}
\] 
is invariant under the $\Gamma_{\bb{Q}_p}$-action and it is independent of the choice of $\bs{\zeta}$. Therefore, we deduce that $\mrm{D}_\mrm{dR}\big(E_\beta(\beta+j)\big)$ has $b_{\beta,j}$ as canonical $E_\beta$-basis.
\end{proof}

\noindent	Write $\bs{\Lambda}_\Gamma$ for $O\llbracket\bb{Z}_p^\times\rrbracket$, then by (\cite{KLZ}, Theorem 8.2.3) and (\cite{LZ14}, Theorems 4.15, B.5) there is a $\big(\mbf{I}_\scr{G}\widehat{\otimes} \bs{\Lambda}_\Gamma\big)$-linear map 
	\[
	\bs{\cal{L}}: \mrm{H}^1\big(\bb{Q}_p,\bs{\cal{U}}_\scr{G}^{\msf{f}_\circ}(M)\widehat{\otimes}\bs{\Lambda}_\Gamma(-\mbf{j})\big)\longrightarrow\bb{D}\big(\bs{\cal{U}}_\scr{G}^{\msf{f}_\circ}(M)\big)\widehat{\otimes}\bs{\Lambda}_\Gamma
	\]
	 such that for all points $\mrm{P}\in\mrm{Hom}(\mbf{I}_\scr{G},\overline{\bb{Q}}_p)$ and all characters of $\bb{Z}^\times_p$ of the form $\eta\cdot\varepsilon_\bb{Q}^j$ where $j\in\bb{Z}$ and $\eta$ has finite order, we have a commutative diagram 
\[\xymatrix{
\mrm{H}^1\big(\bb{Q}_p,\bs{\cal{U}}_\scr{G}^{\msf{f}_\circ}(M)\widehat{\otimes}\bs{\Lambda}_\Gamma(-\mbf{j})\big)\ar[d]\ar[r]^{\bs{\cal{L}}} & \bb{D}\big(\bs{\cal{U}}_\scr{G}^{\msf{f}_\circ}(M)\big)\widehat{\otimes}\bs{\Lambda}_\Gamma\ar[d]\\
\mrm{H}^1\big(\bb{Q}_p,\cal{U}_{\scr{G}_\mrm{P}}^{\msf{f}_\circ}(M)(-j-\eta)\big)\ar[r] & \mrm{D}_\mrm{dR}\big(\cal{U}_{\scr{G}_\mrm{P}}^{\msf{f}_\circ}(M)(-j-\eta)\big)
}\]	
where the rightmost vertical map is \[\bb{D}(\bs{\cal{U}}_\scr{G}^{\msf{f}_\circ}(M))\widehat{\otimes}\bs{\Lambda}_\Gamma\longrightarrow\mrm{D}_\mrm{dR}\big(\cal{U}_{\scr{G}_\mrm{P}}^{\msf{f}_\circ}(M)(-j-\eta)\big),
\qquad
\bs{x}\otimes[u]\mapsto\eta\varepsilon_\bb{Q}^j(u)\cdot\bs{x}_\mrm{P}\otimes b_{\eta^{-1},-j},
\]
and the bottom horizontal map is given by 
\begin{equation}\label{specialization biglog}
\begin{cases}
\left(1-\alpha_{1,\msf{g}_\mrm{P}}\alpha_{2,\msf{g}_\mrm{P}}\alpha_{\msf{f}^*_\circ}^{-1}\cdot \eta(\mrm{Fr}_p)p^j\right)\left(1-\alpha^{-1}_{1,\msf{g}_\mrm{P}}\alpha^{-1}_{2,\msf{g}_\mrm{P}}\alpha_{\msf{f}^*_\circ}\cdot \eta^{-1}(\mrm{Fr}_p)p^{-j-1}\right)^{-1} & \text{cond}(\eta)=0\\
\\
\left(\alpha_{1,\msf{g}_\mrm{P}}\alpha_{2,\msf{g}_\mrm{P}}\alpha_{\msf{f}^*_\circ}^{-1}\cdot\eta(p)p^{1+j}\right)^{\text{cond}(\eta)}G(\eta)^{-1} & \text{cond}(\eta)>0
\end{cases}
\end{equation}
\[
\times\qquad\begin{cases}
\frac{(-1)^{-j-1}}{(-j-1)!}\cdot\log_\mrm{BK} & j<0\\
\\
j!\cdot\exp_\mrm{BK}^* & j\ge 0.
\end{cases}
\]
As in the proof of (\cite{KLZ}, Theorem 8.2.8), we pull back the map $\bs{\cal{L}}$ by the automorphism 
\[
1\otimes[z]\mapsto\bs{\Theta}(z)^{-1}\cdot 1\otimes[z]
\]
of $\mbf{I}_\scr{G}\widehat{\otimes} \bs{\Lambda}_\Gamma$. By functoriality of the construction of $\bs{\cal{L}}$ we obtain 
	\begin{equation}\label{eq: biglog diagram}
		\xymatrix{ \mrm{H}^1\big(\bb{Q}_p,\bs{\cal{V}}^{\msf{f}_\circ}_\scr{G}(M)\widehat{\otimes}\bs{\Lambda}_\Gamma(-\mbf{j})\big)\ar[d]\ar@{.>}[r]^{\bs{\cal{L}}'}&\Big(\bb{D}\big(\bs{\cal{U}}_\scr{G}^{\msf{f}_\circ}(M)\big)\widehat{\otimes}\bs{\Lambda}_\Gamma\Big)\widehat{\otimes}_{\bs{\Theta}}\bs{\Lambda}_\Gamma\ar[d]^{\mrm{id}\otimes\bs{\Theta}^{-1}}\\ \mrm{H}^1\big(\bb{Q}_p,\bs{\cal{U}}_\scr{G}^{\msf{f}_\circ}(M)\widehat{\otimes}\bs{\Lambda}_\Gamma(-\mbf{j})\big)\ar[r]^{\bs{\cal{L}}}&\bb{D}\big(\bs{\cal{U}}_\scr{G}^{\msf{f}_\circ}(M)\big)\widehat{\otimes}\bs{\Lambda}_\Gamma.}\end{equation}

	\begin{proposition}\label{prop: big log}
		There is an homomorphism 
		\[
		\bs{\cal{L}}_\scr{G}^{\msf{f}_\circ}: \mrm{H}^1\big(\bb{Q}_p,\bs{\cal{V}}^{\msf{f}_\circ}_\scr{G}(M)\big)\longrightarrow \bb{D}\big(\bs{\cal{U}}_\scr{G}^{\msf{f}_\circ}(M)\big)
		\]
		 satisfying the following properties: 
		\begin{itemize}
			\item[(i)] For all arithmetic points $\mrm{P}\in\cal{A}_{\bs{\chi}}(\mbf{I}_\scr{G})$ of weight $(\ell t_L, t_L)$, $\ell\ge2$, and character $(\chi_\circ\theta_L^{1-\ell}\chi^{-1},\mathbbm{1})$,
			\[
			\nu_{\mrm{P}}\circ\bs{\cal{L}}^{\msf{f}_\circ}_\scr{G}=\frac{(-1)^{\ell-2}}{(\ell-2)!}\cdot\Upsilon(\mrm{P})\cdot\big(\log_\mrm{BK}\circ\ \mrm{P}\big)
			\]
			where $\Upsilon(\mrm{P})=\left(\alpha_{1,\msf{g}_{\mrm{P}}}\alpha_{2,\msf{g}_{\mrm{P}}}\alpha_{\msf{f}^*_\circ}^{-1}p^{2-\ell}\right)^\alpha\cdot G\big(\chi_{\mbox{\tiny $\spadesuit$}}\cdot\theta^{\ell-1}_{\bb{Q}\lvert D_p}\big)^{-1}$.
			
			\item[(ii)] For the arithmetic point $\mrm{P}_\circ\in\cal{A}_{\bs{\chi}}(\mbf{I}_\scr{G})$ of weight one,
			\[
			\nu_{\mrm{P}_\circ}\circ\bs{\cal{L}}^{\msf{f}_\circ}_\scr{G}= \Upsilon(\mrm{P}_\circ)\cdot\big(\exp^*_\mrm{BK}\circ\ \mrm{P}_\circ\big)
			\]
			where $\Upsilon(\mrm{P}_\circ)=\big(1-\alpha_{1,\msf{g}_{\mrm{P}_\circ}}\alpha_{2,\msf{g}_{\mrm{P}_\circ}}\alpha_{\msf{f}^*_\circ}^{-1}\big)\big(1-\alpha^{-1}_{1,\msf{g}_{\mrm{P}_\circ}}\alpha^{-1}_{2,\msf{g}_{\mrm{P}_\circ}}\alpha_{\msf{f}^*_\circ}\cdot p^{-1}\big)^{-1}$.
		\end{itemize}
	\end{proposition}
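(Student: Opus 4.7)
The plan is to construct $\bs{\cal{L}}^{\msf{f}_\circ}_\scr{G}$ by specializing the twisted big logarithm $\bs{\cal{L}}'$ of diagram~(\ref{eq: biglog diagram}) at the trivial character of the auxiliary $\bs{\Lambda}_\Gamma$, and to read the two specialization properties directly off the universal formula~(\ref{specialization biglog}) interpolated through $\bs{\Theta}$. First I would verify that $\bs{\cal{U}}^{\msf{f}_\circ}_\scr{G}(M)$ carries an unramified $\Gamma_{\bb{Q}_p}$-action: combining Definition~\ref{TwistedGradedPiece} with the character on $\mrm{Gr}^2\bs{\cal{V}}_\mathscr{G}(M)$ in Proposition~\ref{somekindoffil} and the definition $\bs{\Theta}=(\bs{\eta}_\bb{Q}\eta_\bb{Q})_{\lvert\Gamma_{\bb{Q}_p}}$, the Galois action on $\mrm{Fil}^2\bs{\cal{V}}_\mathscr{G}(M)(\bs{\Theta}^{-1})\otimes\mrm{Gr}^0\mrm{V}_{\msf{f}_\circ}(p)(-1)$ collapses to the product of the unramified characters $\bs{\Psi}_{\scr{G},p}^{-1}$, $\delta_p(\msf{f}_\circ)$, and the unramified part of $\psi_\circ^{-1}$. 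This places us in the setting of \cite{KLZ}~Theorem~8.2.3, from which we extract $\bs{\cal{L}}$ and, by the pullback $1\otimes[z]\mapsto\bs{\Theta}(z)^{-1}\otimes[z]$ along the diagram, the twisted version $\bs{\cal{L}}'$.

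Next I would define
\[
\bs{\cal{L}}^{\msf{f}_\circ}_\scr{G}
:=
\mathrm{aug}\circ(\mathrm{id}\otimes\bs{\Theta}^{-1})\circ\bs{\cal{L}}'\circ\iota,
\]
where $\iota:\mrm{H}^1(\bb{Q}_p,\bs{\cal{V}}^{\msf{f}_\circ}_\scr{G}(M))\to\mrm{H}^1(\bb{Q}_p,\bs{\cal{V}}^{\msf{f}_\circ}_\scr{G}(M)\widehat{\otimes}\bs{\Lambda}_\Gamma(-\mbf{j}))$ is the pushforward induced by the unit section of $\bs{\Lambda}_\Gamma$ (which is Galois-equivariant because the twist $(-\mbf{j})$ evaluated at $1\in\Gamma$ is trivial) and $\mathrm{aug}:\bs{\Lambda}_\Gamma\to O$ is the augmentation. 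The composite lands in $\bb{D}(\bs{\cal{U}}_\scr{G}^{\msf{f}_\circ}(M))$ as required.

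The assertions (i)--(ii) follow from unwinding~(\ref{specialization biglog}) after the $\bs{\Theta}$-twist. At an arithmetic point $\mrm{P}$ of weight $(\ell t_L,t_L)$ and character $(\chi_\circ\theta_L^{1-\ell}\chi^{-1},\mathbbm{1})$ with $\chi=\chi_{\mbox{\tiny $\spadesuit$}}\circ\mrm{N}_{L/\bb{Q}}$, one has $\bs{\Theta}(\mrm{P})=\chi_{\mbox{\tiny $\spadesuit$}}^{-1}\cdot\eta_\bb{Q}^{\ell-1}$. Using $\varepsilon_\bb{Q}=\eta_\bb{Q}\theta_\bb{Q}$ this rewrites as $\eta\cdot\varepsilon_\bb{Q}^{j}$ with $j=-(\ell-1)$ and finite-order part $\eta=\chi_{\mbox{\tiny $\spadesuit$}}\cdot\theta_{\bb{Q}\lvert D_p}^{\ell-1}$ (of conductor $p^\alpha$). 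For $\ell\ge 2$ we land in the $j<0$, conductor-$p^\alpha$ branch of~(\ref{specialization biglog}), which produces $\frac{(-1)^{\ell-2}}{(\ell-2)!}\log_\mrm{BK}$ multiplied by the Euler/Gauss-sum factor $(\alpha_{1,\msf{g}_\mrm{P}}\alpha_{2,\msf{g}_\mrm{P}}\alpha_{\msf{f}^*_\circ}^{-1}p^{2-\ell})^\alpha G(\chi_{\mbox{\tiny $\spadesuit$}}\cdot\theta_{\bb{Q}\lvert D_p}^{\ell-1})^{-1}=\Upsilon(\mrm{P})$. At $\mrm{P}_\circ$ of weight one, $\bs{\Theta}(\mrm{P}_\circ)=\mathbbm{1}$ so $\eta=\mathbbm{1}$, $j=0$, placing us in the conductor-zero, $j\ge 0$ branch; the $j!=1$ factor leaves only $\exp^*_\mrm{BK}$ times the Euler factor, which after simplification is $\Upsilon(\mrm{P}_\circ)$.

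The main obstacle is the book-keeping of conventions matching the explicit form of $\bs{\cal{U}}^{\msf{f}_\circ}_\scr{G}(M)$ with those of \cite{KLZ}'s big logarithm: in particular, establishing that the unramified Frobenius eigenvalue on $\bb{D}(\bs{\cal{U}}^{\msf{f}_\circ}_{\scr{G}_\mrm{P}}(M))$ is precisely $\alpha_{1,\msf{g}_\mrm{P}}\alpha_{2,\msf{g}_\mrm{P}}\alpha_{\msf{f}^*_\circ}^{-1}$ requires combining $\bs{\Psi}_{\scr{G},p}(\mrm{P})(\mrm{Fr}_p)=\alpha_{1,\msf{g}_\mrm{P}}\alpha_{2,\msf{g}_\mrm{P}}$, the ordinary filtration on $\mrm{V}_{\msf{f}_\circ}(p)$ that yields $\delta_p(\msf{f}_\circ)(\mrm{Fr}_p)=\alpha_{\msf{f}_\circ}$, and the duality $\alpha_{\msf{f}_\circ}\cdot\alpha_{\msf{f}^*_\circ}=\psi_\circ(p)\,p$. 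Some care is also needed with the Gauss-sum normalizations and the factor $p^{2-\ell}$ arising from the Tate twist $(-1)$ inside $\bs{\cal{U}}^{\msf{f}_\circ}_\scr{G}(M)$ when comparing with \cite{KLZ}'s convention.
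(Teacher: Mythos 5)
Your construction is essentially the paper's own proof: the evaluation $\mathrm{aug}\circ(\mathrm{id}\otimes\bs{\Theta}^{-1})$ you use is exactly the map $\beta:1\otimes[u]\mapsto\langle u\rangle[u]$ of the paper (both send $[u]\mapsto\bs{\Theta}(u)^{-1}\in\mbf{I}_\scr{G}$, so the cyclotomic variable is specialized at the $\mrm{P}$-dependent character $\bs{\Theta}(\mrm{P})^{-1}$ rather than at the trivial character, as your opening sentence might suggest), and the identification of the unramified action on $\bs{\cal{U}}^{\msf{f}_\circ}_\scr{G}(M)$ and of the branches $j=1-\ell$, $\eta=\chi_{\mbox{\tiny $\spadesuit$}}\theta_\bb{Q}^{\ell-1}$ (resp.\ $j=0$, $\eta=\mathbbm{1}$ at $\mrm{P}_\circ$) in (\ref{specialization biglog}) matches the paper's computation of $\Upsilon(\mrm{P})$ and $\Upsilon(\mrm{P}_\circ)$. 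So the proposal is correct and follows the same route, with only the repackaging of $\beta$ as ``augmentation after the $\bs{\Theta}^{-1}$-twist.''
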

	\begin{proof}
		First, we note that the map $\bs{\Theta}\otimes\mrm{id}:\bs{\Lambda}_\Gamma\widehat{\otimes}_{\bs{\Theta}}\bs{\Lambda}_\Gamma\overset{\sim}{\to} \bs{\Lambda}_\Gamma$ is an isomorphism, hence $\bs{\cal{L}}'$ can be seen as a homomorphism
		$\bs{\cal{L}}': \mrm{H}^1\big(\bb{Q}_p,\bs{\cal{V}}^{\msf{f}_\circ}_\scr{G}(M)\widehat{\otimes}\bs{\Lambda}_\Gamma(-\mbf{j})\big)\longrightarrow \bb{D}\big(\bs{\cal{U}}_\scr{G}^{\msf{f}_\circ}(M)\big)\widehat{\otimes}\bs{\Lambda}_\Gamma$.
For any arithmetic point $\mrm{P}\in\cal{A}_{\bs{\chi}}(\mbf{I}_\scr{G})$, there is a commutative diagram
\[\xymatrix{
\mrm{H}^1\big(\bb{Q}_p,\bs{\cal{V}}^{\msf{f}_\circ}_\scr{G}(M)\widehat{\otimes}\bs{\Lambda}_\Gamma(-\mbf{j})\big)\ar[d]\ar[rr]^{\bs{\cal{L}}'} && \bb{D}\big(\bs{\cal{U}}_\scr{G}^{\msf{f}_\circ}(M)\big)\widehat{\otimes}\bs{\Lambda}_\Gamma\ar[d]\\
\mrm{H}^1\big(\bb{Q}_p,\cal{V}^{\msf{f}_{\circ}}_{\msf{g}_{\mrm{P}}}(M)\big)\ar[rr] && \mrm{D}_\mrm{dR}\big(\cal{V}^{\msf{f}_{\circ}}_{\msf{g}_{\mrm{P}}}(M)\big)
}\]	
obtained by composing ($\ref{eq: biglog diagram}$) with specialization at point $\mrm{P}$ and character \[\bs{\Theta}(\mrm{P})^{-1}=\chi_{\mbox{\tiny $\spadesuit$}}\cdot\big(\varepsilon_\bb{Q}^{1-\ell}\cdot\theta_{\bb{Q}}^{\ell-1}\big)_{\lvert D_p}.
\] Then, using ($\ref{specialization biglog}$), the bottom horizontal map can be computed to be
\[
\frac{(-1)^{\ell-2}}{(\ell-2)!}\cdot\left(\alpha_{1,\msf{g}_{\mrm{P}}}\alpha_{2,\msf{g}_{\mrm{P}}}\alpha_{\msf{f}^*_\circ}^{-1}p^{2-\ell}\right)^\alpha G\big(\chi_{\mbox{\tiny $\spadesuit$}}\cdot\theta^{\ell-1}_{\bb{Q}\lvert D_p}\big)^{-1}\cdot\log_\mrm{BK}.
\]
Similarly, when considering the arithmetic point $\mrm{P}_\circ$, the relevant character is the trivial character $\bs{\Theta}(\mrm{P}_\circ)^{-1}\equiv 1$, and the bottom horizontal map can be seen to be
\[
\big(1-\alpha_{1,\msf{g}_{\mrm{P}_\circ}}\alpha_{2,\msf{g}_{\mrm{P}_\circ}}\alpha_{\msf{f}^*_\circ}^{-1}\big)\big(1-\alpha^{-1}_{1,\msf{g}_{\mrm{P}_\circ}}\alpha^{-1}_{2,\msf{g}_{\mrm{P}_\circ}}\alpha_{\msf{f}^*_\circ}\cdot p^{-1}\big)^{-1}\cdot\exp^*_\mrm{BK}.
\]
In order to define the claimed homomorphism $\bs{\cal{L}}_\scr{G}^{\msf{f}_\circ}$, we note that if we consider
\[
\beta:\mbf{I}_\scr{G}\widehat{\otimes}\bs{\Lambda}_\Gamma\longrightarrow \mbf{I}_\scr{G},\qquad 1\otimes[u]\mapsto \langle u\rangle[u],
\]
then for all $\mrm{P}\in\cal{A}_{\bs{\chi}}(\mbf{I}_\scr{G})$ the following diagram commutes
\[\xymatrix{
\mbf{I}_\scr{G}\widehat{\otimes}\bs{\Lambda}_\Gamma\ar[drr]_{\mrm{P}\otimes\bs{\Theta}(\mrm{P})^{-1}}\ar[rr]^\beta && \mbf{I}_\scr{G}\ar[d]^{\mrm{P}} \\
&& O.
}
\]
Therefore, the composition   \[\xymatrix{
\mrm{H}^1\big(\bb{Q}_p,\bs{\cal{V}}^{\msf{f}_\circ}_\scr{G}(M)\big)\ar[d]^{\mrm{H}^1\big(\bb{Q}_p,\mrm{id}\otimes 1\big)}\ar@{.>}[rr]^{\bs{\cal{L}}_\scr{G}^{\msf{f}_\circ}}&& \bb{D}\big(\bs{\cal{U}}_\scr{G}^{\msf{f}_\circ}(M)\big)\\
\mrm{H}^1\big(\bb{Q}_p,\bs{\cal{V}}^{\msf{f}_\circ}_\scr{G}(M)\widehat{\otimes}\bs{\Lambda}_\Gamma(-\mbf{j})\big)\ar[rr]^{\bs{\cal{L}}'}&& \bb{D}\big(\bs{\cal{U}}_\scr{G}^{\msf{f}_\circ}(M)\big)\widehat{\otimes}\bs{\Lambda}_\Gamma\ar[u]_\beta
}\]
satisfies the claimed properties.
\end{proof}

\subsubsection{The motivic $p$-adic $L$-function.}\label{motivic p-adic L-function}
The class 
$\boldsymbol{\kappa}_p^{\msf{f}_\circ}(\scr{G}) \in \mrm{H}^1\big(\mathbb{Q}_p,\bs{\cal{V}}^{\msf{f}_\circ}_{\scr{G}}(M)\big)$, presented in Definition \ref{TwistedGradedPiece} and arising from cycles on Shimura threefolds, is the key input to define the motivic $p$-adic $L$-function.
\begin{definition}\label{motpadicLfun}
The motivic $p$-adic $L$-function is given by
\[
\scr{L}^\mrm{mot}_p(\breve{\scr{G}},\msf{f}_\circ)
:=
\Big\langle \bs{\cal{L}}_\scr{G}^{\msf{f}_\circ}\big(\boldsymbol{\kappa}_p^{\msf{f}_\circ}(\scr{G}) \big),\
\omega_{{\breve{\scr{G}}}}\otimes\eta_\circ'\Big\rangle
\in \bs{\Pi}\otimes_{\bs{\Lambda}} \mbf{I}_\scr{G}.
\]	
\end{definition}

\begin{lemma}\label{cruximplicat}
Assume Conjecture \ref{wishingOhta}, then the motivic $p$-adic $L$-function belongs to $\mbf{I}_\scr{G}$, and 
\[
\scr{L}_p^\mrm{mot}(\breve{\scr{G}},\msf{f}_\circ)(\mrm{P}_\circ)\not=0\quad\implies\quad\mrm{exp}^*_\mrm{BK}\big(\bs{\kappa}^{\msf{f}_\circ}_p(\scr{G})(\mrm{P}_\circ)\big)\not=0.
\]
\end{lemma}
\begin{proof}
	This is a direct consequence of Remark \ref{remintegral} and Proposition \ref{prop: big log}.
\end{proof}

\section{$p$-adic Gross-Zagier formulas}
The main goal of this section is to give a formula for certain values of the syntomic Abel--Jacobi map in terms of $p$-adic modular forms. Given the definition of the motivic $p$-adic $L$-function in terms of the pairing introducted in Section $\ref{generalizingOhta}$, we will be interested in the values
\[
\mrm{AJ}_{\mrm{syn}}\big(\Delta_\alpha^\circ\big)\big((\lambda_\alpha)^*\omega_{\breve{\scr{G}}_\mrm{P}}\otimes\eta_\circ\big)\in\bb{C}_p
\]

\subsection{$P$-syntomic cohomology}
Let $K/\bb{Q}_p$ be finite extension, we denote by $K_0$ the maximal unramified subfield of $K$ and by $q$ be the cardinality of the residue field.
\begin{definition}
	A filtered $(\varphi,N,\Gamma_K)$-module over $K$ is a finite dimensional $\bb{Q}_p^\mrm{ur}$-vector space $D$ endowed with a $\bb{Q}_p^\mrm{ur}$-semilinear bijective Frobenius endomorphism $\varphi$ and a $\bb{Q}_p^\mrm{ur}$-linear monodromy operator $N$ satisfying $N\varphi=p\varphi N$. The absolute Galois group $\Gamma_K$ acts $\bb{Q}_p^\mrm{ur}$-semilinearly on $D$
and there is a decreasing, separated, exhaustive filtration of the $K$-vector space \[D_K:=\big(D\otimes_{\bb{Q}_p^\mrm{ur}}\bar{\bb{Q}}_p\big)^{\Gamma_K}\] by $K$-vector subspaces $\mrm{Fil}^iD_K$.
\end{definition}

\noindent One writes $D_\mrm{st}$ for the $K_0$-vector space $D^{\Gamma_K}$ of $\Gamma_K$-invariant elements. A filtered $(\varphi,N,\Gamma_K)$-module $D$ such that the $\Gamma_K$-action is unramified and $N=0$ is said to be \emph{crystalline}. In this case one can show that
\[
D=D_\mrm{st}\otimes_{K_0}\bb{Q}_p^\mrm{ur}\qquad\text{and}\qquad D_K=D_\mrm{st}\otimes_{K_0}K.
\]
\begin{definition}
	A crystalline filtered $(\varphi,N,\Gamma_K)$-module over $K$ is said to be \emph{convenient} for a choice of polynomial $P(T)\in 1+TK[T]$ if $P(\Phi)$ and $P(q\Phi)$ are bijective endomorphisms of $D_K$, where $\Phi$ denotes the extension of scalars of the $K_0$-linear operator $\varphi^{[K_0:\bb{Q}_p]}$ on $D_\mrm{st}$.
\end{definition}

\noindent For a variety $X_{/K}$, we let $\mrm{H}^{\bfcdot}_\mrm{HK}(X_h)$ and $\mrm{H}^{\bfcdot}_\mrm{dR}(X_h)$ be the extensions of Hyodo--Kato and de-Rham cohomologies defined by Beilinson \cite{Beilinson} and by 
\[
\iota_\mrm{dR}^\mrm{B}:\mrm{H}^{\bfcdot}_\mrm{HK}(X_h)\otimes_{K_0}K\longrightarrow\mrm{H}^{\bfcdot}_\mrm{dR}(X_h)
\]
the comparison morphism relating them (which is an isomorphism if $X$ has a semistable model over $\cal{O}_K$). For the filtered $(\varphi,N,\Gamma_K)$-modules $D^{\bfcdot}(X_h)=\mbf{D}_\mrm{pst}\big(\mrm{H}^{\bfcdot}_\mrm{et}(X_{\overline{K}},\bb{Q}_p)\big)$ it was shown by Beilinson \cite{Beilinson} that 
\[
D^{\bfcdot}(X_h)_\mrm{st}=\mrm{H}^{\bfcdot}_\mrm{HK}(X_h)\qquad\text{and}\qquad D^{\bfcdot}(X_h)_K=\mrm{H}^{\bfcdot}_\mrm{dR}(X_h).
\]
\subsubsection{Cohomology of filtered $(\varphi, N, \Gamma_K)$-modules.} For a polynomial $P(T)\in 1+TK[T]$ one can define the complex 
\[
C^{\bfcdot}_{\mrm{st},P}(D):\qquad D_{\mrm{st},K}\oplus\mrm{Fil}^0D_K\longrightarrow D_{\mrm{st},K}\oplus D_{\mrm{st},K}\oplus D_K\longrightarrow D_{\mrm{st},K}
\]
where the first map is $(u,v)\mapsto(P(\Phi)u, Nu, u-v)$, and the second is $(w,x,y)\mapsto Nw- P(q\Phi)x$. The cohomology of this complex is denoted by \[
\mrm{H}^{\bfcdot}_{\mrm{st},P}(D):=\mrm{H}^{\bfcdot}\big(C^{\bfcdot}_{\mrm{st},P}(D)\big).
\]

\begin{theorem}(\cite{BLZ} Theorem 2.1.2)
	There is a $P$-syntomic descent spectral sequence
	\[
	E_2^{i,j}=\mrm{H}^i_{\mrm{st},P}\big(D^j(X_h)(r)\big)\implies \mrm{H}^{i+j}_{\mrm{syn},P}(X_h,r)
	\]
	compatible with cup products.
\end{theorem}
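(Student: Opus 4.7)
The plan is to upgrade the functor $D\mapsto C^{\bfcdot}_{\mrm{st},P}(D)$, currently defined on a single filtered $(\varphi,N,\Gamma_K)$-module, to a functor on chain complexes of such modules by applying it degreewise and totalising; then to feed in a derived lift $\mbf{D}^{\bfcdot}(X_h)(r)$ of the system $\{D^j(X_h)(r)\}_{j\ge0}$. Taking the total complex produces a natural candidate for $\mrm{R}\Gamma_{\mrm{syn},P}(X_h,r)$, and the spectral sequence is obtained by the obvious filtration by the $j$-index on this double complex, whose $E_2$-page reads off exactly as $\mrm{H}^i_{\mrm{st},P}\big(D^j(X_h)(r)\big)$.

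Concretely, I would first construct $\mbf{D}^{\bfcdot}(X_h)(r)$ out of Beilinson's derived Hyodo--Kato complex $\mrm{R}\Gamma_\mrm{HK}(X_h)(r)$ over $K_0$ (carrying chain-level semilinear $\varphi$, $N$ and $\Gamma_K$-action) together with the derived de Rham complex $\mrm{R}\Gamma_\mrm{dR}(X_h)(r)$ over $K$ (carrying the chain-level Hodge filtration) linked by Beilinson's comparison $\iota_\mrm{dR}^\mrm{B}$ at the level of complexes. Applying the three columns of $C^{\bfcdot}_{\mrm{st},P}$ to this data produces a triple complex, and the assignment $\mrm{R}\Gamma_{\mrm{syn},P}(X_h,r):=\mrm{Tot}\,C^{\bfcdot}_{\mrm{st},P}(\mbf{D}^{\bfcdot}(X_h)(r))$ recovers $P$-syntomic cohomology on taking hypercohomology. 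The filtration by $j$-columns then collapses exactly to the spectral sequence of the theorem; compatibility with cup products follows by checking that the chain-level cup products on $\mrm{R}\Gamma_\mrm{HK}$ and $\mrm{R}\Gamma_\mrm{dR}$ (available from Beilinson's construction) intertwine with the linear differentials of $C^{\bfcdot}_{\mrm{st},P}$, producing a DG-pairing on the total complex that induces the expected multiplication on $E_2$.

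The main obstacle will be to produce the chain-level rigidification of $\mbf{D}^{\bfcdot}(X_h)(r)$: on cohomology one has well-defined Frobenius, monodromy, Galois action, filtration and the Hyodo--Kato comparison, but to feed them into $C^{\bfcdot}_{\mrm{st},P}$ one needs all of these structures to be strictly functorial at the complex level rather than merely up to quasi-isomorphism. One resolves this by working in the $h$-topology setting of \cite{Beilinson}, where $\mrm{R}\Gamma_\mrm{HK}(X_h)$ and $\mrm{R}\Gamma_\mrm{dR}(X_h)$ are constructed as genuine complexes functorial in $h$-hypercovers, so that all structures descend coherently; the same framework yields a chain-level cup product, which is the key ingredient for the multiplicativity assertion.
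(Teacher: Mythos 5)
The paper does not actually prove this statement: it is imported wholesale from \cite{BLZ} (Theorem 2.1.2), which in turn rests on Nekov\'a\v{r}--Nizio\l{} syntomic cohomology and \cite{Beilinson}, so there is no internal argument to compare yours against. Your outline is broadly the strategy of the cited proof (a chain-level, mapping-fibre incarnation of $C^{\bfcdot}_{\mrm{st},P}$ built from Beilinson's $h$-topology Hyodo--Kato and de Rham complexes, with products coming from chain-level cup products), but there is a genuine gap at the heart of it: the ``obvious filtration by the $j$-index'' is the stupid (column) filtration of your double complex, and it cannot produce the stated $E_2$-page. Its $E_1$-terms involve the $\mrm{st},P$-construction applied to the individual \emph{terms} of your lift, and since the operations entering $C^{\bfcdot}_{\mrm{st},P}$ (passage to $D_{\mrm{st}}$, $\Gamma_K$-invariants, $\mrm{Fil}^0$) are not exact, the resulting $E_2$ has no reason to equal $\mrm{H}^i_{\mrm{st},P}\big(D^j(X_h)(r)\big)$, which is formed from the \emph{cohomology} modules. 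The actual mechanism is the canonical truncation filtration $\tau_{\le j}$ on the coefficient complexes, whose graded pieces are $H^j[-j]$, together with an identification of the (derived) $\mrm{st},P$-construction on these graded pieces with the naive three-term complex $C^{\bfcdot}_{\mrm{st},P}\big(D^j(X_h)(r)\big)$; that identification is exactly where Beilinson's comparison isomorphisms $D^j(X_h)_{\mrm{st}}\cong\mrm{H}^j_\mrm{HK}(X_h)$ and $D^j(X_h)_K\cong\mrm{H}^j_\mrm{dR}(X_h)$, and the admissibility and finite-dimensionality of $D^j(X_h)$, must be invoked. This is the real content, not a rigidification technicality.

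Relatedly, the obstacle you single out is misplaced: there is no chain-level complex of filtered $(\varphi,N,\Gamma_K)$-modules whose cohomology is $\{D^j(X_h)(r)\}$. Beilinson's complexes carry $(\varphi,N)$ and the Hodge filtration at the chain level, but the $\Gamma_K$-structure together with the identification with $\mbf{D}_\mrm{pst}$ of \'etale cohomology exists only on cohomology, through the comparison theorem; this is why \cite{BLZ} define $P$-syntomic cohomology directly from period-ring versions of the rows of $C_{\mrm{st},P}$ applied to the geometric complexes, and only afterwards descend to a statement about the $D^j$'s, rather than applying $C^{\bfcdot}_{\mrm{st},P}$ degreewise to a lift of the $D^j$'s. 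Your cup-product paragraph is right in spirit, but multiplicativity of the spectral sequence additionally requires the explicit homotopy-level product formulas on the mapping fibre and their compatibility with the truncation filtration, not merely that the chain-level products ``intertwine with the linear differentials.''
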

\subsection{Syntomic Abel--Jacobi map}
Let $X_{/K}$ be a smooth $d$-dimensional variety. The commutativity of the following diagram 
\begin{equation}\label{syn to et}\xymatrix{
& \mrm{CH}^i(X)\ar[ld]_{\mrm{cl}_\mrm{syn}}\ar[rd]^{\mrm{cl}_\mrm{et}} &\\
\mrm{H}^{2i}_\mrm{syn}(X_{K,h},i)\ar@{.>}[rr]^{\rho_\mrm{syn}}\ar@{->>}[d] & & \mrm{H}^{2i}_\mrm{et}(X_{K},\bb{Q}_p(i))\ar@{->>}[d]\\
\mrm{Gr}^0_{\mrm{syn}}\ar@{^{(}->}[d]\ar@{.>}[rr]& & \mrm{Gr}^0_{\mrm{et}}\ar@{=}[d]\\
\mrm{H}_{\mrm{st},1-T}^0(D^{2i}(X_h)(i))\ar@{.>}[rr]^\sim & & \mrm{H}^{2i}_\mrm{et}(X_{\overline{K}},\bb{Q}_p(i))^{\Gamma_K}
}\end{equation}
where $\rho_\mrm{syn}$ is Nekov\'a$\check{r}$--Niziol period morphism, follows from the compatibility of the syntomic descent spectral sequence for syntomic cohomology and the Hochschild--Serre spectral sequence for \'etale cohomology (\cite{BLZ} Theorem 2.1.2). 
\begin{remark}
	The bottom horizontal map of diagram ($\ref{syn to et}$) is an isomorphism by (\cite{BLZ} Theorem 1.1.4), hence the middle horizontal map is injective.
\end{remark}

\noindent Therefore, if we let 
\[
\mrm{CH}^i(X)_0:=\ker\left(\mrm{cl}_\mrm{et}:\mrm{CH}^i(X)\longrightarrow \mrm{H}^{2i}_\mrm{et}(X_{\overline{K}},\bb{Q}_p(i))^{\Gamma_K}\right)
\]
denote the subgroup of null-homologous cycles, then the syntomic and the $p$-adic \'etale Abel--Jacobi maps can be compared 
\[\xymatrix{
& \mrm{CH}^i(X)_0\ar[ld]_{\mrm{AJ}_\mrm{syn}}\ar[dr]^{\mrm{AJ}^\mrm{et}_p} &\\
\mrm{H}_{\mrm{st},1-T}^1(D^{2i-1}(X_h)(i))\ar[rr]^{\exp_\mrm{st}} & & \mrm{H}^1(K, \mrm{H}^{2i-1}_\mrm{et}(X_{\overline{K}},\bb{Q}_p(i)))
}\]
through the generalized Bloch--Kato exponential map. 
If $V$ is a quotient of $\mrm{H}^{2i-1}_\mrm{et}(X_{\overline{K}},\bb{Q}_p(i))$ such that $D=\mbf{D}_\mrm{pst}(V)$ is a convenient quotient of $D^{2i-1}(X_h)(i)$ with respect to the polynomial $1-T$, then the natural inclusion $D_K\hookrightarrow C^1_{\mrm{st},1-T}(D)$ induces an isomorphism
\[
\frac{D_K}{\mrm{Fil}^0D_K}\cong\mrm{H}^1_{\mrm{st},1-T}(D),
\]
and one can refine the comparison to
\[\xymatrix{
& \mrm{CH}^i(X)_0\ar[ld]_{\mrm{AJ}_{\mrm{syn},D}}\ar[dr]^{\mrm{AJ}^\mrm{et}_{p,V}} &\\
D_K/\mrm{Fil}^0\ar[rr]^{\exp_\mrm{BK}} & & \mrm{H}_e^1(K, V). 
}\]
\begin{remark}
Recall (\cite{Bloch-Kato} Definition 3.10) that the Bloch--Kato exponential surjects onto  $\mrm{H}_e^1(K, V)$ and its kernel is given by $\mrm{D}_\mrm{cris}(V)^{\varphi=1}/\mrm{H}^0(K,V)$. In particular, when $\mrm{D}_\mrm{cris}(V)^{\varphi=1}=0$ we can write 
\[
\mrm{AJ}_{\mrm{syn},D}=\log_\mrm{BK}\circ\mrm{AJ}_{p,V}^\mrm{et}\qquad\text{where}\qquad \log_\mrm{BK}=\exp_\mrm{BK}^{-1}.
\]
\end{remark}
\noindent If we let $D^*(1)$ denote the Tate dual of $D$, which is a submodule of $D^{2(d-i)+1}(X_h)(d+1-i)$, then $D_K/\mrm{Fil}^0=\big(\mrm{Fil}^0D^*(1)_K\big)^\vee$ and we can write 
\begin{equation}
	\mrm{AJ}_{\mrm{syn},D}: \mrm{CH}^i(X)_0\longrightarrow \big(\mrm{Fil}^0D^*(1)_K\big)^\vee.
\end{equation}

\subsubsection{Evaluation using $P$-syntomic cohomology.} Let $\Delta\in\mrm{CH}^i(X)_0$ be a null-homologous cycle. For any class 
\[
\eta\in\mrm{Fil}^0D^*(1)_K\subset\mrm{Fil}^{d+1-i}\mrm{H}^{2(d-i)+1}_\mrm{dR}(X_h),
\] choose a polynomial $P(T)\in 1+TK[T]$ such that $P(1)\not=0$, $P(q^{-1})\not=0$ and $\eta\in\mrm{H}^0_{\mrm{st},P}(D^*(1))$ (\cite{BLZ} Proposition 1.4.3). Suppose that $\eta$ is in the kernel of the "knight's move" map \[\mrm{H}^0_{\mrm{st},P}\big(D^{2(d-i)+1}(X_h)(d+1-i)\big)\longrightarrow \mrm{H}^2_{\mrm{st},P}\big(D^{2(d-i)}(X_h)(d+1-i)\big),\]
so that $\eta$ can be lifted to syntomic cohomology. Then, for any lift $\tilde{\eta}\in \mrm{H}^{2(d-i)+1}_{\mrm{syn},P}\big(X_h,d+1-i\big)$ we can write
\begin{equation}\label{evaluation}
	\mrm{AJ}_{\mrm{syn},D}(\Delta)(\eta)=\mrm{tr}_{X,\mrm{syn},P}\big(\mrm{cl}_\mrm{syn}(\Delta)\cup\tilde{\eta}\big)
\end{equation}
 thanks to the compatibility of the $P$-syntomic descent spectral sequence with cup products 
\[\resizebox{\displaywidth}{!}{\xymatrix{
\mrm{Fil}^1\mrm{H}^{2i}_{\mrm{syn},1-T}\big(X_h,i\big)\ar[d]& \times & \mrm{H}^{2(d-i)+1}_{\mrm{syn},P}\big(X_h,d+1-i\big)\ar[d]\ar[r]& \mrm{H}^{2d+1}_{\mrm{syn},P}\big(X_h,d+1\big)/\mrm{Fil}^2\cong K\ar[d]^\sim \\
\mrm{H}^1_{\mrm{st},1-T}\big(D^{2i-1}(X_h)(i)\big)\ar@{->>}[d]& \times & \mrm{H}^0_{\mrm{st},P}\big(D^{2(d-i)+1}(X_h)(d+1-i)\big)\ar[r]& \mrm{H}^1_{\mrm{st},P}\big(\mbf{D}_\mrm{pst}(\bb{Q}_p(1))\big)\cong K\ar@{=}[d]\\
\mrm{H}^1_{\mrm{st},1-T}(D) &\times& \mrm{H}^0_{\mrm{st},P}(D^*(1))\ar@{^{(}->}[u]\ar[r]& \mrm{H}^1_{\mrm{st},P}\big(\mbf{D}_\mrm{pst}(\bb{Q}_p(1))\big)\cong K.
}}\]

\subsection{Abel--Jacobi map of Hirzebruch--Zagier cycles}
For every arithmetic point $\mrm{P}\in \cal{A}_{\bs{\chi}}(\mbf{I}_\scr{G})$ of weight 2 and level $p^\alpha$, the Galois representation $\mrm{V}_{\msf{g}_\mrm{P}}$ is crystalline as a $\Gamma_{\bb{Q}_p(\zeta_{p^\alpha})}$-representation. Throughout this subsection, we will consider all our geometric structures, including the moduli schemes and the cycles, to be defined over $F_\alpha:=\bb{Q}_p(\zeta_{p^\alpha})$. Similarly, we will regard all Galois representations, as well as Dieudonn\'e functors, to be defined with respect to the absolute Galois group $\Gamma_{F_\alpha}$.

\noindent The specialization at $\mrm{P}$ of 
 $\bs{\cal{V}}_{\scr{G},\msf{f}_\circ}(M)$ 
is a quotient of $\mrm{H}_\et^3(Z_\alpha(K)_{\overline{F}},\bb{Q}_p(2))$ such that
\begin{equation}
	D_{\msf{g}_\mrm{P},\msf{f}_\circ}
:=
\mbf{D}_\mrm{pst}\big(\cal{V}_{\scr{G}_\mrm{P},\mrm{f}_\circ}(M)\big)
\end{equation}
is a convenient quotient of $D^3(Z_\alpha(K))(2)$. Furthermore, 
$D_{\msf{g}_\mrm{P},\msf{f}_\circ}\cong (D_{\msf{g}_\mrm{P},\msf{f}_\circ})^*(1)$ since the Galois representation $\cal{V}_{\scr{G}_\mrm{P},\mrm{f}_\circ}(M)$ is Kummer self-dual.

\begin{definition}
Let 
$\omega_\mrm{P} \in e_\mrm{n.o.}
\mrm{Fil}^2\mrm{H}^2_\mrm{dR}(S(K_{\diamond,t}(p^\alpha))/F_\alpha)$
be the de Rham cohomology class associated with the specialization 
\[
\breve{\msf{g}}_\mrm{P}\in S^\mrm{n.o.}_{2t_L,t_L}\big(K_{\diamond,t}(p^\alpha);\chi_\circ\theta_L^{-1}\chi^{-1},\mathbbm{1};O\big)
\] of the $\mbf{K}_\scr{G}$-adic cuspform $\breve{\scr{G}}$.
\end{definition}

\noindent Recall the class $\eta_\circ$ of Definition $\ref{def eta}$, then the tensor product $(\lambda_\alpha)^*\omega_\mrm{P}\otimes\eta_\circ$ belongs to the convenient $(\varphi, N,\Gamma_{F_\alpha})$-module $(D_{\msf{g}_\mrm{P},\msf{f}_\circ})^*(1)_{F_\alpha}\cong (D_{\msf{g}_\mrm{P},\msf{f}_\circ})_{F_\alpha}$ and it makes sense to try to evaluate
\[
\mrm{AJ}_{\mrm{syn}}\big(\Delta_\alpha^\circ\big)\big((\lambda_\alpha)^*\omega_\mrm{P}\otimes\eta_\circ\big)
=\mrm{AJ}_{\mrm{syn}}\Big((\lambda_\alpha,\mrm{id})_*\Delta_\alpha^\circ\Big)\big(\omega_\mrm{P}\otimes\eta_\circ\big).
\]
We use the functoriality of the formation of the syntomic Abel--Jacobi map to move the computation on a Hilbert--Blumenthal variety where the theory of overconvergent $p$-adic Hilbert cuspforms with level at $p$ was developed by Kisin and Lai in (\cite{Kisin-Lai}).
\begin{lemma}\label{ablemma}
    The following equality holds
    \[
    \lambda_\alpha \circ w_{\mathfrak{p}_2^\alpha} 
    =
    \langle\varpi_{\mathfrak{p}_2}^\alpha,1\rangle \circ w_{\mathfrak{p}_2^\alpha} \circ \lambda_\alpha.
    \]
\end{lemma}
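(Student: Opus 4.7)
The plan is to verify the equation by a direct computation on adelic coset representatives for points of the Shimura variety. Using that the isomorphism $\nu_\alpha$ is the identity on the complex uniformization (Corollary \ref{proj-nu_alpha-commute}), I would interpret $w_{\mathfrak{p}_2^\alpha}$ as right translation by $\tau_{\mathfrak{p}_2^\alpha}^{-1}$, and unpack $\lambda_\alpha$ as the composition of right translation by $\tau_\alpha^{-1}$ with the scalar normalization $(-)^*$, namely $[x,h]\mapsto[x,\det(h\tau_\alpha^{-1})^{-1}\cdot h\tau_\alpha^{-1}]$. Similarly, $\langle\varpi_{\mathfrak{p}_2}^\alpha,1\rangle$ is right translation by the central element $\mathrm{diag}(\varpi_{\mathfrak{p}_2}^{-\alpha},\varpi_{\mathfrak{p}_2}^{-\alpha})$.

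Expanding the two sides of the identity on an arbitrary representative $[x,h]$ and using that central scalars commute with every matrix in $G(\bb{A}_f)$, the $h$- and $\det(h)$-dependent terms cancel after collecting the determinantal factors $\det(\tau_\alpha) = m_\alpha$ and $\det(\tau_{\mathfrak{p}_2^\alpha}) = \varpi_{\mathfrak{p}_2}^\alpha$, and the two expressions are seen to differ by the scalar $\varpi_{\mathfrak{p}_2}^{2\alpha}$ multiplied by the commutator $\tau_{\mathfrak{p}_2^\alpha}^{-1}\tau_\alpha^{-1}\tau_{\mathfrak{p}_2^\alpha}\tau_\alpha$. The heart of the proof is a direct $2\times 2$ matrix multiplication at $\mathfrak{p}_2$: using the isomorphism $\cal{O}_{L,\mathfrak{p}_2}\simeq\bb{Z}_p$ with $\varpi_{\mathfrak{p}_2}=p$ and $m_\alpha=Mp^\alpha$, the commutator equals $\mathrm{diag}(M,M^{-1})$ at $\mathfrak{p}_2$ and is trivial at every other finite place, since $\tau_{\mathfrak{p}_2^\alpha}$ is the identity there.

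It remains to check that the resulting discrepancy $\varpi_{\mathfrak{p}_2}^{2\alpha}\cdot\mathrm{diag}(M,M^{-1})$, concentrated at $\mathfrak{p}_2$, is absorbed by the $G(\bb{Q})_+\cdot K_{\diamond,t}(p^\alpha)$-action defining the Shimura variety. This relies on $M$ being coprime to $p$, so that $\mathrm{diag}(M,M^{-1})$ meets the congruence conditions defining $K_{\diamond,t}(p^\alpha)$—namely $a_{\mathfrak{p}_1}d_{\mathfrak{p}_1}\equiv a_{\mathfrak{p}_2}d_{\mathfrak{p}_2}$ and $d_{\mathfrak{p}_1}d_{\mathfrak{p}_2}\equiv 1\pmod{p^\alpha}$—after pulling out the global rational scalar into $G(\bb{Q})_+$ and absorbing the central factor $\varpi_{\mathfrak{p}_2}^{2\alpha}$ on the right.

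The main obstacle is the careful bookkeeping of the determinantal scalars arising from the two applications of $(-)^*$ inside $\lambda_\alpha$, and ensuring that their interplay with the commutator at $\mathfrak{p}_2$ reproduces exactly the diamond operator $\langle\varpi_{\mathfrak{p}_2}^\alpha,1\rangle$ without introducing spurious units modulo $p^\alpha$. Once the local matrix identity at $\mathfrak{p}_2$ is in hand, the rest of the argument is a mechanical check that the residual adelic factor lies in the congruence subgroup.
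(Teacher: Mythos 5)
Your route is the one the paper itself gestures at (a direct check on complex points, where Corollary \ref{proj-nu_alpha-commute} lets you suppress $\nu_\alpha$ and read $w_{\mathfrak{p}_2^\alpha}$ as right translation by $\tau_{\mathfrak{p}_2^\alpha}^{-1}$; the paper merely isolates the commutation of $\lambda_\alpha$ with $\nu_\alpha$, via $\det\tau_\alpha=Mp^\alpha\in\bb{Q}^\times_+$, as a separate step). The gap is in your final ``absorption'' step, which is exactly where all the content of the lemma sits. A central idele such as $\varpi_{\mathfrak{p}_2}^{2\alpha}$, supported only at $\mathfrak{p}_2$, cannot be ``absorbed on the right'': it lies neither in $K_{\diamond,t}(p^\alpha)$ (its $\mathfrak{p}_2$-entries are not units) nor in the centre $L^\times$ of $G(\bb{Q})_+$, and right translation by such an element is precisely a nontrivial diamond operator --- the very kind of operator the statement is designed to keep track of. In a consistent bookkeeping, the factor $\det(\tau_{\mathfrak{p}_2^\alpha})^{-1}=\varpi_{\mathfrak{p}_2}^{-\alpha}$ created by $(-)^*$ on the left-hand side is what the operator $\langle\varpi_{\mathfrak{p}_2}^\alpha,1\rangle$ on the right-hand side is there to match; if after collecting determinants you are still left with a net $\varpi_{\mathfrak{p}_2}^{2\alpha}$, that is either a sign/inverse slip in the computation or an unproved identification, and in neither case can it be waved into the group action.

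The unit part of the discrepancy is not handled either. The commutator of $\tau_\alpha$ and $\tau_{\mathfrak{p}_2^\alpha}$ is $\mathrm{diag}(M^{\pm 1},M^{\mp 1})$ at $\mathfrak{p}_2$ and the identity at every other finite place, in particular at $\mathfrak{p}_1$. For such an element the second condition defining $K_{\diamond,t}(p^\alpha)$ reads $d_{\mathfrak{p}_1}d_{\mathfrak{p}_2}=M^{\mp 1}\equiv 1\pmod{p^\alpha}$, which is not implied by $p\nmid M$; so the element does not lie in $K_{\diamond,t}(p^\alpha)$ as you claim. Nor does ``pulling out the global rational scalar into $G(\bb{Q})_+$'' rescue this: $\mathrm{diag}(M,M^{-1})$ is not a scalar, a non-central element of $G(\bb{Q})_+$ cannot be moved across the representative without also moving the point of $\mathfrak{H}^2$ (so it is unavailable when proving an identity of morphisms), and extracting a genuine scalar $M^{\pm 1}$ changes the element at all the remaining finite places, where it then violates the tame level condition at primes dividing $M$. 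So the assertion that the residual adelic factor lies in $G(\bb{Q})_+\cdot K_{\diamond,t}(p^\alpha)$ is the one step that genuinely needs an argument, and as written it is not correct; this is where your proposal falls short of a proof.
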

\begin{proof}
  By definition 
    $w_{\mathfrak{p}_2^\alpha} 
    =
    \mathfrak{T}_{\tau_{\mathfrak{p}_2}} \circ \nu_\alpha$
    and
    $
    \lambda_\alpha = (-)^* \circ \mathfrak{T}_{\tau_\alpha}
    $, thus by a direct calculation using complex uniformizations one sees that
    \[
    \lambda_\alpha\circ \mathfrak{T}_{\tau_{\mathfrak{p}_2}}
    =
    \langle\varpi_{\mathfrak{p}_2}^\alpha,1\rangle\circ \mathfrak{T}_{\tau_{\mathfrak{p}_2}}\circ\lambda_\alpha.
    \]
  The claim follows because $\nu_\alpha\circ\lambda_\alpha=\lambda_\alpha\circ\nu_\alpha$ as the determinant of $\tau_\alpha$ defining $\lambda_\alpha$ is $Mp^\alpha\in\bb{Q}^\times_+$.
\end{proof}
\noindent The diagonal embedding $\zeta:Y(K'_0(p^\alpha))\to S(K_\diamond(p^\alpha))$ naturally factors 
\[\xymatrix{
Y(K'_0(p^\alpha))\ar[r]^\zeta\ar[dr]_\zeta& S^*(K^*_\diamond(p^\alpha))\ar@{.>}[d]^\xi\\
&S(K_\diamond(p^\alpha))
}\]
through a map to a Hilbert--Blumenthal variety $\zeta:Y(K'_0(p^\alpha))\to S^*(K^*_\diamond(p^\alpha))$, denoted by the same symbol. 
\begin{lemma}\label{TwistedCycle2}
Let $Z^*_\diamond(p^\alpha):=S^*(K^*_\diamond(p^\alpha))\times X_{1,0}(N,p)$ and consider the null-homologous cycle 
\[
\Xi_\alpha^\circ:=(\mrm{id}, \varepsilon_{\msf{f}^*_\circ})_*( \lambda_\alpha\circ \zeta,  \pi_{1,\alpha})_* [Y(K'_0(p^\alpha))]\in\mrm{CH}^2(Z^*_\diamond(p^\alpha))_0(F_\alpha)\otimes_\bb{Z}\bb{Z}_p
\] then
   \[
    (\lambda_\alpha, \mrm{id})_*\Delta^\circ_\alpha= (   w_{\mathfrak{p}_2^\alpha}\circ\xi,  \mrm{id})_* \Xi_\alpha^\circ.
   \]
\end{lemma}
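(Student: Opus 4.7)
The argument is a direct manipulation of maps between Shimura varieties, combining Lemma \ref{ablemma} with the factorization of the diagonal embedding $\zeta$ through the Hilbert-Blumenthal variety $S^*(K^*_\diamond(p^\alpha))$. Since the correspondence $\varepsilon_{\msf{f}_\circ}$ acts only on the $X_0(p)$-factor, it commutes with $\lambda_\alpha$; unraveling the definition of $\Delta_\alpha^\circ$ yields
\[
(\lambda_\alpha,\mrm{id})_*\Delta_\alpha^\circ
=
(\mrm{id},\varepsilon_{\msf{f}_\circ})_*\bigl(\lambda_\alpha\circ\langle\varpi_{\mathfrak{p}_2}^\alpha,1\rangle\circ w_{\mathfrak{p}_2^\alpha}\circ\zeta,\ \pi_{1,\alpha}\bigr)_*[Y(K'_0(p^\alpha))].
\]

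The first task is to simplify $\lambda_\alpha\circ\langle\varpi_{\mathfrak{p}_2}^\alpha,1\rangle\circ w_{\mathfrak{p}_2^\alpha}$ to $w_{\mathfrak{p}_2^\alpha}\circ\lambda_\alpha$. A short calculation with $\lambda_\alpha=(-)^*\circ\mathfrak{T}_{\tau_\alpha}$ shows that for every central element $z\in Z_G(\bb{A}_f)$ one has $\lambda_\alpha\circ\langle z\rangle=\langle z^{-1}\rangle\circ\lambda_\alpha$, because the involution $g\mapsto\det(g)^{-1}g$ sends $gz$ to $\det(g)^{-1}g\cdot z^{-1}$. Combined with Lemma \ref{ablemma} this gives
\[
\lambda_\alpha\circ\langle\varpi_{\mathfrak{p}_2}^\alpha,1\rangle\circ w_{\mathfrak{p}_2^\alpha}
=\langle\varpi_{\mathfrak{p}_2}^{-\alpha},1\rangle\circ\langle\varpi_{\mathfrak{p}_2}^\alpha,1\rangle\circ w_{\mathfrak{p}_2^\alpha}\circ\lambda_\alpha
=w_{\mathfrak{p}_2^\alpha}\circ\lambda_\alpha,
\]
so the normalization $\langle\varpi_{\mathfrak{p}_2}^\alpha,1\rangle$ built into $\Delta_\alpha$ exactly cancels the diamond produced by Lemma \ref{ablemma}.

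The second step uses that $\zeta$ factors as $\xi\circ\zeta_{S^*}$ through $S^*(K^*_\diamond(p^\alpha))$ and that $\lambda_\alpha$ is given by the same adelic recipe on both varieties, so $\lambda_\alpha\circ\xi=\xi\circ\lambda_\alpha$. Feeding this in and commuting $(\mrm{id},\varepsilon_{\msf{f}_\circ})_*$ past $(w_{\mathfrak{p}_2^\alpha}\circ\xi,\mrm{id})_*$ produces
\[
(\lambda_\alpha,\mrm{id})_*\Delta_\alpha^\circ
=(w_{\mathfrak{p}_2^\alpha}\circ\xi,\,\mrm{id})_*(\mrm{id},\varepsilon_{\msf{f}_\circ})_*\bigl(\lambda_\alpha\circ\zeta_{S^*},\,\pi_{1,\alpha}\bigr)_*[Y(K'_0(p^\alpha))].
\]
The only remaining point is to replace $\varepsilon_{\msf{f}_\circ}$ by $\varepsilon_{\msf{f}_\circ^*}$; in the setting relevant for a rational elliptic curve $E/\bb{Q}$, the form $\msf{f}_\circ=\msf{f}_E$ has real Hecke eigenvalues so $\msf{f}_\circ^*=\msf{f}_\circ$ and the two projectors literally coincide, while in general the substitution is justified via the Atkin-Lehner duality between $T(\ell)$ and its transpose on the cohomology component contributing to the pushforward. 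I expect this last bookkeeping step to be the only non-formal aspect of the proof.
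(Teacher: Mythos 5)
Your argument is correct and takes essentially the same route as the paper's proof: unravel $\Delta_\alpha^\circ$, push $\lambda_\alpha$ past the central diamond $\langle\varpi_{\mathfrak{p}_2}^\alpha,1\rangle$ (picking up its inverse), cancel it against the diamond produced by Lemma \ref{ablemma}, and then factor $\zeta$ through $\xi$ using that $\tau_\alpha$ lies in $G^*(\bb{A}_f)$ so $\lambda_\alpha$ commutes with $\xi$. Your explicit discussion of replacing $\varepsilon_{\msf{f}_\circ}$ by $\varepsilon_{\msf{f}_\circ^*}$ is in fact more careful than the paper, which makes the same substitution silently in its chain of equalities; as you note, in the intended application $\msf{f}_\circ=\msf{f}_E$ has trivial nebentypus and rational eigenvalues, so the two projectors coincide.
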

\begin{proof}
Using Lemma $\ref{ablemma}$ we compute
\[
\begin{split}
(\lambda_\alpha, \mrm{id})_*\Delta^\circ_\alpha
 &=
 (\lambda_{\alpha}, \mrm{id})_*(\mrm{id}, \varepsilon_{\msf{f}_\circ})_* (\langle\varpi_{\mathfrak{p}_2}^\alpha,1\rangle \circ w_{\mathfrak{p}_2^\alpha} \circ \zeta,\ \pi_{1,\alpha})_*[Y(K'_0(p^\alpha))] \\
 &=
 (\mrm{id}, \varepsilon_{\msf{f}^*_\circ})_*(\langle\varpi_{\mathfrak{p}_2}^{-\alpha},1\rangle \circ \lambda_\alpha \circ w_{\mathfrak{p}_2^\alpha}\circ \zeta,\ \pi_{1,\alpha})_*[Y(K'_0(p^\alpha))] \\
 &=
 (\mrm{id}, \varepsilon_{\msf{f}^*_\circ})_*(\langle\varpi_{\mathfrak{p}_2}^{-\alpha},1\rangle \circ \langle\varpi_{\mathfrak{p}_2}^{\alpha},1\rangle \circ   w_{\mathfrak{p}_2^\alpha} \circ \lambda_\alpha \circ \zeta,\  \pi_{1,\alpha})_*[Y(K'_0(p^\alpha))] \\
 &= 
  (\mrm{id}, \varepsilon_{\msf{f}^*_\circ})_*(   w_{\mathfrak{p}_2^\alpha}  \circ\lambda_\alpha\circ \zeta,\  \pi_{1,\alpha})_* [Y(K'_0(p^\alpha))] \\
 &= 
 ( w_{\mathfrak{p}_2^\alpha}\circ\xi,\  \mrm{id})_* \Xi_\alpha^\circ.
\end{split}
\]
\end{proof}
 
\noindent Therefore we are left to compute the right-hand side of the following equation
\begin{equation}\label{firstreduction}
\mrm{AJ}_{\mrm{syn}}\Big((\lambda_\alpha,\mrm{id})_*\Delta_\alpha^\circ\Big)\big(\omega_\mrm{P}\otimes\eta_\circ\big)= \mrm{AJ}_{\mrm{syn}}\big(\Xi^\circ_\alpha\big)\big(\omega^\diamond_\mrm{P}\otimes\eta_\circ\big)
\end{equation}
where the de Rham cohomology class 
\[
\omega^\diamond_\mrm{P}:=(w_{\frak{p}_2^\alpha}\circ\xi)^*\omega_\mrm{P}\in\mrm{Fil}^2\mrm{H}^2_\mrm{dR}(S^*(K^*_\diamond(p^\alpha))/F_\alpha)
\]
is associated with the cuspform
\begin{equation}\label{def prop cuspform}
\breve{\msf{g}}_\mrm{P}^\diamond:=(\nu_\alpha\circ\xi)^*(\breve{\msf{g}}_\mrm{P}\lvert\tau^{-1}_{\frak{p}_2^\alpha})\in S_{2t_L,t_L}\big(K^*_\diamond(p^\alpha);O\big).
\end{equation}

\subsubsection{Back to syntomic cohomology.} 
 If $R(T), Q(T)\in 1+T\cdot F_\alpha[T]$ are polynomials such that the cohomology classes
 $R(p^{-2}\Phi)\omega^\diamond_\mrm{P}$ and $Q(\Phi)\eta_\circ$ are zero, then
\[
\omega^\diamond_\mrm{P}
\in
\mrm{H}^0_{\mrm{st},R}\big(D^2(S^*(K^*_{\diamond}(p^\alpha)))(2)\big),\qquad 
\eta_\circ
\in
\mrm{H}^0_{\mrm{st},Q}\big(D^1(X_{1,0}(N,p))\big)
\]
and it is often possible to lift them to syntomic cohomology.
\begin{lemma}
	Suppose that $Q(p)\not=0$, then there exists lifts
	\[
	\tilde{\omega}^\diamond_\mrm{P} \in \mrm{H}^2_{\mrm{syn},R}(S^*(K^*_{\diamond}(p^\alpha)),2),\qquad\tilde{\eta}_\circ\in\mrm{H}^1_{\mrm{syn},Q}(X_{1,0}(N,p),0)
	\]
	of $\omega^\diamond_\mrm{P}$ and $\eta_\circ$ to syntomic cohomology.
\end{lemma}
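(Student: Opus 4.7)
My plan is to apply the $P$-syntomic descent spectral sequence of (\cite{BLZ}, Theorem 2.1.2),
\[
E_2^{i,j}=\mrm{H}^i_{\mrm{st},P}\big(D^j(X_h)(r)\big)\implies \mrm{H}^{i+j}_{\mrm{syn},P}(X_h,r),
\]
separately to the Hilbert--Blumenthal surface $X=S^*(K^*_\diamond(p^\alpha))$ with $r=2$, $P=R$ and to the modular curve $X=X_0(p)$ with $r=0$, $P=Q$. By the choices of $R$ and $Q$, the classes $\omega^\diamond_\mrm{P}$ and $\eta_\circ$ already live in the $E_2^{0,2}$ and $E_2^{0,1}$ positions of the respective spectral sequences. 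Consequently the only obstruction to lifting each one to the full syntomic cohomology is the \emph{knight's move} differential
\[
d_2\colon E_2^{0,j}\longrightarrow E_2^{2,j-1},
\]
and it would suffice in each case to show that the target group vanishes on the relevant class.

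The curve case is a short direct computation. Since $X_0(p)$ has good reduction, $\mrm{H}^0_\et(X_0(p),\bb{Q}_p)$ is a trivial unramified Galois representation, so the filtered $(\varphi,N,\Gamma_{F_\alpha})$-module $D^0(X_0(p))$ has $N=0$, trivial $\Gamma_{F_\alpha}$-action, $\Phi$ acting as the identity on $D^0_{\mrm{st},F_\alpha}$, and Hodge filtration concentrated in degree zero. Unwinding $C^{\bfcdot}_{\mrm{st},Q}\big(D^0(X_0(p))\big)$, the second differential $(w,x,y)\mapsto Nw-Q(q\Phi)x$ reduces to multiplication by $-Q(p)$ on $D^0_{\mrm{st},F_\alpha}$. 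Since $Q(p)\neq 0$ by hypothesis this map is surjective, so $\mrm{H}^2_{\mrm{st},Q}(D^0(X_0(p)))=0$ and $\eta_\circ$ admits a syntomic lift $\tilde\eta_\circ$.

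For the Hilbert--Blumenthal surface I plan to project onto the $\breve{\msf{g}}_\mrm{P}^\diamond$-Hecke isotypic component; by Hecke-equivariance of the spectral sequence it suffices to show the target vanishes after projection. On that component $\mrm{H}^1(S^*(K^*_\diamond(p^\alpha)))$ is controlled by Saito--Skinner's purity theorem for Hilbert modular varieties (already invoked in the proof of Proposition \ref{classINsel}): $\mrm{H}^1$ is pure of weight $1$, while the Frobenius eigenvalues determining the roots of $R$ come from the weight $(2t_L,t_L)$ cuspform $\breve{\msf{g}}_\mrm{P}^\diamond$ realized in $\mrm{H}^2$. Matching $p$-adic absolute values then shows that $R(q\Phi)=R(p\Phi)$ acts invertibly on the relevant piece of $D^1(S^*)(2)_\mrm{st}$, which forces the cokernel $\mrm{H}^2_{\mrm{st},R}(D^1(S^*)(2))$ to vanish on this isotypic component, so $\omega^\diamond_\mrm{P}$ admits the sought-after lift $\tilde\omega^\diamond_\mrm{P}$. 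The main obstacle is exactly this last absolute-value comparison: because the level $K^*_\diamond(p^\alpha)$ is non-hyperspecial at $p$, the Frobenius spectrum on $\mrm{H}^1$ is not immediately transparent and its control will require a careful analysis of the Hecke decomposition at $p^\alpha$ combined with the $p$-adic valuations of the Hecke eigenvalues of $\breve{\msf{g}}_\mrm{P}^\diamond$.
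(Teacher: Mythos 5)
Your treatment of the modular curve is correct and is exactly the paper's computation: the obstruction group is $\mrm{H}^2_{\mrm{st},Q}\big(D^0(X_0(p))\big)=\mrm{H}^2_{\mrm{st},Q}\big(\mbf{D}_\mrm{pst}(\bb{Q}_p)\big)\cong F_\alpha/Q(p)F_\alpha$, which vanishes precisely because $Q(p)\neq0$.

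The surface half, however, has a genuine gap, and it is the step you yourself flag as the ``main obstacle.'' Your plan is to kill $\mrm{H}^2_{\mrm{st},R}\big(D^1(S^*(K^*_\diamond(p^\alpha)))(2)\big)$ on the $\breve{\msf{g}}^\diamond_\mrm{P}$-isotypic part by a purity/absolute-value comparison, but the inputs you cite do not deliver this: the Saito--Skinner results invoked in Proposition \ref{classINsel} concern the weight-monodromy property for the \emph{middle-degree} (interior) cohomology of Hilbert modular varieties, not $\mrm{H}^1$ of the open surface; and for an open smooth variety $\mrm{H}^1$ is in general mixed (weights $1$ and $2$, with boundary contributions), so ``$\mrm{H}^1$ is pure of weight $1$'' is unjustified, especially at the deep non-hyperspecial level $K^*_\diamond(p^\alpha)$ where the semistable Frobenius structure on $D^1$ is exactly what you admit you cannot control. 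As written, the lifting of $\omega^\diamond_\mrm{P}$ is therefore not proved. The paper sidesteps all of this: since $\omega^\diamond_\mrm{P}$ is attached to a cuspform it extends to the minimal resolution $S^*(K^*_\diamond(p^\alpha))^c$ of the Baily--Borel compactification, so it suffices to lift there and restrict; and that compactified Hilbert modular surface is simply connected, hence $\mrm{H}^1_\et=0$, so $D^1\big(S^*(K^*_\diamond(p^\alpha))^c\big)=0$ and the obstruction group $\mrm{H}^2_{\mrm{st},R}\big(D^1(\cdot)(2)\big)$ vanishes for trivial reasons, with no analysis of Frobenius eigenvalues needed. If you prefer to stay on the open surface, you would instead need the (true, but separate) vanishing of $\mrm{H}^1$ of the open Hilbert--Blumenthal surface, e.g.\ via finiteness of the abelianization of the relevant arithmetic groups --- some such input replacing the purity claim is indispensable.
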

\begin{proof}
Let $S^*(K^*_{\diamond}(p^\alpha))^c$ denote the minimal resolution of the Baily-Borel compactification  of $S^*(K^*_{\diamond}(p^\alpha))$. As the class $\omega^\diamond_\mrm{P}$ extends to the smooth compactification (\cite{Geer}, Chapter III, Proposition 3.7), it suffices to show $\omega^\diamond_\mrm{P}$ can be lifted to the syntomic cohomology of $S^*(K^*_{\diamond}(p^\alpha))^c$. The algebraic surface $S^*(K^*_{\diamond}(p^\alpha))^c$ is simply connected, hence $\mrm{H}^2_{\mrm{st},R}\big(D^1(S^*(K^*_{\diamond}(p^\alpha))^c)(2)\big)=0$ and the descent spectral sequence (\cite{BLZ} Theorem 2.1.2) produces a surjection
\[\xymatrix{
\mrm{H}^2_{\mrm{syn},R}(S^*(K^*_{\diamond}(p^\alpha))^c,2)
\ar@{->>}[r]&
\mrm{H}^0_{\mrm{st},R}\big(D^2(S^*(K^*_{\diamond}(p^\alpha))^c)(2)\big)
}\]
which proves the first claim. In the modular curve case, we compute that
\[
\mrm{H}^2_{\mrm{st},Q}\big(D^0(X_{1,0}(N,p))\big)=\mrm{H}^2_{\mrm{st},Q}\big(\mbf{D}_{\mrm{pst}}(\bb{Q}_p)\big)=F_\alpha/Q(p)F_\alpha
\]
is zero if $Q(p)\not=0$. Hence,  there is a surjection
$\mrm{H}^1_{\mrm{syn},Q}(X_{1,0}(N,p),0)\twoheadrightarrow \mrm{H}^0_{\mrm{st},Q}\big(D^1(X_{1,0}(N,p))\big)$ whenever $Q(p)$ is non-zero.
\end{proof}

\noindent Out of $R(T)$ and $Q(T)$ we can form the polynomial $(R\star Q)(T)\in 1+T\cdot F_\alpha[T]$ whose roots are all the products of a root of $R(T)$ with a root of $Q(T)$.
It follows from equations (\ref{evaluation}) and (\ref{firstreduction}) that if $(R\star Q)(1)\not=0$ and $(R\star Q)(p^{-1})\not=0$ we can evaluate the syntomic Abel--Jacobi map as
\[
\mrm{AJ}_{\mrm{syn}}\Big((\lambda_\alpha,\mrm{id})_*\Delta_\alpha^\circ\Big)\big(\omega_\mrm{P}\otimes\eta_\circ\big)= \mrm{tr}_{Z^*_\diamond(p^\alpha),R\star Q}\Big(\mrm{cl}_\mrm{syn}\big(\Xi_\alpha^\circ\big)\cup\big(\tilde{\omega}^\diamond_\mrm{P}\otimes\tilde{\eta}_\circ\big)\Big).
\]
Moreover, the projection formula (\cite{BLZ}, Theorem 2.5.3) computes the syntomic trace for $Z_\diamond(p^\alpha)$ as a syntomic trace for the curve $Y_\alpha=Y(K'_\diamond(p^\alpha))$
\begin{equation}\label{AJ101}
\mrm{AJ}_{\mrm{syn}}\Big((\lambda_\alpha,\mrm{id})_*\Delta_\alpha^\circ\Big)\big(\omega_\mrm{P}\otimes\eta_\circ\big)=\Big\langle (\zeta\circ \lambda_\alpha)^*\tilde{\omega}^\diamond_\mrm{P},\ (\pi_{1,\alpha})^*\tilde{\eta}_\circ\Big\rangle_{Y_\alpha,R\star Q}
	\end{equation}
where we used the equality $(\varepsilon_{\msf{f}_\circ})^*\eta_\circ=\eta_\circ$.

\subsubsection{Explicit cup product formulas.}\label{cupproduct}
To continue the computation of syntomic regulators it is convenient to make the choice of lifts more explicit. According to (\cite{BLZ} Section 2.4) any lift $\tilde{\omega}^\diamond_{\mrm{P}}\in \mrm{H}^2_{\mrm{syn},R}(S^*(K^*_\diamond(p^\alpha)),2)$ of $\omega_\mrm{P}^\diamond$ can be described by a tuple $[u,v;w,x,y;z]$ where   
\begin{center}\begin{tabular}{lllll}
$u\in\bb{R}\Gamma^{B,2}_\mrm{HK}(S^*(K^*_\diamond(p^\alpha)))$, &&$v\in \mrm{Fil}^2\hspace{1mm}\bb{R}\Gamma^{2}_\mrm{dR}(S^*(K^*_\diamond(p^\alpha)))$, &&$z\in\bb{R}\Gamma^{B,0}_\mrm{HK}(S^*(K^*_\diamond(p^\alpha)))$,\\
\\
$w,x\in \bb{R}\Gamma^{B,1}_\mrm{HK}(S^*(K^*_\diamond(p^\alpha)))$,&&$y\in \bb{R}\Gamma^{1}_\mrm{dR}(S^*(K^*_\diamond(p^\alpha)))$,&&\\
\end{tabular}\end{center}
satisfy the relations
\begin{center}
    \begin{tabular}{lllll}
$du=0$,& &$dv=0$,& &$dw=R(p^{-2}\Phi)u$,\\
$dx=Nu$, &&$dy=\iota_\mrm{dR}^B(u)-v$, &&$dz=Nw-R(p^{-1}\Phi)x$.
\end{tabular}
	\end{center}
We choose a lift $\tilde{\omega}^\diamond_{\mrm{P}}$ such that $\iota^B_\mrm{dR}(u)=v=\omega^\diamond_\mrm{P}$ so that we can assume $y=0$. Then, we obtain
\[
(\zeta\circ \lambda_\alpha)^*\tilde{\omega}^\diamond_\mrm{P}=\big[0,0;(\zeta\circ \lambda_\alpha)^*w,(\zeta\circ \lambda_\alpha)^*x,0;(\zeta\circ \lambda_\alpha)^*z\big]
\]
for dimension reasons. Similarly, any lift $\tilde{\eta}_\circ\in \mrm{H}^1_{\mrm{syn},Q}(X_{1,0}(N,p),0)$ of $\eta_\circ$ can be described by a tuple $[u',v';w',x',y';0]$ where 
\begin{center}\begin{tabular}{lll}
$u'\in\bb{R}\Gamma^{B,1}_\mrm{HK}(X_{1,0}(N,p))$, &&$v'\in \bb{R}\Gamma^{1}_\mrm{dR}(X_{1,0}(N,p))$,\\
\\
$w',x'\in \bb{R}\Gamma^{B,0}_\mrm{HK}(X_{1,0}(N,p))$,&&$y'\in \bb{R}\Gamma^{0}_\mrm{dR}(X_{1,0}(N,p))$
\end{tabular}\end{center}
satisfy the relations
\begin{center}
    \begin{tabular}{lllll}
$du'=0$,& &$dv'=0$,& &$dw'=Q(\Phi)u'$,\\
$dx'=Nu'$, &&$dy'=\iota_\mrm{dR}^B(u')-v'$. &&
\end{tabular}
	\end{center}
As one might expect, another tuple $[u'',v'';w'',x'',y'';z'']$ represents the cup product
\[ 
(\zeta\circ \lambda_\alpha)^*\tilde{\omega}^\diamond_\mrm{P}\cup (\pi_{1,\alpha})^*\tilde{\eta}_\circ\in \mrm{H}^3_{\mrm{syn},R\star Q}(Y_\alpha,2)\]
which can be described explicitly (\cite{BLZ}, Proposition 2.4.1). For our computation, the relevant entries are $w''$ and $y''$ (see \cite{BLZ}, Equation (2)).
Given polynomials  $a(T_1,T_2), b(T_1,T_2)$ with
	\[
	(R\star Q)(T_1T_2)=a(T_1,T_2)R(T_1)+b(T_1,T_2)Q(T_2),
	\]
we then compute that
		 \[
w''=a(p^{-2}\Phi,\Phi)((\zeta\circ \lambda_\alpha)^*w\cup (\pi_{1,\alpha})^*u')\qquad\&\qquad y''=0.
\]
 Therefore, combining ($\ref{AJ101}$) with the definition of the syntomic trace map (\cite{BLZ}, Definition 3.1.2) we obtain
\begin{equation}\label{AJformula1}
    \begin{split}
	\mrm{AJ}_{\mrm{syn}}\Big((\lambda_\alpha,\mrm{id})_*\Delta_\alpha^\circ\Big)\big(\omega_\mrm{P}\otimes\eta_\circ\big)&= \mrm{tr}_{Y_\alpha,R\star Q}\Big( (\zeta\circ \lambda_\alpha)^*\tilde{\omega}^\diamond_\mrm{P}\cup (\pi_{1,\alpha})^*\tilde{\eta}_\circ\Big)\\ 
		&=-\iota_\mrm{dR}^B\big((R\star Q)(\Phi)^{-1}w''\big)
	\\
		&=-\frac{a(p^{-2}\Phi,\alpha_{\msf{f}_\circ^*})}{(R\star Q)(p^{-1})} \cdot \big[( \lambda_\alpha)^*\zeta^*[\iota_\mrm{dR}^B(w)]\cup_\mrm{dR} (\pi_{1,\alpha})^*\eta_\circ\big]
    \end{split}
\end{equation}
where the last equality follows the facts that $\eta_\circ$ is a eigenvector for $\Phi$ of eigenvalue $\alpha_{\msf{f}_\circ^*}$ and the Frobenius endomorphism of $\mbf{D}_\mrm{pst}(\bb{Q}_p(1))_{\mrm{st},F_\alpha}$ is multiplication by $p^{-1}$.

\subsection{Relation to $p$-adic modular forms}\label{section AJ p-adic}
 The modular curve $X_{1,0}(N,p)$ admits a proper regular model $\scr{X}_{1,0}(N,p)_{/\bb{Z}_p}$ whose special fiber is the union of two curves, each isomorphic to the special fiber of $X(V_{1}(N))$. One writes $\{\scr{W}_\infty, \scr{W}_0\}$ for the standard admissible covering of $X_{1,0}(N,p)=\scr{X}_{1,0}(N,p)^\mrm{an}$ by wide open neighborhoods obtained as the inverse image under the specialization map of the two distinguished curves in the special fiber. The two opens are interchanged by the involution $\lambda_1:X_{1,0}(N,p)\to X_{1,0}(N,p)$ defined over $\bb{Q}_p(\zeta_p)$. 
 The de
Rham cohomology group $\mrm{H}^1_\mrm{dR}(X_{1,0}(N,p)/\bb{Q}_p)$ is endowed with an action of a Frobenius map $\Phi$ commuting with the $U_p$ operator and the ordinary unit root subspace
\[
\mrm{H}^1_\mrm{dR}(X_{1,0}(N,p)/\bb{Q}_p)^{\mrm{ord},\mrm{ur}}\subseteq e_\mrm{ord}\mrm{H}^1_\mrm{dR}(X_{1,0}(N,p)/\bb{Q}_p)
\]
is spanned by the eigenvectors of $\Phi$ whose eigenvalue is a $p$-adic unit. As in (\cite{DR2}, Lemma 4.2) the natural map induced by restriction
\[
\mrm{res}_{\scr{W}_\infty}:\mrm{H}^1_\mrm{dR}(X_{1,0}(N,p)/\bb{Q}_p)^{\mrm{ord},\mrm{ur}}\overset{0}{\longrightarrow} \mrm{H}^1_\mrm{rig}(\scr{W}_\infty)
\]  is trivial, while 
\[
\mrm{res}_{\scr{W}_0}:\mrm{H}^1_\mrm{dR}(X_{1,0}(N,p)/\bb{Q}_p)^{\mrm{ord},\mrm{ur}}[\phi]
\overset{\sim}{\longrightarrow} \mrm{H}^1_\mrm{rig}(\scr{W}_0)[\phi]
\]
is an isomorphism for any eigenform $\phi\in S_{2,1}(V_{1,0}(N,p);\overline{\bb{Q}})$.

\smallskip
 \noindent  For any $\alpha\ge2$  let $\scr{W}_\infty(p^\alpha)$ be the open subset $(\pi_{1,\alpha})^{-1}(\scr{W}_\infty)$ of $X(K'_0(p^\alpha))$. 
\begin{lemma}
If $\omega\in \mrm{H}^1_\mrm{dR}(X(K'_0(p^\alpha))/\bb{Q}_p)$, then the de Rham pairing 
\[
\big\langle (\lambda_\alpha)^*\omega,(\pi_{1,\alpha})^*\eta_\circ\big\rangle_{\mrm{dR}}=\big\langle e_\mrm{ord}\omega,(\pi_{2,\alpha})^*(\lambda_1)^*\eta_\circ\big\rangle_\mrm{dR}
\]
 depends only on the overconvergent cuspform associated to $\mrm{res}_{\scr{W}_\infty(p^\alpha)}\big(e_{\mrm{ord}}\omega\big)$.
\end{lemma}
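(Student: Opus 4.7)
The plan is to handle the two assertions in order: first the pairing identity, then the dependence only on the associated $p$-adic cuspform.

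For the displayed equality, the starting observation is that $\lambda_\alpha$ should satisfy the geometric commutation relation
\[
\pi_{1,\alpha}\circ\lambda_\alpha \;=\; \lambda_1\circ\pi_{2,\alpha},
\]
the curve-level analogue of the identities (\ref{Commute1})--(\ref{Commute2}). This follows from a direct adelic calculation using the definition $\lambda_\alpha=(-)^*\circ\mathfrak{T}_{\tau_\alpha}$ together with the descriptions of $\pi_{1,\alpha},\pi_{2,\alpha}$ as degeneracy maps, in the same spirit as Lemma~\ref{ablemma}. Since $\tau_\alpha^2$ is a scalar matrix, $\lambda_\alpha$ is an involution and $(\lambda_\alpha)_*=(\lambda_\alpha)^*$ on cohomology; consequently
\[
(\lambda_\alpha)_*(\pi_{1,\alpha})^*\eta_\circ \;=\; (\pi_{2,\alpha})^*(\lambda_1)^*\eta_\circ.
\]
Applying the adjunction $\langle(\lambda_\alpha)^*\omega,\xi\rangle_\mrm{dR}=\langle\omega,(\lambda_\alpha)_*\xi\rangle_\mrm{dR}$ and inserting $e_\mrm{ord}$ on the $\omega$-side—justified because $\eta_\circ\in\mrm{H}^1_\mrm{dR}(X_0(p))^{\mrm{ord},\mrm{ur}}$ and the pullback $(\pi_{2,\alpha})^*(\lambda_1)^*\eta_\circ$ is fixed by the dual ordinary projector (via the modular-curve analogue of (\ref{U_pAdjoint}))—yields the displayed equality.

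For the second assertion, the key input is the vanishing $\mrm{res}_{\scr{W}_\infty}(\eta_\circ)=0$ recalled just above the lemma. Since $\lambda_1$ interchanges $\scr{W}_0$ and $\scr{W}_\infty$, the class $(\lambda_1)^*\eta_\circ$ vanishes on $\scr{W}_0$, so that $(\pi_{2,\alpha})^*(\lambda_1)^*\eta_\circ$ vanishes on the wide open $\scr{V}_0:=\pi_{2,\alpha}^{-1}(\scr{W}_0)\subset X(K'_0(p^\alpha))$. A standard \v{C}ech--de Rham computation then expresses the Poincar\'e pairing of any class $\xi$ with $(\pi_{2,\alpha})^*(\lambda_1)^*\eta_\circ$ purely in terms of the restriction of $\xi$ to the complementary wide open $\scr{V}_\infty:=\pi_{2,\alpha}^{-1}(\scr{W}_\infty)$. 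Applied to $\xi=e_\mrm{ord}\omega$, this restriction is tautologically (the pull-back of) the $p$-adic cuspform attached to $e_\mrm{ord}\omega$, since $\scr{V}_\infty$ is a strict neighborhood of the ordinary locus, proving the claim.

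The main obstacle will be pinning down the geometric identity $\pi_{1,\alpha}\circ\lambda_\alpha=\lambda_1\circ\pi_{2,\alpha}$ and the dual-ordinary compatibility needed for the $e_\mrm{ord}$-insertion: although both follow routinely from adelic bookkeeping analogous to Lemma~\ref{ablemma} and equation~(\ref{U_pAdjoint}), the degeneracy maps $\pi_{i,\alpha}$ here project between non-adjacent levels ($p^\alpha$ to $p$), so the computation must be iterated and the resulting normalizations tracked carefully.
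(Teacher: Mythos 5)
Your proposal is correct and takes essentially the same route as the paper: one inserts the ordinary projector using the ordinarity of $\eta_\circ$ together with the Atkin--Lehner intertwining of $U_p$ and $U_p^*$, moves $\lambda_\alpha$ across the Poincar\'e pairing (the paper records $\lambda_\alpha^{-1}=\lambda_\alpha\circ\langle-1,1\rangle$ rather than a strict involution, a harmless bookkeeping difference), and invokes the identity $\pi_{1,\alpha}\circ\lambda_\alpha=\lambda_1\circ\pi_{2,\alpha}$ to obtain the displayed equality. For the final assertion the paper argues exactly as you do, from the support of $\eta_\circ$ on $\scr{W}_0$, the fact that $\lambda_1$ swaps $\scr{W}_0$ and $\scr{W}_\infty$, and the explicit wide-open description of the pairing (citing \cite{DR2}, Equation (109)), concluding that only $\mrm{res}_{\scr{W}_\infty(p^\alpha)}(e_\mrm{ord}\omega)$, i.e.\ the associated $p$-adic cuspform, intervenes.
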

\begin{proof}
Since the class $\eta_\circ$ ordinary and $e_\mrm{ord}\circ(\pi_{1,\alpha})^*=(\pi_{1,\alpha})^*\circ e_\mrm{ord}$ we can compute
	\[\begin{split}
	\big\langle (\lambda_\alpha)^*\omega,(\pi_{1,\alpha})^*\eta_\circ\big\rangle_\mrm{dR}
	&=
	\big\langle e^*_\mrm{ord}(\lambda_\alpha)^*\omega,(\pi_{1,\alpha})^*\eta_\circ\big\rangle_\mrm{dR}\\
	&=
	\big\langle (\lambda_\alpha)^*e_\mrm{ord}\omega,(\pi_{1,\alpha})^*\eta_\circ\big\rangle_\mrm{dR}\\
\text{as}\quad\lambda_\alpha^{-1}=\lambda_\alpha\circ\langle-1,1\rangle\qquad	&=
	\big\langle e_\mrm{ord}\omega,(\pi_{1,\alpha}\circ\lambda_\alpha)^*\eta_\circ\big\rangle_\mrm{dR}\\
	&=
	\big\langle e_\mrm{ord}\omega,(\lambda_1\circ\pi_{2,\alpha})^*\eta_\circ\big\rangle_\mrm{dR}.\\
	\end{split}\]
	Then, the pairing depends only on  the overconvergent cuspform associated to $\mrm{res}_{\scr{W}_\infty(p^\alpha)}\big(e_{\mrm{ord}}\omega\big)$  because the class $\eta_\circ$ is supported on the wide open $\scr{W}_0$,  the involution $\lambda_1:X_{1,0}(N,p)\to X_{1,0}(N,p)$ interchanges $\scr{W}_0$ with $\scr{W}_\infty$, and because of the explicit description of the Poincar\'e pairing given in (\cite{DR2}, Equation (109)).
\end{proof}

\noindent We are left to describe the restriction of $e_\mrm{ord}\zeta^*[\iota_\mrm{dR}^B(w)]$ to  $\scr{W}_\infty(p^\alpha)$ in terms of $p$-adic cuspforms.

\begin{definition}
Let $S^*_\diamond(p^\alpha)$ denote  the $\bb{Q}_p$-scheme $S^{*}(K_\diamond^*(p^\alpha))$ with its minimal compactification   $\overline{S}_{\diamond}^*(p^\alpha)^{\mbox{\tiny $\mrm{min}$}}$. For a choice  of toroidal compactification $\overline{S}_{\diamond}^{*}(p^\alpha)^{\mbox{\tiny $\mrm{tor}$}}$ we write $\mbox{\small $D$}$ for the boundary divisor $\overline{S}_{\diamond}^{*}(p^\alpha)^{\mbox{\tiny $\mrm{tor}$}}\setminus S^*_\diamond(p^\alpha)$ and we will use the same symbols to denote the associated rigid analytic spaces.
\end{definition} 

\noindent Following (\cite{Kisin-Lai}, Section 3.2.2) we let
\begin{equation}
j_{\mbox{\tiny $\mrm{tor}$}}: \overline{S}_{\diamond}^{*}(p^\alpha)^{\mbox{\tiny $\mrm{tor}$}}_{\mbox{\tiny $\mrm{ord}$}}\hookrightarrow \overline{S}_{\diamond}^{*}(p^\alpha)^{\mbox{\tiny $\mrm{tor}$}},\qquad 
j_{\mbox{\tiny $\mrm{min}$}}: \overline{S}_{\diamond}^{*}(p^\alpha)^{\mbox{\tiny $\mrm{min}$}}_{\mbox{\tiny $\mrm{ord}$}}\hookrightarrow \overline{S}_{\diamond}^{*}(p^\alpha)^{\mbox{\tiny $\mrm{min}$}}
\end{equation}
denote the open immersions of the rigid analytic subspaces parametrizing ordinary abelian surfaces and the unramified cusps. By (\cite{Kisin-Lai}, Equation 3.2.4) the pullback of a fundamental system of strict neighborhoods of the ordinary loci at level $p^0$ provides a fundamental system of strict neighborhoods at level $p^\alpha$. In particular,  $\overline{S}_{\diamond}^{*}(p^\alpha)^{\mbox{\tiny $\mrm{min}$}}_{\mbox{\tiny $\mrm{ord}$}}$ has a fundamental system of strict neighborhoods consisting of affinoid subdomains.

\noindent 	The overconvergent cuspform associated to the restriction of $e_\mrm{ord}\zeta^*[\iota_\mrm{dR}^B(w)]$ to  $\scr{W}_\infty(p^\alpha)$ depends only on the image of $\iota_\mrm{dR}^B(w)$ in the rigid complex of $\overline{S}_{\diamond}^{*}(p^\alpha)^{\mbox{\tiny $\mrm{tor}$}}_{\mbox{\tiny $\mrm{ord}$}}$ because the modular interpretation of $\zeta: X(K_0'(p^\alpha))\to S_\diamond^*(p^\alpha)^{\mbox{\tiny $\mrm{tor}$}}$ ensures that the inverse image of neighborhoods of the ordinary locus of $\overline{S}_{\diamond}^{*}(p^\alpha)^{\mbox{\tiny $\mrm{tor}$}}_{\mbox{\tiny $\mrm{ord}$}}$ are neighborhoods of the ordinary locus of $\scr{W}_\infty(p^\alpha)$. Furthermore, as in characteristic zero there exists a Hecke equivariant projection from the de Rham complex to the dual BGG complex which is a quasi-isomorphism of filtered complexes (\cite{BGG}, Sections 5.3-5.4), we are reduced to describing the projection of $\iota_\mrm{dR}^B(w)$  using overconvergent cuspforms for the group $G^*$ (defined as in \cite{Hilbert}, Section 3.3).
Concretely, the relevant complex is given by
	\[\xymatrix{
	S^\dagger_{0,0}(K^*_\diamond(p^\alpha)) \ar[rr]^-{(d_1,d_2)}&&
	 S^\dagger_{(2,0),(1,0)}(K^*_\diamond(p^\alpha))\oplus S^\dagger_{(0,2),(0,1)}(K^*_\diamond(p^\alpha)) \ar[rr]^-{-d_2\oplus d_1}&&
	S^\dagger_{2t_L,t_L}(K^*_\diamond(p^\alpha)),
	}\]
it computes the rigid cohomology groups (see also \cite{Hilbert}, Theorem 3.5)
	\[
	\mrm{H}_{\mrm{rig},c}^{\bfcdot}\Big(\overline{S}_{\diamond}^{*}(p^\alpha)^{\mbox{\tiny $\mrm{tor}$}}_{\mbox{\tiny $\mrm{ord}$}}\Big)
	:=\bb{H}^{\bfcdot}\Big(\overline{S}_{\diamond}^{*}(p^\alpha)^{\mbox{\tiny $\mrm{tor}$}}, (j_\mrm{tor})^\dagger\ \Omega^{\star}_{\overline{S}_{\diamond}^{*}(p^\alpha)^{\mbox{\tiny $\mrm{tor}$}}}(-\mbox{\small $D$})\Big),
	\]
and the projection of $\omega_\mrm{P}^\diamond$ in $S^\dagger_{2t_L,t_L}(K^*_\diamond(p^\alpha))$ is the cuspform $\breve{\msf{g}}_\mrm{P}^\diamond$ defined in \eqref{def prop cuspform}.

\begin{lemma}\label{lemma: representative} 
The image of $\iota_\mrm{dR}^B(w)$ in $S^\dagger_{(2,0),(1,0)}(K^*_\diamond(p^\alpha))\oplus S^\dagger_{(0,2),(0,1)}(K^*_\diamond(p^\alpha))$ is given by a pair $(H_1,H_2)$ of overconvergent cuspforms satisfying 
\begin{equation}\label{star relation}
d_1H_2-d_2H_1=R(V(p))\breve{\msf{g}}_\mrm{P}^\diamond.
\end{equation}
\end{lemma} 
\begin{proof}
Recall that the operator $R(p^{-2}\Phi)$ was required to annihilate the cohomology class $\omega_\mrm{P}^\diamond$. As the action of $p^{-2}\Phi$ on de Rham cohomology corresponds to the action of the $V(p)$--operator on overconvergent cuspforms, we deduce the triviality of the class $R(V(p))\breve{\msf{g}}_\mrm{P}^\diamond$ in $\mrm{H}_{\mrm{rig},c}^{2}\big(\overline{S}_{\diamond}^{*}(p^\alpha)^{\mbox{\tiny $\mrm{tor}$}}_{\mbox{\tiny $\mrm{ord}$}}\big)$.
\end{proof}


\subsubsection{Choice of the polynomial $R$.}
Recall that the cuspform $\breve{\msf{g}}_\mrm{P}\in S_{2t_L,t_L}(K_{\diamond,t}(p^\alpha);O)$ is an eigenform for the Hecke operators $U_{\mathfrak{p}_1},U_{\mathfrak{p}_2}^*$ for the two primes above $p$ with eigenvalues  
\[
U_{\mathfrak{p}_1}\breve{\msf{g}}_\mrm{P}=\alpha_{1,\msf{g}_\mrm{P}}\cdot\breve{\msf{g}}_\mrm{P}\qquad \text{and}\qquad U_{\mathfrak{p}_2}^*\breve{\msf{g}}_\mrm{P}= \overline{\alpha}_{2,\msf{g}_\mrm{P}}\cdot\breve{\msf{g}}_\mrm{P}=\alpha_{2,\msf{g}_\mrm{P}}^{-1}p\cdot \breve{\msf{g}}_\mrm{P}.
\]
Thus, the cuspform $(w_{\frak{p}_2^\alpha})^*\breve{\msf{g}}_\mrm{P}\in S_{2t_L,t_L}(K_\diamond(p^\alpha);O)$ is an eigenform for the $U_p$-operator and equation (\ref{U_pAtkinLehner}) allows us to compute the Hecke eigenvalue by
\[\begin{split}
U_p \cdot (w_{\frak{p}_2^\alpha})^*\breve{\msf{g}}_\mrm{P}
&=
(w_{\mathfrak{p}_2^\alpha})^* U_{\mathfrak{p}_1}U_{\mathfrak{p}_2}^* \langle \varpi_{\mathfrak{p}_2},1 \rangle \breve{\msf{g}}_\mrm{P}\\
&=
\Big(\chi_\circ\theta_L^{-1}\chi^{-1}(\varpi_{\mathfrak{p}_2})\cdot\alpha_{1,\msf{g}_\mrm{P}}\alpha_{2,\msf{g}_\mrm{P}}^{-1}p\Big)
\cdot(w_{\mathfrak{p}_2^\alpha})^*   \breve{\msf{g}}_\mrm{P}\\
&= \chi_\circ\theta_L^{-1}\chi^{-1}(\varpi_{\mathfrak{p}_2})\cdot\alpha_{1,\msf{g}_\mrm{P}}\alpha_{2,\msf{g}_\mrm{P}}^{-1}p\cdot (w_{\frak{p}_2^\alpha})^*\breve{\msf{g}}_\mrm{P}.
\end{split}\]
We suppose from now on that $\mathfrak{p}_1$ is narrowly principal in $\cal{O}_L$, $\frak{p}_1=(p_1)\cal{O}_L$, for $p_1\in\cal{O}_L$ a totally positive generator and we
set $p_2 := p/p_1 \in \cal{O}_L$. Then, there are equalities of Hecke operators 
    \[
    U(p) =  \langle 1,p^{-1}\varpi_{p}\rangle U_{p},\qquad
    U(p_i) = \langle 1,p_i^{-1}\varpi_{\mathfrak{p}_i}\rangle U_{\mathfrak{p}_i}\qquad \text{for}\ i=1,2
    \]
  Moreover, $U(p_1), U(p_2)$ commute with $(\nu_\alpha)^*$ as one can see  arguing as in Lemma \ref{U_p-nu_alpha-commute}.
    \begin{lemma}
    The cuspform $\breve{\msf{g}}^\diamond_\mrm{P}$ is an eigenform for the Hecke operators $U(p_1)$ and $U(p_2)$:
    \[
    U(p_1)\cdot \breve{\msf{g}}^\diamond_\mrm{P}
    =
    \alpha^\diamond_1 \cdot \breve{\msf{g}}^\diamond_\mrm{P},\qquad 
     U(p_2)\cdot \breve{\msf{g}}^\diamond_\mrm{P}
    =
    \alpha^\diamond_2 \cdot \breve{\msf{g}}^\diamond_\mrm{P}
    \]
     where 
     \[ \alpha^\diamond_1:=\chi_\circ\theta_L^{-1}\chi^{-1}((p_1)_{\mathfrak{p}_2}^{-1})\cdot\alpha_{1,\msf{g}_\mrm{P}}
     \qquad \text{and}\qquad  \alpha_2^\diamond:=\chi_\circ\theta_L^{-1}\chi^{-1}((p_2)_{\mathfrak{p}_2}^{-1}\varpi_{\frak{p}_2})\cdot\alpha_{2,\msf{g}_\mrm{P}}^{-1}p.
     \]
    \end{lemma}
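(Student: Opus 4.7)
The plan is to reduce the assertion to a direct eigenvalue computation on $\breve{\msf{g}}_\mrm{P}\lvert\tau^{-1}_{\frak{p}_2^\alpha}$, viewed as a cuspform on $S(K_\diamond(p^\alpha))$, and then to apply the Atkin--Lehner machinery of Lemma \ref{Atkin-Lehner} together with the commutation relations established in Section \ref{def Up}. Concretely, I would first note that the Hecke operators $U(p_i) = \langle 1, p_i^{-1}\varpi_{\frak{p}_i}\rangle U_{\frak{p}_i}$ commute with the pullback along $\nu_\alpha\circ\xi$: commutation with $\nu_\alpha^*$ comes from (the obvious analogue of) Corollary \ref{proj-nu_alpha-commute} and Lemma \ref{U_p-nu_alpha-commute}, and commutation with $\xi^*$ follows from the fact that the local factors of $K$ and $K^*$ at $p$ are identified, so the double cosets defining $U_{\frak{p}_i}$ and the diamond operators descend coherently from $G$ to $G^*$. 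Hence it suffices to verify
\[
U(p_i)\cdot \bigl(\breve{\msf{g}}_\mrm{P}\lvert\tau^{-1}_{\frak{p}_2^\alpha}\bigr) \;=\; \alpha_i^{\diamond}\cdot \bigl(\breve{\msf{g}}_\mrm{P}\lvert\tau^{-1}_{\frak{p}_2^\alpha}\bigr).
\]

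Next I would handle $U_{\frak{p}_1}$ and $U_{\frak{p}_2}$ separately. The Atkin--Lehner element $\tau_{\frak{p}_2^\alpha}$ is the identity outside $\frak{p}_2$, so $U_{\frak{p}_1}$ commutes with $(\mathfrak{T}_{\tau_{\frak{p}_2^\alpha}})^*$ (as remarked just after equation (\ref{U_pAdjoint})), giving the eigenvalue $\alpha_{1,\msf{g}_\mrm{P}}$ on the twist. For $U_{\frak{p}_2}$ I would invoke (\ref{U_pAdjoint}) in the form $(\mathfrak{T}_{\tau_{\frak{p}_2^\alpha}})^*\circ U_{\frak{p}_2} = \langle\varpi_{\frak{p}_2},1\rangle^{-1}\circ U_{\frak{p}_2}^*\circ (\mathfrak{T}_{\tau_{\frak{p}_2^\alpha}})^*$ (obtained by solving for $U_{\frak{p}_2}$ and using that $(\mathfrak{T}_{\tau_{\frak{p}_2^\alpha}})^2 = \langle -\varpi_{\frak{p}_2}^\alpha,1\rangle$), and the known eigenvalues $U_{\frak{p}_2}^*\breve{\msf{g}}_\mrm{P} = \alpha_{2,\msf{g}_\mrm{P}}^{-1}p\cdot\breve{\msf{g}}_\mrm{P}$ and $\langle\varpi_{\frak{p}_2},1\rangle\breve{\msf{g}}_\mrm{P} = \chi_\circ\theta_L^{-1}\chi^{-1}(\varpi_{\frak{p}_2})\cdot\breve{\msf{g}}_\mrm{P}$, to extract the $U_{\frak{p}_2}$ eigenvalue on the twist. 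This is consistent with the $U_p$-computation already performed above the statement, which serves as a sanity check after multiplying the two eigenvalues.

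Then I would compute the diamond contribution. By Lemma \ref{Atkin-Lehner}, $\breve{\msf{g}}_\mrm{P}\lvert\tau^{-1}_{\frak{p}_2^\alpha}$ lies in $S_{2t_L,t_L}(\mathfrak{N}p^\alpha;\ \chi_\circ\theta_L^{-1}\chi^{-1},\ (\chi_\circ\theta_L^{-1}\chi^{-1})_{\frak{p}_2}^{-1};O)$, so in the parametrization $\langle z,a\rangle\mapsto\psi(z)\psi'(a)$ (recall $m=0$ in weight $(2t_L,t_L)$), the operator $\langle 1,p_i^{-1}\varpi_{\frak{p}_i}\rangle$ acts on this form by evaluating the local character at $(p_i^{-1}\varpi_{\frak{p}_i})_{\frak{p}_2}$. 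Since $(\varpi_{\frak{p}_i})_{\frak{p}_2}$ equals $1$ or $\varpi_{\frak{p}_2}$ according to $i=1$ or $2$, and $(p_i^{-1})_{\frak{p}_2} = (p_i)_{\frak{p}_2}^{-1}$, a direct substitution produces the character factors $\chi_\circ\theta_L^{-1}\chi^{-1}((p_1)_{\frak{p}_2}^{-1})$ and $\chi_\circ\theta_L^{-1}\chi^{-1}((p_2)_{\frak{p}_2}^{-1}\varpi_{\frak{p}_2})$ asserted in the statement.

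The main obstacle is bookkeeping: the various local characters ($\psi$, $\psi'$, their $\frak{p}$-components, and the diamond normalizations built into the definition of $U(p_i)$) must be tracked simultaneously, and one needs to check carefully that the diamond factor produced by Lemma \ref{Atkin-Lehner} combines correctly with the one produced by (\ref{U_pAdjoint}) so that the product $\alpha_1^{\diamond}\alpha_2^{\diamond}$ recovers the already-computed $U_p$-eigenvalue on $(w_{\frak{p}_2^\alpha})^*\breve{\msf{g}}_\mrm{P}$. I would perform this consistency check as the last step, using the identity $(p_1)_{\frak{p}_2}\cdot(p_2)_{\frak{p}_2} = (p)_{\frak{p}_2} = \varpi_{\frak{p}_2}$, which collapses the two character factors in $\alpha_1^{\diamond}\alpha_2^{\diamond}$ to a single factor $\chi_\circ\theta_L^{-1}\chi^{-1}(\varpi_{\frak{p}_2})$ matching the previous display.
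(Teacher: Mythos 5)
Your overall strategy coincides with the paper's: commute $U(p_i)$ past the pullbacks along $\xi$ and $\nu_\alpha$, then transfer the computation through the Atkin--Lehner twist using the known eigenvalues $U_{\frak{p}_1}\breve{\msf{g}}_\mrm{P}=\alpha_{1,\msf{g}_\mrm{P}}\breve{\msf{g}}_\mrm{P}$, $U^*_{\frak{p}_2}\breve{\msf{g}}_\mrm{P}=\alpha_{2,\msf{g}_\mrm{P}}^{-1}p\,\breve{\msf{g}}_\mrm{P}$ and the nebentypus. The only cosmetic difference is that you evaluate the diamond $\langle 1,p_i^{-1}\varpi_{\frak{p}_i}\rangle$ directly on the twisted form via the character computed in Lemma \ref{Atkin-Lehner}, whereas the paper pushes the diamond past $(\mathfrak{T}_{\tau_{\frak{p}_2^\alpha}})^*$ using (\ref{AL-diamonds}) and then evaluates the character $(\chi_\circ\theta_L^{-1}\chi^{-1},\mathbbm{1})$ of $\breve{\msf{g}}_\mrm{P}$ itself.

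There are, however, two concrete problems. First, your justification of the commutation with $\xi^*$ is not correct as stated: it is not true that ``the double cosets defining $U_{\frak{p}_i}$ and the diamond operators descend coherently from $G$ to $G^*$.'' The matrix defining $U_{\frak{p}_i}$ has determinant $\varpi_{\frak{p}_i}$, which does not lie in $\bb{A}_{\bb{Q},f}^\times$ up to units, so $U_{\frak{p}_i}$ and $\langle 1,p_i^{-1}\varpi_{\frak{p}_i}\rangle$ are \emph{not} separately compatible with $\xi:S^*(K^*_\diamond(p^\alpha))\to S(K_\diamond(p^\alpha))$; only their product $U(p_i)$ is, because it is attached to the matrix with diagonal entries $(p_i,1)$ for $p_i$ a totally positive global generator of $\frak{p}_i$, hence an element of $G(\bb{Q})^+G^*(\bb{A}_f)$ to which the compatibility of (\cite{LLZ}, Definition 2.2.4) applies. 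This is exactly where the hypothesis that the primes above $p$ are narrowly principal enters, and your argument misses it. Second, your closing consistency check is mis-stated: since $(p_1)_{\frak{p}_2}(p_2)_{\frak{p}_2}=\varpi_{\frak{p}_2}$ as ideles supported at $\frak{p}_2$, the two character factors in $\alpha_1^\diamond\alpha_2^\diamond$ cancel, giving $\alpha_1^\diamond\alpha_2^\diamond=\alpha_{1,\msf{g}_\mrm{P}}\alpha_{2,\msf{g}_\mrm{P}}^{-1}p$; they do not collapse to $\chi_\circ\theta_L^{-1}\chi^{-1}(\varpi_{\frak{p}_2})$. The earlier display for $U_p\cdot(w_{\frak{p}_2^\alpha})^*\breve{\msf{g}}_\mrm{P}$ is recovered only after reinstating the diamond $\langle 1,p^{-1}\varpi_p\rangle$ relating $U(p)$ to $U_p$. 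Relatedly, when you evaluate the diamond directly on the twisted form you must track the inversion built into $\langle z,a\rangle=\langle z\rangle T(a^{-1},1)$ and into the twisted nebentypus $\psi'\cdot\psi_{\frak{p}_2}^{-1}$; as written, the ``direct substitution'' is at real risk of producing $\chi_\circ\theta_L^{-1}\chi^{-1}((p_1)_{\frak{p}_2})$ in place of its inverse, which is why the paper prefers to conjugate with (\ref{AL-diamonds}) and evaluate on $\breve{\msf{g}}_\mrm{P}$, whose character is unambiguous. The strategy is salvageable, but these two points need to be repaired before the bookkeeping can be trusted.
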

    \begin{proof}
    By definition the matrices $\mbox{\tiny $\begin{pmatrix}p_i&0\\0&1\end{pmatrix}$}$ for $i=1,2$ belong to
    $G(\bb{Q})^+G^*(\bb{A}_{f})$, hence the operators $U(p_1)$ and $U(p_2)$ commute with the natural map $\xi:S^*(K_\diamond^*(p^\alpha))\rightarrow S(K_\diamond(p^\alpha))$ (\cite{LLZ}, Definition 2.2.4). First, we observe that 
     \[\begin{split}
    U(p_1)\cdot \breve{\msf{g}}^\diamond_\mrm{P}
    &=
    \xi^*\circ  U(p_1)\circ(w_{\frak{p}_2^\alpha})^*\breve{\msf{g}}_\mrm{P}\\
    &=
    \xi^*\circ(\nu_\alpha)^*\circ U(p_1)\circ(\mathfrak{T}_{\tau_{\mathfrak{p}_2}})^*\breve{\msf{g}}_\mrm{P}.
    \end{split}\]
    As
    \[\begin{split}
    U(p_1)\circ(\mathfrak{T}_{\tau_{\mathfrak{p}_2}})^*
    &=
    U_{\mathfrak{p}_1}\circ  \langle 1,p_1^{-1}\varpi_{\mathfrak{p}_1}\rangle\circ (\mathfrak{T}_{\tau_{\mathfrak{p}_2}})^*\\
    (\text{Equation}\ (\ref{AL-diamonds}))\qquad&=
    U_{\mathfrak{p}_1}\circ (\mathfrak{T}_{\tau_{\mathfrak{p}_2}})^*\circ
    \langle (p_1)_{\mathfrak{p}_2}^{-1},p_1^{-1}\varpi_{\mathfrak{p}_1}\cdot(p_1)_{\mathfrak{p}_2}^{2}\rangle\\
    &=
    (\mathfrak{T}_{\tau_{\mathfrak{p}_2}})^*\circ U_{\mathfrak{p}_1}\circ \langle (p_1)_{\mathfrak{p}_2}^{-1},p_1^{-1}\varpi_{\mathfrak{p}_1}\cdot(p_1)_{\mathfrak{p}_2}^2\rangle,
    \end{split}\]
     we obtain
    
    \[\begin{split}
     U(p_1)\cdot \breve{\msf{g}}^\diamond_\mrm{P}
     &=
    \xi^*\circ(\nu_\alpha)^*\circ(\mathfrak{T}_{\tau_{\mathfrak{p}_2}})^*\circ U_{\mathfrak{p}_1}\circ \langle (p_1)_{\mathfrak{p}_2}^{-1},\varpi_{\mathfrak{p}_1}\cdot(p_1)_{\mathfrak{p}_2}^{2}\rangle \breve{\msf{g}}_\mrm{P}\\
    &=
    \chi_\circ\theta_L^{-1}\chi^{-1}((p_1)_{\mathfrak{p}_2}^{-1})\cdot\alpha_{1,\msf{g}_\mrm{P}}\cdot \breve{\msf{g}}^\diamond_\mrm{P}.
    \end{split}\]
 Similarly one computes
    \[
    U(p_2)\cdot \breve{\msf{g}}^\diamond_\mrm{P}
    = \chi_\circ\theta_L^{-1}\chi^{-1}((p_2)_{\mathfrak{p}_2}^{-1}\varpi_{\frak{p}_2})\cdot\alpha_{2,\msf{g}_\mrm{P}}^{-1}p\cdot \breve{\msf{g}}^\diamond_\mrm{P}. 
    \]
    \end{proof}
  \begin{remark}
  	Note that 
	\begin{equation}
	\alpha_1^\diamond\alpha_2^\diamond=\alpha_{1,\msf{g}_\mrm{P}}\alpha_{2,\msf{g}_\mrm{P}}^{-1}p.
	\end{equation}
  \end{remark}

\begin{definition}
	We set 
	\[
	R(T):=(1-\alpha^\diamond_1\alpha^\diamond_2T).
	\]
\end{definition}

\begin{lemma}\label{lemma: R is ok}
	If $R(T)=(1-\alpha^\diamond_1\alpha^\diamond_2T)$, then in $\mrm{H}_{\mrm{rig},c}^{2}\big(\overline{S}_{\diamond}^{*}(p^\alpha)^{\mbox{\tiny $\mrm{tor}$}}_{\mbox{\tiny $\mrm{ord}$}}\big)$ we have
	\[
	[R(V(p))\breve{\msf{g}}_\mrm{P}^\diamond]=0.
	\]
\end{lemma}
\begin{proof}
Consider the operators
    \[
    V(p_1)
    :=
    \langle 1,p_1\varpi_{\mathfrak{p}_1}^{-1}\rangle V_{\mathfrak{p}_1}
    \qquad \mrm{and} \qquad
    V(p_2)
    :=
    \langle 1,p_2^{-1}\varpi_{\mathfrak{p}_2}\rangle V_{\mathfrak{p}_2},
    \]
    acting on automorphic forms for $G^*$. They satisfy $V(p)=V(p_1)V(p_2)$ and  $U(p_i)V(p_i) = 1$ for $i=1,2$. The polynomials
\[
R_i(T_i)=(1-\alpha^\diamond_iT_i)\quad \text{for}\quad i=1,2
\]
can be used to write 
\[
R(T_1T_2)=R_1(T_1)R_2(T_2)+\alpha^\diamond_2T_2R_1(T_1)+ \alpha^\diamond_1T_1R_2(T_2).
\]
Therefore, the cuspform $R(V(p))\breve{\msf{g}}_\mrm{P}^\diamond$ can be expressed as a sum of depleted cuspforms
\[
R(V(p))\breve{\msf{g}}_\mrm{P}^\diamond=(\breve{\msf{g}}_\mrm{P}^\diamond)^{\mbox{\tiny$[\cal{P}]$}}+\alpha^\diamond_2V(p_2)(\breve{\msf{g}}_\mrm{P}^\diamond)^{\mbox{\tiny$[\mathfrak{p}_1]$}}+\alpha^\diamond_1V(p_1)(\breve{\msf{g}}_\mrm{P}^\diamond)^{\mbox{\tiny$[\mathfrak{p}_2]$}}.
\]
We deduce that $U(p)\cdot R(V(p))\breve{\msf{g}}_\mrm{P}^\diamond=0$. This implies the claim because $U(p)$ acts invertibly on the finite dimensional cohomology group $\mrm{H}_{\mrm{rig},c}^{2}\big(\overline{S}_{\diamond}^{*}(p^\alpha)^{\mbox{\tiny $\mrm{tor}$}}_{\mbox{\tiny $\mrm{ord}$}}\big)$ having a right inverse (the $V(p)$--operator).
\end{proof}

\subsection{The formula} 

	We choose the polynomial $Q(T)=(1-\alpha_{\msf{f}_\circ^*}^{-1}T)$ to annihilate the class $\eta_\circ$ because by definition $\Phi(\eta_\circ)=\alpha_{\msf{f}_\circ^*}\cdot\eta_\circ$. Clearly $Q(p)\not=0$, and $R\star Q(1)\not=0$, $R\star Q(p^{-1})\not=0$ because the Weil conjectures imply that the roots of $R(T)$ have complex absolute values $p^{-1}$. Observe that
	\begin{equation}
	(R\star Q)(T)= R(\alpha_{\msf{f}_\circ^*}^{-1}T) =(1-\alpha_{1,\msf{g}_\mrm{P}}\alpha_{2,\msf{g}_\mrm{P}}^{-1} \alpha_{\msf{f}_\circ^*}^{-1}p T).
	\end{equation}
	 and if we write
	\[
	(R\star Q)(T_1T_2) = 
	a(T_1,T_2)R(T_1)
	+b(T_1,T_2)Q(T_2),
	\]
	as in Section $\ref{cupproduct}$, then $a(T_1,\alpha_{\msf{f}_\circ^*})=1$ because
	\[
	R(T_1)=(R\star Q)(T_1\cdot\alpha_{\msf{f}_\circ^*})=a(T_1,\alpha_{\msf{f}_\circ^*})R(T_1).
	\]
Therefore, equation ($\ref{AJformula1}$) simplifies to the following expression
\begin{equation}\label{secondreduction}
\mrm{AJ}_{\mrm{syn}}\Big((\lambda_\alpha,\mrm{id})_*\Delta_\alpha^\circ\Big)\big(\omega_\mrm{P}\otimes\eta_\circ\big)=\frac{-1}{(R\star Q)(p^{-1})} \Big\langle e_\mrm{ord}\zeta^*[\iota_\mrm{dR}^B(w)], (\pi_{2,\alpha})^*(\lambda_1)^*\eta_\circ\Big\rangle_{\mrm{dR}, Y_\alpha}.
   \end{equation}
By Lemma \ref{lemma: representative} and the straightforward computation $e_\mrm{ord}\zeta^*(d_1f, d_2f)=e_\mrm{ord}d\zeta^*f=0$ for any overconvergent function $f\in S^\dagger_{0,0}(K^*_\diamond(p^\alpha))$, we obtain
\begin{equation}\label{express in classical forms}
e_\mrm{ord}\zeta^*[\iota_\mrm{dR}^B(w)]
=\omega_{e_\mrm{ord}\zeta^*\big(H_1, H_2\big)}.
\end{equation}

\subsubsection{Replacing overconvergent cuspforms with $p$-adic ones.}
Recall the expression 
\[
R(V(p))\breve{\msf{g}}_\mrm{P}^\diamond=(\breve{\msf{g}}_\mrm{P}^\diamond)^{\mbox{\tiny$[\cal{P}]$}}+\alpha^\diamond_2V(p_2)(\breve{\msf{g}}_\mrm{P}^\diamond)^{\mbox{\tiny$[\mathfrak{p}_1]$}}+\alpha^\diamond_1V(p_1)(\breve{\msf{g}}_\mrm{P}^\diamond)^{\mbox{\tiny$[\mathfrak{p}_2]$}}
\]
obtained in the proof of Lemma \ref{lemma: R is ok}. Each term of the right hand side is trivial in the cohomology group $\mrm{H}_{\mrm{rig},c}^{2}\big(\overline{S}_{\diamond}^{*}(p^\alpha)^{\mbox{\tiny $\mrm{tor}$}}_{\mbox{\tiny $\mrm{ord}$}}\big)$, thus there exist three pairs of overconvergent cuspforms
\[
(A_1,A_2),\ (B_1,B_2),\ (C_1,C_2)\ \in S^\dagger_{(2,0),(1,0)}(K^*_\diamond(p^\alpha))\oplus S^\dagger_{(0,2),(0,1)}(K^*_\diamond(p^\alpha))
\] 
satisfying the relations
\[
d_1A_2-d_2A_1=(\breve{\msf{g}}_\mrm{P}^\diamond)^{\mbox{\tiny$[\cal{P}]$}},\quad
d_1B_2-d_2B_1=\alpha^\diamond_2V(p_2)(\breve{\msf{g}}_\mrm{P}^\diamond)^{\mbox{\tiny$[\mathfrak{p}_1]$}},\quad
d_1C_2-d_2C_1=\alpha^\diamond_1V(p_1)(\breve{\msf{g}}_\mrm{P}^\diamond)^{\mbox{\tiny$[\mathfrak{p}_2]$}}.
\]
We can further assume that $(A_1,A_2)$ consists of $\cal{P}$-depleted forms, $(B_1,B_2)$ consists of $\mathfrak{p}_1$-depleted forms, and $(C_1,C_2)$ consists of $\mathfrak{p}_2$-depleted forms because depletions operators are idempotents commuting with differential operators. Then, without loss of generality, we can suppose that $(H_1,H_2)=(A_1+B_1+C_1, A_2+B_2+C_2)$. Finally, the next lemma shows that in evaluating $e_\mrm{ord}\zeta^*[\iota_\mrm{dR}^B(w)]$, we can replace $\big(H_1, H_2\big)$  with the pair  of $p$-adic cuspforms
\[
\Big(-d_2^{-1}(\breve{\msf{g}}^\diamond_\mrm{P})^{\mbox{\tiny $[\cal{P}]$}}- \alpha_1^\diamond V(p_1)d_2^{-1}(\breve{\msf{g}}^\diamond_\mrm{P})^{\mbox{\tiny $[\frak{p}_2]$}},\quad \alpha_2^\diamond V(p_2) d_1^{-1}(\breve{\msf{g}}^\diamond_\mrm{P})^{\mbox{\tiny $[\frak{p}_1]$}}\Big).
\]

\begin{lemma}\label{PrimitiveVanishing}
Suppose $(\msf{g}_1,\msf{g}_2)\in 
S_{(2,0),(1,0)}^\mrm{p\mbox{-}adic}(K^*_\diamond(p^\alpha))\oplus S_{(0,2),(0,1)}^\mrm{p\mbox{-}adic}(K^*_\diamond(p^\alpha))$
such that 
\[
d_1 \msf{g}_2 -d_2 \msf{g}_1 =0,
\]
and that either $\msf{g}_1$ is $\mathfrak{p}_1$-depleted or $\msf{g}_2$ is $\mathfrak{p}_2$-depleted.
Then \[e_\mrm{ord}\zeta^*(\msf{g}_1, \msf{g}_2)=0.\]
\end{lemma}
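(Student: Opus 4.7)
The plan is to compute the Fourier expansion of $\zeta^{*}(\msf{g}_1,\msf{g}_2)$ and observe that every coefficient is divisible by the index $m$; iterating $U_p$ then introduces arbitrarily large powers of $p$ in the coefficients, forcing $e_\mrm{ord}$ to kill the form.

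The differential operators $d_1,d_2$ act on the $\nu$-th Fourier coefficient of a $p$-adic Hilbert modular form by multiplication by $\nu_{\mathfrak{p}_1},\nu_{\mathfrak{p}_2}$ respectively, so the hypothesis $d_1\msf{g}_2-d_2\msf{g}_1=0$ is equivalent to the identity
\[
\nu_{\mathfrak{p}_1}\cdot\msf{a}(\nu,\msf{g}_2)\ =\ \nu_{\mathfrak{p}_2}\cdot\msf{a}(\nu,\msf{g}_1),\qquad \forall\,\nu\in L^{\times}_{+}.
\]
Assume first that $\msf{g}_1$ is $\mathfrak{p}_1$-depleted (the $\mathfrak{p}_2$-depleted case is entirely symmetric). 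For $\nu$ with $v_{\mathfrak{p}_1}(\nu)\ge 1$ one has $\msf{a}(\nu,\msf{g}_1)=0$, and since $\nu_{\mathfrak{p}_1}\ne 0$ in $\bb{Q}_p$, the relation forces $\msf{a}(\nu,\msf{g}_2)=0$ as well; thus $\msf{g}_2$ is also effectively $\mathfrak{p}_1$-depleted. For the surviving $\nu$ with $v_{\mathfrak{p}_1}(\nu)=0$, the unit $\nu_{\mathfrak{p}_1}\in\bb{Z}_p^{\times}$ gives
\[
\msf{a}(\nu,\msf{g}_1)+\msf{a}(\nu,\msf{g}_2)\ =\ \tfrac{\nu_{\mathfrak{p}_1}+\nu_{\mathfrak{p}_2}}{\nu_{\mathfrak{p}_1}}\,\msf{a}(\nu,\msf{g}_1)\ =\ \tfrac{\mrm{Tr}_{L/\bb{Q}}(\nu)}{\nu_{\mathfrak{p}_1}}\,\msf{a}(\nu,\msf{g}_1).
\]

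Because $\zeta^{*}$ sends the pair representing the 1-form $\msf{g}_1\,d\tau_1+\msf{g}_2\,d\tau_2$ to the weight-two elliptic $p$-adic form $\zeta^{*}\msf{g}_1+\zeta^{*}\msf{g}_2$, its $m$-th classical Fourier coefficient on the identity component is
$\sum_{\mrm{Tr}(\nu)=m,\,\nu\gg 0}\bigl(\msf{a}(\nu,\msf{g}_1)+\msf{a}(\nu,\msf{g}_2)\bigr)=m\cdot b(m)$, where
\[
b(m)\ =\ \sum_{\substack{\mrm{Tr}(\nu)=m,\ \nu\gg 0\\ v_{\mathfrak{p}_1}(\nu)=0}}\frac{\msf{a}(\nu,\msf{g}_1)}{\nu_{\mathfrak{p}_1}}
\]
is a finite sum of $p$-adically bounded terms (by integrality of $\msf{g}_1$ and the unit property of $\nu_{\mathfrak{p}_1}$), hence bounded in $O$ uniformly in $m$. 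Consequently
$a(n,U_p^{k}\zeta^{*}(\msf{g}_1,\msf{g}_2))=np^{k}\cdot b(np^{k})\to 0$ in the $p$-adic topology as $k\to\infty$, and Hida's characterization of the ordinary projector shows that every Fourier coefficient of $e_\mrm{ord}\zeta^{*}(\msf{g}_1,\msf{g}_2)$ vanishes. The $q$-expansion principle, applied componentwise over the narrow class group, then yields $e_\mrm{ord}\zeta^{*}(\msf{g}_1,\msf{g}_2)=0$.

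The one technical point requiring verification is that the normalized adelic Fourier coefficients of Lemma~\ref{q-exp diagonal}, together with their $(y_\eta)_p^{t_L-x}\eta^{x-t_L}$ corrections, really do package the diagonal restriction in the shape $m\cdot b(m)$ above, and that the Kisin--Lai derivations $d_i$ act on these $p$-adic Fourier coefficients exactly by multiplication by $\nu_{\mathfrak{p}_i}$ up to bounded $p$-adic units. This is a direct bookkeeping exercise, essentially a refinement of the computation behind Lemma~\ref{q-exp diagonal}, and is the only step where one might worry about sign or normalization subtleties; once pinned down on the identity component it transports verbatim to the other narrow class components.
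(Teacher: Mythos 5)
Your proposal is correct and follows essentially the same route as the paper: the identical coefficient relation $\lambda b_\lambda=\bar\lambda a_\lambda$ together with $\mathfrak{p}_1$-depletion shows the trace-$m$ coefficient of $\zeta^*(\msf{g}_1,\msf{g}_2)$ is $m$ times a bounded quantity, i.e.\ the diagonal restriction is $-d\zeta^*\big[d_1^{-1}\msf{g}_1\big]$. The only cosmetic difference is that the paper then invokes $e_\mrm{ord}\circ d=0$ directly, whereas you inline the standard proof of that fact by letting $U_p^{k}$ act coefficient-wise and passing to the limit defining $e_\mrm{ord}$.
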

\begin{proof}
Since this is a statement involving only sections on the identity component of the Hilbert--Blumenthal surface, we may carry out the computation using the classical $q$-expansion.

\noindent First, suppose $\msf{g}_1$ is $\mathfrak{p}_1$-depleted. For $\lambda\in L$, write $q^\lambda =\exp(2\pi i(\lambda z_1+\bar{\lambda}z_2)),$ $\bar{\lambda}\in L$ is the algebraic conjugate.
Suppose $\msf{g}_1 = \sum_\lambda a_\lambda q^\lambda$ and $\msf{g}_2 = \sum_\lambda b_\lambda q^\lambda$, where $\lambda$ runs through the totally positive elements in some lattice in $L$. Then $d_1\msf{g}_2-d_2\msf{g}_1=0$ implies
\[
\lambda b_\lambda -\bar{\lambda}a_\lambda=0,\qquad\text{or equivalently}\qquad b_\lambda = \frac{\bar{\lambda}}{\lambda}\cdot a_\lambda.
\]
Here $a_\lambda$ and $b_\lambda$ are understood to be in $\overline{\bb{Q}}_p,$ as well as $\lambda$ and $\bar{\lambda}$ via our fixed embedding $\overline{\bb{Q}}\hookrightarrow \overline{\bb{Q}}_p,$ corresponding to a place of $\bar{\bb{Q}}_p$ above $\mathfrak{p}_1$. As $\msf{g}_1$ is $\mathfrak{p}_1$-depleted, 
$d_1^{-1}\msf{g}_1\in S_{0t_L,0t_L}^\mrm{p\mbox{-}adic}(K^*_\diamond(p^\alpha))$
is well-defined and we can compute
\[
\zeta^*(\msf{g}_1, \msf{g}_2)
=\sum_{n>0}\Big(\sum_{\lambda+\bar{\lambda}=n}(a_\lambda+b_\lambda)\Big)q^n=-\sum_{n>0}n\Big(\sum_{\lambda+\bar{\lambda}=n}\frac{a_\lambda}{\lambda}\Big)q^n=-d\zeta^*\big[d_1^{-1}\msf{g}_1\big].
\]
 Therefore, $e_\mrm{ord}\zeta^*(\msf{g}_1, \msf{g}_2)=0$. The case when $\msf{g}_2$ is $\mathfrak{p}_2$-depleted is completely analogous.
\end{proof}

\begin{corollary}\label{corollcomputation}
Let $\Omega_\mrm{P}$ denote the differential form associated to  $e_\mrm{ord}\zeta^*\big[ d_2^{-1}\big(\nu_\alpha^*(\breve{\msf{g}}_\mrm{P}\lvert\tau^{-1}_{\frak{p}_2^\alpha})\big)^{\mbox{\tiny $[\cal{P}]$}}\big]$, then the following equality holds
	\[
	e_\mrm{ord}\zeta^*[\iota_\mrm{dR}^B(w)]=-\Omega_\mrm{P}.
	\]
\end{corollary}
\begin{proof}
	Combining equation ($\ref{express in classical forms}$) and Lemma $\ref{PrimitiveVanishing}$ we deduce that
	\[
	e_\mrm{ord}\zeta^*[\iota_\mrm{dR}^B(w)]
	=
	\omega_{e_\mrm{ord}\zeta^*\Big( -d_2^{-1}(\breve{\msf{g}}^\diamond_\mrm{P})^{\mbox{\tiny $[\cal{P}]$}}- \alpha_1^\diamond V(p_1)d_2^{-1}(\breve{\msf{g}}^\diamond_\mrm{P})^{\mbox{\tiny $[\frak{p}_2]$}},\ \alpha_2^\diamond V(p_2) d_1^{-1}(\breve{\msf{g}}^\diamond_\mrm{P})^{\mbox{\tiny $[\frak{p}_1]$}}\Big)}.
	\] 
	Then, the claim follows by applying (\cite{BlancoFornea}, Proposition 2.11 $\&$ Lemma 3.10) as in the proof of (\cite{BlancoFornea}, Theorem 5.14) to obtain  the vanishing 
	\[
	e_\mrm{ord}\zeta^*\Big( V(p_2) d_1^{-1}(\breve{\msf{g}}^\diamond_\mrm{P})^{\mbox{\tiny $[\frak{p}_1]$}}\Big)=0,
	\qquad 
	e_\mrm{ord}\zeta^*\Big( V(p_1)d_2^{-1}(\breve{\msf{g}}^\diamond_\mrm{P})^{\mbox{\tiny $[\frak{p}_2]$}}\Big)=0.
	\]
\end{proof}

\noindent Now we are ready to compute the syntomic Abel--Jacobi map of Hirzebruch--Zagier cycles.

\begin{theorem}\label{AJ formula}
Suppose that $p$ splits in $L$ with narrowly principal factors and that there is no totally positive unit in $L$ congruent to $-1$ modulo $p$, then
\[
\mrm{AJ}_{\mrm{syn}}\Big((\lambda_\alpha,\mrm{id})_*\Delta_\alpha^\circ\Big)\big(\omega_\mrm{P}\otimes\eta_\circ\big)
=
\frac{\alpha_{\msf{f}^*_\circ}^{\alpha-1}\cdot \alpha_{2,\msf{g}_\mrm{P}}^{-\alpha}\cdot G(\theta_{L,\mathfrak{p}}^{-1}\chi_\mathfrak{p}^{-1})}{\big(1-\alpha_{1,\msf{g}_\mrm{P}}\alpha_{2,\msf{g}_\mrm{P}}^{-1}\alpha_{\msf{f}_\circ^*}^{-1}\big)}\cdot\frac{\Big\langle e_{\mrm{ord}}\zeta^*\big(d_\mu^{\bfcdot}\breve{\scr{G}}^{\mbox{\tiny $[\cal{P}]$}}\big)^\dagger(\mrm{P}),\ \msf{f}_\circ^{*\mbox{\tiny $(p)$}}\Big\rangle}{\Big\langle\msf{f}_\circ^{*\mbox{\tiny $(p)$}}, \msf{f}_\circ^{*\mbox{\tiny $(p)$}}\Big\rangle}.
\]
\end{theorem}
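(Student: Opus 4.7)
The starting point is equation (\ref{secondreduction}), into which I substitute Corollary \ref{corollcomputation} to obtain
\[
\mrm{AJ}_{\mrm{syn}}\big((\lambda_\alpha,\mrm{id})_*\Delta_\alpha^\circ\big)\big(\omega_\mrm{P}\otimes\eta_\circ\big)
=\frac{-1}{(R\star Q)(p^{-1})}\Big\langle\omega_{\mathsf{F}_\alpha},\,(\pi_{2,\alpha})^*(\lambda_1)^*\eta_\circ\Big\rangle_{\mrm{dR},Y_\alpha},
\]
where $\mathsf{F}_\alpha=\scr{R}(p^{-1}U_p^{-1})\,e_\mrm{ord}\zeta^*\Big[d_2^{-1}\big((w_{\mathfrak{p}_2^\alpha})^*\breve{\msf{g}}_\mrm{P}\big)^{\mbox{\tiny $[\cal{P}]$}}\Big]$. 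I have used the identity $\nu_\alpha^*(\breve{\msf{g}}_\mrm{P}|\tau_{\mathfrak{p}_2^\alpha}^{-1})=(w_{\mathfrak{p}_2^\alpha})^*\breve{\msf{g}}_\mrm{P}$, which follows from the factorization $w_{\mathfrak{p}_2^\alpha}=\mathfrak{T}_{\tau_{\mathfrak{p}_2^\alpha}}\circ\nu_\alpha$.

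Next I apply Proposition \ref{analysis comp geometry} with $\mathfrak{p}=\mathfrak{p}_2$ (so $d_\mu=d_2$ and $\msf{a}_p(\varpi_{\mathfrak{p}_2},\breve{\msf{g}}_\mrm{P})=\alpha_{2,\msf{g}_\mrm{P}}$) to identify the cuspform appearing in $\mathsf{F}_\alpha$ with a classical form:
\[
e_\mrm{ord}\zeta^*\Big[d_2^{-1}\big((w_{\mathfrak{p}_2^\alpha})^*\breve{\msf{g}}_\mrm{P}\big)^{\mbox{\tiny $[\cal{P}]$}}\Big]=\alpha_{2,\msf{g}_\mrm{P}}^{-\alpha}\,G(\theta_{L,\mathfrak{p}_2}^{-1}\chi_{\mathfrak{p}_2}^{-1})\,e_\mrm{ord}\zeta^*\big(d_{\mathfrak{p}_2}^{\bfcdot}\breve{\scr{G}}^{\mbox{\tiny $[\cal{P}]$}}\big)^\dagger(\mrm{P}),
\]
which by Corollary \ref{diagonal restriction family} descends to an ordinary $p$-adic cuspform of weight $(2,1)$, level $Mp$ and character $\psi_\circ^{-1}$.

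To evaluate the de Rham pairing, observe that $\scr{R}(p^{-1}U_p^{-1})$ commutes with $e_\mrm{ord}\zeta^*$ and with the projection to the $\msf{f}_\circ^*$-isotypic component induced by pairing with $\eta_\circ$, on which $U_p$ acts as $\alpha_{\msf{f}_\circ^*}$. Hence $\scr{R}(p^{-1}U_p^{-1})$ contributes the scalar $\scr{R}(p^{-1}\alpha_{\msf{f}_\circ^*}^{-1})$. Definition \ref{def eta} normalizes $\eta_\circ$ so that pairing a level-$Mp$ cuspform $\msf{h}$ against $(\lambda_1)^*\eta_\circ$ on $X_0(Mp)$ produces $\varphi(\msf{h})=\langle\msf{h},\msf{f}_\circ^{*\mbox{\tiny $(p)$}}\rangle/\langle\msf{f}_\circ^{*\mbox{\tiny $(p)$}},\msf{f}_\circ^{*\mbox{\tiny $(p)$}}\rangle$; converting the pairing on $Y_\alpha$ against $(\pi_{2,\alpha})^*(\lambda_1)^*\eta_\circ$ to this level-$p$ pairing introduces a factor $\alpha_{\msf{f}_\circ^*}^{\alpha-1}$ coming from the iteration of the $\pi_2$-degeneracy. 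Using the factorization $(R\star Q)(p^{-1})=\scr{R}(\alpha_{\msf{f}_\circ^*}^{-1}p^{-1})(1-\alpha_{1,\msf{g}_\mrm{P}}\alpha_{2,\msf{g}_\mrm{P}}^{-1}\alpha_{\msf{f}_\circ^*}^{-1})$, the $\scr{R}$-factors cancel and collecting constants yields the stated formula.

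The main obstacle is this last normalization step: the degeneracy $\pi_{2,\alpha}\colon Y_\alpha\to X_0(p)$ is a $U_p$-twisted version of the natural projection, and its $\alpha-1$-fold iteration accounts for the $\alpha_{\msf{f}_\circ^*}^{\alpha-1}$ exponent in the final answer; pinning down this precise power requires careful bookkeeping of how $U_p$-eigenvalues interact with the $\msf{f}_\circ^*$-isotypic projection built into Definition \ref{def eta}, together with the assumption that the $\mathfrak{p}_i$ are narrowly principal so that the operators $U(p_i)$ and $V(p_i)$ on $G^*$-cuspforms behave as expected.
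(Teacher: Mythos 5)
Your proposal is correct and follows essentially the same route as the paper: substitute Corollary \ref{corollcomputation} into (\ref{secondreduction}), peel off the scalar $\scr{R}(\alpha_{\msf{f}_\circ^*}^{-1}p^{-1})$ using that $\eta_\circ$ is a $U_p$-eigenvector with eigenvalue $\alpha_{\msf{f}_\circ^*}$ (so it cancels against the factorization of $(R\star Q)(p^{-1})$), reduce the pairing from $Y_\alpha$ to level $K'_0(p)$ picking up $\alpha_{\msf{f}_\circ^*}^{\alpha-1}$, and conclude with Definition \ref{def eta}, Corollary \ref{diagonal restriction family} and Proposition \ref{analysis comp geometry} with $\mathfrak{p}=\mathfrak{p}_2$. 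The only difference is cosmetic: you invoke Proposition \ref{analysis comp geometry} before the level-lowering step rather than after, which changes nothing in the bookkeeping.
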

\begin{proof}
	By equation ($\ref{secondreduction}$) and Corollary $\ref{corollcomputation}$ we have 
	\[
	\mrm{AJ}_{\mrm{syn}}\Big((\lambda_\alpha,\mrm{id})_*\Delta_\alpha^\circ\Big)\big(\omega_\mrm{P}\otimes\eta_\circ\big)=
	\frac{1}{\big(1-\alpha_{1,\msf{g}_\mrm{P}}\alpha_{2,\msf{g}_\mrm{P}}^{-1}\alpha_{\msf{f}_\circ^*}^{-1}\big)}\cdot \Big\langle \Omega_\mrm{P},\hspace{1mm} (\pi_{2,\alpha})^*(\lambda_1)^*\eta_\circ\Big\rangle_{\mrm{dR},Y_\alpha}
	\]
	Corollary $\ref{diagonal restriction family}$ and Proposition $\ref{analysis comp geometry}$ show that the cuspform $e_\mrm{ord}\zeta^*\Big[ d_2^{-1}\big(\nu_\alpha^*(\breve{\msf{g}}_\mrm{P}\lvert\tau^{-1}_{\frak{p}_2^\alpha})\big)^{\mbox{\tiny $[\cal{P}]$}}\Big]$ is of level $K'_0(p)$, and since $U_p^{\alpha-1}\eta_\circ=\alpha_{\msf{f}^*_\circ}^{\alpha-1}\cdot \eta_\circ$
	\[
	\Big\langle \Omega_\mrm{P},\hspace{1mm}  (\pi_{2,\alpha})^*(\lambda_1)^*\eta_\circ\Big\rangle_{\mrm{dR},Y_\alpha}=
	\Big\langle \Omega_\mrm{P},\hspace{1mm} (\lambda_1)^*U_p^{\alpha-1}\eta_\circ\Big\rangle_{\mrm{dR},Y_1}=
	\alpha_{\msf{f}^*_\circ}^{\alpha-1}\cdot \Big\langle \Omega_\mrm{P},\hspace{1mm} (\lambda_1)^*\eta_\circ\Big\rangle_{\mrm{dR},Y_1}.
	\]
	 Finally, by Definition $\ref{def eta}$ 
\[
\Big\langle \Omega_\mrm{P},\hspace{1mm} (\lambda_1)^*\eta_\circ\Big\rangle_{\mrm{dR},Y_1}
=
\frac{\Big\langle e_\mrm{ord}\zeta^*\Big[ d_2^{-1}\big(\nu_\alpha^*(\breve{\msf{g}}_\mrm{P}\lvert\tau^{-1}_{\frak{p}_2^\alpha})\big)^{\mbox{\tiny $[\cal{P}]$}}\Big],\msf{f}_\circ^{*\mbox{\tiny $(p)$}}\Big\rangle}{\Big\langle\msf{f}_\circ^{*\mbox{\tiny $(p)$}}, \msf{f}_\circ^{*\mbox{\tiny $(p)$}}\Big\rangle}
\]
which together with Proposition $\ref{analysis comp geometry}$ implies the result.	
\end{proof}

\section{Comparison}\label{Sect: Comparison}
Recall our running assumptions: there is an ordinary prime $p$ for $\msf{f}_\circ$ such that
	\begin{itemize}
		\item[\bfcdot] $p$ splits in $L$ with narrowly principal factors;
		\item[\bfcdot] there is no totally positive unit in $L$ congruent to $-1$ modulo $p$;
		\item[\bfcdot] the eigenvalues of $\mrm{Fr}_p$ on $\mrm{As}(\varrho)$ are all distinct modulo $p$.
	\end{itemize}
Consider the element
\[
\bs{\zeta}_{\scr{G},\msf{f}_\circ}:=\alpha_{\msf{f}_\circ}\cdot\Big(1-\scr{G}(\mbf{T}(\varpi_{\frak{p}_1})\mbf{T}(\varpi_{\frak{p}_2}^{-1}))\alpha_{\msf{f}^*_\circ}^{-1}\Big)\in\mbf{I}_\scr{G}.
\]
\begin{theorem}\label{comparison aut-mot}
There is an equality of $p$-adic $L$-functions
\[
\bs{\zeta}_{\scr{G},\msf{f}_\circ}\cdot \scr{L}^\mrm{mot}_p(\breve{\scr{G}},\msf{f}_\circ)
=
\scr{L}^\mrm{an}_p(\breve{\scr{G}},\msf{f}_\circ)\qquad \text{in}\qquad
\bs{\Pi}\otimes_{\bs{\Lambda}} \mbf{I}_\scr{G}. 
\]
In particular, $\scr{L}^\mrm{mot}_p(\breve{\scr{G}},\msf{f}_\circ)$ belongs to  $\mbf{I}_\scr{G}[\bs{\zeta}_{\scr{G},\msf{f}_\circ}^{-1}]$.
\end{theorem}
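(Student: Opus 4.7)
The plan is to invoke the vanishing criterion of Theorem \ref{cor: vanishing criterion}: since both sides lie in $\bs{\Pi}\otimes_{\bs{\Lambda}}\mbf{I}_\scr{G}$, it suffices to check the identity at every arithmetic point $\mrm{P}\in\cal{A}_{\bs{\chi}}(\mbf{I}_\scr{G})$ of weight $(2t_L,t_L)$. Fix such a $\mrm{P}$, of level $p^\alpha$ and character $(\chi_\circ\theta_L^{-1}\chi^{-1},\mathbbm{1})$, and write out the two specializations explicitly.

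First I would evaluate $\mrm{P}(\scr{L}_p^\mrm{mot}(\breve{\scr{G}},\msf{f}_\circ))$. Combining Proposition \ref{prop: huge period map} with the weight-two case ($\ell=2$) of Proposition \ref{prop: big log}, the specialization equals
\[
\Upsilon(\mrm{P})\cdot\big\langle \log_\mrm{BK}\bigl(\mrm{P}(\boldsymbol{\kappa}_p^{\msf{f}_\circ}(\scr{G}))\bigr),\ (\lambda_\alpha)^*\omega_{\breve{\scr{G}}_\mrm{P}}\otimes\eta_\circ\big\rangle_\mrm{dR},
\]
where $\Upsilon(\mrm{P})=(\alpha_{1,\msf{g}_\mrm{P}}\alpha_{2,\msf{g}_\mrm{P}}\alpha_{\msf{f}_\circ^*}^{-1})^\alpha\cdot G(\chi_{\mbox{\tiny $\spadesuit$}}\theta_{\bb{Q}|D_p})^{-1}$. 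Next, I would use the compatibility of $p$-adic \'etale and syntomic Abel-Jacobi maps with the Bloch-Kato logarithm to identify the log-image of the specialized class with the syntomic Abel-Jacobi image of the Hirzebruch-Zagier cycle, accounting for the normalization $\kappa_\alpha^\mrm{n.o.}=(U_{\mathfrak{p}_1}^{-\alpha},\mrm{id})\kappa_\alpha^\circ$ of \eqref{normalizationHZclasses}: the pairing above becomes
\[
\alpha_{1,\msf{g}_\mrm{P}}^{-\alpha}\cdot \mrm{AJ}_\mrm{syn}\big((\lambda_\alpha,\mrm{id})_*\Delta_\alpha^\circ\big)\big(\omega_\mrm{P}\otimes\eta_\circ\big).
\]

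Now I would plug in the explicit $p$-adic Gross-Zagier formula, Theorem \ref{AJ formula}, which expresses $\mrm{AJ}_\mrm{syn}((\lambda_\alpha,\mrm{id})_*\Delta_\alpha^\circ)(\omega_\mrm{P}\otimes\eta_\circ)$ as an explicit multiple of $\langle e_\mrm{ord}\zeta^*(d_\mathfrak{p}^{\bfcdot}\breve{\scr{G}}^{\mbox{\tiny $[\cal{P}]$}})^\dagger(\mrm{P}),\msf{f}_\circ^{*(p)}\rangle/\langle\msf{f}_\circ^{*(p)},\msf{f}_\circ^{*(p)}\rangle$. Comparing this with the definition of $\scr{L}_p^\mrm{aut}(\breve{\scr{G}},\msf{f}_\circ)$, whose specialization at $\mrm{P}$ is exactly that Petersson quotient, the ratio of specializations collapses to an explicit finite product of local factors. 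Collecting the Gauss sums coming from $\Upsilon(\mrm{P})$ and from Theorem \ref{AJ formula}, the $U_{\mathfrak{p}_i}$-eigenvalue powers $\alpha_{i,\msf{g}_\mrm{P}}^{\pm\alpha}$, and the Euler factor $(1-\alpha_{1,\msf{g}_\mrm{P}}\alpha_{2,\msf{g}_\mrm{P}}^{-1}\alpha_{\msf{f}_\circ^*}^{-1})^{-1}$, the identity reduces after cancellation to
\[
\mrm{P}(\scr{L}_p^\mrm{aut})=-\alpha_{\msf{f}_\circ}\bigl(1-\alpha_{1,\msf{g}_\mrm{P}}\alpha_{2,\msf{g}_\mrm{P}}^{-1}\alpha_{\msf{f}_\circ^*}^{-1}\bigr)\cdot\mrm{P}(\scr{L}_p^\mrm{mot})=\mrm{P}(\bs{\zeta}_{\scr{G},\msf{f}_\circ})\cdot\mrm{P}(\scr{L}_p^\mrm{mot}),
\]
which is the claim after applying $\mrm{P}$. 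The final clause ($\scr{L}_p^\mrm{mot}\in \mbf{I}_\scr{G}[\bs{\zeta}_{\scr{G},\msf{f}_\circ}^{-1}]$) is then immediate from $\scr{L}_p^\mrm{aut}\in \mbf{I}_\scr{G}$ together with the injectivity of $\mbf{I}_\scr{G}[\bs{\zeta}_{\scr{G},\msf{f}_\circ}^{-1}]\hookrightarrow \bs{\Pi}\otimes_{\bs{\Lambda}}\mbf{I}_\scr{G}$, which itself follows from the flatness ensured by Lemma \ref{inverse limit}.

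The main obstacle is bookkeeping: one has to track carefully how (i) the Atkin-Lehner twist used in Proposition \ref{analysis comp geometry}, (ii) the diamond/character normalizations packaging the class $(\breve{\scr{G}}^{[\cal{P}]})^\dagger$, (iii) the Gauss sums entering both $\Upsilon(\mrm{P})$ and the factor $G(\theta_{L,\mathfrak{p}}^{-1}\chi_\mathfrak{p}^{-1})$ of Theorem \ref{AJ formula}, and (iv) the relation $\alpha_{\msf{f}_\circ^*}=\overline{\alpha_{\msf{f}_\circ}}$ with $\alpha_{\msf{f}_\circ}\beta_{\msf{f}_\circ^*}=\psi_\circ(p)^{-1}\cdot\alpha_{\msf{f}_\circ^*}^{-1}\cdot$ conjugate roots, conspire to give exactly $\mrm{P}(\bs{\zeta}_{\scr{G},\msf{f}_\circ})$ on the nose. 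All the ingredients are in place, but the constants must line up consistently across the entire weight-two locus, which is ensured because the vanishing criterion is applied to a single element of $\mbf{I}_\scr{G}\otimes_{\bs{\Lambda}}\bs{\Pi}$ rather than to each specialization independently.
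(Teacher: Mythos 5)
Your proposal follows the paper's own proof essentially step for step: reduce to weight-two arithmetic points via Theorem \ref{cor: vanishing criterion}, compute $\mrm{P}(\scr{L}_p^\mrm{mot})$ by combining Proposition \ref{prop: huge period map} with the $\ell=2$ case of Proposition \ref{prop: big log}, convert the Bloch--Kato logarithm of the specialized class (with the $U_{\mathfrak{p}_1}^{-\alpha}$ normalization of \eqref{normalizationHZclasses}) into the syntomic Abel--Jacobi value, and then match against $\mrm{P}(\scr{L}_p^\mrm{aut})$ through Theorem \ref{AJ formula}, the Gauss sums and eigenvalue powers cancelling to leave exactly $\mrm{P}(\bs{\zeta}_{\scr{G},\msf{f}_\circ})$. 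This is the same argument as in the paper, so no further comment is needed.
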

\begin{proof}
By Theorem \ref{cor: vanishing criterion} it suffices to show that the elements we want to compare have the same specialization at every arithmetic point of $\mbf{I}_\scr{G}$  of weight $(2t_L,t_L)$.
Let $\mrm{P}\in\cal{A}_{\bs{\chi}}(\mbf{I}_\scr{G})$ be an arithmetic point of weight $(2t_L,t_L)$ and character $(\chi_\circ\theta_L^{-1}\chi^{-1},\mathbbm{1})$ of conductor $p^\alpha$ we have (Proposition \ref{prop: big log})
			\[
			\mrm{P}\circ\bs{\cal{L}}^{\msf{f}_\circ}_\scr{G}=\Upsilon(\mrm{P})\cdot\big(\log_\mrm{BK}\circ\ \mrm{P}\big)
			\qquad\text{for}\qquad
\Upsilon(\mrm{P})=\left(\alpha_{1,\msf{g}_{\mrm{P}}}\alpha_{2,\msf{g}_{\mrm{P}}}\alpha_{\msf{f}^*_\circ}^{-1}\right)^\alpha\cdot G\big(\chi_{\mbox{\tiny $\spadesuit$}}\cdot\theta_{\bb{Q}\lvert D_p}\big)^{-1}.
\]
By Propositions \ref{prop: huge period map} 
\[
\begin{split}
\scr{L}^\mrm{mot}_p(\breve{\scr{G}},\msf{f}_\circ)(\mrm{P})
&=
\Big\langle \mrm{P}\circ\bs{\cal{L}}_\scr{G}^{\msf{f}_\circ}\big(\boldsymbol{\kappa}_p^{\msf{f}_\circ}(\scr{G})\big),\
(\lambda_\alpha)^*\omega_{\breve{\scr{G}}_\mrm{P}}\otimes (\lambda_1)^*\eta_\circ\Big\rangle_\mrm{dR}\\
&=
\Upsilon(\mrm{P})\cdot\Big\langle\log_\mrm{BK}\big(\boldsymbol{\kappa}_p^{\msf{f}_\circ}(\scr{G})(\mrm{P})\big),\
(\lambda_\alpha)^*\omega_{\breve{\scr{G}}_\mrm{P}}\otimes (\lambda_1)^*\eta_\circ\Big\rangle_\mrm{dR}.\\
\end{split}\]
From the definition of $\kappa_\alpha^\mrm{n.o.}$ (Equation ($\ref{normalizationHZclasses}$)) we continue with
\begin{equation}\label{L-1}\begin{split}
\scr{L}^\mrm{mot}_p(\breve{\scr{G}},\msf{f}_\circ)(\mrm{P})
&=
\alpha_{1,\msf{g}_{\mrm{P}}}^{-\alpha}\cdot\Upsilon(\mrm{P})\cdot\Big\langle\mrm{AJ}_\mrm{syn}(\Delta^\circ_\alpha),\
(\lambda_\alpha)^*\omega_{\breve{\scr{G}}_\mrm{P}}\otimes (\lambda_1)^*\eta_\circ\Big\rangle_\mrm{dR}\\
&=
\alpha_{1,\msf{g}_{\mrm{P}}}^{-\alpha}\cdot\Upsilon(\mrm{P})\cdot\mrm{AJ}_\mrm{syn}\Big((\lambda_\alpha,\lambda_1)_*\Delta^\circ_\alpha\Big)\big(
\omega_{\breve{\scr{G}}_\mrm{P}}\otimes \eta_\circ\big)\\
(\text{Theorem}\ \ref{AJ formula})\qquad
&= 
\frac{\alpha_{\msf{f}^*_\circ}^{-1}}{\big(1-\alpha_{1,\msf{g}_\mrm{P}}\alpha_{2,\msf{g}_\mrm{P}}^{-1}\alpha_{\msf{f}_\circ^*}^{-1} \big)}\cdot\frac{\Big\langle e_{\mrm{ord}}\zeta^*\big(d_\mu^{\bfcdot}\breve{\scr{G}}^{\mbox{\tiny $[\cal{P}]$}}\big)^\dagger(\mrm{P}),\ \msf{f}_\circ^{*\mbox{\tiny $(p)$}}\Big\rangle}{\Big\langle\msf{f}_\circ^{*\mbox{\tiny $(p)$}}, \msf{f}_\circ^{*\mbox{\tiny $(p)$}}\Big\rangle}.
\end{split}\end{equation}
We found that
\begin{equation}\label{eq111}
\begin{split}
\scr{L}^\mrm{mot}_p(\breve{\scr{G}},\msf{f}_\circ)(\mrm{P})
&= \frac{\alpha_{\msf{f}^*_\circ}^{-1}}{\big(1-\alpha_{1,\msf{g}_\mrm{P}}\alpha_{2,\msf{g}_\mrm{P}}^{-1}\alpha_{\msf{f}_\circ^*}^{-1} \big)}\cdot \scr{L}^\mrm{an}_p(\breve{\scr{G}},\msf{f}_\circ)(\mrm{P}).
\end{split}\end{equation}
  If we let
\[
\bs{\zeta}_{\scr{G},\msf{f}_\circ}:=\alpha_{\msf{f}_\circ}\cdot\Big(1-\scr{G}(\mbf{T}(\varpi_{\frak{p}_1})\mbf{T}(\varpi_{\frak{p}_2}^{-1}))\alpha_{\msf{f}^*_\circ}^{-1}\Big)\in\mbf{I}_\scr{G}
\]
then we can rewrite ($\ref{eq111}$) as
\[
 \bs{\zeta}_{\scr{G},\msf{f}_\circ}(\mrm{P})\cdot\scr{L}^\mrm{mot}_p(\breve{\scr{G}},\msf{f}_\circ)(\mrm{P})
 = \scr{L}^\mrm{an}_p(\breve{\scr{G}},\msf{f}_\circ)(\mrm{P}).
\]
The equality $ \bs{\zeta}_{\scr{G},\msf{f}_\circ}\cdot\scr{L}^\mrm{mot}_p(\breve{\scr{G}},\msf{f}_\circ)=  \scr{L}^\mrm{an}_p(\breve{\scr{G}},\msf{f}_\circ)$ follows from Theorem \ref{cor: vanishing criterion}.
\end{proof}

\begin{corollary}\label{second step}
	For any arithmetic point $\mrm{P}\in\cal{A}_{\bs{\chi}}(\mbf{I}_\scr{G})$
	\[
	\scr{L}_p^\mrm{aut}(\breve{\scr{G}},\msf{f}_\circ)(\mrm{P})
	\not=0\qquad\iff\qquad
	\scr{L}_p^\mrm{mot}(\breve{\scr{G}},\msf{f}_\circ)(\mrm{P})\not=0.
	\]
\end{corollary}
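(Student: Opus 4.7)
The plan is to deduce the corollary directly from the equality
\[
\bs{\zeta}_{\scr{G},\msf{f}_\circ}\cdot \scr{L}^\mrm{mot}_p(\breve{\scr{G}},\msf{f}_\circ)=\scr{L}^\mrm{aut}_p(\breve{\scr{G}},\msf{f}_\circ)
\]
in $\bs{\Pi}\otimes_{\bs{\Lambda}} \mbf{I}_\scr{G}$ established in Theorem \ref{comparison aut-mot}. Both sides can be evaluated at an arithmetic point $\mrm{P}\in\cal{A}_{\bs{\chi}}(\mbf{I}_\scr{G})$, so the corollary will follow once I check that the scalar $\bs{\zeta}_{\scr{G},\msf{f}_\circ}(\mrm{P})\in\overline{\bb{Q}}_p$ never vanishes.

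The key computation is then the evaluation of
\[
\bs{\zeta}_{\scr{G},\msf{f}_\circ} = -\alpha_{\msf{f}_\circ}\cdot\Big(1-\scr{G}\big(\mbf{T}(\varpi_{\frak{p}_1})\mbf{T}(\varpi_{\frak{p}_2}^{-1})\big)\cdot\alpha_{\msf{f}^*_\circ}^{-1}\Big)
\]
at $\mrm{P}$. First, $\alpha_{\msf{f}_\circ}$ is a $p$-adic unit by the $p$-ordinariness of $\msf{f}_\circ$, so this factor never vanishes. For the second factor, I would argue exactly as in the remark following Proposition \ref{classINsel}: the specialization $\mrm{P}\circ\scr{G}\big(\mbf{T}(\varpi_{\frak{p}_1})\mbf{T}(\varpi_{\frak{p}_2}^{-1})\big)$ is an algebraic integer all of whose complex absolute values equal $1$ (this is the ratio of two Satake parameters of the classical eigenform $\scr{G}_\mrm{P}$, each of complex absolute value $1$ by Ramanujan/Deligne applied to the weight-two specialization, together with archimedean temperedness at weight one), whereas $|\alpha_{\msf{f}^*_\circ}|_\bb{C}=p^{1/2}$ by the Weil bounds for $\msf{f}_\circ^*$. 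Consequently
\[
\Big|\mrm{P}\circ\scr{G}\big(\mbf{T}(\varpi_{\frak{p}_1})\mbf{T}(\varpi_{\frak{p}_2}^{-1})\big)\cdot\alpha_{\msf{f}^*_\circ}^{-1}\Big|_\bb{C}=p^{-1/2}\ne 1,
\]
so the bracketed factor specializes to a nonzero complex (hence nonzero $p$-adic) number.

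Combining the two observations, $\bs{\zeta}_{\scr{G},\msf{f}_\circ}(\mrm{P})\ne 0$ for every arithmetic point $\mrm{P}\in\cal{A}_{\bs{\chi}}(\mbf{I}_\scr{G})$. Specialising Theorem \ref{comparison aut-mot} at $\mrm{P}$ gives
\[
\bs{\zeta}_{\scr{G},\msf{f}_\circ}(\mrm{P})\cdot \scr{L}^\mrm{mot}_p(\breve{\scr{G}},\msf{f}_\circ)(\mrm{P})=\scr{L}^\mrm{aut}_p(\breve{\scr{G}},\msf{f}_\circ)(\mrm{P}),
\]
and since the leading factor is a nonzero scalar in $\overline{\bb{Q}}_p$, the vanishing of one side is equivalent to the vanishing of the other. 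The only subtle point is the archimedean control of the Hecke eigenvalues at the weight-one arithmetic point $\mrm{P}_\circ$, but there the Satake parameters of $\msf{g}_\circ$ are unit root numbers of absolute value $1$ by the Deligne--Serre theorem, so the same argument applies and no extra hypothesis is needed.
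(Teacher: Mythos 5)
Your proposal is correct and follows essentially the same route as the paper: specialize the identity $\bs{\zeta}_{\scr{G},\msf{f}_\circ}\cdot\scr{L}^\mrm{mot}_p(\breve{\scr{G}},\msf{f}_\circ)=\scr{L}^\mrm{aut}_p(\breve{\scr{G}},\msf{f}_\circ)$ of Theorem \ref{comparison aut-mot} at $\mrm{P}$ and note that $\bs{\zeta}_{\scr{G},\msf{f}_\circ}(\mrm{P})\not=0$, exactly by the archimedean absolute-value comparison the paper already invokes for $\bs{\xi}_{\scr{G},\msf{f}_\circ}$ (ratio of the two eigenvalues at $\frak{p}_1,\frak{p}_2$ has complex absolute value $1$, while $\alpha_{\msf{f}_\circ^*}$ has absolute value $p^{1/2}$). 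The only slip is cosmetic: at weight two the individual $U_{\frak{p}_i}$-eigenvalues have absolute value $p^{1/2}$ rather than $1$; it is their ratio that has absolute value $1$, which is all your argument needs.
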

\begin{proof}
	The claim follows from Theorem $\ref{comparison aut-mot}$ and the fact that $\bs{\zeta}_{\scr{G},\msf{f}_\circ}(\mrm{P})\not=0$ for any arithmetic point $\mrm{P}\in\cal{A}_{\bs{\chi}}(\mbf{I}_\scr{G})$.
\end{proof}

\begin{remark}
	We are not assuming Conjecture \ref{wishingOhta} for  Theorem \ref{comparison aut-mot} and Corollary \ref{second step}.
\end{remark}

\subsection{Specialization in parallel weight one}

\begin{proposition}\label{nontriv-specialization}
Suppose that the running assumptions on the prime $p$, the representation $\varrho$, and Conjecture \ref{wishingOhta} hold. If the special $L$-value $L(\msf{f}_\circ,\mrm{As}(\varrho),1)$ does not vanish, then there is a surjection
		\[
		\xymatrix{	 \bs{\cal{V}}_\mathscr{G}(M)(-1)\otimes_{\mrm{P}_\circ}E_\wp\ar@{->>}[r]
	& \mrm{As}(\varrho).
		}\]
such that $\bs{\kappa}^{\msf{f}_\circ}_p(\scr{G})(\mrm{P}_\circ)$ has non-trivial image under the induced map 
\[
\mrm{H}^1\big(\bb{Q}_p,\bs{\cal{V}}^{\msf{f}_\circ}_{\scr{G}_{\mrm{P}_\circ}}(M)\big)\longrightarrow\mrm{H}^1(\bb{Q}_p,\mrm{As}(\varrho)\otimes\mrm{Gr}^0\mrm{V}_{\msf{f}_\circ}).
\] 
\end{proposition}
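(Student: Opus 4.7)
The plan is to produce the non-trivial local class in two moves. First, chain together the non-vanishing results already at hand to show that the local class $\bs{\kappa}^{\msf{f}_\circ}_p(\scr{G})(\mrm{P}_\circ)$ itself is non-zero. Second, use Assumption (\ref{assumptions})(5) to extract a direct summand quotient along which the class remains non-trivial.

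For the first move, starting from $L(\msf{f}_\circ,\mrm{As}(\varrho),1)\neq 0$, Corollary \ref{firststep} gives $\scr{L}^\mrm{aut}_p(\breve{\scr{G}},\msf{f}_\circ)(\mrm{P}_\circ)\neq 0$; Corollary \ref{second step} then yields $\scr{L}^\mrm{mot}_p(\breve{\scr{G}},\msf{f}_\circ)(\mrm{P}_\circ)\neq 0$; and Assumption (\ref{assumptions})(4) forces $\exp^*_\mrm{BK}\big(\bs{\kappa}^{\msf{f}_\circ}_p(\scr{G})(\mrm{P}_\circ)\big)\neq 0$. Because the Bloch-Kato dual exponential is a well-defined functional on the whole of $\mrm{H}^1\big(\bb{Q}_p,\bs{\cal{V}}^{\msf{f}_\circ}_{\scr{G}_{\mrm{P}_\circ}}(M)\big)$, this immediately implies that $\bs{\kappa}^{\msf{f}_\circ}_p(\scr{G})(\mrm{P}_\circ)$ is non-zero in the local cohomology group.

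For the second move, Assumption (\ref{assumptions})(5) provides a $\Gamma_\bb{Q}$-equivariant isomorphism $\bs{\cal{V}}_{\scr{G}_{\mrm{P}_\circ}}(M)(-1)\otimes_O E_\wp\cong \mrm{As}(\varrho)^{\oplus n}$. Since the ordinary filtration $\mrm{Fil}^2$ is determined purely by the $\Gamma_{\bb{Q}_p}$-action on the graded pieces listed in Proposition \ref{somekindoffil}, this abstract isomorphism respects $\mrm{Fil}^2$ after specialization to $\mrm{P}_\circ$; tensoring with $\mrm{Gr}^0\mrm{V}_{\msf{f}_\circ}(-1)$ produces a $\Gamma_{\bb{Q}_p}$-equivariant splitting
\[
\bs{\cal{V}}^{\msf{f}_\circ}_{\scr{G}_{\mrm{P}_\circ}}(M)\otimes_O E_\wp\ \cong\ \big(\mrm{Fil}^2\mrm{As}(\varrho)\otimes\mrm{Gr}^0\mrm{V}_{\msf{f}_\circ}\big)^{\oplus n},
\]
and hence a direct sum decomposition of the corresponding $\mrm{H}^1$. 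As the localized class is non-zero it must project non-trivially to one of the $n$ summands; fix the corresponding projector, which gives the desired surjection $\bs{\cal{V}}_{\scr{G}_{\mrm{P}_\circ}}(M)(-1)\otimes_O E_\wp\twoheadrightarrow\mrm{As}(\varrho)$. To conclude it is enough to know that the map on $\mrm{H}^1$ induced by the inclusion $\mrm{Fil}^2\mrm{As}(\varrho)\otimes\mrm{Gr}^0\mrm{V}_{\msf{f}_\circ}\hookrightarrow\mrm{As}(\varrho)\otimes\mrm{Gr}^0\mrm{V}_{\msf{f}_\circ}$ is injective, which is an exact analogue of Lemma \ref{fil2 inj}: thanks to Assumption (\ref{assumptions})(3), the graded pieces of the quotient are one-dimensional $\Gamma_{\bb{Q}_p}$-characters that are pairwise distinct and none equal to the trivial character, so $\mrm{H}^0(\bb{Q}_p,-)$ of the quotient vanishes.

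The principal obstacle is showing that the isomorphism of Assumption (\ref{assumptions})(5) transports the ordinary filtration correctly. Concretely, one has to match the characters appearing in Proposition \ref{somekindoffil} evaluated at $\mrm{P}_\circ$ with the ordinary filtration of $\mrm{As}(\varrho)$ obtained from Proposition \ref{AsaiFil}; this identification relies crucially on the distinct-eigenvalues condition of Assumption (\ref{assumptions})(3), which guarantees that the filtration is the unique $\Gamma_{\bb{Q}_p}$-stable filtration whose graded pieces realize the prescribed characters, and hence is preserved under any $\Gamma_{\bb{Q}_p}$-equivariant isomorphism.
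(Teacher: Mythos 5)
Your proof is correct and takes essentially the same route as the paper's: the identical chain Corollary \ref{firststep} $\Rightarrow$ Corollary \ref{second step} $\Rightarrow$ Assumption (4) of (\ref{assumptions}) $\Rightarrow$ equation (\ref{step four}) shows $\bs{\kappa}^{\msf{f}_\circ}_p(\scr{G})(\mrm{P}_\circ)\neq 0$, and Assumption (5) then produces a copy of $\mrm{As}(\varrho)$ receiving it non-trivially, with your extra care about the Lemma \ref{fil2 inj}-type injectivity merely making explicit a step the paper leaves implicit. One small correction: the non-triviality of the $\Gamma_{\bb{Q}_p}$-characters on the graded pieces of the quotient comes from the weight gap (the eigenvalue $\alpha_{\msf{f}_\circ^*}$ has complex absolute value $p^{1/2}$ while the Frobenius eigenvalues of $\mrm{As}(\varrho)$ are roots of unity), not from Assumption (3), which only guarantees that they are pairwise distinct.
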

\begin{proof}
	Corollary \ref{firststep} shows that the non-vanishing of the special $L$-value  $L(\msf{f}_\circ,\mrm{As}(\varrho),1)$ is equivalent to the non-vanishing of the automorphic $p$-adic $L$-function $\scr{L}_p^\mrm{aut}(\breve{\scr{G}},\msf{f}_\circ)$ at the arithmetic point $\mrm{P}_\circ\in\cal{A}_{\bs{\chi}}(\mbf{I}_\scr{G})$ of parallel weight one corresponding to  $\msf{g}_\circ^{\mbox{\tiny $(p)$}}$. Then
		\[\begin{split}
		L\Big(\msf{f}_\circ,\mrm{As}(\varrho),1\Big)\not=0\qquad
		\iff&\qquad \scr{L}_p^\mrm{aut}(\breve{\scr{G}},\msf{f}_\circ)(\mrm{P}_\circ)\not=0\\
		\mbox{\tiny $(\text{Corollary}\ \ref{second step})$}\qquad \iff&\qquad
		\scr{L}_p^\mrm{mot}(\breve{\scr{G}},\msf{f}_\circ)(\mrm{P}_\circ)\not=0\\
		\mbox{\tiny $(\text{Lemma}\ \ref{cruximplicat})$}\qquad\implies&\qquad
		\mrm{exp}^*_\mrm{BK}\big(\bs{\kappa}^{\msf{f}_\circ}_p(\scr{G})(\mrm{P}_\circ)\big)\not=0\\
		\mbox{\tiny $(\text{Equation}\ (\ref{step four}))$}\qquad \iff&\qquad
		\bs{\kappa}^{\msf{f}_\circ}_p(\scr{G})(\mrm{P}_\circ)\not=0.\\
		\end{split}\]
 Therefore, by Corollary \ref{correctspec}, the   non-trivial class $\bs{\kappa}^{\msf{f}_\circ}_p(\scr{G})(\mrm{P}_\circ)\in \mrm{H}^1\big(\bb{Q}_p,\bs{\cal{V}}^{\msf{f}_\circ}_{\scr{G}_{\mrm{P}_\circ}}(M)\big)$ maps non-trivially to some copy of $\mrm{H}^1(\bb{Q}_p,\mrm{As}(\varrho)\otimes\mrm{Gr}^0\mrm{V}_{\msf{f}_\circ})$.
		
\end{proof}

\noindent The choice of surjection $\bs{\cal{V}}_{\scr{G}_{\mrm{P}_\circ}}(M)(-1)\otimes_OE_\wp\twoheadrightarrow\mrm{As}(\varrho)$ in Proposition \ref{nontriv-specialization} induces a map
\[
\mrm{H}^1\big(\bb{Q}_p,\bs{\cal{V}}_{\scr{G}_{\mrm{P}},\msf{f}_\circ}(M)\big)\longrightarrow\mrm{H}^1(\bb{Q}_p,\mrm{V}_{\varrho,\msf{f}_\circ}).
\]
We denote the image of $\boldsymbol{\kappa}_{\scr{G},\msf{f}_\circ}$ under such map by 
\begin{equation}
	\kappa(\msf{g}_\circ^{\mbox{\tiny $(p)$}},\msf{f}_\circ)\in \mrm{H}^1\big(\bb{Q}, \mrm{V}_{\varrho,\msf{f}_\circ}\big).
\end{equation}  
The quotient map $\mrm{V}_{\varrho,\msf{f}_\circ}\twoheadrightarrow \mrm{As}(\varrho)\otimes\mrm{Gr}^0\mrm{V}_{\msf{f}_\circ}$ induces a homomorphism 
\[
\partial_p: 
\mrm{H}^1\big(\bb{Q}_p, \mrm{V}_{\varrho,\msf{f}_\circ}\big)
\longrightarrow 
\mrm{H}^1\big(\bb{Q}_p, \mrm{As}(\varrho)\otimes\mrm{Gr}^0\mrm{V}_{\msf{f}_\circ}\big)
\] 
 whose kernel is the local Selmer group at $p$,
as one can see by analyzing the Hodge--Tate weights, 
\[
\mrm{H}^1_f\big(\bb{Q}_p, \mrm{V}_{\varrho,\msf{f}_\circ}\big)=\ker(\partial_p).
\]

\begin{theorem}\label{criterion crystalline}
Suppose that the running assumptions on the prime $p$, the representation $\varrho$, and Conjecture \ref{wishingOhta} hold.  Let $\msf{g}_\circ^{\mbox{\tiny $(p)$}}$ be any ordinary $p$-stabilization of $\msf{g}_\circ$. If the special $L$-value $L(\msf{f}_\circ,\mrm{As}(\varrho),1)$ does not vanish, then the global cohomology class $\kappa(\msf{g}_\circ^{\mbox{\tiny $(p)$}},\msf{f}_\circ)$ is not crystalline at $p$. Furthermore,
\[
\partial_p\big(\kappa(\msf{g}_{\circ}^{\mbox{\tiny $(p)$}},\msf{f}_\circ)\big)\in \mrm{H}^1\Big(\bb{Q}_p,\mrm{As}(\varrho)^{\beta_p}\otimes\mrm{Gr}^0\mrm{V}_{\msf{f}_\circ}\Big)
\]
where $\mrm{As}(\varrho)^{\beta_p}$ is the subpace where $\mrm{Fr}_p$ acts as multiplication by $\beta_p=\beta_1\beta_2$.
\end{theorem}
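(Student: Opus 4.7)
The plan is to chain together the non-vanishing implications already established and then pin down the local Galois data at $p$ to identify the correct Frobenius eigenspace.

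First I would read off the non-vanishing of the specialized cohomology class. Assume $L(\msf{f}_\circ,\mrm{As}(\varrho),1)\neq 0$. Corollary \ref{firststep} gives $\scr{L}^\mrm{aut}_p(\breve{\scr{G}},\msf{f}_\circ)(\mrm{P}_\circ)\neq 0$; Corollary \ref{second step} upgrades this to $\scr{L}^\mrm{mot}_p(\breve{\scr{G}},\msf{f}_\circ)(\mrm{P}_\circ)\neq 0$; assumption ($4$) of (\ref{assumptions}) then yields $\exp^*_\mrm{BK}(\bs{\kappa}^{\msf{f}_\circ}_p(\scr{G})(\mrm{P}_\circ))\neq 0$; and since $\cal{V}^{\msf{f}_\circ}_{\msf{g}_{\mrm{P}_\circ}}(M)$ contains neither the trivial representation nor $\bb{Q}_p(1)$, the Bloch--Kato dual exponential is an isomorphism (cf.\ (\ref{step four})), giving $\bs{\kappa}^{\msf{f}_\circ}_p(\scr{G})(\mrm{P}_\circ)\neq 0$. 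Invoking assumption ($5$), Proposition \ref{nontriv-specialization} produces a $\Gamma_\bb{Q}$-equivariant surjection $\bs{\cal{V}}_{\scr{G}_{\mrm{P}_\circ}}(M)(-1)\otimes_O E_\wp\twoheadrightarrow \mrm{As}(\varrho)$ through which $\bs{\kappa}^{\msf{f}_\circ}_p(\scr{G})(\mrm{P}_\circ)$ has non-trivial image in $\mrm{H}^1(\bb{Q}_p,\mrm{As}(\varrho)\otimes\mrm{Gr}^0\mrm{V}_{\msf{f}_\circ})$.

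Next I would identify this image with $\partial_p(\kappa(\msf{g}_\circ^{\mbox{\tiny $(p)$}},\msf{f}_\circ))$ by unwinding the constructions. The class $\boldsymbol{\kappa}_{\scr{G},\msf{f}_\circ}$ is built from $\bs{\kappa}^\mrm{n.o.}_\infty$ via the projection $\mrm{pr}_{\scr{G},\msf{f}_\circ}$, and $\kappa(\msf{g}_\circ^{\mbox{\tiny $(p)$}},\msf{f}_\circ)$ is its specialization at $\mrm{P}_\circ$ along the chosen surjection. On the other hand $\bs{\kappa}^{\msf{f}_\circ}_p(\scr{G})$ is precisely the image of the localized $\bs{\kappa}_p(\scr{G},\msf{f}_\circ)$ under the composition $\mrm{Fil}^2\bs{\cal{V}}_{\scr{G},\msf{f}_\circ}(M)\twoheadrightarrow \mrm{Fil}^2\bs{\cal{V}}_\scr{G}(M)\otimes\mrm{Gr}^0\mrm{V}_{\msf{f}_\circ}(-1)$ (Definition \ref{TwistedGradedPiece}), and $\partial_p$ is by definition the projection to the graded quotient $\mrm{As}(\varrho)\otimes\mrm{Gr}^0\mrm{V}_{\msf{f}_\circ}$. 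A Hodge--Tate weight calculation (as in Corollary \ref{negative HT weights} applied at weight $\ell=1$) shows that $\ker(\partial_p)=\mrm{H}^1_f(\bb{Q}_p,\mrm{V}_{\varrho,\msf{f}_\circ})$. Hence the non-vanishing established above is exactly $\partial_p(\kappa(\msf{g}_\circ^{\mbox{\tiny $(p)$}},\msf{f}_\circ))\neq 0$, and in particular $\kappa(\msf{g}_\circ^{\mbox{\tiny $(p)$}},\msf{f}_\circ)$ fails to be crystalline at $p$.

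Finally I would pin down the Frobenius eigenspace. By Definition \ref{somedefgal}, $D_p$ acts on $\mbf{V}^{\msf{f}_\circ}_\scr{G}$ through $\bs{\Psi}_{\scr{G},p}^{-1}\cdot\delta_p(\msf{f}_\circ)\cdot(\bs{\eta}_\bb{Q}\cdot\eta_\bb{Q}\cdot\psi_\circ^{-1})_{\lvert D_p}$. At the arithmetic point $\mrm{P}_\circ$ of parallel weight one, $\bs{\Psi}_{\scr{G},\frak{p}_i}(\mrm{Fr}_p)$ specializes to the chosen ordinary Hecke eigenvalue $\alpha_i$ of $\msf{g}_\circ^{\mbox{\tiny $(p)$}}$, and the characters $\bs{\eta}_\bb{Q},\eta_\bb{Q}$ become trivial at $\mrm{P}_\circ$. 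Since the tensor induction of $\det(\varrho)$ is assumed trivial, $\alpha_1\beta_1\cdot\alpha_2\beta_2$ coincides with the inverse of $\psi_\circ$ evaluated at $p$ (up to the normalization fixed after assumption (\ref{assumption characters})), which forces the Frobenius eigenvalue of $\mbf{V}^{\msf{f}_\circ}_\scr{G}(\mrm{P}_\circ)$ to equal $\beta_1\beta_2$ times the unit-root Frobenius eigenvalue on $\mrm{Gr}^0\mrm{V}_{\msf{f}_\circ}$. Since the surjection $\bs{\cal{V}}_{\scr{G}_{\mrm{P}_\circ}}(M)(-1)\otimes E_\wp\twoheadrightarrow\mrm{As}(\varrho)$ is $D_p$-equivariant, the image of $\bs{\kappa}^{\msf{f}_\circ}_p(\scr{G})(\mrm{P}_\circ)$ lands inside the $\beta_p$-Frobenius eigenline, giving $\partial_p(\kappa(\msf{g}_\circ^{\mbox{\tiny $(p)$}},\msf{f}_\circ))\in\mrm{H}^1\!\bigl(\bb{Q}_p,\mrm{As}(\varrho)^{\beta_p}\otimes\mrm{Gr}^0\mrm{V}_{\msf{f}_\circ}\bigr)$.

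The bulk of the work is assembling existing results, so the main obstacle is the careful bookkeeping in the last paragraph: matching normalizations between Lemma \ref{GaloisStructure}, Proposition \ref{somekindoffil}, and Definitions \ref{somedefgal}--\ref{TwistedGradedPiece} to ensure that the specific direct summand $\mbf{V}^{\msf{f}_\circ}_\scr{G}$ corresponds to the product $\beta_1\beta_2$ rather than one of the three other Frobenius eigenvalues $\alpha_1\alpha_2,\alpha_1\beta_2,\beta_1\alpha_2$ appearing in the filtration of $\mrm{As}(\varrho)_{\lvert D_p}$.
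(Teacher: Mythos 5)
Your proposal is correct and follows essentially the same route as the paper: it chains Corollaries \ref{firststep} and \ref{second step} with assumptions (4)--(5) (i.e.\ Proposition \ref{nontriv-specialization}), identifies the image of $\bs{\kappa}^{\msf{f}_\circ}_p(\scr{G})(\mrm{P}_\circ)$ with $\partial_p\big(\kappa(\msf{g}_\circ^{\mbox{\tiny $(p)$}},\msf{f}_\circ)\big)$ using $\ker(\partial_p)=\mrm{H}^1_f$, and pins down the Frobenius eigenvalue $\beta_p=\beta_1\beta_2$ via the ordinary filtration, which is exactly the paper's appeal to Proposition \ref{AsaiFil} giving $\mrm{As}(\varrho)^{\beta_p}=\mrm{Fil}^2\mrm{As}(\varrho)$. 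One bookkeeping remark: it is the product $\bs{\Theta}=\bs{\eta}_\bb{Q}\cdot\eta_\bb{Q}$ that becomes trivial at $\mrm{P}_\circ$, not each character separately, but since only this product enters the character of $\mbf{V}^{\mrm{f}_\circ}_\scr{G}$ in Definition \ref{somedefgal}, your eigenvalue computation is unaffected.
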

\begin{proof}
	 It follows from the definitions that
	\[
	\mrm{Im}\Big(\bs{\cal{V}}_{\scr{G}}^{\msf{f}_\circ}(M)\longrightarrow \mrm{As}(\varrho)\otimes\mrm{Gr}^0\mrm{V}_{\msf{f}_\circ}\Big)=\mrm{Im}\Big(\mrm{Fil}^2 \bs{\cal{V}}_{\scr{G},\msf{f}_\circ}(M)\longrightarrow \mrm{As}(\varrho)\otimes\mrm{Gr}^0\mrm{V}_{\msf{f}_\circ}\Big),
	\]
	and that the image of  $\bs{\kappa}^{\msf{f}_\circ}_p(\scr{G})(\mrm{P}_\circ)$ coincides with $\partial_p\big(\kappa_p(\msf{g}_{\circ}^{\mbox{\tiny $(p)$}},\msf{f}_\circ)\big)$. Therefore, invoking Proposition \ref{nontriv-specialization} we deduce the first claim. We obtain the second claim by observing that Proposition $\ref{AsaiFil}$ implies that
	\[
	\mrm{As}(\varrho)^{\beta_p}=\mrm{Fil}^2\mrm{As}(\varrho).
	\]
\end{proof}

\section{On the equivariant BSD-conjecture}

Let $(\mrm{W},\varrho)$ be a $d$-dimensional self-dual Artin representation with coefficients in a number field $D$. Suppose $\varrho$ factors through the Galois group $G(H/\bb{Q})$ of a number field $H$
\[\xymatrix{
\Gamma_\bb{Q}\ar[rr]^{\varrho}\ar@{->>}[dr]&& \mrm{GL}_d(D)\\
& G(H/\bb{Q})\ar@{^{(}->}[ru]&.
}\]
Let $E_{/\bb{Q}}$ be a rational elliptic curve, then its algebraic rank with respect to the Artin representation $\varrho$ is defined as
\[
r_\mrm{alg}(E,\varrho)=\mrm{dim}_D\ E(H)^\varrho_D,
\]
the dimension of $E(H)^\varrho_D=\mrm{Hom}_{G(H/\bb{Q})}(\varrho, E(H)\otimes D)$ the $\varrho$-isotypic component of the Mordell-Weil group. For $p$ a rational prime and $\wp\mid p$ an $O_D$-prime ideal, we can consider the $p$-adic Galois representations
\[
\mrm{V}_\wp(E)=\mrm{H}^1_\et(E_{\bar{\bb{Q}}},D_\wp(1)), \qquad \mrm{W}_\wp=\mrm{W}\otimes_DD_\wp.
\]
As the Artin representation $\varrho$ is self-dual, the Kummer map allows us to identify the group $E(H)^\varrho_D$ with a subgroup of the Bloch--Kato Selmer group $\mrm{H}^1_f(\bb{Q},\mrm{W}_\wp\otimes \mrm{V}_\wp(E))$. Then, local Tate duality together with the global Poitou-Tate exact sequence can be used to show that global cohomology classes not crystalline at $p$ bound the size of the $\varrho$-isotypic component of the Mordell-Weil group of $E_{/\bb{Q}}$ (\cite{DR2}, Section 6.1).

\begin{lemma}\label{zerolocalization}
	Let $\kappa_1,\dots,\kappa_d\in\mrm{H}^1(\bb{Q},  \mrm{W}_\wp\otimes \mrm{V}_\wp(E))$ be global cohomology classes with linearly independent images in the singular quotient $\mrm{H}^1_\mrm{sing}(\bb{Q}_p, \mrm{W}_\wp\otimes \mrm{V}_\wp(E))$ at $p$. Then the $\varrho$-isotypic part of $E(H)$ is trivial:
		\[
		r_\mrm{alg}(E,\varrho)=0.
		\]

\end{lemma}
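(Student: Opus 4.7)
The plan is to derive the triviality of $E(H)^\varrho_D$ from Poitou--Tate global duality applied to the self-dual $\Gamma_\bb{Q}$-representation $V:=\mrm{W}_\wp\otimes\mrm{V}_\wp(E)$. First, because $V$ is Kummer self-dual---$\mrm{W}$ is self-dual and $\mrm{V}_\wp(E)\cong\mrm{V}_\wp(E)^*(1)$---the Kummer map induces an inclusion
\[
E(H)^\varrho_D\otimes_D D_\wp\ \hookrightarrow\ \mrm{H}^1_f(\bb{Q},V),
\]
so it suffices to show that the Bloch--Kato Selmer group on the right is trivial. Local Tate duality at $p$ descends to a perfect pairing
\[
\langle\ ,\ \rangle_p\colon \mrm{H}^1_f(\bb{Q}_p,V)\times\mrm{H}^1_\mrm{sing}(\bb{Q}_p,V)\longrightarrow D_\wp
\]
since $\mrm{H}^1_f$ is its own annihilator under self-duality, and a direct Bloch--Kato computation---using that $V$ has Hodge--Tate weights $\{0,-1\}$ each with multiplicity $d$, and that $V^{\Gamma_{\bb{Q}_p}}=0$ by the distinctness of the Frobenius eigenvalues on $\mrm{As}(\varrho)$ (assumption (3) of \ref{assumptions})---shows that both spaces have dimension $d$ over $D_\wp$.

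The next step is to invoke Poitou--Tate reciprocity. By their geometric origin as \'etale Abel--Jacobi images of null-homologous cycles defined over $\bb{Q}$, the classes $\kappa_i$ lie in $\mrm{H}^1_f(\bb{Q}_v,V)$ at every prime $v\neq p$. Consequently, for any Selmer class $s\in\mrm{H}^1_f(\bb{Q},V)$ and every $i$,
\[
\langle\mrm{loc}_p(s),\partial_p\kappa_i\rangle_p\ =\ -\sum_{v\neq p}\langle\mrm{loc}_v(s),\mrm{loc}_v(\kappa_i)\rangle_v\ =\ 0,
\]
because the local $\mrm{H}^1_f$ is isotropic at every finite place. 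Since the $\partial_p\kappa_i$ are $d$ linearly independent elements of the $d$-dimensional space $\mrm{H}^1_\mrm{sing}(\bb{Q}_p,V)$, they span it, and the perfectness of $\langle\ ,\ \rangle_p$ forces $\mrm{loc}_p(s)=0$ for every Selmer class $s$.

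Finally, to upgrade $\mrm{loc}_p(s)=0$ to $s=0$ for Selmer classes arising from Mordell--Weil, one notes that the composition
\[
E(H)^\varrho_D\otimes D_\wp\ \hookrightarrow\ \mrm{H}^1_f(\bb{Q},V)\ \xrightarrow{\,\mrm{loc}_p\,}\ \mrm{H}^1_f(\bb{Q}_p,V)
\]
is injective modulo torsion: since $E_{/\bb{Q}}$ has good reduction at $p$ (by the coprimality of $N$ and $p$), the local Kummer map at each prime of $H$ above $p$ factors through the formal-group logarithm, which is injective on torsion-free parts, so the $\varrho$-isotypic Mordell--Weil component is torsion and hence zero. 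The main obstacle in this plan is not the duality formalism itself but the preliminary verification that the classes $\kappa_i$ are genuinely Selmer away from $p$; this relies on the global geometric nature of Hirzebruch--Zagier cycles (being defined over $\bb{Q}$ after twisting) and on the good reduction properties of the ambient Shimura threefold, inputs that are baked into the construction of $\bs{\kappa}_{\scr{G},E}$ and its specialization to weight one.
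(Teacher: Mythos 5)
Your route is genuinely different from the paper's, but as a proof of the lemma as stated it has a gap: the step where you kill the terms at $v\neq p$ in the reciprocity sum $\sum_v\langle\mrm{loc}_v(s),\mrm{loc}_v(\kappa_i)\rangle_v=0$ requires $\mrm{loc}_v(\kappa_i)\in\mrm{H}^1_f(\bb{Q}_v,\mrm{W}_\wp\otimes\mrm{V}_\wp(E))$ for every $v\neq p$, and that is not among the hypotheses. The lemma is deliberately stated for \emph{arbitrary} global classes whose images in the singular quotient at $p$ are linearly independent; nothing is assumed about their local behaviour away from $p$, and for the weight-one specializations of the Hirzebruch--Zagier classes the Selmer property at the bad places (primes dividing $N\cdot\mrm{N}_{L/\bb{Q}}(\frak{Q})\cdot d_{L/\bb{Q}}$, and ramified places) is a nontrivial verification that the paper never carries out -- it is not ``baked into'' the construction, and you cannot invoke it at the level of this abstract lemma. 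The paper's proof avoids the issue entirely: besides the perfect duality between $\mrm{H}^1_f(\bb{Q}_p,\cdot)$ and $\mrm{H}^1_\mrm{sing}(\bb{Q}_p,\cdot)$ (your first step, which matches), it quotes the Poitou--Tate consequence that the image of $\mrm{loc}_p$ on the whole of $\mrm{H}^1(\bb{Q},\mrm{W}_\wp\otimes\mrm{V}_\wp(E))$ is only $d$-dimensional ((\cite{DR2}, Lemma 6.2)). Since the $d$ classes $\kappa_i$ already surject onto the $d$-dimensional singular quotient, this image meets $\mrm{H}^1_f(\bb{Q}_p,\cdot)$ trivially, so \emph{every} Selmer class localizes to zero at $p$ -- no information about the $\kappa_i$ away from $p$ is needed. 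If you want to salvage your argument you must either add the local hypothesis at $v\neq p$ to the statement (and then prove it for the classes used in Theorem 9.2), or replace your reciprocity step by the dimension count on the image of $\mrm{loc}_p$.

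Two smaller points. First, your assertion that $\mrm{H}^1_f(\bb{Q}_p,V)$ and $\mrm{H}^1_\mrm{sing}(\bb{Q}_p,V)$ are each $d$-dimensional needs $\mrm{H}^0(\bb{Q}_p,V)=0$; appealing to assumption (3) of (\ref{assumptions}) is fine in the application but the lemma itself lives in the general self-dual Artin setting, where this is part of what the cited (\cite{DR2}, Lemma 6.1) is packaging. Second, your final upgrade from $\mrm{loc}_p(s)=0$ to the vanishing of the Mordell--Weil component is essentially the paper's closing diagram; the detour through good reduction and the formal-group logarithm is unnecessary, since the injectivity of $E(H)\otimes D\hookrightarrow E(H_\frak{p})\otimes D_\wp$ together with the injectivity of the local Kummer map already gives the conclusion.
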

\begin{proof}
	The local cohomology group $\mrm{H}^1(\bb{Q}_p, \mrm{W}_\wp\otimes \mrm{V}_\wp(E))$ is a $2d$-dimensional $D_\wp$-vector space and the local Tate pairing induces a perfect duality of $d$-dimensional spaces (\cite{DR2}, Lemma 6.1)
	\[
	\langle\ , \rangle: \mrm{H}^1_f(\bb{Q}_p, \mrm{W}_\wp\otimes \mrm{V}_\wp(E))\times\mrm{H}^1_\mrm{sing}(\bb{Q}_p,  \mrm{W}_\wp\otimes \mrm{V}_\wp(E))\longrightarrow D_\wp.
	\]
	The global Poitou-Tate exact sequence implies that the image of the localization at $p$
	\[
	\mrm{loc}_p:\mrm{H}^1(\bb{Q}, \mrm{W}_\wp\otimes \mrm{V}_\wp(E))\longrightarrow\mrm{H}^1(\bb{Q}_p, \mrm{W}_\wp\otimes \mrm{V}_\wp(E))
	\]
	is $d$-dimensional (\cite{DR2}, Lemma 6.2). Therefore, the existence of global cohomology classes $\kappa_1,\dots,\kappa_d$ whose localizations generate the singular quotient at $p$ implies that the restriction of $\mrm{loc}_p$ to the Bloch--Kato Selmer group $\mrm{H}^1_f(\bb{Q},\mrm{V}_\wp(E)\otimes \mrm{W}_\wp)$ is the zero map. The commutativity of the diagram
\[\xymatrix{
E(H)^{\varrho}_D\ar[r]\ar@{^{(}->}[d]& \oplus_{\mathfrak{p}\mid p}\mrm{Hom}_{G(H_\mathfrak{p}/\bb{Q}_p)}(\mrm{W}_\wp, E(H_\mathfrak{p})\otimes D_\wp)\ar@{^{(}->}[d]\\
\mrm{H}^1_f(\bb{Q},  \mrm{W}_\wp\otimes\mrm{V}_\wp(E))\ar[r]^0& \mrm{H}^1(\bb{Q}_p,\mrm{W}_\wp\otimes \mrm{V}_\wp(E))
}\]
and the injectivity of the vertical Kummer maps imply the triviality of the top horizontal morphism.  Since $E(H)\otimes D\hookrightarrow E(H_\mathfrak{p})\otimes D_\wp$ is injective for all $O_H$-prime ideal $\mathfrak{p}\mid p$, we deduce that $\dim_DE(H)_D^{\varrho}=0$.
\end{proof}

\subsubsection{On twisted triple products.}
The goal of this section is apply the idea of $p$-adic deformation to the setting where the self-dual Artin representation is $4$-dimensional and arises as the tensor induction
\[
\mrm{As}(\varrho)=\otimes\mbox{-}\mrm{Ind}_L^\bb{Q}(\varrho)
\] 
of a totally odd, irreducible two-dimensional Artin representation $\varrho:\Gamma_L\to\mrm{GL}_2(D)$  of the absolute Galois group of a real quadratic field $L$. We suppose that $\varrho$ has conductor $\mathfrak{Q}$ and that the tensor induction of the determinant $\det(\varrho)$ is the trivial character. Let $E_{/\bb{Q}}$ be a rational elliptic curve of conductor $N$, and for any rational prime $p$, we consider  the Kummer self-dual $p$-adic Galois representation of $\Gamma_\bb{Q}$ 
\[
\mrm{V}_{\varrho,E}=\mrm{As}(\varrho)\otimes \mrm{V}_\wp(E).
\] 
By modularity (\cite{W}, \cite{TW}, \cite{PS}) there are a primitive Hilbert cuspform $\msf{g}_\varrho\in S_{t_L,t_L}(\frak{Q};D)$ of parallel weight one, and a primitive elliptic cuspform $\msf{f}_E\in S_{2,1}(N;\bb{Q})$ of weight $2$ associated to $\varrho$ and $E$ respectively.
The twisted $L$-function $L\big(E,\mrm{As}(\varrho),s\big)$ has meromorphic continuation to $\bb{C}$ and a functional equation centered at $s=1$, at which the $L$-function is holomorphic. 


\begin{theorem}\label{Main Theorem}
Suppose that $N$ is coprime to $\mathfrak{Q}$, split in $L$, and there exists an ordinary prime $p\nmid 2N\cdot\frak{Q}$ for $E_{/\bb{Q}}$ such that  
	\begin{itemize}
		\item[($1$)] $p$ splits in $L$ with narrowly principal factors;
		\item[($2$)] there is no totally positive unit in $L$ congruent to $-1$ modulo $p$;
		\item[($3$)] the eigenvalues of $\mrm{Fr}_p$ on $\mrm{As}(\varrho)$ are all distinct modulo $p$.
	\end{itemize}
If, additionally, $\varrho$ is residually not solvable and Conjecture \ref{wishingOhta} holds, then 
\[
r_\mrm{an}\big(E,\mrm{As}(\varrho)\big)=0\quad\implies\quad r_\mrm{alg}\big(E,\mrm{As}(\varrho)\big)=0.
\]
\end{theorem}
\begin{proof}
	As the eigenvalues of $\mrm{Fr}_p$ on $\mrm{As}(\varrho)$ are all distinct modulo $p$, the cuspform $\msf{g}_\varrho$ has $4$ distinct $p$-stabilizations 
	\[
	\msf{g}_\varrho^{\mbox{\tiny $(\alpha_1\alpha_2)$}},
	\quad 
	\msf{g}_\varrho^{\mbox{\tiny $(\alpha_1\beta_2)$}},
	\quad 
	\msf{g}_\varrho^{\mbox{\tiny $(\beta_1\alpha_2)$}}
	\quad \text{and}\quad 
	\msf{g}_\varrho^{\mbox{\tiny $(\beta_1\beta_2)$}}.
	\]
	Therefore, assuming $r_\mrm{an}\big(E,\mrm{As}(\varrho)\big)=0$, Theorem \ref{criterion crystalline} produces $4$ global cohomology classes
	\[
	\kappa\big(\msf{g}_\varrho^{\mbox{\tiny $(\alpha_1\alpha_2)$}},\msf{f}_E\big),
	\quad 
	\kappa\big(\msf{g}_\varrho^{\mbox{\tiny $(\alpha_1\beta_2)$}},\msf{f}_E\big),
	\quad 
	\kappa\big(\msf{g}_\varrho^{\mbox{\tiny $(\beta_1\alpha_2)$}},\msf{f}_E\big),
	\quad 
	\kappa\big(\msf{g}_\varrho^{\mbox{\tiny $(\beta_1\beta_2)$}},\msf{f}_E\big)\quad\in\ \mrm{H}^1\big(\bb{Q},\mrm{V}_{\varrho,E}\big)
	\]
	whose images in the singular quotient $\mrm{H}^1_\mrm{sing}\big(\bb{Q}_p,\mrm{V}_{\varrho,E}\big)$ are linearly independent because they belong to different eigenspaces. The result follows by invoking Lemma \ref{zerolocalization}.
\end{proof}

\subsection{Rational elliptic curves over quintic fields}
In this section we show that in many cases of interest there are infinitely many ordinary primes $p\nmid 2N\cdot\frak{Q}$ for a rational elliptic curve $E_{/\bb{Q}}$ satisfying assumptions ($1$),($2$),($3$) of Theorem \ref{Main Theorem}.

\subsubsection{Narrowly principal prime factors.} By class field theory, a prime is  split in $L$ with narrowly principal prime factors if and only if it totally splits in the narrow class field $H_L^+$ of $L$. We recall the lattice of subfields of $\bb{Q}(\zeta_{16})$ 
\begin{equation}\label{lattice of subfields}
	\xymatrix{
	& \bb{Q}(\zeta_{16})\ar@{-}[d]\ar@{-}[dl]\ar@{-}[dr]& &\\
	\bb{Q}(\zeta_{16})^+\ar@{-}[d]& F\ar@{-}[dl]& \bb{Q}(\zeta_8)\ar@{-}[dll]\ar@{-}[dl]\ar@{-}[d]\\
	\bb{Q}(\sqrt{2})\ar@{-}[dr]& \bb{Q}(i)\ar@{-}[d]& \bb{Q}(\sqrt{-2})\ar@{-}[dl]\\
	& \bb{Q}& &
	}
\end{equation}
where $F/\bb{Q}$ is the splitting field of the polynomial $X^4+4X^2+2$.
\begin{lemma}\label{narrow class}
	Let $L=\bb{Q}(\sqrt{d})$ be a real quadratic field. 
	\begin{itemize}
	\item[\bfcdot] If $d\equiv_41$ then  $H_L^+\cap\bb{Q}(\zeta_8)=\bb{Q}$.
	\item[\bfcdot] If $d\equiv_43$ then $\bb{Q}(i)\subseteq H_L^+$ and $H_L^+\cap\bb{Q}(\sqrt{\pm2})=\bb{Q}$.
	\item[\bfcdot] If $d\equiv_8\pm2$ then $H_L^+\cap \bb{Q}(i)=\bb{Q}$, $H_L^+\cap\bb{Q}(\sqrt{\mp2})=\bb{Q}$ and $\bb{Q}(\sqrt{\pm2})\subseteq H_L^+$.
	\end{itemize}
\end{lemma}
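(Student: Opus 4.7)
The plan is to reduce each intersection to an explicit computation inside the narrow genus field of $L$. Let $G_L^+$ denote the maximal subextension of $H_L^+$ that is abelian over $\bb{Q}$. Since any intersection $H_L^+\cap F$ with $F/\bb{Q}$ abelian is itself abelian over $\bb{Q}$ and contained in $H_L^+$, it must lie in $G_L^+$; conversely $G_L^+\subseteq H_L^+$. Hence $H_L^+\cap F = G_L^+\cap F$ for each abelian $F/\bb{Q}$ appearing in the statement, and the entire lemma reduces to computing $G_L^+$ and reading off its quadratic subfields.

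By classical genus theory, if $\mrm{disc}(L) = D_1\cdots D_t$ is the (essentially unique) decomposition into prime discriminants, i.e.\ elements of $\{-4,\pm 8\}\cup\{p^* : p \text{ odd prime}\}$ with $p^* = (-1)^{(p-1)/2}p$, then $G_L^+ = \bb{Q}(\sqrt{D_1},\ldots,\sqrt{D_t})$. In particular, every quadratic subfield of $G_L^+$ is of the form $\bb{Q}(\sqrt{m})$ where $m$ is a product of a nonempty subset of the $D_i$'s, and I would check in each case which of $\bb{Q}(i), \bb{Q}(\sqrt{2}), \bb{Q}(\sqrt{-2})$ can arise this way.

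For $d\equiv 1\pmod 4$ the discriminant equals $d$ and every $D_i$ is odd, so any such $m$ is an odd integer with $|m|>1$; hence $\bb{Q}(\sqrt{m})$ ramifies at an odd prime and cannot coincide with any of $\bb{Q}(i), \bb{Q}(\sqrt{2}), \bb{Q}(\sqrt{-2})$. For $d\equiv 3\pmod 4$ the factorization of $4d$ includes the prime discriminant $-4$, immediately giving $\bb{Q}(i)\subseteq G_L^+$; the remaining $D_i$'s are odd, so no subset product equals $\pm 2$ and $\bb{Q}(\sqrt{\pm 2})\not\subseteq G_L^+$.

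For $d\equiv 2, 6\pmod 8$, write $d = 2d'$ with $d'$ odd. A short parity computation shows that $\prod_{p\mid d'}p^* = (-1)^s d'$, where $s$ counts the odd primes $p\mid d'$ with $p\equiv 3\pmod 4$; since the parity of $s$ is determined by $d'\pmod 4$, matching against $\mrm{disc}(L) = 8d'$ forces the prime discriminant at $2$ to be $+8$ when $d\equiv 2\pmod 8$ (so that $\bb{Q}(\sqrt{2})\subseteq G_L^+$) and $-8$ when $d\equiv 6\pmod 8$ (so that $\bb{Q}(\sqrt{-2})\subseteq G_L^+$). The same odd/even argument as before then rules out the other two quadratic subfields of $\bb{Q}(\zeta_8)$ in each case. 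The only delicate step in the whole plan is this sign bookkeeping; everything else is a direct reading of the genus-theoretic structure of $G_L^+$.
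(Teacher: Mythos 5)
Your proof is correct. The paper offers no argument to compare with here (its proof consists of the sentence ``The proof is a straightforward verification''), and your reduction $H_L^+\cap F=G_L^+\cap F$ for $F/\bb{Q}$ abelian, followed by the classical description of the narrow genus field $G_L^+=\bb{Q}(\sqrt{D_1},\dots,\sqrt{D_t})$, where $D_1\cdots D_t$ is the factorization of the discriminant of $L$ into prime discriminants, is a clean and complete way to carry out that verification. The one step you flagged checks out: writing $d=2d'$ with $d'$ odd, each $p^*=(-1)^{(p-1)/2}p$ is $\equiv 1\pmod 4$, so $\prod_{p\mid d'}p^*=d'$ when $d'\equiv 1\pmod 4$ (forcing the factor at $2$ to be $+8$, whence $\bb{Q}(\sqrt{2})\subseteq H_L^+$ for $d\equiv 2\pmod 8$) and $\prod_{p\mid d'}p^*=-d'$ when $d'\equiv 3\pmod 4$ (forcing $-8$, whence $\bb{Q}(\sqrt{-2})\subseteq H_L^+$ for $d\equiv 6\pmod 8$), exactly as you state. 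Every other nonempty subset product of the prime discriminants is divisible by some odd prime exactly once, hence $\bb{Q}(\sqrt{m})$ is ramified at an odd prime and cannot be $\bb{Q}(i)$, $\bb{Q}(\sqrt{2})$ or $\bb{Q}(\sqrt{-2})$; since a nontrivial intersection of $G_L^+$ with $\bb{Q}(\zeta_8)$ would have to contain one of these three quadratic fields, this yields all the stated trivial intersections, including the $d\equiv 1\pmod 4$ case where all $D_i$ are odd.
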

\begin{proof}
	 If $d\equiv_41$, the field $L$ is unramified at $2$, while $\bb{Q}(\sqrt{\bullet})$ is ramified at $2$ for any $\bullet\in\{-1,\pm2\}$. We deduce that $L(\sqrt{\bullet})/L$ is ramified at $2$, hence $H_L^+\cap\bb{Q}(\zeta_8)=\bb{Q}$. 

 \noindent 	If $d\equiv_43$, both $\bb{Q}(i)$ and $L$ are ramified at $2$, while $\bb{Q}(\sqrt{-d})$ is not. Therefore, the prime $2$ cannot ramify $L(i)/L$. As $\bb{Q}(i)$ is only ramified at $2$, we deduce that $\bb{Q}(i)\subseteq H_L^+$. Next, we note that $L(\sqrt{\pm2})/\bb{Q}$ is totally ramified at $2$ because  all its proper subfields are ramified at $2$. In particular, $L(\sqrt{\pm2})/L$ is ramified at $2$, thus $H_L^+\cap\bb{Q}(\sqrt{\pm2})=\bb{Q}$.
 	
\noindent 	If $d\equiv_42$, all the proper subfields of the biquadratic field $L(i)$ are ramified $2$, hence $2$ is totally ramified in $L(i)$. In particular, $L(i)/L$ is ramified at $2$ and $H_L^+\cap \bb{Q}(i)=\bb{Q}$. Furthermore, if $d\equiv_8\pm2$ the extension $L(\sqrt{\mp2})/\bb{Q}$ is totally ramified at $2$, while $L(\sqrt{\pm2})/L$ is unramified because $\pm d/2\equiv_41$. Hence, $H_L^+\cap\bb{Q}(\sqrt{\mp2})=\bb{Q}$ and $\bb{Q}(\sqrt{\pm2})\subseteq H_L^+$.
\end{proof}

\begin{proposition}\label{inf narrow princ}
	Let $L=\bb{Q}(\sqrt{d})$ be a real quadratic field, then the primes $p\equiv_{16}9$ which are split in $L$ with narrowly principal factors have positive density.
\end{proposition}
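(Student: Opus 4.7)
The plan is to rephrase both conditions as Chebotarev conditions inside a single finite extension of $\bb{Q}$ and then apply Chebotarev's density theorem. Under the Artin reciprocity map, the condition $p \equiv 9 \pmod{16}$ is equivalent to $\mrm{Frob}_{p}$ restricting to the element $\sigma_{9} \in \mrm{Gal}(\bb{Q}(\zeta_{16})/\bb{Q})$ defined by $\sigma_{9}(\zeta_{16}) = \zeta_{16}^{9} = -\zeta_{16}$. Since $\sigma_{9}$ has order $2$ and fixes $\zeta_{16}^{2} = \zeta_{8}$, its fixed field inside $\bb{Q}(\zeta_{16})$ is precisely $\bb{Q}(\zeta_{8})$. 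On the other hand, as observed in the paragraph preceding Lemma \ref{narrow class}, the condition that $p$ be split in $L$ with narrowly principal factors is equivalent to $p$ splitting completely in the narrow Hilbert class field $H_{L}^{+}$, i.e.\ to $\mrm{Frob}_{p}$ being trivial on $H_{L}^{+}$.

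I would then pass to the compositum $H_{L}^{+}\cdot \bb{Q}(\zeta_{16})$. By Chebotarev's theorem, the set of primes satisfying both conditions has positive density precisely when there exists an element $\sigma \in \mrm{Gal}(H_{L}^{+}\cdot\bb{Q}(\zeta_{16})/\bb{Q})$ with $\sigma|_{H_{L}^{+}} = \mrm{id}$ and $\sigma|_{\bb{Q}(\zeta_{16})} = \sigma_{9}$. Such a compatible lift exists if and only if $\sigma_{9}$ acts trivially on $H_{L}^{+}\cap\bb{Q}(\zeta_{16})$, equivalently
\[
H_{L}^{+}\cap\bb{Q}(\zeta_{16}) \subseteq \bb{Q}(\zeta_{8}).
\]

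The only nontrivial step is to verify this inclusion, which I would carry out by a ramification argument at the prime $2$. The extension $\bb{Q}(\zeta_{16})/\bb{Q}$ is totally ramified at $2$ of degree $8$, so every subfield $K \subseteq \bb{Q}(\zeta_{16})$ satisfies $e(K/\bb{Q})_{2} = [K : \bb{Q}]$. On the other hand, $H_{L}^{+}/L$ is unramified at all finite primes, which forces $e(H_{L}^{+}/\bb{Q})_{2} = e(L/\bb{Q})_{2} \leq 2$. Combining these observations yields $[H_{L}^{+}\cap\bb{Q}(\zeta_{16}):\bb{Q}]\leq 2$, and the only subfields of $\bb{Q}(\zeta_{16})$ of degree at most $2$ over $\bb{Q}$ are $\bb{Q}$, $\bb{Q}(i)$, $\bb{Q}(\sqrt{2})$, and $\bb{Q}(\sqrt{-2})$, each of which is contained in $\bb{Q}(\zeta_{8})$. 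I do not expect serious obstacles: this uniform ramification bound circumvents the case-by-case distinction of Lemma \ref{narrow class} and yields the desired density equal to $1/[H_{L}^{+}\cdot\bb{Q}(\zeta_{16}):\bb{Q}] > 0$.
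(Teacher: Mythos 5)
Your argument is correct, and it reaches the conclusion by a genuinely different route than the paper. Both proofs reduce to the same Chebotarev compatibility criterion: one needs an element of $\mrm{Gal}\big(H_L^+\cdot\bb{Q}(\zeta_{16})/\bb{Q}\big)$ that is trivial on $H_L^+$ and restricts to $\sigma_9$ on $\bb{Q}(\zeta_{16})$, which exists precisely when $H_L^+\cap\bb{Q}(\zeta_{16})\subseteq\bb{Q}(\zeta_8)$. The paper verifies this by invoking Lemma \ref{narrow class} and carrying out a case-by-case determination of the exact intersection according to $d$ modulo $4$ and $8$ (it equals $\bb{Q}$, $\bb{Q}(i)$, $\bb{Q}(\sqrt{-2})$ or $\bb{Q}(\sqrt{2})$), with a separate ramification-at-$2$ argument needed in the case $d\equiv_8 2$ to rule out $\bb{Q}(\zeta_{16})^+$ and $F$. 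You instead obtain the inclusion uniformly: since $2$ is totally ramified in $\bb{Q}(\zeta_{16})/\bb{Q}$, every subfield of $\bb{Q}(\zeta_{16})$ is totally ramified at $2$, while $H_L^+/L$ is unramified at finite primes, so the ramification index of $2$ in any subfield of $H_L^+$ is at most $2$; hence $[H_L^+\cap\bb{Q}(\zeta_{16}):\bb{Q}]\le 2$, and all three quadratic subfields of $\bb{Q}(\zeta_{16})$ already lie in $\bb{Q}(\zeta_8)$. This in effect promotes the trick the paper uses only in the $d\equiv_8 2$ case to a uniform argument, making Lemma \ref{narrow class} unnecessary for this proposition; what it gives up is the explicit identification of $H_L^+\cap\bb{Q}(\zeta_{16})$ in each congruence class, which the paper's style of later arguments (Corollary \ref{narrow cong}, Proposition \ref{choiceofp}) echoes but does not strictly require — the compatibility statement you prove is exactly what is used downstream. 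Your density count $1/[H_L^+\cdot\bb{Q}(\zeta_{16}):\bb{Q}]$ is also justified, since the restriction map to $\mrm{Gal}(H_L^+/\bb{Q})\times\mrm{Gal}(\bb{Q}(\zeta_{16})/\bb{Q})$ is injective, so the admissible Frobenius set is a single (conjugation-invariant) element.
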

\begin{proof}
It follows directly from Lemma $\ref{narrow class}$ that:
	\begin{itemize}
		\item [\bfcdot] if $d\equiv_41$ then $\bb{Q}(\zeta_{16})\cap H_L^+=\bb{Q}$;
		\item [\bfcdot] if $d\equiv_43$ then $\bb{Q}(\zeta_{16})\cap H_L^+=\bb{Q}(i)$;
		\item [\bfcdot] if $d\equiv_86$ then $\bb{Q}(\zeta_{16})\cap H_L^+=\bb{Q}(\sqrt{-2})$.
	\end{itemize}
When $d\equiv_82$ we claim that $\bb{Q}(\zeta_{16})\cap H_L^+=\bb{Q}(\sqrt{2})$. Indeed, in this case the intersection could be either $\bb{Q}(\sqrt{2})$, $\bb{Q}(\zeta_{16})^+$ or $F$ and we show that the latter two options cannot occur. Let $A$ denote either $\bb{Q}(\zeta_{16})^+$ or $F$. When $d=2$ then $\bb{Q}(\zeta_{16})\cap H_L^+=\bb{Q}(\sqrt{2})$ because $A/\bb{Q}(\sqrt{2})$ is ramified at $2$. When $d\not=2$, then $L(\sqrt{2})/\bb{Q}(\sqrt{2})$ is a proper extension unramified at $2$. It follows that $L(\sqrt{2})\cdot A/L(\sqrt{2})$ is ramified at $2$ and cannot be contained in $H_L^+$.

\noindent Since the rational primes $p\equiv_{16}9$ are those totally split in $\bb{Q}(\zeta_8)$ and inert in the extension $\bb{Q}(\zeta_8)\subseteq\bb{Q}(\zeta_{16})$, the analysis above of the intersection $\bb{Q}(\zeta_{16})\cap H_L^+$ together with Chebotarev's density theorem finishes the proof.
\end{proof}

\subsubsection{Congruences for totally positive units.} Let $\epsilon\in\cal{O}_{L,+}^\times$ be a generator of the totally positive units and $p$ a rational prime split in $L$. Then requiring that there is no totally positive unit congruent to $-1$ modulo $p$ is equivalent to ask that, for $\frak{p}\mid p$, the subgroup $\langle \bar\epsilon\rangle$ of $(\cal{O}_L/\frak{p})^\times$, generated by the reduction of $\epsilon$, has odd order.

\begin{lemma}\label{intermediatefields}
	Let $\epsilon\in\cal{O}_{L,+}^\times$ be a generator of the totally positive units of $L=\bb{Q}(\sqrt{d})$, then the totally real number field $L(\sqrt{\epsilon})$ is either equal to $L$ or it is biquadratic over $\bb{Q}$. Suppose that $L(\sqrt{\epsilon})$ is biquadratic and write $\epsilon=a+b\sqrt{d}$ for $a,b\in\bb{N}$, then the subfields of $L(\sqrt{\epsilon})$  are
	\[\xymatrix{
	& L(\sqrt{\epsilon})\ar@{-}[d]\ar@{-}[dl]\ar@{-}[dr]&\\
	\bb{Q}(\sqrt{2(a+1)})\ar@{-}[dr]& \bb{Q}(\sqrt{d})\ar@{-}[d]& \bb{Q}(\sqrt{2(a-1)})\ar@{-}[dl]\\
	&\bb{Q}&.
	}\]
\end{lemma}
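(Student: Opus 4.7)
The plan is to exploit the fact that $\epsilon$ is a totally positive unit in a real quadratic field to gain control over $\sqrt{\epsilon}\sqrt{\epsilon'}$, where $\epsilon' = a - b\sqrt{d}$ denotes the Galois conjugate of $\epsilon$. First I would record that $\mrm{N}_{L/\bb{Q}}(\epsilon) = \epsilon\epsilon' > 0$ since both embeddings of $\epsilon$ are positive, and since $\epsilon$ is a unit this norm equals $+1$. Choosing positive real square roots $\sqrt{\epsilon}, \sqrt{\epsilon'} \in \bb{R}_{>0}$, one obtains $\sqrt{\epsilon}\sqrt{\epsilon'} = 1$, hence $\sqrt{\epsilon'} = 1/\sqrt{\epsilon} \in L(\sqrt{\epsilon})$. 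This at once shows that $L(\sqrt{\epsilon})$ is stable under the Galois closure operation, so if $\sqrt{\epsilon} \notin L$ then $L(\sqrt{\epsilon})/\bb{Q}$ is Galois of degree $4$.

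Next I would argue that the Galois group is forced to be $(\bb{Z}/2\bb{Z})^2$ rather than cyclic. Let $\sigma$ generate $\mrm{Gal}(L(\sqrt{\epsilon})/L)$, so $\sigma(\sqrt{\epsilon}) = -\sqrt{\epsilon}$ and $\sigma(\sqrt{\epsilon'}) = -\sqrt{\epsilon'}$; let $\tau$ lift the nontrivial element of $\mrm{Gal}(L/\bb{Q})$, so $\tau(\sqrt{\epsilon})^2 = \epsilon'$. Choosing $\tau(\sqrt{\epsilon}) = \sqrt{\epsilon'}$ gives $\tau^2(\sqrt{\epsilon}) = \tau(1/\sqrt{\epsilon}) = 1/\sqrt{\epsilon'} = \sqrt{\epsilon}$, so $\tau$ has order $2$. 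Thus $\mrm{Gal}(L(\sqrt{\epsilon})/\bb{Q}) \cong \langle \sigma \rangle \times \langle \tau \rangle$ is biquadratic.

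The final step is to identify the three quadratic subfields. Set $u := \sqrt{\epsilon} + \sqrt{\epsilon'}$ and $v := \sqrt{\epsilon} - \sqrt{\epsilon'}$. A direct calculation gives
\[
u^2 \;=\; \epsilon + \epsilon' + 2\sqrt{\epsilon\epsilon'} \;=\; 2a + 2 \;=\; 2(a+1),
\qquad
v^2 \;=\; 2a - 2 \;=\; 2(a-1).
\]
Since $\tau$ fixes $u$ and $\sigma\tau$ fixes $v$, the fields $\bb{Q}(u) = \bb{Q}(\sqrt{2(a+1)})$ and $\bb{Q}(v) = \bb{Q}(\sqrt{2(a-1)})$ are precisely the fixed fields of the two subgroups of order $2$ other than $\langle\sigma\rangle$, which fixes $L = \bb{Q}(\sqrt{d})$. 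As a consistency check, $uv = \epsilon - \epsilon' = 2b\sqrt{d}$, so the product of the two discriminants equals $d$ in $\bb{Q}^\times/(\bb{Q}^\times)^2$, confirming that $L$ is the third quadratic subfield.

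The only mildly delicate point is ensuring $\mrm{N}_{L/\bb{Q}}(\epsilon) = +1$: this uses that a generator of the totally positive units has totally positive norm, so even if the fundamental unit of $L$ has norm $-1$, our $\epsilon$ (being its square in that case) still has norm $+1$. The rest of the argument is a transparent application of Galois theory.
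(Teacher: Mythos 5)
Your proof is correct and follows essentially the same route as the paper: both hinge on $\mrm{N}_{L/\bb{Q}}(\epsilon)=1$ (so $\sqrt{\epsilon'}=1/\sqrt{\epsilon}$) and on the identity showing that $\sqrt{\epsilon}\pm\sqrt{\epsilon'}$ square to $2(a\pm1)$, which is just a rearrangement of the paper's identity $\big(\sqrt{\tfrac{a+1}{2}}+\sqrt{\tfrac{a-1}{2}}\big)^2=\epsilon$. Your explicit description of the Klein four Galois action is a slightly more detailed packaging of the paper's observation that $L(\sqrt{\epsilon})$ is the splitting field of $X^4-\mrm{Tr}_{L/\bb{Q}}(\epsilon)X^2+1$, but the substance is the same.
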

\begin{proof}
	If the fundamental unit of $L$ is not totally positive, then $\epsilon$ is a square in $L$ and $L(\sqrt{\epsilon})=L$. If $\epsilon$ is the fundamental unit, then the number field $L(\sqrt{\epsilon})$  is the splitting field of the polynomial 
	\[
	X^4-\mrm{Tr}_{L/\bb{Q}}(\epsilon)X^2+1  = (X^2-\epsilon)(X^2-1/\epsilon),
	\]
	hence it is biquadratic over $\bb{Q}$ and totally real. Using the relation $\mrm{N}_{L/\bb{Q}}(\epsilon)=1$ one sees that 
	\[
	\left(\sqrt{\frac{a+1}{2}}+\sqrt{\frac{a-1}{2}}\right)^2=\epsilon
	\]
	and the claim follows.
\end{proof}

\begin{remark}
The number field $L(\sqrt[8]{\epsilon})$ is not Galois over $\bb{Q}$. Its Galois closure is obtained by adding an $8$-th root of unity. Indeed  $J=L(\sqrt[8]{\epsilon},\zeta_8)$ is the splitting field of the polynomial
\[
X^{16}-\mrm{Tr}_{L/\bb{Q}}(\epsilon)X^8+1  = (X^8-\epsilon)(X^8-1/\epsilon).
\]
It is clear from this description that $J/\bb{Q}$ is a solvable extension.
\end{remark}

\begin{lemma}\label{nounits}
Let  $\epsilon\in\cal{O}_{L,+}^\times$ be  a generator of the totally positive units and $J/\bb{Q}$ the Galois closure of $L(\sqrt[8]{\epsilon})$. Then for all but finitely many primes $p\equiv_{16}9$ which are totally split in $J$, there is no totally positive unit in $L$ congruent to $-1$ modulo $p$. 
\end{lemma}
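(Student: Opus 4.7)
The plan is to reduce the statement to an elementary calculation in $\mathbb{F}_p^\times$, exploiting the splitting hypothesis to force $\bar{\epsilon}$ into a subgroup of odd order.

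First I would record a uniform description of the totally positive units. Since the only roots of unity in the real field $L$ are $\pm 1$ and $-1$ is totally negative, the group $\mathcal{O}_{L,+}^\times$ is infinite cyclic generated by $\epsilon$; depending on whether the fundamental unit $\eta$ is totally positive, totally negative, or of mixed sign, one has $\epsilon = \eta$, $\epsilon = -\eta$, or $\epsilon = \eta^2$ respectively. Consequently, the statement to prove is equivalent to showing that for all but finitely many primes $p \equiv 9 \pmod{16}$ totally split in $J$ and for any $\mathcal{O}_L$-prime $\mathfrak{p} \mid p$, the element $-1$ does not belong to the cyclic subgroup $\langle\bar{\epsilon}\rangle \subseteq (\mathcal{O}_L/\mathfrak{p})^\times$, or equivalently that $\langle\bar{\epsilon}\rangle$ has odd order.

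Next I would use the arithmetic hypotheses on $p$. Excluding the finitely many primes ramifying in $J$, we may assume $p$ is unramified in $J$; since $L \subseteq J$ and $p$ is totally split in $J$, the prime $p$ splits in $L$ and $\mathcal{O}_L/\mathfrak{p} \simeq \mathbb{F}_p$, so that $(\mathcal{O}_L/\mathfrak{p})^\times$ is cyclic of order $p-1$. The congruence $p \equiv 9 \pmod{16}$ gives $v_2(p-1) = 3$, so the subgroup of $8$-th powers in $\mathbb{F}_p^\times$ is cyclic of odd order $(p-1)/8$. Moreover, since $p$ is totally split in $J = L(\sqrt[8]{\epsilon},\zeta_8)$, the element $\bar{\epsilon} \in \mathbb{F}_p^\times$ is an $8$-th power.

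Combining the two observations, $\bar{\epsilon}$ lies inside a cyclic subgroup of odd order $(p-1)/8$, hence $\langle\bar{\epsilon}\rangle$ itself has odd order and cannot contain the unique element of order $2$, namely $-1$. This yields the lemma. The argument is essentially direct and I do not expect a genuine obstacle; the only mild subtlety is the case analysis showing $\mathcal{O}_{L,+}^\times = \langle\epsilon\rangle$, and verifying that the finite exceptional set is controlled by primes dividing the discriminant of $J/\mathbb{Q}$.
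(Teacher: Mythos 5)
Your proposal is correct and follows essentially the same route as the paper's proof: identify $(\cal{O}_L/\frak{p})^\times$ with $\bb{F}_p^\times$, use total splitting in $J=L(\sqrt[8]{\epsilon},\zeta_8)$ to make $\bar\epsilon$ an $8$-th power away from finitely many primes, and conclude that $\langle\bar\epsilon\rangle$ has odd order $\mid (p-1)/8$ since $p\equiv 9\pmod{16}$, so it cannot contain $-1$. The extra remarks on the structure of $\cal{O}_{L,+}^\times$ and the exceptional set only make explicit what the paper leaves implicit.
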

\begin{proof}
	Suppose that $p\equiv_{16}9$ and totally split in $J$. If $\frak{p}$ is an $\cal{O}_L$-prime ideal above $p$ then $(\cal{O}_L/\frak{p})^\times\cong(\bb{Z}/p\bb{Z})^\times$ and for all but finitely many such primes the reduction $\bar{\epsilon}$ of $\epsilon$ modulo $\frak{p}$ is an $8$-th power. It follows that $\bar{\epsilon}$ generates a subgroup of order dividing $(p-1)/8$. Since $p\equiv_{16}9$ that order is odd and the subgroup cannot contain $-1$. 
	\end{proof}

\begin{corollary}\label{narrow cong}
	Let $L$ be a real quadratic field, then the primes $p$ split in $L$ with narrowly principal factors and such that there is no totally positive unit congruent to $-1$ mod $p$ have positive density.
\end{corollary}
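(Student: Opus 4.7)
The plan is to merge Proposition~\ref{inf narrow princ} and Lemma~\ref{nounits} into a single application of the Chebotarev density theorem. Let $J$ denote the Galois closure of $L(\sqrt[8]{\epsilon})/\bb{Q}$ and set $M := H_L^+\cdot J\cdot \bb{Q}(\zeta_{16})$. I would seek primes $p$ whose Frobenius conjugacy class in $\mrm{Gal}(M/\bb{Q})$ restricts to the identity on both $H_L^+$ and $J$ and to the element $\tau_9 \in \mrm{Gal}(\bb{Q}(\zeta_{16})/\bb{Q})$ corresponding to $9\bmod 16$. Any such prime is totally split in $H_L^+$ (hence splits in $L$ with narrowly principal factors by the analysis of Proposition~\ref{inf narrow princ}), is totally split in $J$, and satisfies $p\equiv 9\pmod{16}$; by the argument internal to Lemma~\ref{nounits} the reduction $\bar{\epsilon}$ is then an $8$-th power in $\bb{F}_p^\times$ of odd order, so $-1\notin\langle\bar{\epsilon}\rangle$ and both conditions of the corollary hold.

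The existence of the desired Frobenius class reduces, via the fiber product
\[
\mrm{Gal}(M/\bb{Q}) \cong \mrm{Gal}(H_L^+\cdot J/\bb{Q}) \times_{\mrm{Gal}((H_L^+\cdot J)\cap \bb{Q}(\zeta_{16})/\bb{Q})} \mrm{Gal}(\bb{Q}(\zeta_{16})/\bb{Q}),
\]
to checking that $\tau_9$ acts trivially on the intersection $(H_L^+\cdot J)\cap \bb{Q}(\zeta_{16})$. Since $\tau_9$ has order two and fixes precisely $\bb{Q}(\zeta_8)\subset \bb{Q}(\zeta_{16})$, the problem becomes purely Galois-theoretic: prove that $(H_L^+\cdot J)\cap \bb{Q}(\zeta_{16})\subseteq \bb{Q}(\zeta_8)$. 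Once this is established, Chebotarev's theorem gives positive density for the class, completing the argument.

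The main obstacle is this intersection computation. Among the subfields of $\bb{Q}(\zeta_{16})$ listed in the lattice~(\ref{lattice of subfields}), only $\bb{Q}(\zeta_{16})^+ = \bb{Q}(\sqrt{2+\sqrt{2}})$, the quartic field $F$, and $\bb{Q}(\zeta_{16})$ itself fail to lie in $\bb{Q}(\zeta_8)$. Lemma~\ref{narrow class} already controls $H_L^+\cap \bb{Q}(\zeta_{16})\subseteq \bb{Q}(\zeta_8)$, so what remains is a Kummer-theoretic verification that $\sqrt{2+\sqrt{2}}$, a generator of $F$, and $\zeta_{16}$ each fail to lie in the compositum $H_L^+\cdot L(\zeta_8,\sqrt[8]{\epsilon})$. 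This is a finite check keyed to the congruence class of $d$ modulo $8$ and to the class of $\epsilon$ in $L^\times/(L^\times)^2$. In the finitely many degenerate cases that escape this argument, the strategy can be rescued by replacing the pair $(8,16)$ throughout by $(2^{n-1},2^n)$ for $n$ sufficiently large, targeting the Frobenius class $\tau_{2^{n-1}+1}$ on $\bb{Q}(\zeta_{2^n})$ so that $(p-1)/2^{n-1}$ remains odd and the oddness argument driving Lemma~\ref{nounits} still applies.
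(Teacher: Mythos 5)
Your reduction is set up correctly: forming $M=H_L^+\cdot J\cdot\bb{Q}(\zeta_{16})$, using the fiber-product description of $\mrm{Gal}(M/\bb{Q})$, and observing that everything comes down to showing $\tau_9$ is trivial on $(H_L^+\cdot J)\cap\bb{Q}(\zeta_{16})$, i.e.\ that $\zeta_{16}\notin H_L^+\cdot J$ (since $\zeta_8\in J$, the intersection is either $\bb{Q}(\zeta_8)$ or all of $\bb{Q}(\zeta_{16})$, so your list of candidate subfields $\bb{Q}(\zeta_{16})^+$ and $F$ is redundant). This is the same skeleton as the paper's argument. The problem is that you never actually prove the one statement that carries the weight of the corollary. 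You defer it to ``a Kummer-theoretic verification \dots a finite check keyed to the congruence class of $d$ modulo $8$ and to the class of $\epsilon$ in $L^\times/(L^\times)^2$,'' you do not perform the check, and you then concede it might fail in ``finitely many degenerate cases.'' A proof cannot leave its only nontrivial step in this state: as written there is no argument that $\zeta_{16}\notin H_L^+\cdot J$ for an arbitrary real quadratic $L$ and fundamental totally positive unit $\epsilon$.

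For comparison, the paper closes exactly this gap with no case distinctions: it computes $[J:\bb{Q}]$ and $[L(\sqrt[4]{\epsilon},i):\bb{Q}]=16$, determines $L(\sqrt[4]{\epsilon},i)\cap\bb{Q}(\zeta_{16})$, and then rules out $\bb{Q}(\zeta_{16})\subseteq J$ because that containment would force $J=\bb{Q}(\zeta_{16})\cdot L(\sqrt[4]{\epsilon},i)$, so $\mrm{Gal}(J/\bb{Q})$ would embed into $\mrm{Gal}(\bb{Q}(\zeta_{16})/\bb{Q})\times\mrm{Gal}(L(\sqrt[4]{\epsilon},i)/\bb{Q})$, a group of exponent $4$, contradicting the element of order $8$ coming from the Kummer generator $\sqrt[8]{\epsilon}$ (the $H_L^+$-part of the intersection is already handled by Lemma~\ref{narrow class} / Proposition~\ref{inf narrow princ}). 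Your fallback of replacing $(8,16)$ by $(2^{n-1},2^n)$ for ``$n$ sufficiently large'' does not rescue the argument either: the analogous condition $(H_L^+\cdot J_n)\cap\bb{Q}(\zeta_{2^n})\subseteq\bb{Q}(\zeta_{2^{n-1}})$ has to be ruled out at each level, both the Kummer field and the cyclotomic level grow with $n$, and you give no reason the obstruction disappears asymptotically. Supply an exponent/order argument (or an equivalent ramification or Kummer-theoretic computation) proving $\zeta_{16}\notin H_L^+\cdot J$ uniformly in $L$, and your proof is complete; without it, the key step is missing.
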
	
\begin{proof}
	By Proposition $\ref{inf narrow princ}$ and Lemma $\ref{nounits}$, all the primes which are totally split in $J$, $H_L^+$, $\bb{Q}(\zeta_8)$ and inert in the extension $\bb{Q}(\zeta_8)\subseteq\bb{Q}(\zeta_{16})$ satisfy the requirements. Clearly the splitting conditions for $J$ and $H_L^+$ are compatible, and from Proposition $\ref{inf narrow princ}$ we know that also the splitting conditions for $H_L^+$ and $\bb{Q}(\zeta_{16})$ are compatible too. We are left to understand $J\cap\bb{Q}(\zeta_{16})$.
Clearly $\bb{Q}(\zeta_8)$ is contained in the intersection because $J=L(\sqrt[8]{\epsilon},\zeta_8)$. One can check that 
\[
[J:\bb{Q}]=\begin{cases}
	16\cdot 2& \text{if}\ \bb{Q}(\sqrt{2})\subseteq L(\sqrt{\epsilon})\\
	16\cdot 4& \text{if}\ \bb{Q}(\sqrt{2})\not\subseteq L(\sqrt{\epsilon}),\\
\end{cases}
\]
$[L(\sqrt[4]{\epsilon},i):\bb{Q}]=16$, and
\[
L(\sqrt[4]{\epsilon},i)\cap\bb{Q}(\zeta_{16})=\begin{cases}
	\bb{Q}(\zeta_8) & \text{if}\ \bb{Q}(\sqrt{2})\subseteq L(\sqrt{\epsilon})\\
	\bb{Q}(i)& \text{if}\ \bb{Q}(\sqrt{2})\not\subseteq L(\sqrt{\epsilon}).\\
\end{cases}
\]
Suppose by contradiction that $\bb{Q}(\zeta_{16})\subseteq J$. Then $J=\bb{Q}(\zeta_{16})\cdot L(\sqrt[4]{\epsilon},i)$ because $\bb{Q}(\zeta_{16})\cdot L(\sqrt[4]{\epsilon},i)$ is a subfield of the same degree as $J$. Therefore the natural injection 
\[
G(J/\bb{Q})\hookrightarrow G(\bb{Q}(\zeta_{16})/\bb{Q})\times G(L(\sqrt[4]{\epsilon},i)/\bb{Q})
\]
produces a contradiction because $G(J/\bb{Q})$ contains an element of order $8$ while the other two Galois groups have exponent $4$. 
In summary, we showed that $J\cap\bb{Q}(\zeta_{16})=\bb{Q}(\zeta_8)$ so that all the required splitting conditions are compatible. Chebotarev's density theorem finishes the proof.
\end{proof}

\noindent When $\varrho$ is one of the Artin representations constructed in \cite{MicAnalytic}, the next proposition shows that there are infinitely many primes satisfying assumptions ($1$),($2$),($3$) of Theorem \ref{Main Theorem}.
\begin{proposition}\label{choiceofp}
Let $K/\bb{Q}$ be an $S_5$-quintic extension whose Galois closure $\widetilde{K}/\bb{Q}$ contains a real quadratic field $L$. Suppose $E_{/\bb{Q}}$ is a rational elliptic curve, then there are infinitely many ordinary primes $p$ for $E_{/\bb{Q}}$ such that
\begin{itemize}
	\item[\bfcdot] $p$ splits in $L$ with narrowly principal factors;
	\item[\bfcdot] there is no totally positive unit in $L$ congruent to $-1$ modulo $p$;
	\item[\bfcdot] the conjugacy class of $\mrm{Fr}_p$ in $G(\widetilde{K}/\bb{Q})\cong S_5$ is that of $5$-cycles.	
\end{itemize}   
\end{proposition}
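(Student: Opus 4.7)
The plan is to combine the positive density of Corollary \ref{narrow cong}, which handles conditions (1) and (2), with a Chebotarev argument imposing the $5$-cycle condition (3) in $\widetilde{K}/\bb{Q}$, and to intersect the result with the set of ordinary primes for $E$. All three are Frobenius conditions in finite Galois extensions of $\bb{Q}$, so the task is to verify they can be imposed simultaneously.

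Let $M := J \cdot H_L^+ \cdot \bb{Q}(\zeta_{16})$ be the Galois extension of $\bb{Q}$ encoding conditions (1) and (2) as in the proof of Corollary \ref{narrow cong}. The central step is the computation $\widetilde{K} \cap M = L$. The field $M/\bb{Q}$ is solvable since its constituents are either abelian over $\bb{Q}$ or abelian over the abelian extension $L(\zeta_8)$. On the other hand, the Galois subfields of $\widetilde{K}/\bb{Q}$ correspond to normal subgroups of $S_5 \simeq \mrm{Gal}(\widetilde{K}/\bb{Q})$, of which only $\{e\}, A_5, S_5$ exist; hence the Galois subfields of $\widetilde{K}$ are just $\bb{Q}, L, \widetilde{K}$. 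Since $\widetilde{K}/\bb{Q}$ is non-solvable while $L \subseteq H_L^+ \subseteq M$, we are forced to conclude $\widetilde{K} \cap M = L$.

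Consequently $\mrm{Gal}(\widetilde{K} \cdot M/\bb{Q})$ is the fiber product of $\mrm{Gal}(\widetilde{K}/\bb{Q})$ and $\mrm{Gal}(M/\bb{Q})$ over $\mrm{Gal}(L/\bb{Q})$. A $5$-cycle in $S_5$ lies in $A_5 = \mrm{Gal}(\widetilde{K}/L)$ and thus restricts to the identity on $L$; the Frobenius class from Corollary \ref{narrow cong} also restricts to the identity on $L$, since the corresponding primes are split in $L$. The two conditions therefore define a well-posed conjugacy class in the fiber product, and Chebotarev's density theorem produces a positive density of primes satisfying (1), (2), and (3) simultaneously.

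To conclude, I would intersect with the ordinary primes for $E_{/\bb{Q}}$. In the non-CM case these have density one by a theorem of Serre, so the intersection is automatic. In the CM case with CM field $F$ (necessarily imaginary quadratic), ordinarity amounts to the splitting condition in $F$; since $F$ is imaginary while $L$ is real, $F \cap \widetilde{K} = \bb{Q}$, and a parallel fiber product argument in $\widetilde{K} \cdot M \cdot F$ preserves compatibility. The main obstacle is thus the solvability-based identification $\widetilde{K} \cap M = L$; once this is in place, all remaining steps are routine applications of Chebotarev's density theorem.
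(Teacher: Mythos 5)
Your argument is correct and follows essentially the same route as the paper: there, too, the key point is that non-solvability of $\widetilde{K}/L$ (equivalently, that the only normal subgroups of $S_5$ are $1$, $A_5$, $S_5$) forces $\widetilde{K}$ to meet $J$, $H_L^+$ and $\bb{Q}(\zeta_{16})$ only in $L$, that $5$-cycles lie in $A_5$ and hence restrict trivially to $L$, and then Chebotarev applies, with ordinarity handled exactly as you do (density one in the non-CM case, splitting in the imaginary quadratic CM field otherwise). Your packaging through the single compositum $M$ and the identification $\widetilde{K}\cap M=L$ is just a tidier version of the paper's three separate intersection computations.
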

\begin{proof}
Since $\widetilde{K}/L$ is a non-solvable extension, we deduce that $\widetilde{K}\cap J=L$ and $\widetilde{K}\cap H_L^+=L$. Moreover,  $\widetilde{K}\cap\bb{Q}(\zeta_{16})$ is either $\bb{Q}(\sqrt{2})$ or $\bb{Q}$ according to whether $L=\bb{Q}(\sqrt{2})$ or not.
 Given that $5$-cycles are in the kernel of the surjection $G(\widetilde{K}/\bb{Q})\twoheadrightarrow G(L/\bb{Q})$, one can prove the existence of a set of positive density consisting of rational primes satisfying the listed conditions as in Corollary $\ref{narrow cong}$. It then remains to show that infinitely many of such primes are of good ordinary reduction for the given elliptic curve.
 When $E_{/\bb{Q}}$ does not have complex multiplication, the ordinary primes have density one so there are infinitely many ordinary primes that satisfy the listed conditions. When the elliptic curves $E_{/\bb{Q}}$  has complex multiplication by a quadratic imaginary field $B$, a prime is ordinary for $E_{/\bb{Q}}$ if it splits in $B$. As this new splitting requirement is compatible with those coming from the conditions above, Chebotarev's density theorem gives the claim. 
\end{proof}

\begin{corollary}\label{finalquintic}
	Let $K/\bb{Q}$ be a non-totally real $S_5$-quintic extension whose Galois closure contains a real quadratic field $L$. Suppose that $N$ is odd, unramified in $K/\bb{Q}$ and split in $L$, and that Conjecture \ref{wishingOhta} holds, then 
		\[
		r_\mrm{an}(E/K)=r_\mrm{an}(E/\bb{Q})\quad\implies\quad r_\mrm{alg}(E/K)=r_\mrm{alg}(E/\bb{Q}).
		\]
\end{corollary}
\begin{proof}
By (\cite{MicAnalytic}, Corollary 4.2) there exists a parallel weight one Hilbert eigenform $\msf{g}_K$ over $L$ of level $\frak{Q}$ prime to $N$  such that $\varrho_{\msf{g}_K}$ is residually not solvable and $\mrm{As}(\varrho_{\msf{g}_K})\cong\mrm{Ind}_K^\bb{Q}\mathbbm{1}-\mathbbm{1}$. From the Artin formalism of $L$-functions we deduce that
	\[
	r_\mrm{an}\big(E,\mrm{As}(\varrho_{\msf{g}_K})\big)=r_\mrm{an}(E/K)-r_\mrm{an}(E/\bb{Q})\quad\text{and}\quad r_\mrm{alg}\big(E,\mrm{As}(\varrho_{\msf{g}_K})\big)=r_\mrm{alg}(E/K)-r_\mrm{alg}(E/\bb{Q}).
	\]
The result follows from Theorem $\ref{Main Theorem}$ after invoking Proposition \ref{choiceofp}.
\end{proof}

\bibliography{p3L_GZ}
\bibliographystyle{alpha}

\end{document}